\documentclass[10pt,reqno]{elsarticle}

\usepackage{textcomp,times,amsmath,amsthm,amssymb,amsfonts,epsfig,color,graphicx,enumitem,accents,booktabs,accents}
\usepackage{rotating,multirow,color,url,mathrsfs,mathrsfs,bm}
\usepackage{arydshln}
\usepackage{bbm}
\usepackage{esint}
\usepackage{gensymb}
\usepackage{epstopdf}
\usepackage{comment}
\usepackage{leftidx}
\usepackage{mathtools}
\usepackage{subfig}
\usepackage{float}
\usepackage{tcolorbox}
\usepackage{tabularx}
\usepackage[toc,title,page]{appendix}
\tolerance=10000


\newtheorem{theorem}{Theorem}[section]
\newtheorem{corollary}[theorem]{Corollary}
\newtheorem{lemma}[theorem]{Lemma}
\newtheorem{proposition}[theorem]{Proposition}
\newtheorem{conjecture}[theorem]{Conjecture}



\theoremstyle{definition}

\newtheorem{remark}[theorem]{Remark}
\newtheorem{example}[theorem]{Example}



\numberwithin{equation}{section}
\numberwithin{figure}{section}
\numberwithin{table}{section}

\frenchspacing

\textwidth=16.5cm
\textheight=23cm
\parindent=16pt
\oddsidemargin=-0cm
\evensidemargin=1cm
\topmargin=-2cm

\captionsetup[subfigure]{labelfont=rm}



\DeclareSymbolFont{matha}{OML}{txmi}{m}{it}
\DeclareMathSymbol{\varv}{\mathord}{matha}{118}

\newcommand{\e}{{\mathrm{e}}}

\newcommand{\R}{\mathbb{R}}

\newcommand{\Rd}{\R^2}

\newcommand{\Comp}{{\mathcal{C}}}
\newcommand{\jp}{{j_+}}
\newcommand{\rhop}{{\rho_+}}

\newcommand{\ind}{\mathbf{\chi}}
\newcommand{\ov}{\overline}

\newcommand{\Z}{{Z}}

\newcommand{\Prob}{{(\mathcal{P})}}

\newcommand{\symtens}{{\,\odot\,}}
\newcommand{\weakstar}{{\overset{\ast}{\rightharpoonup}}}
\newcommand{\mbf}[1]{{\mathbf{#1}}}

\newcommand{\opnorm}[1]{{\left\vert\kern-0.25ex\left\vert\kern-0.25ex\left\vert #1 
		\right\vert\kern-0.25ex\right\vert\kern-0.25ex\right\vert}}

\def\O{\Omega}
\newcommand{\Ob}{{\overline{\Omega}}}
\newcommand{\bO}{{\partial\Omega}}

\newcommand{\argu}{{\,\cdot\,}}

\newcommand{\Mes}{{\mathcal{M}}}

\newcommand{\Sdd}{{\mathcal{S}^{2 \times 2}}}
\newcommand{\Sddp}{{\mathcal{S}^{2 \times 2}_+}}
\newcommand{\Sddm}{{\mathcal{S}^{2 \times 2}_-}}

\newcommand{\DIV}{{\mathrm{Div}}}

\newcommand{\dive}{{\mathrm{div}}}

\newcommand{\sig}{{\sigma}}

\newcommand{\TAU}{{\varsigma}}

\newcommand{\tr}{{\mathrm{Tr}}}

\newcommand{\Cmin}{{\mathcal{C}_{\mathrm{min}}}}
\newcommand{\Totc}{{V_0}}
\newcommand{\ro}{{\rho}}
\newcommand{\dro}{{\rho^0}}

\newcommand{\relProb}{{(\overline{\mathcal{P}})}}
\newcommand{\dProb}{{(\mathcal{P}^*)}}

\newcommand{\pairing}[1]{{\left \langle #1 \right \rangle}}
\newcommand{\norm}[1]{\Arrowvert #1 \Arrowvert}
\newcommand{\abs}[1]{{\left \lvert #1 \right \rvert}}

\newcommand{\eps}{\varepsilon}
\newcommand{\Rb}{\overline{\mathbb{R}}}

\newcommand{\D}{\mathcal{D}}

\newcommand{\V}{{\mathcal{V}}}
\newcommand{\Ha}{\mathcal{H}}
\newcommand{\IM}{\mathrm{Im}}

\newcommand{\mres}{\mathbin{\vrule height 1.6ex depth 0pt width
		0.13ex\vrule height 0.13ex depth 0pt width 1.3ex}}

\newcommand{\DO}{{\D(\Omega;\R^2 \times \R)}}
\newcommand{\vro}{{\varrho}}
\newcommand{\jsq}{{\tfrac{1}{2}\langle\sig^{-1} q,q \rangle}}
\newcommand{\q}{{q}}

\newcommand{\Cm}{{\mathrm{C}}}
\newcommand{\te}{{\theta}}
\newcommand{\Kro}{{\ov{\mathcal{K}}_\rho}}
\newcommand{\Krop}{{\mathcal{K}_\rho}}

\DeclareRobustCommand{\ub}[1]{\underaccent{\bar}{#1}}
\newcommand{\Oh}{{\ub{\Omega}}}
\newcommand{\Obh}{{\ub{\Ob}}}

\makeatletter
\def\ps@pprintTitle{%
	\let\@oddhead\@empty
	\let\@evenhead\@empty
	\def\@oddfoot{}%
	\let\@evenfoot\@oddfoot}
\makeatother

\begin{document}
	
	\begin{frontmatter}
	
	\title{Optimal design of plane elastic membranes using the convexified F\"{o}ppl's model}

	
	\author{Karol Bo{\l}botowski}
	
	\address{Department of Structural Mechanics and Computer Aided Engineering, Faculty of Civil Engineering\\
	Warsaw University of Technology, 16 Armii Ludowej Street, 00-637 Warsaw
	}

	\ead{karol.bolbotowski@pw.edu.pl}
	
	\date{\today}

	\begin{abstract}
		This work puts forth a new optimal design formulation for planar elastic membranes. The goal is to minimize the membrane's compliance through choosing the material distribution described by a positive Radon measure.	
		The deformation of the membrane itself is governed by the convexified F\"{o}ppl's model.
		The uniqueness of this model lies in the convexity of its variational formulation despite the inherent nonlinearity of the strain-displacement relation.
		It makes it possible to rewrite the optimization problem as a pair of mutually dual convex variational problems. In the primal problem a linear functional is maximized with respect to displacement functions while enforcing that point-wisely the strain lies in an unbounded closed convex set. The dual problem consists in finding equilibrated stresses that are to minimize a convex integral functional of linear growth defined on the space of Radon measures. The pair of problems is analysed: existence and regularity results are provided, together with the system of optimality criteria. To demonstrate the computational potential of the pair, a finite element scheme is developed around it. Upon reformulation to a conic-quadratic \& semi-definite programming problem, the method is employed to produce numerical simulations for several load case scenarios.

	\end{abstract}
	
	\begin{keyword}
		Plane membrane \sep optimal design \sep minimum compliance \sep convexified F\"{o}ppl's model \sep convex duality \sep finite element method \sep conic-quadratic programming \sep semi-definite programming\\
		\vspace{0.2cm}
		\MSC[2020] 74P05, 49K20, 65K99, 49J45, 49J20
	
	\end{keyword}
	
	\end{frontmatter}


\section{Introduction}
\label{sec:introdction}

\subsection{Optimizing membranes versus classical optimal design formulations in conductivity and elasticity }
\label{ssec:linear_model}

The out-of-plane equilibrium of a planar membrane occupying a domain $\Omega \subset \R^2$ and clamped on its boundary $\bO$ can be written down as follows:
\begin{equation}
		\label{eq:memb_model}
	- \dive(\sigma \nabla w) = f \quad \text{in} \quad  \Omega.
\end{equation}
The unknown function $w:\Omega \to \R$ represents the membrane's out-of-plane displacement (normal displacement or deflection, for short). It must satisfy the Dirichlet boundary condition $w=0$ on $\bO$. The applied load $f: \Omega \to \R$ acts perpendicularly to the plane.  The in-plane stress in the membrane is modeled by a function  $\sigma: \Omega \to \Sddp$, valued in symmetric positive semi-definite matrices. The positivity condition enforces that the membrane is capable of withstanding tensile stresses only, thus excluding the compressive ones. In the simplest variant, one assumes that the stress does not depend on the load $f$, and that it is homogeneous and hydrostatic, i.e. $\sigma = s\, \mathrm{I}_2$ where $s>0$ is a positive constant, $\mathrm{I}_2$ is the $2 \times 2$ identity matrix. Such a setting is justified if on the boundary $\bO$ the membrane is subject to a pre-stress exerted by a normal traction of intensity $s$ that is sufficiently large, see e.g. \cite[Section 5.10]{ciarlet2000b}. By taking such $\sigma$ we recast the most basic membrane model to be found as the leading example of a linear boundary value problem in some of the classical books on mathematical physics \cite{courant1924}. Indeed, in this case the equilibrium equation \eqref{eq:memb_model} reduces to the Poisson equation  $- s\, \triangle w = f$. 

In the sequel of this subsection $\Omega$ will not represent the membrane itself, but rather the design domain in which the material is to be optimally distributed. When addressing the issue of optimal design of membranes one cannot ignore the fact that equation \eqref{eq:memb_model} governs yet another problem in physics: the one of planar stationary heat flow where, in turn, $w$ is temperature, $f$ is the heat source/sink, and in place of $\sigma$ one has the conductivity tensor $A:\Omega \to \Sddp$ that characterizes the 2D conductor. Meanwhile, optimization of conductors is a classical topic. Various settings are possible; in majority of them we essentially look for the field $A$ that satisfies some constraint on the amount of the material and that minimizes the adequately defined energy. If we assume that $A = a \mathbbm{1}_\omega \mathrm{I}_2$, where $a>0$ is a fixed positive constant and $ \mathbbm{1}_\omega$ is the characteristic function of a subdomain $\omega \subset \Omega$ satisfying the constraint on the volume $\abs{\omega} \leq V_0 < \abs{\Omega}$, then we find ourselves in the framework of \textit{shape optimization} \cite{sokolowski1992,plotnikov2023,bouchitte2014}. In general, optimal shapes do not exists, and it is when the \textit{relaxation by homogenization method} \cite{kohn1986optimal,allaire2002} intervenes. A different approach consists in finding an optimal \textit{mass} or material distribution \cite{bouchitte2001,bouchitte2007} being a positive measure $\mu \in \Mes_+(\Ob)$ satisfying $\mu(\Ob) \leq V_0$, which generates a matrix valued measure $\mathcal{A} = (a \mathrm{I}_2) \mu \in \Mes(\Ob;\Sddp)$. \textit{The free material design} \cite{bouchitte2001,lewinski2021} is a variant of the latter formulation, when an arbitrary anisotropic measure $\mathcal{A} \in \Mes(\Ob;\Sddp)$ is admissible provided that $\int_\Ob \tr \, \mathcal{A} \leq V_0$ . Most of the work cited above concerns not only the optimal design in conductivity, but also 2D or 3D linear elasticity. Then, one usually minimizes the \textit{compliance}, defined as the work of the load on the induced displacements; cf. \cite{bolbotowski2022b} for the free material design problem in this context.

Regardless of the aforementioned mathematical reminiscence between the statics of a membrane and the stationary heat flow, the far reaching advances in the optimal design of conductors quoted above cannot be transferred to optimizing membranes. We must mind that, unlike the conductivity tensor $A:\O \to \Sddp$, the stress tensor $\sigma:\O \to \Sddp$ is not a material property that we can control freely point by point as we see fit. Since $\sigma$ is an in-plane stress, it must meet the in-plane equilibrium equation:
\begin{equation}
	\label{eq:in-plane_eq}
	- \DIV\, \sigma = 0 \quad \text{in} \quad  \Omega,
\end{equation}
where the zero right hand side reflects the lack of an in-plane loading. In the paper \cite{bolbotowski2022a}, co-written by the present author, a formulation of optimal design of membranes is proposed by adding \eqref{eq:in-plane_eq} as a constraint in the free material design formulation. Formally, one seeks a stress tensor $\sigma: \O \to \Sddp$ satisfying $\int_\O \tr\,\sigma \,dx \leq V_0$ and $-\DIV \, \sigma = 0$ in $\Omega$; the target is to minimize the membrane's compliance. Assuming that for a given $\sigma$ the equation \eqref{eq:memb_model} admits a solution $w_\sigma$, the compliance can be defined as $\int_\O w_\sigma f \, dx$.  To the best of the present author's knowledge, such an optimization setting is entirely new in the literature, and so are the optimal designs obtained in \cite{bolbotowski2022a}. In fact, it is the author's belief  that the topic of optimizing membranes as mechanical systems is rather left out overall.

It is only fair to point out a drawback of the paper \cite{bolbotowski2022a}: the premise for the optimal design formulation is heuristic from the mechanical point of view. Above all, it is not \textit{a priori} justified to control the stress $\sigma$ in the whole domain $\Ob$ as if it was a design variable -- physically, we cannot impose the stress anywhere but on the boundary $\bO$. In the interior $\O$ the stress $\sigma$ should be induced via a constitutive law of elasticity, as a response to strain. The structure's cost being an integral of the trace of $\sigma$ is just as debatable. Both of these ideas were inspired by the famous Michell problem \cite{bouchitte2008,lewinski2019a}; the latter formulation, however, received many justifications over the years,  \cite{allaire1993,bouchitte2020,babadjian2023,bolbotowski2022b} to name a few. One of the main goals of the present contribution is to pose a design formulation that is equivalent to \cite{bolbotowski2022a} and, at the same time, more sound mechanically. For that purpose we will employ a more advanced membrane model that does not assume $\sigma$ in \eqref{eq:memb_model} to be pre-fixed but rather that $\sigma$ depends on the membrane's deformation and, effectively, on the load $f$ itself.

\subsection{The convexified F\"{o}ppl's membrane model: the primal and dual formulations}
\label{ssec:foppl_intro}

Discussed above, the very simplistic membrane model revolving around the out-of-plane equilibrium equation \eqref{eq:memb_model} suffers from several defects in the view of physics.  Firstly, the material does not play any role. Thus, such a membrane should not be considered an \textit{elastic} body. Indeed, the model lacks a constitutive law of elasticity that, typically, establishes the relation between the stresses and strains, see \cite{ciarlet2000a}. In fact, as mentioned before, in \eqref{eq:memb_model} the in-plane stress $\sigma$ is predefined. Meanwhile, the strain itself is not defined at all. Consequently, such a model is not predestined for a rational optimization formulation where a material distribution is looked for, in one setting or the other.

A model that responds to the foregoing issues was put forth by A. F\"{o}ppl \cite{foppl1907} at the dawn of the 19th century. Therein, the in-plane strain $\xi$, being a function valued in symmetric matrices $\Sdd$, depends not only on the deflection $w: \Omega \to \R $ but also on the in-plane (tangential) displacement function $u: \Omega \to \Rd$. Assuming smoothness of the two functions, the strain was proposed by F\"{o}ppl as follows:
\begin{equation}
	\label{eq:foppl_strain}
	\xi = \tfrac{1}{2} \nabla w \otimes \nabla w + e(u) \ : \ \O \  \to \ \Sdd,
\end{equation}
where $e(u) = \frac{1}{2} \big( \nabla u + (\nabla u)^\top \big)$ is the symmetric part of the gradient of $u$. We can see that the tangential displacement $u$ enters the strain formula to first order, whilst the normal one $w$ to the second. The formula \eqref{eq:foppl_strain} is  better known from the von K\'{a}rm\'{a}n's plate theory \cite{vonkarman1910,ciarlet1980} that emerged few years later; in the latter theory a bending term involving the Hessian $\nabla^2 w$ is also present. 

The quadratic term with respect to $\nabla w$ in \eqref{eq:foppl_strain} renders the F\"{o}ppl's membrane theory geometrically nonlinear as opposed to e.g.: linearized 2D and 3D elasticity, Kirchhoff's plate theory, or, virtually, to the model described in the previous subsection. The nonlinearity, however, is not "full" since the quadratic term $\frac{1}{2}(\nabla u)^\top \nabla u$ is omitted. One can say that the F\"{o}ppl's model assumes small in-plane deformation and moderate deflections. The additional term is accounted for in the \textit{fully nonlinear membrane models}, which may be found in e.g. \cite{fox1993,ledret1995}. The F\"{o}ppl's model can be thus deemed as \textit{partially geometrically nonlinear}.

Assuming a uniform thickness $b_0$ of a membrane that is clamped on the boundary $\bO$, let us now set the F\"{o}ppl's problem by writing down its variational formulation:
\begin{equation}
	\label{eq:foppl_model}\tag*{$(\mathrm{F\ddot{o}})$}
	\inf \left\{  \int_\O j\bigl(\tfrac{1}{2} \, \nabla w \otimes \nabla w + e(u) \bigr) \, b_0 \, dx  - \int_\O w f \,  dx  \ : \  (u,w) \in \D\bigl(\Omega;\R^2 \times \R\bigr) \right\}
\end{equation}
where $\D(\O;\R^d)$ stands for the space of $\R^d$-valued smooth functions with compact support in $\O$.
Above $j:\Sdd \to \R_+$ is a quadratic energy potential, more accurately $j(\xi) = \frac{1}{2} \pairing{\mathscr{H}\xi,\xi}$ for a symmetric positive semi-definite linear operator $\mathscr{H}$ on $\Sdd$, which is known as the Hooke tensor. Of course, the smoothness conditions for $(u,w)$ must be relaxed: the natural functional space for a solution would be $W^{1,2}(\O;\Rd) \times W^{1,4}(\O;\R)$. However, the first term of the minimized functional, i.e. the stored elastic energy, is not weakly lower semi-continuous in this space. In particular, the problem \ref{eq:foppl_model} is not convex. All in all, the formulation \ref{eq:foppl_model} is not guaranteed to have a solution in general.

The sequences on which the weak lower semi-continuity fails have a clear and appealing physical interpretation, cf. \cite{belgacem2002,conti2006}. Let us take affine functions  $(u,w) : \O \to \Rd \times \R$ that generate a contraction of the membrane, that is a constant strain $\xi = \tfrac{1}{2}  \nabla w \otimes \nabla w + e(u)$ which is non-zero and negative semi-definite. Then, one can point to a sequence of smooth functions $(u_h,w_h)$ such that $(u_h,w_h) \rightharpoonup (u,w)$ weakly in $W^{1,2}(\O;\Rd) \times W^{1,4}(\O;\R)$, and yet $\xi_h = \tfrac{1}{2}  \nabla w_h \otimes \nabla w_h + e(u_h) \to 0$ strongly in $L^2(\O;\Sdd)$. Accordingly, we find that $\int_\O j(\xi_h) \, b_0 \, dx $ converges to zero, whilst  $ \int_\O j (\xi)\, b_0 \, dx $ is strictly positive, ultimately disapproving the weak lower semi-continuity. A loose conclusion can be drawn: contractions of the membrane should not contribute to the stored elastic energy. Mechanically, contracting a membrane made of thin film or canvas does not require compressive stresses; instead, it can be realized through fine-scale oscillation of the displacements that produce a near-isometric deformation of the membrane. Such oscillation -- or, to put it differently, \textit{wrinkling} of the membrane -- is not captured by the functions $(u,w)$ themselves but is recovered through the sequence $(u_h-u,w_h-w)$ provided it is energetically profitable. The wrinkling phenomenon in thin films was the subject of many other works, see e.g. \cite{bella2014,tobasco2021}.

The foregoing remarks point to a need for relaxation of the formulation \ref{eq:foppl_model}. In general, the fine-scale oscillation sequences can be constructed so that  the strain $\xi:\Omega \to \Sdd$ defined in \eqref{eq:foppl_strain} is energetically corrected by a positive semi-definite function $\zeta : \Omega \to \Sddp$. Effectively, the stored elastic energy that recognizes the membrane's capability to wrinkle must use the \textit{relaxed energy potential}:
\begin{equation}
	\label{eq:j_plus_0}
	j_+(\xi) :=  \min\limits_{\zeta \in \Sddp} j(\xi +\zeta) \qquad \forall\,\xi \in \Sdd.
\end{equation}
The relaxed potential $j_+$ enjoys a crucial property that the potential $j$ does not: \begin{equation}
	\label{eq:g_0}
	\text{the mapping} \quad  \Sdd \times \Rd \ni (\xi,\te) \ \ \mapsto \ \ g(\xi,\te) :=  j_+\bigl(\tfrac{1}{2} \, \theta \otimes \theta + \xi \bigr) \quad \text{is jointly convex.}
\end{equation}
Accordingly, the following relaxation of the F\"{o}ppl's formulation \ref{eq:foppl_model} is its convexification:
\begin{equation}
	\label{eq:conv_foppl_model}\tag*{$(\mathrm{Co F\ddot{o}})$}
	\inf \left\{  \int_\O j_+\bigl(\tfrac{1}{2} \, \nabla w \otimes \nabla w + e(u) \bigr) \, b_0 \, dx  - \int_\O w f \,  dx  \ : \  (u,w) \in \D\bigl(\Omega;\R^2 \times \R\bigr) \right\}
\end{equation}
Of course, above we delivered merely a sketch of the reasoning that leads to proposing \textit{the convexified F\"{o}ppl's model} (or the \textit{relaxed  F\"{o}ppl's model}). Nonetheless, its rigorous justification can be found in \cite{conti2006} where the authors showed that \ref{eq:conv_foppl_model} emerges as a $\Gamma$-limit of a sequence of nonlinear 3D elasticity problems posed in cylinders $\Oh_\eps = \O \times (-\eps/2,\eps/2)$ for the thickness $\eps$ approaching zero. To arrive exactly at the problem \ref{eq:conv_foppl_model} certain assumptions have to be made. 
Firstly, the 3D bodies must be fully clamped on their sides $\bO \times (-\eps/2,\eps/2)$. Secondly, we must decide on how the load scales with the thickness by choosing a factor $\eps^\alpha$, and then we must suitably rescale the total potential energy by a factor of $1/\eps^\beta$. In \cite{conti2006} it is showed that the problem \ref{eq:conv_foppl_model} is obtained through $\Gamma$-convergence whenever $\alpha \in (0,3)$ and $\beta = 4/3\, \alpha$. The reader is also encouraged to confer the paper \cite{friesecke2006} which shows that driving the asymptotic 3D-to-2D process by the two exponents $\alpha,\beta$ leads to a whole hierarchy of different 2D theories, including the plate models of von K\'{a}rm\'{a}n and Kirchhoff.

Naturally, the smoothness conditions in \ref{eq:conv_foppl_model} must be relaxed as well. The minimized functional is now weakly lower semi-continuous in $W^{1,2}(\O;\Rd) \times W^{1,4}(\O;\R)$, but it is not coercive in this space. In fact, for the minimizing sequence $(u_h,w_h)$ the relaxed potential $j_+$ controls only the positive part of the strain $\tfrac{1}{2} \nabla w_h \otimes \nabla w_h + e(u_h)$. Therefore, the compactness result must be looked for in a larger functional space. It is here where the boundary condition intervenes: since $u_h=0$ on the boundary, the integral $\int_\O e(u_h) dx$ is a zero matrix by Gauss's divergence theorem. Effectively, this yields uniform bounds on the $L^1$ norm of $e(u_h)$ and on the $L^2$ norm of the gradient $\nabla w_h$. Ultimately, a relaxed solution $(u,w)$ of \ref{eq:conv_foppl_model} is guaranteed to exist in the space $BD(\O;\Rd) \times W^{1,2}_0(\O;\R)$, where $BD$ stands for the space of functions of bounded deformation \cite{temam1985}. Extra constraints are imposed on $u$. For instance, the singular part of the distributional symmetrized gradient, being a Radon measure $\eps(u) \in \Mes(\O;\Sdd)$, must be negative semi-definite. The mechanical interpretation is that jump-type discontinuities of $u$ reflect membrane's contraction by wrinkling concentration or folding: when one part of the membrane goes over the other part. The reader is referred to \cite{conti2006} for more details.

\bigskip

We conclude this subsection by addressing an aspect of the convexified F\"{o}ppl's model that was skipped in \cite{conti2006}: the one concerning stresses in the membrane.  Contrarily, the work \cite{ledret1995} on the fully nonlinear membrane model discusses this matter to large extent. However, the formulation \ref{eq:conv_foppl_model} is exceptional, for it is convex. This unlocks the use of the convex duality methods \cite{ekeland1999}. In the present paper we will show that they lead to the following dual formulation of the convexified F\"{o}ppl's membrane problem:
\begin{equation}
	\label{eq:dual_foppl}\tag*{$(\mathrm{CoF\ddot{o}}^*)$}
	\min \biggl\{ \int_\O j^*(\sig) \, b_0 \, dx + \int_\O \jsq\, b_0\, dx : \!
	\begin{array}{rr}
		\sig \in L^2(\O;\Sddp),\!  & -\DIV \,\sig = 0 \ \ \text{ in } \O\\
		q \in L^{4/3}(\O;\Rd),\!  & -\dive \, q= f/b_0 \ \ \text{ in } \O
	\end{array}\!\!
	\biggr\}
\end{equation}
where $j^* :\Sdd \to \R_+$ is the convex conjugate of $j$, cf. \cite{rockafellar1970convex}.
The functions $\sigma$ and $q$ enjoy the interpretations of in-plane and out-of-plane stress components, whilst the two constraints are membrane's equilibrium equations in the respective directions. It should be noted that the potential  $j^*$ enters the  formulation instead of the relaxed one $j_+^*$.
However, the relaxation is manifested by the condition on  positive semi-definiteness of $\sigma$. A dual face of the relaxation $j_+$ will be revealed: it is to guarantee that the membrane is subject to tensile stresses only.

\subsection{A survey of the possible optimal design formulations}

To the best knowledge of the author the  relaxed F\"{o}ppl's model \cite{conti2006}  has not yet been employed in optimal design of membranes. Meanwhile, the model enjoys features that makes the optimization worthy of exploring. Despite its geometrical nonlinearity, reflected in the strain-displacement relation \eqref{eq:foppl_strain}, its variational formulation \ref{eq:conv_foppl_model} is convex. As a consequence, the dual stress-based formulation \ref{eq:dual_foppl} is at hand. This, in turn, unlocks various convex compliance minimization methods that so far have been applied to bodies and structures modelled by geometrically linear theories. Below we will discuss several possibilities of optimal design formulations using the convexified F\"{o}ppl's model.

In the sequel, for the out-of-plane load of the membrane we shall take a signed Radon measure, namely $f \in \Mes(\Ob;\R)$. To facilitate optimization we will now let go of the assumption that the membrane has a constant thickness $b_0$. In fact, to encompass a more general class of feasible designs, a membrane will be identified with a positive Radon measure $\mu \in \Mes_+(\Ob)$  representing its \textit{material distribution}. For each such measure we define
\begin{equation}
	\label{eq:comp_foppl}\tag*{$(\mathrm{Co F\ddot{o}}_\mu)$}
	\Comp(\mu) := -\inf \left\{\int_\Ob j_+\bigl(\tfrac{1}{2} \, \nabla w \otimes \nabla w + e(u) \bigr) \, d\mu - \int_\Ob w\, df   \ : \  (u,w) \in \D\bigl(\Omega;\R^2 \times \R\bigr) \right\}
\end{equation}
Assuming that the initial potential $j$ is quadratic and that the above problem admits a continuous solution $(\check{u},\check{w})$, the following equality holds true: $\Comp(\mu) = \frac{3}{4}  \int_\Ob \check{w}\, df$, and this number is non-negative. Therefore, $\Comp(\mu)$  variationally defines  the \textit{compliance} of a membrane  whose material distribution is $\mu$.
A family of optimal membrane problems in the form of compliance minimization with a volume constraint may be thus put forward:
\begin{equation}
	\label{eq:OM0_family}\tag*{$(\mathrm{OM}_{\Mes_{\mathrm{adm}},V})$}
	\Cmin(\Mes_{\mathrm{adm}},V) =  \inf \Big \{ \Comp(\mu) \ : \ \mu \in \Mes_{\mathrm{adm}}, \ \mu(\Ob) \leq V \Big \} 
\end{equation}
Above $V>0$ is an upper bound on the material's volume, and $\Mes_{\mathrm{adm}} \subset \Mes_+(\Ob)$ is a class of admissible material distributions. The set $\Mes_{\mathrm{adm}}$ essentially decides what optimization framework we are in, and below we discuss the three natural choices.

For a constant thickness parameter $b_0>0$ we start by considering
\begin{equation*}
	\#1:\qquad  \Mes_{\mathrm{adm}} = \Mes_\mathrm{shape} := \Big\{\mu_\omega(dx) =  b_0  \mathbbm{1}_\omega(x) \mathcal{L}^2(dx) \, : \, \omega \subset \O  \Big\}.
\end{equation*}
Effectively, we are in the shape optimization approach, being a well rooted topic in the context of  2D/3D linearized elasticity, see e.g. \cite{sokolowski1992,plotnikov2023}. Taking experience from this vast literature, we should not expect that the problem $(\mathrm{OM}_{\Mes_{\mathrm{shape}},V})$ admits a solution in general. Since the homogenization method is not yet developed for the convexified F\"{o}ppl's model, we could instead perform the shape sensitivity analysis for the functional $\omega \mapsto \Comp(\mu_\omega)$. This potentially paves a way to a numerical scheme by the \textit{level-set method}  \cite{allaire2004}. As we can find from the latter paper, computing the shape derivative within a geometrically nonlinear theory poses a significantly greater challenge than in case of, e.g., compliance minimization of a linearly elastic body. Meanwhile, the convexity of the relaxed F\"{o}ppl's model opens up the possibility of adopting an alternative method of shape sensitivity analysis that was put forth in \cite{bouchitte2014}. The key lies in rewriting the stored energy in \ref{eq:comp_foppl} by means of the jointly convex integrand $g\big( e(u),\nabla w\big)$ defined in \eqref{eq:g_0}. This embeds the shape optimization problem of membranes in the general framework of \cite{bouchitte2014}, at least at the formal level. As a result, we can recast an explicit formula for the shape derivative. It is analogous to the one known for linear models, except that the quadratic energy is now replaced by  $ j_+\bigl(\tfrac{1}{2} \, \nabla w \otimes \nabla w + e(u) \bigr)$. At this moment, however, this is merely conjecture since the function $g$ does not satisfy the growth conditions set in \cite{bouchitte2014}.

The second standard choice is 
\begin{equation*}
		\#2: \qquad \Mes_{\mathrm{adm}} = \Mes_{\mathrm{thi}} := \Big\{\mu_b(dx) =  b(x)\mathcal{L}^2(dx) \, : \, b \in L^\infty(\Omega;[0,b_0]) \Big\}.
\end{equation*}
In this case the design variable is the function $b$ that represents membrane's thickness. It is worth noting that $b$ may vanish on e.g. open subsets of $\Omega$, so effectively the shape of the membrane is parallelly optimized as well. Thickness optimization is a well established topic in the context of \textit{sheets} in plane stress, see \cite{petersson1999,golay2001,czarnecki2013}. For Kirchhoff plates it is a  more delicate subject since $b$ enters the elastic energy cubicly. As a result, the minimized functional is not weakly-* lower semi-continuous in $L^\infty$, see \cite{kohn1984,kohn1986thin} where this issue is remedied by working with suitably averaged plate stiffnesses that model oscillatory stiffeners. No similar difficulties occur for plane stress sheets \cite{petersson1999,golay2001,czarnecki2013} where $b$ enters the energy linearly, and an optimal $\check{b} \in L^\infty(\Omega;[0,b_0])$ does exist. An analogous existence result holds true for the optimal membrane problem $(\mathrm{OM}_{\Mes_{\mathrm{thi}},V})$.

Solution of the thickness optimization problem $(\mathrm{OM}_{\Mes_{\mathrm{thi}},V})$ depends on the ratio $V / (b_0 \abs{\O})$.
It is natural to study the behaviour of the membrane's optimization problem when this ratio becomes small, e.g. by considering the volume bounds $V = \eps\, V_0$ when $\eps$ approaches zero. This way we enter the very popular topic of optimal design under the conditions of \textit{vanishing material volume} (or \textit{vanishing mass}). In case of  the shape optimization problem for linearly elastic plane stress sheet, sending $\eps$ to zero leads to the Michell problem, see \cite{allaire1993,bouchitte2007dimred,bouchitte2020,babadjian2023}. Alternatively, in \cite{bouchitte2007dimred} a similar asymptotic analysis was also carried out for the \textit{fictitious material} optimization formulation for 3D linear elasticity, and in this setting the vanishing mass limit yielded an optimization problem where a material distribution being any positive Radon measure is sought, see \cite[Proposition 3.1]{bouchitte2007dimred}. In this fashion, the design formulation in the framework of \textit{mass optimization} \cite{bouchitte2001,bouchitte2007} can be justified. Incidentally, for a sheet in plane stress the fictitious material approach coincides with the thickness optimization problem.

By the above token -- the result \cite[Proposition 3.1]{bouchitte2007dimred} in particular -- one may conjecture a connection that justifies the third variant of the problem \ref{eq:OM0_family}:
\begin{equation*}
	\#3: \qquad \Mes_{\mathrm{adm}} = \Mes_+(\Ob),
\end{equation*}
i.e. when every material distribution $\mu \in  \Mes_+(\Ob)$ that satisfies the bound on the total mass is feasible. Readily, we consider a sequence of thickness optimization problems $(\mathrm{OM}_{\Mes_{\mathrm{thi}},\eps V_0})$ for $\eps$ going to zero, whilst by $\check{b}_\eps \in L^\infty(\Omega;[0,b_0])$ we denote the corresponding solutions. Then, one can argue that the sequence of scaled optimal compliance $\eps^{1/3} \, \Cmin\big(\Mes_{\mathrm{thi}},\eps V_0 \big)$ converges to $\Cmin\big( \Mes_+(\Ob),V_0 \big)$, whereas (up to extracting a subsequence) the sequence of scaled absolutely continuous measures $\frac{1}{\eps}\, \check{b}_\eps\, \mathcal{L}^2$ weakly-* converges to a measure $\mu \in \Mes_+(\Ob)$ that solves \ref{eq:OM0_family} in the framework $\#3$. Naturally, the measure $\mu$ may no longer be absolutely continuous. It may, for instance, charge lower dimensional subsets of $\Ob$. The interpretation follows: if very little material is at disposal, then it is in our interest to locate it with a high precision where it matters the most. If these subsets are e.g. curves, then we gain the information that 1D \textit{strings} are more efficient than a diffused thin film.

\bigskip

In this work we focus entirely on the framework $\#3$ of the optimal membrane problem \ref{eq:OM0_family}, which essentially renders this contribution a continuation of the stream of research on optimal mass distribution \cite{bouchitte2001,bouchitte2007}. While the seminal paper \cite{bouchitte2001} concerned 2D or 3D elasticity, in \cite{bouchitte2007} we find a  general formulation assuming a geometrically linear model, which is then  particularized for plates. The novelty of the relaxed F\"{o}ppl's model, to which we will extend this optimization method, lies in its geometric nonlinearity \eqref{eq:foppl_strain}. The common feature is the models' convexity, which  facilitates the adoption of the mass optimization methods as proposed in this paper.

Of course, the justification of the mass optimization formulation $\#3$ using the relaxed F\"{o}ppl's  membrane model is, for the most part, heuristic. A debate starts already at the level of the generalization \ref{eq:comp_foppl} of the convexified F\"{o}ppl's model to membranes of variable thickness $b=b(x)$, let alone when an arbitrary material distribution $\mu \in \Mes_+(\Ob)$ is considered. In fact, the rigorous 3D-to-2D analysis, that lead the authors of \cite{conti2006} to the model, assumed not only a constant thickness of the membrane, but also that the membrane is clamped on its whole boundary. According to \cite{conti2006}, the latter condition is essential for providing stiffness to the membrane. This condition ceases to be true already in the shape optimization formulation: $\partial \omega \backslash \bO$ becomes the free part of the boundary. However, optimization is to guarantee an increase, not a loss of the stiffness. Therefore, heuristically speaking, we can hope that the  occurrence of the free boundary in an optimal membrane does not compromise the validity of the relaxed F\"{o}ppl's model. As far as membranes of varying thickness are concerned, the present author is not aware of such generalization of the 3D-to-2D justification \cite{conti2006}. Such extensions, however, were successful in the case of other 2D models (see e.g. \cite{kohn1984} in the case of Kirchhoff plates), so one can hope for an analogous result for the relaxed F\"{o}ppl's model. A more involved approach would consist in justifying the problem $\#3$  by performing a 3D-to-2D asymptotics for the optimal design problem directly, see e.g. \cite{bouchitte2007dimred}.

Ultimately, these delicate issues fall out of the scope of this work, and here the variational formulation \ref{eq:comp_foppl} will be \textit{a priori} assumed for membranes of an arbitrary material distribution $\mu \in \Mes_+(\Ob)$. Then, we directly attack the problem \ref{eq:OM0_family} in the framework of material distribution optimization $\#3$, without further reference to the thickness optimization problem $\#2$.  

\subsection{A synopsis of the main results}

As announced at the end of the previous subsection in this paper we address the optimal design problem below:
\begin{equation}
	\label{eq:OM0}\tag*{$(\mathrm{OM})$}
	\Cmin=  \min \Big \{ \Comp(\mu) \ : \ \mu \in \Mes_+(\Ob), \ \mu(\Ob) \leq V_0 \Big \} 
\end{equation}
where $\Omega \subset \Rd$ is a bounded domain, $V_0$ is a positive constant, and $\Comp: \Mes_+(\Ob) \to [0,\infty]$ is the compliance functional defined variationally in \ref{eq:comp_foppl}. The relaxed potential $j_+$ entering the definition of $\Comp$ is computed via the formula \eqref{eq:j_plus_0} for a convex, positively $p$-homogeneous potential $j$ that satisfies certain growth conditions to be named later, while $p \in (1,\infty)$ is a fixed exponent. Showing weak-* lower semi-continuity of $\Comp$ will be straightforward, and, thus, existence of solution to \ref{eq:OM0} will follow by the Direct Method of the Calculus of Variations. 

The main results of this work rest on the observation that the problem \ref{eq:OM0} can be reduced to a pair of mutually dual convex problems that falls within the class of problems encompassed by the work \cite{bouchitte2007}, namely:
\begin{equation}
	\label{eq:P}\tag*{$(\mathcal{P}_{\Lambda,\Cm,\mathscr{F}})$}
	\sup\biggl\{ \pairing{\mathscr{F},v} \ : \ v \in \D(\O;\R^m), \ \ (\Lambda v)(x)  \in \mathrm{C} \quad \forall\, x \in \Ob \biggr\}
\end{equation}
\begin{equation}
	\label{eq:dP}\tag*{$(\mathcal{P}^*_{\Lambda,\Cm,\mathscr{F}})$}
	\inf \biggl\{ \int_\Ob \chi_\mathrm{C}^*(\tau) \ : \ \tau \in \Mes\bigl( \Ob;V \bigr), \ \ \Lambda^*\tau = \mathscr{F} \ \text{ in } \D'(\O;\R^m)\biggr\}
\end{equation}
In the general setting $V$ is any finite dimensional space, $\Lambda: \D(\O;\R^m) \to C(\Ob;V)$ is a linear continuous operator, $\Cm$ is a closed convex subset of $V$, whilst $\mathscr{F} \in \D'(\O;\R^m)$ is a vector of $m$ distributions. The function $\chi_\Cm^*:V \to [0,\infty]$ is the \textit{support function} of $\Cm$, whilst the  operator $\Lambda^*: \Mes(\Ob;V) \to \D'(\O;\R^m)$ is the adjoint of $\Lambda$. In \ref{eq:dP} the objective is to be understood in the sense of the integral representation of a convex functional on the space of measures, cf. \cite{goffman1964}. When $\Cm$ contains a neighbourhood of the origin of $V$, a standard duality results yields the equality  $\sup \text{\ref{eq:P}} = \inf \text{\ref{eq:dP}}$, whilst the minimum is attained whenever this number is other than $+\infty$.

If $m=2$ and $\Lambda = e(\argu)$, then the pair \ref{eq:P}, \ref{eq:dP} encodes the optimal material distribution of a plane-stress sheet \cite{bouchitte2001,golay2001}. When $m=1$ and $\Lambda$ is the Hessian operator, this pair relates to optimal design of Kirchhoff plates \cite{bouchitte2007}. In both these cases $\Cm \subset \Sdd$ is a sub-level set of the energy potential $j$. In this paper we concentrate on the following framework:
\begin{subequations}
\label{eq:setting}
	\begin{align}
		v &= (u,w):\O  \to \R^m = \Rd \times \R, \qquad \pairing{\mathscr{F},v}   =  \int_ \Ob w\, df ,  \qquad V = \Sdd \times \Rd, \\ &
		\qquad \Lambda v = \big(e(u),\nabla w \big), \qquad
		\mathrm{C} = \Big\{ (\xi,\theta) \in \Sdd \times \Rd \ : \ j_+\big( \tfrac{1}{2} \theta \otimes \theta + \xi  \big) \leq 1/p \Big\}.
	\end{align}
\end{subequations}
Convexity of the set $\Cm$ follows by \eqref{eq:g_0}.
The mathematical analysis of the pair of problems \ref{eq:P}, \ref{eq:dP} in the setting  \eqref{eq:setting} was precisely the topic of the recent work \cite{bolbotowski2022a} co-written by the present author, yet only for a particular choice of the energy potential: $j(\xi) = j^\mathrm{M}(\xi):= \frac{1}{p} \big( \max_i \abs{\lambda_i(\xi)} \big)^p$. The index "$\mathrm{M}$" is to invoke the Michell problem in which this energy plays the central role. In this scenario, the set $\Cm = \Cm^\mathrm{M}$ enjoys special properties that led the authors of \cite{bolbotowski2022a} to a new link with the Monge-Kantorovich optimal transport problem \cite{villani2003,santambrogio2015}. Essentially, the paper \cite{bolbotowski2022a} departs from the dual problem $(\mathcal{P}^*_{\Lambda,\Cm^\mathrm{M}\!,\mathscr{F}})$ whose very motivation was optimizing membranes. As discussed before, however, the mechanical justification of that formulation was superficial. One of the novelties of this paper consists in showing that the pair \ref{eq:P}, \ref{eq:dP} factually underlies the optimal design problem \ref{eq:OM0} which rests on the mechanically-sound relaxed F\"{o}ppl's model. In a natural way, this promotes working with a larger class of energy potentials $j$ rather than just $j^\mathrm{M}$.

The connection between the design formulation \ref{eq:OM0} and the pair \ref{eq:P}, \ref{eq:dP} particularized in \eqref{eq:setting} is multifold. Below we state its most important aspect, the one focusing on recasting a solution of the original problem upon solutions of the foregoing pair. Let us note that by $\rho^0:\Sdd \to  \R_+$ we understand the uniquely defined convex positively one-homogeneous continuous function satisfying the equality $j^*(\sigma) = \frac{1}{p'} \big( \dro(\sigma)\big)^{p'}$ where $p' = p/(p-1)$. The result below is a preview of Theorem \ref{thm:constructing_lambda_OEM}:
\begin{theorem}
	\label{thm:main}
	Let $\hat\tau = (\hat\TAU, \hat\vartheta) \in \Mes(\Ob;\Sddp \times \Rd)$ be any solution of the problem \ref{eq:dP} in the setting \eqref{eq:setting}, and by $Z = \min \text{\ref{eq:dP}}$ denote the value function. Then, the material distribution $\check{\mu} = \frac{2V_0}{Z} \dro(\hat{\TAU})$ solves the optimal membrane problem \ref{eq:OM0}.
\end{theorem}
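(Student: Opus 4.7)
My strategy is to use the pair \ref{eq:P}, \ref{eq:dP} as a two-sided gauge for the compliance functional, with the $p$-homogeneity of $j_+$ supplying a free scalar that is then optimally chosen. As a preliminary, observe that replacing $\hat\TAU$ by $\lambda \hat\TAU$ preserves the equilibrium $-\DIV \hat\TAU = 0$ and sends the dual objective $\int \rho^0(d\hat\TAU) + \int \tfrac12 \langle \hat\TAU^{+} d\hat\vartheta, d\hat\vartheta \rangle$ to $\lambda X + Y/\lambda$, where $X := \int \rho^0(d\hat\TAU)$ and $Y$ is the second summand. Optimizing over $\lambda$ forces $X = Y = Z/2$ at any minimizer of \ref{eq:dP}; in particular $\check\mu(\Ob) = (2V_0/Z) \cdot (Z/2) = V_0$, so $\check\mu$ is admissible in \ref{eq:OM0}.

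The core of the proof is a two-way inequality sandwiching $\Cmin$. For the lower bound, fix any admissible $\mu \in \Mes_+(\Ob)$ with $\mu(\Ob) \leq V_0$ and any maximizer $(\hat u, \hat w) \in \D(\Omega;\Rd \times \R)$ of \ref{eq:P}, so that $L(\hat w) := \int \hat w \, df$ equals $Z$ by strong duality, while $j_+(\hat \eta) \leq 1/p$ pointwise with $\hat \eta := \tfrac12 \nabla \hat w \otimes \nabla \hat w + e(\hat u)$. For every $\lambda > 0$ the rescaled pair $(\lambda^2 \hat u, \lambda \hat w)$ is a valid test function in the sup-formulation of $\Comp(\mu)$ (cf. \ref{eq:comp_foppl}), so by the $p$-homogeneity of $j_+$
$$\Comp(\mu) \;\geq\; \lambda\, L(\hat w) - \lambda^{2p}\!\int j_+(\hat\eta)\, d\mu \;\geq\; \lambda Z - \lambda^{2p}\,\frac{V_0}{p}.$$
Maximizing the right-hand side over $\lambda > 0$ yields $\lambda_* = (Z/(2V_0))^{1/(2p-1)}$ and a closed-form lower bound on $\Cmin$ in terms of $Z$ and $V_0$.

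To close the gap it remains to show that both inequalities become equalities when $\mu = \check\mu$ and the test is $(\lambda_*^2 \hat u, \lambda_* \hat w)$. The second inequality is saturated by complementary slackness in \ref{eq:P}: the dual measure $\hat\TAU$ charges only the active set $\{j_+(\hat\eta) = 1/p\}$; since $\mathrm{supp}\,\check\mu \subseteq \mathrm{supp}\,\rho^0(\hat\TAU) \subseteq \mathrm{supp}\,\hat\TAU$ and $\check\mu(\Ob) = V_0$, one recovers $\int j_+(\hat\eta)\, d\check\mu = V_0/p$. The first inequality is tight because the pair $(\lambda_*^2 \hat u, \lambda_* \hat w)$ actually attains the supremum defining $\Comp(\check\mu)$: its Euler--Lagrange system is the pair of equilibrium equations $-\DIV \sigma = 0$ and $-\dive q = f$ for the rescaled stresses $\sigma = \lambda_*^{2p-2}\, d\hat\TAU/d\check\mu$ and $q = \lambda_*^{2p-1}\, d\hat\vartheta/d\check\mu$, whose validity is precisely the extremality relations connecting the primal and dual optima of \ref{eq:P}, \ref{eq:dP}. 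Hence $\Comp(\check\mu)$ meets the lower bound, and $\check\mu$ solves \ref{eq:OM0}.

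The main technical obstacle is the rigorous handling of the pointwise quantities associated with the primal maximizer $(\hat u, \hat w)$: since \ref{eq:P} is naturally relaxed onto displacements of bounded deformation together with $w \in W^{1,2}_0(\Omega)$, the strain $\hat\eta$ only exists a priori as a Radon measure, and one must carefully decompose it into an absolutely continuous part satisfying $j_+ \leq 1/p$ (equal to $1/p$ on $\mathrm{supp}\,\hat\TAU$) and a singular part encoding wrinkling. Matching the active set of the pointwise constraint with $\mathrm{supp}\,\hat\TAU$, and verifying that the rescaled primal pair attains $\Comp(\check\mu)$, rely on the optimality system of \ref{eq:P}, \ref{eq:dP} developed in the body of the paper and invoked here as Theorem \ref{thm:constructing_lambda_OEM}.
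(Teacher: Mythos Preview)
Your equi-repartition argument ($X=Y=Z/2$, hence $\check\mu(\Ob)=V_0$) is exactly what the paper does. Your lower bound on $\Cmin$ is also essentially the same as the paper's Lemma~\ref{lem:problem_P_m}, though you should not ``fix a maximizer $(\hat u,\hat w)\in\D(\Omega;\Rd\times\R)$'': the supremum in \ref{eq:PM} is generally not attained in $\D$. Use a maximizing sequence $(u_n,w_n)\in\Krop$ with $\int w_n\,df\to Z$; then for every $\lambda>0$ and every admissible $\mu$ one gets $\Comp(\mu)\ge \lambda\int w_n\,df - \lambda^{2p}\,V_0/p$, and sending $n\to\infty$ before optimizing over $\lambda$ yields the same closed-form bound. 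The paper arrives at this bound by a Ky~Fan min--max interchange instead, but the content is the same.

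The genuine gap is in your upper-bound/matching step. You try to certify $\Comp(\check\mu)\le\Cmin$ by exhibiting a \emph{primal} maximizer for \ref{eq:comp_foppl} at $\mu=\check\mu$, namely the rescaled pair $(\lambda_*^2\hat u,\lambda_*\hat w)$, and you justify this via ``extremality relations'' that you attribute to Theorem~\ref{thm:constructing_lambda_OEM}. But Theorem~\ref{thm:constructing_lambda_OEM} \emph{is} the full statement you are proving (Theorem~\ref{thm:main} is merely its preview), so this is circular. Even setting that aside, the pointwise optimality conditions you invoke (complementary slackness $j_+(\hat\eta)=1/p$ on $\mathrm{supp}\,\hat\TAU$, and the system $q=\sigma\nabla w$, $\sigma\in\partial j_+(\cdot)$) are established in the paper (Proposition~\ref{prop:OPM_optimality_conditions}) only under the hypothesis $(\hat u,\hat w)\in C^1(\Ob)$, whereas relaxed maximizers of \ref{eq:relPM} live in $(BV\cap L^\infty)\times(C^{0,1/2}\cap W^{1,2})$ and $\check\mu$ may charge lower-dimensional sets. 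Your own final paragraph correctly identifies this as unresolved.

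The paper sidesteps all of this by working on the \emph{dual} side: it uses the stress-based representation of compliance (Proposition~\ref{prop:dual_comp_mem}) and simply tests with the pair $(\check\sigma,\check q)=\bigl((Z/2V_0)^{(2p-2)/(2p-1)}\hat\sigma,\ (Z/2V_0)\hat q\bigr)$, which is admissible for $(\mathrm{CoF\ddot{o}}^*_{\check\mu})$ by a one-line check of the equilibrium equations. Since $\rho^0(\hat\sigma)=1$ $\hat\mu$-a.e., the resulting energy computes directly to the closed-form value of $\Cmin$. No primal solution, no regularity, no optimality system needed. If you want to salvage your route, replace the primal matching argument by this dual test.
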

\noindent In the statement above $\rho^0(\hat{\TAU})$ can be understood as the variation measure  of $\hat{\TAU}$ with respect to $\rho^0$, see \cite{ambrosio2000}. One of the keys for proving this theorem is first deriving the dual definition of membrane's compliance expressed through the variational formulation $(\mathrm{CoF\ddot{o}}^*_\mu)$. It will be the generalization of \ref{eq:dual_foppl} towards any material distribution $\mu$, where now the stresses $\sigma: \Omega \to \Sddp$ and $q: \Omega \to \Rd$ will be $\mu$-measurable functions. This is achieved in Proposition \ref{prop:dual_comp_mem}.

After the material distribution $\check{\mu}$ is computed by means of Theorem \ref{thm:main}, further information on such optimal membrane follow from the solutions of the pair  of mutually dual problems.  For a suitable constant $\alpha$, the Radon-Nikodym derivatives $\check\sigma =  \alpha\, \frac{d \hat{\TAU}}{d \check{\mu}}$ and $\check{q} = \frac{d \hat{\vartheta}}{d \check{\mu}}$ enjoy the interpretation of the stresses in the optimal membrane. Next, up to rescaling and subsequence extraction, every maximizing sequence $({u}_h,w_h)$ for \ref{eq:P} converges in a suitable sense to functions that are the in-plane and out-of-plane displacements of the optimal membrane. 

One should put an emphasis on the two significant differences with respect to the stream of works \cite{bouchitte2001,bouchitte2007,bolbotowski2022b,lewinski2021}. The first difference concerns the way that the optimal design formulation and the pair of problems of problems \ref{eq:P}, \ref{eq:dP} connect. 
This discrepancy virtually stems from the nonlinearity of the strain operator \eqref{eq:foppl_strain}. We shall thus find that the passage from \ref{eq:OM0} to this pair will not be a direct reapplication of the established methods, and certain modifications will be essential.
The second novelty with respect to the aforementioned papers concerns the pair itself and is more subtle. In each of those works the closed convex set $\Cm$ was bounded. The one specified in \eqref{eq:setting} is not since $j_+$ vanishes on the cone of negative semi-definite matrices. The unboundedness of $\Cm$  affects the way that relaxation of the smoothness conditions in \ref{eq:P} is handled and, effectively, influences the regularity of the relaxed solutions. In the papers \cite{bouchitte2001,bouchitte2007,bolbotowski2022b,lewinski2021} the compactness of $\Cm$ yielded a uniform $L^\infty$ bound on $\Lambda v$. In case of conductor design, for instance, such bound furnished the $W^{1,\infty}$ regularity of the relaxed solutions $v$. Here, for the optimal membrane problem, the unboundedness of $\Cm$ causes a significant loss of the regularity comparing to those works. Nonetheless, from Proposition \ref{prop:compactness} in this work we will learn that the set of functions $(u,w)$ which are admissible in \ref{eq:P} is precompact in $L^1(\Omega;\Rd) \times C(\Ob;\R)$. Furthermore, each accumulation point $(\hat{u},\hat{w})$ of this set satisfies
\begin{equation}
	\label{eq:functional_space}
	(\hat{u},\hat{w}) \ \ \in \ \ (BV\cap L^\infty)(\Omega;\Rd) \   \times \ (C^{0,\frac1{2}}\cap W^{1,2})(\Omega;\R),
\end{equation}
and so does any relaxed solution of the problem \ref{eq:P}. Proving this statement is a point of the present paper where the reader is redirected to the work \cite{bolbotowski2022a}. Therein, the unique structure of the set $\Cm=\Cm^\mathrm{M}$ allows rewriting the constraint in $(\mathcal{P}_{\Lambda,\Cm^\mathrm{M}\!,\mathscr{F}})$ so that it paves a way to \eqref{eq:functional_space}. This way passes through the theory of maximal monotone maps \cite{alberti1999}.
In order to extend this result to sets $\Cm$  generated by functions $j$ other than $j^\mathrm{M}$, it is then necessary to show a type of equivalence between $\Cm$ and $\Cm^\mathrm{M}$. Thanks to the growth conditions imposed on $j$, this is achieved in Section \ref{sec:relaxation_and_optcond}. It is then essential to characterize the set of functions $(u,w)$ that are admissible in the primal problem after the relaxation. This characterization will be stated in Theorem \ref{prop:char_Kro}. With such a characterization at hand, in Proposition \ref{prop:OPM_optimality_conditions} we will then pass to the necessary and sufficient conditions for optimality in the problems \ref{eq:P}, \ref{eq:dP}.

\bigskip

The optimality criteria conclude the first, theoretical part of the work spanning Sections \ref{sec:formulation} and \ref{sec:pair}. In the second part we put forth a numerical method of solving the optimal membrane problem \ref{eq:OM0} by a finite element approximation. The primary goal is to demonstrate the computational potential of the pair of mutually dual convex variational problems \ref{eq:P}, \ref{eq:dP}.  Building the numerical scheme around this pair is an alternative to the more standard approach of attacking the optimal design problem (here \ref{eq:OM0}) directly, see e.g. \cite{allaire2002,haslinger2010,petersson1999}. In the latter spirit, each iteration of the algorithm entails solving the state problem (here \ref{eq:comp_foppl}) for a current material distribution. The state problem is usually tackled via a finite element method which, for the sake of well-posedness, requires keeping a soft material in the whole domain $\O$. As a result, true voids are infeasible. In contrast, treating the pair \ref{eq:P}, \ref{eq:dP} is free of such burden since the material distribution $\mu$ is absent from the formulation.  Instead, one focuses on solving numerically  the two abstract convex variational problems. Then, upon their approximate solutions $(u_h,w_h)$ and $(\TAU_h,\vartheta_h)$, we recast the approximately optimal material distribution $\mu_h$ by applying the formula in Theorem \ref{thm:main}.

Working with the pair of the type \ref{eq:P}, \ref{eq:dP} at the level of the numerical method  may not be classical, but it is not new either when more standard settings $\Lambda,\Cm,\mathscr{F}$ are considered: see e.g.  \cite{golay2001} on optimizing sheet's thickness or \cite{barrett2007mixed} on the Monge-Kantorovich optimal transport problem. Up to an affine transformation, in both contributions  $\Cm $ is the unit ball with respect to the Euclidean norm. The authors' strategy is to first regularize
the pair of problems: for a large parameter $\wp>1$ the pointwise constraint in \ref{eq:P} is replaced by the condition $(\int_\O \abs{A v}^\wp\,dx)^{1/\wp} \leq 1$. The dual problem is affected accordingly, and its unique solution $\tau_\wp$ is now an $L^{\wp/(\wp-1)}$ function. Such a regularized pair of problems is equivalent to a non-linear system of equations, including the $\wp$-Laplacian equation, whose unique solutions converge to a solution of the original pair when $\wp \to \infty$, see \cite{bouchitte2003pLap}. Ultimately, the finite element methods in \cite{golay2001,barrett2007mixed} are aimed at solving this system of equations for a suitably large $\wp$.

In this paper the finite element method is developed directly for the pair of problems \ref{eq:P}, \ref{eq:dP} in the setting \eqref{eq:setting}, without prior regularization. The idea is to perform the discretization so that the finite dimensional counterpart of the pair is a convex programming problem that is in the reach of the modern optimization solvers, cf.  \cite{ben2001}. The finite element part of the numerical scheme will be, in fact, very basic: for a triangulation of $\O$ the displacement functions $v = (u,w)$ will be approximated by continuous, element-wise affine functions $v_h = (u_h,w_h)$, whilst the stresses $\tau= (\TAU,\vartheta)$ by absolutely continuous measures of densities $(\sigma_h,q_h) \in L^\infty(\O;\Sdd\times\Rd)$, constant in each element. Employing such low order finite elements creates an advantage that can be well explained in the realm of the primal formulation. Since the functions $\Lambda v_h = \big(e(u_h),\nabla w_h\big)$ are element-wise constant, the point-wise constraint  $(\Lambda v_h)(x) \in \Cm$ is rigorously reduced  to an element-wise constraint, without the need for regularization. In Theorem \ref{thm:convergence_P_P*} we will state the convergence result for the finite element method thus developed: upon selecting a subsequence, $(u_h,w_h)$ will converge in the norm topology of $L^1(\O;\Rd) \times C(\Ob;\R)$ and $(\sigma_h,q_h)$ weakly-* in the space of measures to solutions of \ref{eq:P} and, respectively, \ref{eq:dP}.

Similar numerical strategy was used by the present author in his Ph.D. thesis \cite{bolbotowski2021} but for the pair of problems corresponding to the free material design. Nonetheless, in terms of practical computations each setting $\Lambda,\Cm,\mathscr{F}$ calls for an individual treatment  when rewriting the discretized pair  \ref{eq:P}, \ref{eq:dP} to a format that is canonical in convex programming. Accordingly, in the work \cite{bolbotowski2021} on the free material design, it proved suitable to use a \textit{conic-quadratic} programming reformulation \cite{andersen2003,ben2001}. Here, in the case of the optimal membrane design, the more involved form of the set $\Cm$  in \eqref{eq:setting} requires more work. Relying on the Schur complement result, in Lemma \ref{lem:quadruple_instead_of_pair} we will show that a pair $(\xi,\theta) \in \Sdd \times  \Rd$ belongs to $\Cm$ if and only if
\begin{equation}
	\label{eq:quadruple_instead_of_pair_0}
	\text{there exist} \quad  \epsilon \in \Sdd, \ \ \zeta \in \mathcal{S}^{3 \times 3}_+ \quad \text{such that:} \qquad
	\begin{cases}
		i) \ \
		\begin{bmatrix}
			\,\xi  &  \tfrac{1}{\sqrt{2}}\,\theta\,\\
			\tfrac{1}{\sqrt{2}}\,\theta^\top\, & 0\,
		\end{bmatrix} + \zeta = \begin{bmatrix}
			\,\epsilon  &  0_2\,\\
			\,0_2^\top\, & 1\,
		\end{bmatrix},\\
		ii) \ \ j(\epsilon) \leq 1/p,
	\end{cases}
\end{equation} 
where above $\theta$ is treated as a column vector, and so is the zero vector $0_2 \in\R^2$. The benefit of such characterization is that both $\xi$ and $\theta$ enter linearly. It is at the expense of the two extra variables $\epsilon$ and $ \zeta$ that are subject to the constraints: condition ii) and positive semi-definiteness, respectively. Although, if the potential $j$ is quadratic, then ii) can be rewritten as a conic-quadratic constraint, whilst the condition $\zeta \in \mathcal{S}^{3 \times 3}_+$ can be implemented directly. In the end, we find that the discrete algebraic variant of the pair  \ref{eq:P}, \ref{eq:dP} can be written as a \textit{conic-quadratic} \& \textit{semi-definite} programming problem. Its form is compatible with the MOSEK software \cite{mosek2019} developed for tackling large scale convex optimization problems. Hence, notwithstanding the quite crude approximation by the first order finite elements, the numerical method herein developed is capable of producing high-resolution optimal membrane designs on a laptop computer.

The finite element method was implemented in MATLAB, for which MOSEK serves as a third-party toolbox. In Section \ref{sec:simulations} a number of simulations will be performed under the assumption that the material is linearly isotropic, namely that $j(\xi) = \frac{1}{2} \pairing{\mathscr{H}\xi,\xi}$ for an isotropic Hooke operator $\mathscr{H}$. Such operator is defined by two constants, Young modulus $E>0$ and Poisson ratio $\nu \in (-1,1)$. For several load scenarios $f$ we will display the finite element approximations of optimal membranes for various $\nu$. It is the author's belief that the designs emerging are new, and they do not resemble any of the shapes that are available  in the form of optimal sheets, plates, or 3D bodies in the literature. In fact, the most intriguing membrane shapes will be obtained for negative Poisson ratios $\nu$, i.e. for \textit{auxetic materials} \cite{caddock1989}. In reality such materials are realized through certain porous microstructures. Physically speaking, employing such a material in a membrane and, moreover, describing it by the relaxed F\"{o}ppl's model may be considered questionable. In particular, we will be interested in optimal membranes for $\nu$ reaching $-1$ which, in the plane-stress elasticity, is attainable via the \textit{chiral honeycomb} microstructures \cite{prall1997}. Ultimately, we shall not dwell on the physical justification of using such materials for membranes in this work.

\subsection{An outline of the paper}

In Section \ref{sec:formulation} we state the formulation of the optimal membrane problem \ref{eq:OM0} from scratch. We start by examining the relaxed potential $j_+$ and the constitutive law it induces. In Proposition \ref{prop:convexity_g} we give an independent prove of the convexity of the relaxed F\"{o}ppl's model. Then, in Proposition \ref{prop:dual_comp_mem}, we establish the dual formulation \ref{eq:dual_comp_memb}. We conclude the section with the existence result for \ref{eq:OM0}.

Section \ref{sec:pair} expounds the connection between \ref{eq:OM0} and the pair of problems \ref{eq:P}, \ref{eq:dP} in the setting \eqref{eq:setting}, which in the sequel will be shortly denoted by \ref{eq:PM}, \ref{eq:dPM}. After proving that the duality gap vanishes in Proposition \ref{prop:supP=infP*}, in Theorem \ref{thm:constructing_lambda_OEM} we show how to recast the optimal material distribution  based on the solution of \ref{eq:dPM}. Further, we deal with the issue of relaxing the primal problem \ref{eq:PM}; in particular, Theorem \ref{prop:char_Kro} provides the characterization of the set of admissible displacement functions in the relaxed problem \ref{eq:relPM}. The necessary and sufficient optimality conditions for the solutions of the pair \ref{eq:relPM}, \ref{eq:dPM} are stated in Proposition \ref{prop:OPM_optimality_conditions}.

Section \ref{sec:FEM} is where we build the finite element method. After a quite standard discretization procedure, in Lemma \ref{lem:quadruple_instead_of_pair} the characterization \eqref{eq:quadruple_instead_of_pair_0} of the set $\Cm$ is proved. We are led to the algebraic pair  of conic-quadratic \& semi-definite programming problems \ref{eq:PMhalg}, \ref{eq:dPMhalg}, whose duality is showed in Theorem \ref{thm:duality_for_FEM} along with the optimality conditions.  Convergence of the finite element method is then proved in Theorem \ref{thm:convergence_P_P*}.

The numerical simulations are demonstrated in Section \ref{sec:simulations}. While restricting to a quadratic domain $\Omega$, for four different loading conditions we perform a case study with respect to the Poisson ratio. Computational details, including mesh resolution, CPU time, and the obtained value functions, are listed in the tables.

The last section  is devoted to the additional insight into the optimal membrane problem in the special case of the Michell energy potential $j = j^\mathrm{M}$. In this work, such framework is shown to be equivalent to working with the linearly isotropic material for $\nu = -1$. In Section \ref{ssec:FMD} we explain how the Michell-energy setting can be additionally recast through the free material design formulation \cite{bolbotowski2022b} provided that a particular set of admissible Hooke operators is chosen. Sections \ref{ssec:two_point} and \ref{ssec:maximal_MK} revisit the results of \cite{bolbotowski2022a} in order to recapitulate on the features of the pair \ref{eq:PM}, \ref{eq:dPM} that are exclusive to this special choice of the energy potential. This includes the link to finding a metric that maximizes the Monge-Kanotrovich transport cost and an alternative numerical scheme by approximation through string systems. Finally, in Section \ref{ssec:vault} we show how a 2D optimal membrane design can can be exploited in solving a long standing problem in engineering optimization \cite{rozvany1979,rozvany1982,lewinski2019a}: erecting an optimal 3D \textit{vault}, a structure in pure compression that concentrates on a single surface. The present author has recently shown in  \cite{bolbotowski2022c} that such a construction can be rigorously made once $j=j^\mathrm{M}$ precisely.

\subsection{Notation}

The basic symbols used in the paper are listed  in the index of notation. Below we explain in more details how vectors, matrices, and some of the differential operators are to be understood.

Elements $A \in \R^{m \times d}$ will be treated as matrices whose component in $i$-th  row and $j$-th column is $A_{ij}$. Elements $\theta\in \R^m$ will be understood either as  column vectors $[\theta_1 \ \ldots \ \theta_m]^\top$ or as sequences of numbers $(\theta_{1}, \ldots , \theta_{m})$, depending on the context. The tensor product  $\theta \otimes \eta$ of two vectors $\theta,\eta \in \R^m$ is the quadratic matrix $A \in \R^{m\times m}$ of the components $A_{ij} = \theta_i  \eta_j$. In Section \ref{sec:FEM} we will adapt a different notation for the vectors and matrices whose components $i$ and $j$ refer to $i$-th finite element and $j$-th node, respectively. Such vectors will be denoted in bold, e.g.  $\mbf{q} \in \R^n$, $\mbf{w} \in \R^m$, $\mbf{D} \in \R^{n \times m}$, and the components themselves will be written as $\mbf{q}(i)$, $\mbf{w}(j)$, $\mbf{D}(i,j)$, etc.

For a smooth scalar function  $w = w(x) = w(x_1,\ldots,x_d):\R^d \to \R$ by the gradient $\nabla w$ we will mean the vector function $\theta:\R^d \to \R^d$ whose components are, point-wisely, $\theta_i = \frac{\partial w}{\partial x_i}$. For $m>1$ and a vector-valued function $u: \R^d \to \R^m$, the matrix-valued  function $\xi = \nabla u : \R^d \to \R^{m \times d}$ is defined quite differently, namely $\xi_{ij} = \frac{\partial u_i}{\partial x_j}$. Note that we must not choose $m=1$ in the latter definition as it would give the transpose of the gradient defined for a scalar function. For a function $u:\R^d \to \R^d$ by $e(u)$ we understand the symmetric part of $\nabla u$, namely $\frac{1}{2} \big( \nabla u + (\nabla u)^\top \big)$.

For a bounded domain $\Omega \subset \R^d$ take Radon measures $\vartheta \in \Mes(\Ob;\R^d)$, $\TAU \in \Mes(\Ob;\R^{m \times d})$, $f \in \Mes(\Ob;\R)$, and $\mathcal{F} \in \Mes(\Ob;\R^m)$. We agree on the following definitions of the distributional divergences of vector- and, respectively, matrix-valued measures:
\begin{alignat}{2}
	\label{eq:div}
	-\dive \, \vartheta\ = f \quad &\text{in } \ \O \qquad \Leftrightarrow \qquad  \int_\O \pairing{\nabla \varphi,\vartheta} = \int_\O \varphi \, df \qquad &&\forall\, \varphi \in \D(\O;\R), \\
	\label{eq:DIV}
	-\DIV \,\TAU = \mathcal{F}  \quad &\text{in } \ \O \qquad \Leftrightarrow \qquad  \int_\O \pairing{\nabla \phi,\TAU} = \int_\O \pairing{\phi,\mathcal{F} } \qquad &&\forall\, \phi \in \D(\O;\R^m).
\end{alignat}
If $m=d$ and $\TAU \in \Mes(\Ob; \mathcal{S}^{d\times d})$, i.e. $\TAU$ is valued in symmetric matrices, then above $\nabla \phi$ can be replaced by $e(\phi)$. If one assumes that $\vartheta,\TAU, f,\mathcal{F} $ are absolutely continuous with smooth densities $q,\sigma,  \tilde{f}, \tilde{\mathcal{F} }$, then \eqref{eq:div} and \eqref{eq:DIV} are equivalent to the equations  $-\sum_i \frac{\partial q_i}{\partial x_i} = \tilde{f}$ and, respectively, $-\sum_j \frac{\partial \sigma_{ij}}{\partial x_j} = \tilde{\mathcal{F}}_i$, both holding in $\Omega$ only. Note that, due to the implied compactness of the supports of $\varphi,\phi$, no boundary conditions on $\bO$ follow from \eqref{eq:div} or \eqref{eq:DIV}.

In most of the text the above notation will be adopted for $d =m=2$. In particular, $\Omega$ will be a domain in  $\Rd$. In several places we will take a detour to three dimensional elasticity, and then we will use the under-bar symbol $\ub{\argu}$ to distinguish 3D objects from the 2D ones, for example $\Oh \subset \R^3, \ub\sigma: \Oh \mapsto \mathcal{S}^{3 \times 3}$, etc.

\begin{table}[p]
	\captionsetup{font=Large}\color{black}
	\caption*{Index of notation}
	\begin{tabularx}{\textwidth}{p{0.2\textwidth}X}
		
		\multicolumn{2}{l}{{\underline{The symbols used in the paper ($V$ is a finite dimensional vector space):}}}   \\		
		$\R_+$, $\R_-$ & the set of non-negative and non-positive real numbers\\
		$\Rb$ & the extended real line $[-\infty,\infty]$\\
		$\R^d$, $S^{d-1}$ & $d$-dimensional Euclidean space and its unit sphere\\
		$\R^{m \times d}$ & the space of  $m \times d$ matrices\\
		$\mathcal{S}^{d \times d}$ & the space of symmetric $d \times d$ matrices\\
		$\mathcal{S}^{d \times d}_+$, 	$\mathcal{S}^{d \times d}_-$ & the cones of positive and negative semi-definite symmetric $d \times d$ matrices\\
		$x \otimes y$ & the tensor product of vectors $x,y \in \R^d$, a $d \times d$ matrix\\
		$x \symtens y$ & the symmetric part of $x \otimes y$, an element of $\mathcal{S}^{d \times d}$\\
		$\mathrm{I}_d, 0_d$ & the identity matrix and the zero column vector, elements of $\mathcal{S}^{d \times d}$ and $\R^d$, respectively\\
		$\pairing{v_1,v_2}_V$ & or $\pairing{v_1,v_2}$, for short, is the scalar product of $v_1,v_2 \in V$\\
		$\abs{v}$ & the Euclidean norm $(\pairing{v,v})^{1/2}$ of a vector $v \in V$\\
		$\lambda_i(\sig)$ & the $i$-th eigenvalue of  a matrix $\sig \in \mathcal{S}^{d \times d}$\\
		$\tr \,\sig$ & the trace of $\sig$ being the sum of the respective eigenvalues\\
		$g^*:V \mapsto \Rb $ & the convex conjugate of a function $g: V \mapsto \Rb$\\
		$\partial g : V \mapsto 2^V $ & the subdifferential of a function $g: V \mapsto \Rb$\\
		$\chi_A:V \to \Rb$ & the indicator function of a set $A \subset V$ \\
		$\chi^*_A:V \to \Rb$ & the support function of a set $A \subset V$ \\
		$\mathrm{co}(A)$ & the convex hull of a set $A \subset V$\\
				
		$\O$ & a bounded domain in $\R^d$  ($d=2$ in most of the text)\\
		$C(\Ob;V)$ & $V$-valued continuous functions over  a compact set $\Ob$ \\
		$C^{0,\alpha}(\Ob;V)$ & $V$-valued $\alpha$-H\"{o}lder continuous functions over $\Ob$ for $\alpha \in [0,1]$\\
		$\norm{\argu}_{L^\infty(\Omega)}$ & the (essential) supremum norm on $\O$\\
		$e(u)$ & the symmetric part of the gradient of a function $u \in C^1(\O;\Rd)$, i.e. $\frac{1}{2} \big( \nabla u + (\nabla u)^\top \big)$ \\
		$\D(\O;\R^m)$ & $m$ copies of the space of smooth functions with compact support in $\O$\\
		$\D'(\O;\R^m)$ & $m$ copies of the space of distributions on $\O$\\
		$\Mes_+(\Ob)$ & positive Radon measures $\mu$ on the compact set $\Ob$\\
		$\mathcal{L}^d,\Ha^k, \delta_{x_0}$ & the Lebesgue, the $k$-dimensional Hausdorff, and the Dirac delta measures on $\R^d$ \\
		$\Mes(\Ob;V)$ & $V$-valued Radon measures on the compact set $\Ob$\\
		$\mathrm{sp} \,f$ & the support of a measure $f \in \Mes(\Ob;V)$\\
		$f \mres A$ & the restriction of a measure $f \in \Mes(\Ob;V)$ to a subset $A \subset \Ob$\\
		$L^q_\mu(\Ob;V)$ &$V$-valued $\mu$-measurable functions that are $\mu$-integrable with an exponent $q \in [1,\infty]$\\
		$L^q(\O;V)$ & the space $L^q_\mu(\Ob;V)$  for $\mu = \mathcal{L}^d$\\
		$W^{k,q}({\O};V)$ & the Sobolev space of the order $k \in \mathbbm{N} \cup\{\infty\}$\\
		$BV({U};V)$ &  $V$-valued functions of bounded variation on ${U} = \O$ or $U = \R^d$

		 \\	\end{tabularx}
\end{table}	

\section{Formulation of the optimal design problem}
\label{sec:formulation}

\subsection{The relaxed energy potential and the induced constitutive law}
\label{ssec:const_law}

Let us agree that $\xi \in \Sdd$ and $\sigma \in \Sdd$ will stand for, respectively, the in-plane strain and the in-plane stress in the membrane. We depart from an elastic energy potential $j:\Sdd \to \R_+$ that will be assumed to satisfy the conditions:
\begin{enumerate}[label={(H\arabic*)}]
	\item \label{as:convex} $j$ is real valued, non-negative and convex;
	\item \label{as:p-hom} $j$ is positively $p$-homogeneous for a fixed exponent $p \in (1,\infty)$;
	\item \label{as:coercivity} there exist positive constants $C_1, C_2$ such that
	\begin{equation*}
		C_1 \abs{\xi}^p \leq j(\xi) \leq C_2 \abs{\xi}^p \qquad \forall\,\xi \in \Sdd.
	\end{equation*} 
\end{enumerate}
It is worth noting that existence of $C_2$ is already guaranteed by assumptions \ref{as:convex} and \ref{as:p-hom}. It is also well established that $j$ must be continuous, see \cite{rockafellar1970convex}. By $j^*:\Sdd \to \Rb=[-\infty,\infty]$ we will denote the  convex conjugate of $j$, cf. Appendix \ref{ap:convex}.  By virtue of \cite[Corollary 15.3.1]{rockafellar1970convex}  the function $j^*$ is convex positively $p'$-homogeneous where $p' = \frac{p}{p-1}$. On top of that, $j^*$ is also continuous and satisfies the bounds $\widetilde{C}_2 \abs{\sig}^{p'} \leq j^*(\sig) \leq \widetilde{C}_1 \abs{\sig}^{p'}$ for suitably chosen  $\widetilde{C}_1, \widetilde{C}_2>0$.

An elastic material that is modelled by the energy potential $j$ is capable of withstanding both tension and compression, i.e. for any stress $\sigma \in \Sdd$ the constitutive relation $\sigma \in \partial j(\xi)$ holds true for at least one strain $\xi \in \Sdd$. In what follows, from a similar perspective we will analyse the relaxed energy potential 
\begin{equation}
	\label{eq:j_plus}
	j_+(\xi) =  \min\limits_{\zeta \in \Sddp} j(\xi +\zeta)
\end{equation}
(note that, thanks to the lower bound in \ref{as:coercivity}, we have coerciveness of $j$, and thus attainment of the minimum follows). Earlier in Section \ref{ssec:foppl_intro}, the energy $j_+$ was introduced as a relaxation of $j$, crucial from the point of view of the well-posedness of the F\"{o}ppl's membrane problem. In the process, the matrix $\zeta$ was interpreted as a strain correction aimed at lowering the stored energy and realized by means of fine-scale wrinkling. Apart from the mathematical properties of the relaxed potential, the following result is to show that $j_+$ is a modification of the initial energy potential $j$ which guarantees that the stress is purely tensile. Namely,  the constitutive law	${\sig} \in \partial j_+(\xi)$ implies that $\sigma \in \Sddp$. Indeed, we will show that -- on the dual side -- the modification $j_+$ is aimed at penalizing non-tensile stresses $\sigma \notin \Sddp$ exactly.

\begin{proposition}
	The function $j_+:\Sdd \to \R_+$ satisfies conditions \ref{as:convex}, \ref{as:p-hom}; namely, it is a real-valued non-negative convex continuous positively $p$-homogeneous function that satisfies the upper bound $j_+ \leq C_2 \abs{\argu}^p$. Its convex conjugate reads 
	\begin{equation}
		\label{eq:jplus_star}
		j_+^*(\sig) = j^*(\sig)+\chi_{\Sddp}(\sig) =\begin{cases}
			j^*(\sig) & \text{if} \quad \sig \in \Sddp,\\
			\infty & \text{if} \quad \sig \notin \Sddp,
		\end{cases} 
	\end{equation}
	where $\chi_\Sddp$ is the indicator function of the closed convex cone $\Sddp$.
	Moreover, for every $\xi \in \Sdd$ the following characterization of the induced constitutive law holds true:
	\begin{equation}
		\label{eq:char_const_law}
		\hat{\sig} \in \partial j_+(\xi)  \qquad \Leftrightarrow \qquad \hat{\sig} \in \Sddp \quad \text{and there exists} \quad  \hat\zeta \in \Sddp \ \ \text{such that} \ \ 
		\begin{cases}
			\hat\sig \in \partial j(\xi + \hat\zeta),\\
			\langle\hat\zeta ,\hat\sig \rangle = 0.
		\end{cases}
	\end{equation}
\end{proposition}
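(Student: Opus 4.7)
My approach is to recognize $j_+$ as an infimal-type convolution, so that its conjugate decouples, and then to extract the subdifferential through the Fenchel extremality relation. The one nontrivial ingredient is the self-duality of the cone $\Sddp$, which is precisely what will couple the relaxation to the tension-only behavior asserted in \eqref{eq:char_const_law}.

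For the properties of $j_+$, I would rewrite the definition as $j_+(\xi)=\inf_{\zeta\in\Sdd}\{j(\xi+\zeta)+\chi_{\Sddp}(\zeta)\}$ and observe that the integrand $(\xi,\zeta)\mapsto j(\xi+\zeta)+\chi_{\Sddp}(\zeta)$ is jointly convex; marginalizing in $\zeta$ therefore produces a convex function. The coercivity from \ref{as:coercivity} guarantees attainment of the infimum, so $j_+$ is real-valued and non-negative. Taking $\zeta=0$ yields $j_+\leq j\leq C_2|\cdot|^p$; positive $p$-homogeneity follows by scaling and the fact that $\Sddp$ is a cone. Finiteness and convexity on a finite-dimensional space give continuity automatically.

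For the conjugate, I would interchange the two suprema and change variable $\eta=\xi+\zeta$:
\[
j_+^*(\sigma)=\sup_{\xi,\,\zeta\in\Sddp}\bigl\{\langle\sigma,\xi\rangle-j(\xi+\zeta)\bigr\}=\sup_\eta\bigl\{\langle\sigma,\eta\rangle-j(\eta)\bigr\}+\sup_{\zeta\in\Sddp}\langle-\sigma,\zeta\rangle.
\]
The first term is $j^*(\sigma)$, and the second is the support function of $\Sddp$ at $-\sigma$; self-duality of $\Sddp$ (whose polar cone is $\Sddm$) makes this equal to $0$ when $\sigma\in\Sddp$ and $+\infty$ otherwise, which is exactly \eqref{eq:jplus_star}.

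For the characterization of $\partial j_+(\xi)$, I would use the Fenchel extremality relation $\hat\sigma\in\partial j_+(\xi)\Leftrightarrow j_+(\xi)+j_+^*(\hat\sigma)=\langle\hat\sigma,\xi\rangle$. In the forward direction the formula for $j_+^*$ forces $\hat\sigma\in\Sddp$; picking a minimizer $\hat\zeta\in\Sddp$ with $j_+(\xi)=j(\xi+\hat\zeta)$, the extremality relation becomes $j(\xi+\hat\zeta)+j^*(\hat\sigma)=\langle\hat\sigma,\xi\rangle$, while Fenchel's inequality for $j$ gives $\geq\langle\hat\sigma,\xi+\hat\zeta\rangle$. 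Subtracting yields $\langle\hat\sigma,\hat\zeta\rangle\leq 0$; combined with $\hat\sigma,\hat\zeta\in\Sddp$ (hence $\langle\hat\sigma,\hat\zeta\rangle\geq 0$), this forces the orthogonality $\langle\hat\sigma,\hat\zeta\rangle=0$ and equality in Fenchel's inequality, so $\hat\sigma\in\partial j(\xi+\hat\zeta)$. The backward direction simply reverses this short chain of equalities. The only delicate point, and the one I expect to be the actual obstacle, is the sign argument yielding $\langle\hat\sigma,\hat\zeta\rangle=0$: everything depends on self-duality of $\Sddp$, which is exactly the mechanical content that a wrinkling correction $\hat\zeta$ and an admissible tensile stress $\hat\sigma$ must be $L^2$-orthogonal.
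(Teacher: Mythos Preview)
Your argument is correct and complete. The backward direction in \eqref{eq:char_const_law}, which you state as ``simply reverses this short chain'', works because $\hat\zeta\in\Sddp$ gives $j_+(\xi)\leq j(\xi+\hat\zeta)$, while the Fenchel--Young inequality for $j_+$ gives the reverse bound $j_+(\xi)+j^*(\hat\sigma)\geq\langle\hat\sigma,\xi\rangle$; combined with $j(\xi+\hat\zeta)+j^*(\hat\sigma)=\langle\hat\sigma,\xi+\hat\zeta\rangle=\langle\hat\sigma,\xi\rangle$, this forces equality throughout.

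Your route is genuinely different from the paper's. The paper writes $j_+(\xi)=\min_{\zeta\in\Sddp}\max_{\sigma}\{\langle\xi+\zeta,\sigma\rangle-j^*(\sigma)\}$ and applies a min--max theorem (from Ekeland--Temam) to swap the order, obtaining $j_+=(j^*+\chi_{\Sddp})^*$ in one stroke; the subdifferential characterization then falls out of the existence and properties of a saddle point $(\hat\zeta,\hat\sigma)$. Your approach avoids the external min--max result entirely: you get convexity from the joint convexity of the infimal-convolution integrand, the conjugate from a direct change of variables, and the subdifferential from Fenchel extremality plus the sign argument $0\leq\langle\hat\sigma,\hat\zeta\rangle\leq 0$. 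What the paper's route buys is that the saddle-point structure makes the complementarity $\langle\hat\zeta,\hat\sigma\rangle=0$ transparent as part of the saddle-point optimality; what yours buys is that it is more elementary and self-contained, and it isolates the self-duality of $\Sddp$ as the sole structural ingredient driving both the conjugate formula and the orthogonality condition.
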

\begin{proof}
		Starting from the fact that $j = (j^*)^*$ we find the following chain of equalities:
		 \begin{align}
		\nonumber
		j_+(\xi) &= \min\limits_{\zeta \in \Sddp} j(\xi +\zeta) =     \min\limits_{\zeta \in \Sddp} \max\limits_{\sig \in \Sdd} \Big\{\pairing{\xi+\zeta,\sig} - j^*(\sig) \Big\}
		= \max\limits_{\sig \in \Sdd} \inf\limits_{\zeta \in \Sddp} \Big\{\pairing{\xi+\zeta,\sig} - j^*(\sig) \Big\} \\
		\label{eq:chain_1}
		&=  \max\limits_{\sig \in \Sddp} \Big\{\pairing{\xi,\sig} - j^*(\sig) \Big\} =  \max\limits_{\sig \in \Sdd} \Big\{\pairing{\xi,\sig} - \big(j^*(\sig) + \chi_{\Sddp}(\sig)\big) \Big\}
		\end{align}
		Above we used the min-max theorem in the variant  \cite[ChapterVI, Proposition 2.2]{ekeland1999} for the functional $\Sddp \times \Sdd \ni (\zeta,\sigma) \mapsto \pairing{\xi+\zeta,\sig} - j^*(\sig)$, whereas to meet the theorem's prerequisite we exploit the lower bound $\widetilde{C}_2 \abs{\sig}^{p'} \leq j^*(\sig)$ while taking into the account that $p'>1$. By the same theorem a saddle point $(\hat{\zeta},\hat{\sigma})$ for this functional exists. To pass to the second line in \eqref{eq:chain_1} we acknowledged that $\inf_{\zeta \in \Sddp} \pairing{\zeta,\sigma} = - \infty$ once $\sigma \notin \Sddp$, while the infimum is zero otherwise. From the chain above we thus see that $j_+ = \big(j^*(\argu)+\chi_\Sddp(\argu)\big)^*$, namely $j_+$ is a convex conjugate of a convex lower semi-continuous, positively $p'$-homogeneous extended-real function.  As the upper bound on $j_+$ is clear by its definition,  by \cite[Corollary 15.3.1]{rockafellar1970convex} we prove the first assertion.
		
		We now pass to examine when does the constitutive law $\hat{\sigma} \in \partial j_+(\xi)$ hold true for a given $\xi$. By virtue of the characterization \eqref{eq:subdiff} (see Appendix \ref{ap:convex}) this law is equivalent to $\langle \xi,\hat{\sig} \rangle = j_+(\xi)+j_+^*(\hat\sig)$. Considering \eqref{eq:jplus_star} and the chain above, $\hat{\sigma} \in \partial j_+(\xi)$ holds if and only if $\hat{\sigma}$ solves any of the maximization problems in the second line of \eqref{eq:chain_1}. Accordingly, from the elementary properties of a saddle point (together with its existence showed above) the sufficient and necessary condition for $\hat{\sigma} \in \partial j_+(\xi)$ is that there exists $\hat{\zeta} \in \Sddp$ such that the pair $(\hat{\zeta},\hat{\sigma})$ is such a point, namely:
		\begin{equation*}
			\max\limits_{\sig \in \Sdd} \Big\{\langle \xi +\hat{\zeta},\sig \rangle - j^*(\sig) \Big\} = \langle{\xi +\hat{\zeta},\hat{\sig}} \rangle - j^*(\hat{\sig}) = \min_{\zeta \in \Sddp} \Big\{ \langle{\xi +\zeta,\hat{\sig}} \rangle - j^*(\hat{\sig}) \Big\}.
		\end{equation*}
		The left equation is equivalent to $j(\xi+\hat{\zeta}) = \langle{\xi +\hat{\zeta},\hat{\sig}} \rangle - j^*(\hat{\sig})$, or to $\hat{\sig} \in \partial j (\xi + \hat\zeta)$ thanks to \eqref{eq:subdiff}. From the right one, we find that $\hat{\sig} \in \Sddp$ and, moreover, $\langle\hat\zeta,\hat\sigma\rangle = 0$. This shows the characterization \eqref{eq:char_const_law} and thus completes the proof.
\end{proof}

\begin{remark}
	While the upper bound $j_+ \leq C_2 \abs{\argu}^p$ is true, the relaxed potential surely cannot be estimated from below by the Euclidean norm of $\xi$. In fact, the following equivalence holds true, and it is straightforward to show thanks to the lower bound for $j$ that is assumed in \ref{as:coercivity}:
	\begin{equation}
		\label{eq:zero_jp}
		j_+(\xi) = 0 \qquad \Leftrightarrow \qquad \xi \in \Sddm.
	\end{equation}
\end{remark}

\begin{remark}[\textbf{Relation with the masonry material model}]
	In the work \cite{giaquinta1985} a mathematical formulation of a problem of statics of 2D masonry structures is put forward. The masonry material is treated as \textit{perfectly brittle}, i.e. it cannot withstand tensile stresses at all, quite opposite to membranes. The model in \cite{giaquinta1985} assumes equations of linearized 2D elasticity apart from the constitutive law: the authors propose the stress-strain relation analogous to \eqref{eq:char_const_law}, except that the cones $\Sddp$ are replaced by $\Sddm$. The correction $\hat\zeta$, reflecting wrinkling in the case of membranes, for the masonry material represents the material's susceptibility to cracking. Although the mathematical forms of the  $\sigma\leftrightarrow\xi$ relations are virtually identical, one should emphasize the discrepancy in terms of their motivation and mechanical aspects. In case of the membrane model the modification $j_+$ of the potential embraces the distinctive wrinkling-type deformations and is, in fact, inherent to the model in which the strain is assumed to equal $\xi = \tfrac{1}{2} \nabla w \otimes \nabla w + e(u)$. In the case of the masonry model, the analogous modification $j_-$ is a choice, aimed at modelling a specific material.
\end{remark}

The paper \cite{giaquinta1985} limits itself to the case of an isotropic linearly elastic material, i.e. to the case when the relation $\sigma \in \partial j (\xi  + \hat{\zeta})$ is linear and reduces to $\sigma = \mathscr{H} \xi$ for an isotropic Hooke operator $\mathscr{H}$. In this scenario, the authors of \cite{giaquinta1985} derived the explicit formula for the potential $j_-$, and they specified the  relation $\sigma \in \partial j_- (\xi)$. The following example essentially quotes their results, with obvious changes of signs to match the membrane setting.

\begin{example}[\textbf{Departing from the linear isotropic constitutive law}]
	\label{ex:iso}
	Let us examine the modified energy potential $j_+$ coming from a linear constitutive law for a 2D isotropic material; namely, 
	\begin{equation*}
		j(\xi) = \tfrac{1}{2} \pairing{\mathscr{H}\xi,\xi}, \qquad j^*(\sig) = \tfrac{1}{2} \pairing{\mathscr{H}^{-1} \sig,\sig}
	\end{equation*}
	for a linear operator $\mathscr{H}:\Sdd \to \Sdd$ as below:
	\begin{equation*}
	\mathscr{H}=  \frac{E}{1-\nu}\Bigl( \tfrac{1}{2}\, \mathrm{I}_2 \otimes \mathrm{I}_2 \Bigr) + \frac{E}{1+\nu} \, \Bigl( \mathrm{Id}_2- \tfrac{1}{2}\, \mathrm{I}_2 \otimes \mathrm{I}_2 \Bigr), \qquad \mathscr{H}^{-1} =  \frac{1-\nu}{E}\Bigl( \tfrac{1}{2}\, \mathrm{I}_2 \otimes \mathrm{I}_2 \Bigr) + \frac{1+\nu}{E} \, \Bigl( \mathrm{Id}_2- \tfrac{1}{2}\, \mathrm{I}_2 \otimes \mathrm{I}_2 \Bigr).
	\end{equation*}
	Above $\mathrm{I}_2$ is the $2 \times 2$ identity matrix, whilst $\mathrm{Id}_2:\Sdd \to \Sdd$ is the identity operator. In order that \ref{as:convex}-\ref{as:coercivity} are met (with $p=2$), the Young modulus $E$ must be greater than zero, and the Poisson ratio $\nu$ must  range in $(-1,1)$.
	
	For a chosen strain $\xi \in \Sdd$ consider its spectral decomposition $\xi = \sum_{i\in\{\mathrm{I},\mathrm{II}\}} \lambda_i(\xi) \,\e_i \otimes  \e_i $ where $\e_\mathrm{I},\e_\mathrm{II} \in S^1$ . Assuming that $ \lambda_\mathrm{I}(\xi)  \geq \lambda_\mathrm{II}(\xi)$ we find the following formula for the relaxed energy potential:
	\begin{equation}
	\label{eq:iso_j_+}
	j_+(\xi) = \left\{
	\begin{array}{ccc}
	\frac{1}{2} \pairing{\mathscr{H}\xi,\xi} \quad & \text{if}\quad   &  \lambda_\mathrm{I}(\xi) +\lambda_\mathrm{II}(\xi) \geq   \frac{1-\nu}{1+\nu} \, \big(\lambda_\mathrm{I}(\xi) - \lambda_\mathrm{II}(\xi) \big) ,\\
	\frac{1}{2} E\, \bigl(\lambda_\mathrm{I}(\xi) \bigr)^2 \quad  &  \text{if} \quad   & \frac{1-\nu}{1+\nu} \, \big(\lambda_\mathrm{I}(\xi) - \lambda_\mathrm{II}(\xi) \big) >  \ \lambda_\mathrm{I}(\xi) + \lambda_\mathrm{II}(\xi) > \lambda_\mathrm{II}(\xi) - \lambda_\mathrm{I}(\xi),  \\
	0  \quad & \text{if} \quad   & \xi \in \mathcal{S}^{2\times 2}_-.
	\end{array}
	\right.
	\end{equation}
	The level sets of $j$ and $j_+$ are displayed in Fig. \ref{fig:contours}(a) and \ref{fig:contours}(b) for a positive and negative $\nu$, respectively. The vector of the eigenvalues of $\sigma \in j_+(\xi)$ is added as well. An interpretation of $j_+$ can be put forth: it is the largest convex l.s.c. minorant of $j$ which guarantees that $\sig \in \Sddp$ whenever $\sigma \in j_+(\xi)$.
	
	\begin{figure}[h]
		\centering
		\subfloat[]{\includegraphics*[trim={0cm 0cm -0cm -0cm},clip,width=0.3\textwidth]{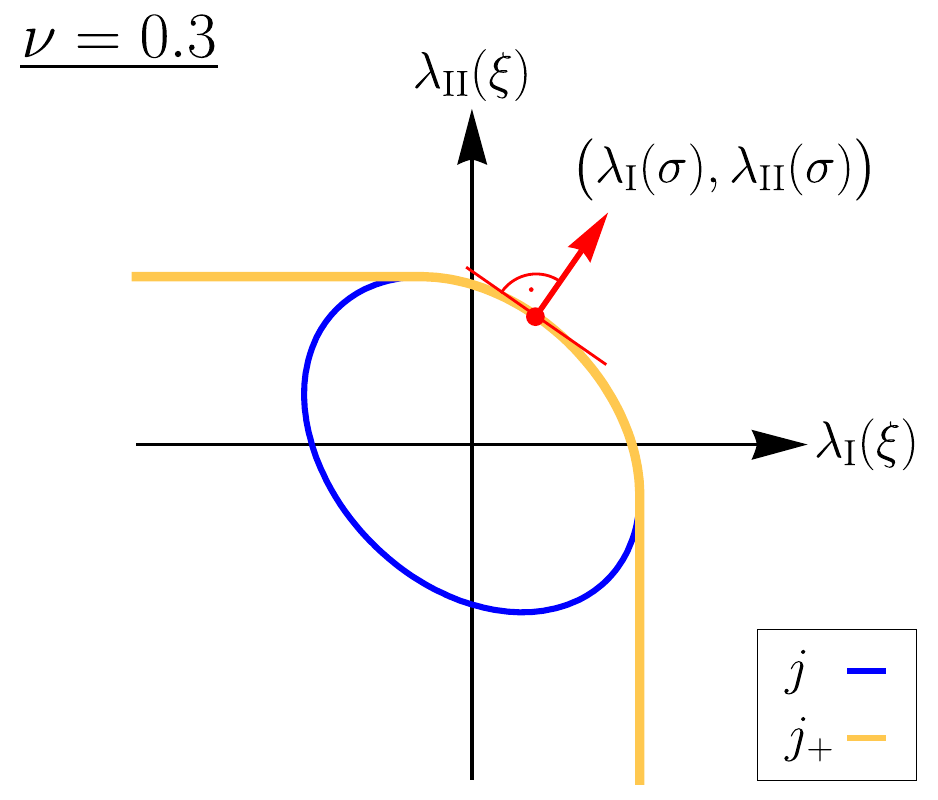}}\hspace{0.7cm}
		\subfloat[]{\includegraphics*[trim={0cm 0cm -0cm -0cm},clip,width=0.3\textwidth]{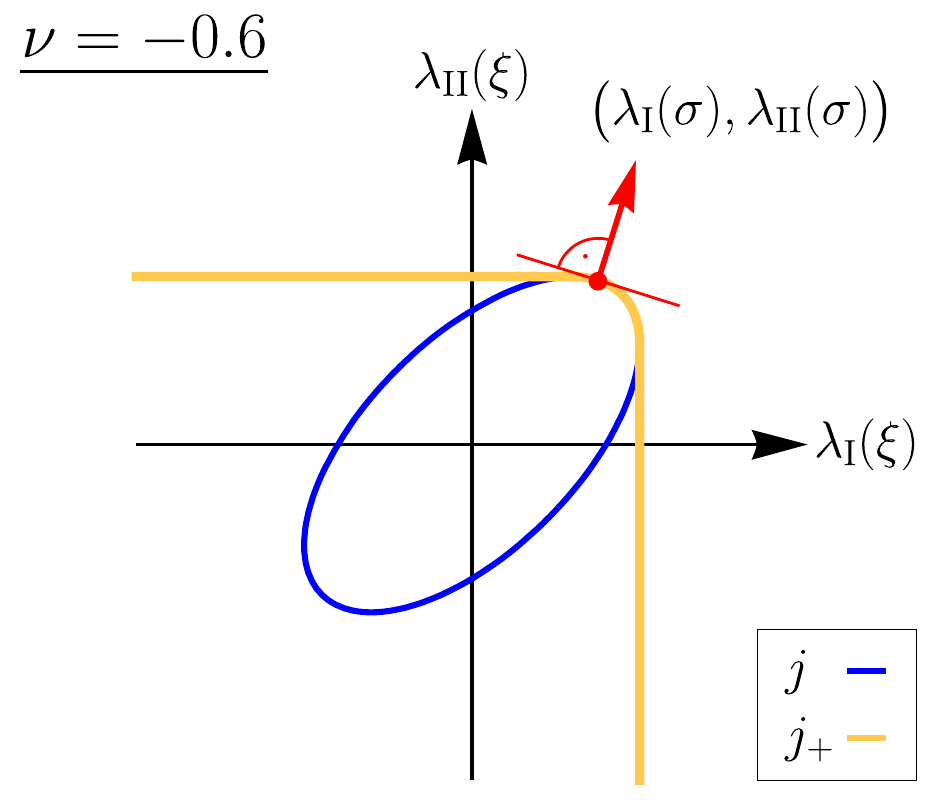}}\hspace{0.7cm}
		\subfloat[]{\includegraphics*[trim={0cm 0cm -0cm -0cm},clip,width=0.3\textwidth]{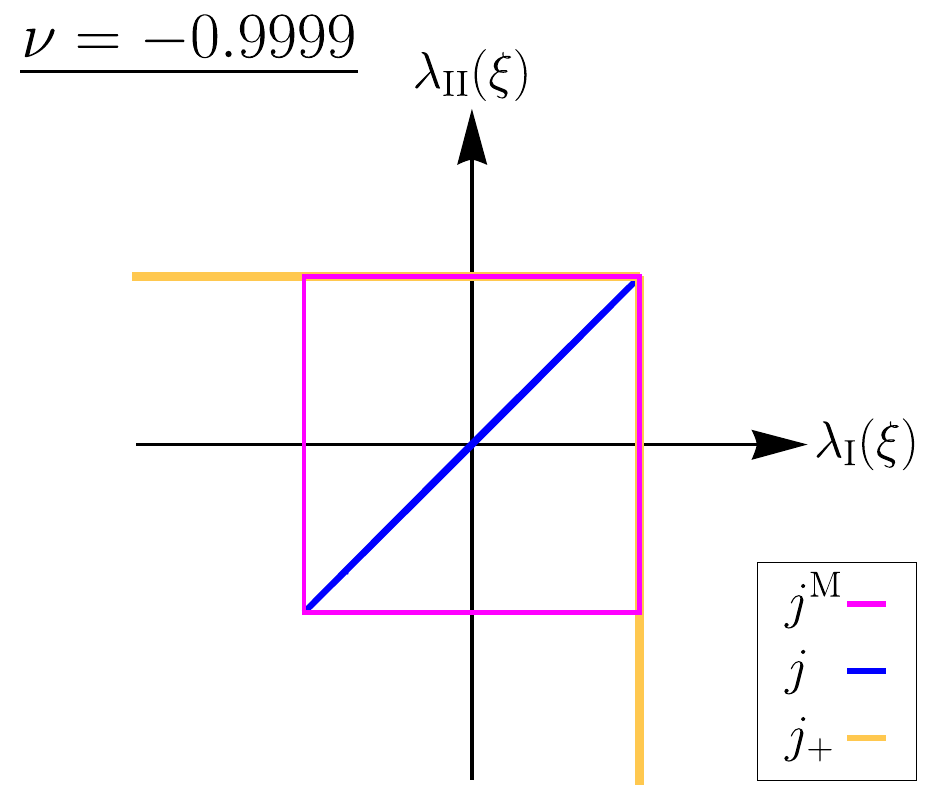}}
		\caption{The level sets of the potentials $j$ and $j_+$ drawn against the eigenvalues of $\xi$ in the case of a linearly elastic isotropic material for different Poisson ratios: (a) $\nu>0$; (b) $\nu<0$; (c) $\nu$ close to $-1$, along with the Michell energy potential $j^\mathrm{M}$.}
		\label{fig:contours}       
	\end{figure}

	Let us now take a closer look at the relation $\sig \in \partial j_+(\epsilon)$ for $\sigma \in \Sddp$ of different ranks: two, one, and zero. First of all, the eigenvectors of $\sigma$ and $\xi$ must coincide, hence	$\sigma = \sum_{i\in\{\mathrm{I},\mathrm{II}\}} \lambda_i(\sig) \,\e_i \otimes  \e_i $ where $\lambda_\mathrm{I}(\sig),\lambda_\mathrm{II}(\sigma) \geq 0$.
	\begin{enumerate}[wide=0pt, leftmargin=15pt, labelwidth=15pt, align=left]
		\item[\underline{Case 1)}] $\lambda_\mathrm{I}(\sig)>0$, $\lambda_\mathrm{II}(\sig)>0$, then
		\begin{equation*}
			\sig \in \partial j_+(\xi)  \qquad \Leftrightarrow \qquad \xi = \mathscr{H}^{-1} \sigma.
		\end{equation*}
		In this case the solution of the minimization problem \eqref{eq:j_plus} is $\hat{\zeta} = 0$. This means that no wrinkling can occur.
		\item[\underline{Case 2)}] $\lambda_\mathrm{I}(\sig)>0$, $\lambda_\mathrm{II}(\sig)=0$, then
		\begin{equation*}
			\sig \in \partial j_+(\xi)  \qquad \Leftrightarrow \qquad \lambda_\mathrm{I}(\xi) = \frac{\lambda_\mathrm{I}(\sig)}{E}, \quad \lambda_\mathrm{II}(\xi)\leq - \nu\, \frac{\lambda_\mathrm{I}(\sig)}{E}.
		\end{equation*}
		In this case the unique solution of \eqref{eq:j_plus} is $\hat{\zeta} = \big(-\nu\frac{\lambda_\mathrm{I}(\sig)}{E} -\lambda_\mathrm{II}(\xi)\big) \, \e_\mathrm{II} \otimes \e_\mathrm{II}$. Hence, we find that the membrane wrinkles along $\e_\mathrm{II}$, which is responsible for the contribution $ \lambda_\mathrm{II}(\xi) +\nu\frac{\lambda_\mathrm{I}(\sig)}{E} \leq 0$ to the membrane's contraction  in this direction.
		\item[\underline{Case 3)}] $\lambda_\mathrm{I}(\sig) =0$, $\lambda_\mathrm{II}(\sig)=0$, then
		\begin{equation*}
		\sig \in \partial j_+(\xi)  \qquad \Leftrightarrow \qquad  \lambda_\mathrm{I}(\xi)  \leq 0, \quad  \lambda_\mathrm{II}(\xi) \leq 0.
		\end{equation*}
		In this case, with no stress at all, the solution of \eqref{eq:j_plus} is  $\hat{\zeta} = - \xi$. This means that the membrane is free to wrinkle arbitrarily (in fact the directions $\e_\mathrm{I}, \e_\mathrm{II}$ are arbitrary, it is enough that $\xi \in \Sddm$).
	\end{enumerate}
\end{example}

\begin{remark}[\textbf{Michell-type material as a limit case when the Poisson ratio approaches $-1$}]
	\label{rem:Michell}
	We will now investigate the behaviour of the potentials $j=j^\nu$ and $j=j_+^\nu$ from Example \ref{ex:iso} when $\nu$ tends to $-1$. For $\nu=-1$ exactly the isotropic material becomes singular. More precisely, since $\mathscr{H}$ is not defined for $\nu=-1$, for a strain 
	$\xi = \sum_{i\in\{\mathrm{I},\mathrm{II}\}} \lambda_i(\xi) \,\e_i \otimes  \e_i $ we must get by with the limit:
	\begin{equation*}
	\lim_{\nu \searrow -1} j^\nu(\xi) = 	\begin{cases}
	\frac{1}{2} E\, (\lambda_0)^2 \quad & \text{if}\quad     \lambda_\mathrm{I}(\xi)  = \lambda_\mathrm{II}(\xi) =: \lambda_0,\\
	\infty \quad  &  \text{otherwise.}
	\end{cases}
	\end{equation*}
	The foregoing singularity is illustrated in Fig. \ref{fig:contours}(c) where for $\nu=-0.9999$ the level set of $j^\nu$ "nearly touches" the origin. Similar occurrence is not observed for $j^\nu_+$. Indeed, one can check that 
	\begin{equation}
	\label{eq:lim_nu_-1}
	\lim_{\nu \searrow -1} j^\nu_+(\xi) =  \frac{1}{2}	E  \Big(\max \big\{\lambda_\mathrm{I}(\xi), \lambda_\mathrm{II}(\xi),0 \big\} \Big)^2.
	\end{equation}
	This discrepancy can be explained in terms of stress energies $(j^\nu)^*$ and $(j^\nu_+)^*$. It is easy to check that $\lim_{\nu \searrow -1} (j^\nu)^*(\sig) = \tfrac{1}{2E} (\tr\,\sig)^2$ for any $\sigma$. This degeneracy of the stress energy reflects the well known property of materials with $\nu=-1$: they are perfectly rigid against a \textit{pure shear stress} $\sigma$, i.e. when $\tr\,\sig = 0$ (see \cite{prall1997} on 2D materials of chiral microstructure). Any non-zero stress $\sig$ of zero trace, however, is not an element of $\Sddp$, or, in other words, pure shear stress is not permitted in a membrane. Accordingly, formula \eqref{eq:jplus_star} guarantees that the limit $\lim_{\nu \searrow -1} (j^\nu_+)^*(\sig)$ is non-degenerate, i.e. strictly positive for any $\sig \neq 0$: it equals $\tfrac{1}{2E} (\tr\,\sig)^2$ for $\sig \in \Sddp$, and $\infty$ for $\sig \notin\Sddp$.
	
	Finally, based on \eqref{eq:lim_nu_-1} we recognize that (see \cite[Example 5.4]{bolbotowski2022b} for $\kappa_+=1$ and $\kappa_-=0$)	
	\begin{equation}
	\label{eq:nu=-1_Michell}
	\lim_{\nu \searrow -1} j^\nu_+(\xi) =  j^\mathrm{M}_+(\xi), \qquad \text{where} \qquad j^\mathrm{M}(\xi) := \frac{1}{2}	E  \Big(\max \big\{ \abs{\lambda_\mathrm{I}(\xi)},\abs{\lambda_\mathrm{II}(\xi)} \big\} \Big)^2.
	\end{equation}
	Namely, the potential $j_+^\nu$ in the limit case $\nu=-1$ can be recast as the the potential $j_+^\mathrm{M}$ computed for the so called \textit{Michell potential} $j^\mathrm{M}$ that models the material behaviour in the famous Michell structures, cf. \cite{bouchitte2008,lewinski2019a}. To visualize the relation \eqref{eq:nu=-1_Michell} one can confer Fig. \ref{fig:contours}(c) where a level set of $j^\mathrm{M}$ is illustrated. This observation has consequences that will be expounded later on in Section \ref{sec:Michell}.
	
	One readily checks that the Michell potential $j^\mathrm{M}$ meets the assumptions \ref{as:convex}-\ref{as:coercivity}. Accordingly, from \eqref{eq:nu=-1_Michell} we deduce that it is meaningful to use a material modelled by $j^\nu_+$ with $\nu=-1$ in the sequel.
\end{remark}

\subsection{The convexified variant of the F\"{o}ppl's membrane model}
\label{ssec:foppl}

For $\Omega \subset \Rd$ let us take a bounded domain; in this section we shall require no regularity of the boundary. For an out-of-plane load we take any signed Radon measure $f \in \Mes(\Ob;\R)$.

The compliance of a membrane whose material distribution is $\mu \in \Mes_+(\Ob)$ will be defined as the minus total potential energy, which then can be rewritten as a supremum:
\begin{equation}
\label{eq:nlcomp}\tag*{$(\mathrm{Co F\ddot{o}}_\mu)$}
\	\Comp(\mu) = -\inf \left\{\int_\Ob j_+\bigl(\tfrac{1}{2} \, \nabla w \otimes \nabla w + e(u) \bigr) \, d\mu - \int_\Ob w\, df   \ : \  (u,w) \in \D\bigl(\Omega;\R^2 \times \R\bigr) \right\}
\end{equation}
In order to analyse the underlying minimization problem above we employ the function $g: \Sdd \times \Rd \to \R_+$:
\begin{equation*}
g(\xi,\te) =  j_+\bigl(\tfrac{1}{2} \, \theta \otimes \theta + \xi \bigr).
\end{equation*}
Before stating the properties of $g$ let us agree on a convention:
\begin{equation}
	\label{eq:jsq}
	\jsq := \begin{cases} \frac1{2} \langle\sig\hat{\te},\hat{\te} \rangle  & \text{if $\q = \sig \hat{\te}$ \ for \ $\hat{\te}\in \Rd$,   }\\
	\infty & \text{if \  $\q \notin  \IM(\sig),$ }
	\end{cases} \qquad \text{for any $\sigma \in \Sddp$, \ $q \in \Rd.$}
\end{equation}
Since $\sig$ is symmmetric, the number $\jsq$ is uniquely defined even in the case when $\sigma$ is singular, despite the ambiguity in the choice of $\hat{\theta}$ in such a scenario. Of course, in the case when $\sigma$ is invertible, the value $\jsq$ adheres to a common understanding.
\begin{proposition}
	\label{prop:convexity_g}
	The function $ \Sdd \times \Rd \ni (\xi,\te) \mapsto g(\xi,\te) \in \R_+$ is jointly convex and continuous. Its convex conjugate reads:
	\begin{align}
	\label{eq:g_star}
	g^*(\sig,\q) &= j_+^*(\sig) + \jsq \\
	\nonumber
	&=\begin{cases} j^*(\sig) + \frac1{2} \langle\sig\hat{\te},\hat{\te} \rangle  & \text{if $\sig\in \Sddp$ \ and \  $\q = \sig \hat{\te}$ \ for \ $\hat{\te}\in \Rd$,   }\\
	\nonumber
	\infty & \text{if $\sig\notin \Sddp$ \ or if \  $\q \notin  \IM(\sig),$ }
	\end{cases}
	\end{align}
	while the subdifferential of $g$ with respect to the pair $(\xi,\te)$ enjoys the characterization:
	\begin{equation}
	\label{eq:g_subdiff}
	(\sig,q) \in \partial g(\xi,\te) \qquad \Leftrightarrow \qquad \begin{cases}
	\sig \in \partial j_+\big(\tfrac{1}{2}\, \te \otimes \te + \xi \big),\\
	q = \sig \te.
	\end{cases}
	\end{equation}
\end{proposition}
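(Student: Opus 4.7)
The plan is to first establish joint convexity of $g$ by a direct algebraic argument, then compute $g^*$ from the definition through a change of variables, and finally extract the subdifferential characterization from the Fenchel equality case in $g^*$.

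For the convexity, the key observation is a monotonicity property of $j_+$ that follows directly from the defining minimization \eqref{eq:j_plus}: if $\eta \in \Sddp$, then $j_+(\zeta - \eta) \le j_+(\zeta)$ for every $\zeta \in \Sdd$, since minimizing $j$ over the smaller set $\Sddp - \eta$ (equivalently, over translates by $\eta$) can only increase the value. The second ingredient is the elementary identity
\begin{equation*}
t\,\tfrac12\,\theta_1\otimes\theta_1 + (1-t)\,\tfrac12\,\theta_2\otimes\theta_2 - \tfrac12\,\bar\theta\otimes\bar\theta \ = \ \tfrac{t(1-t)}{2}\,(\theta_1-\theta_2)\otimes(\theta_1-\theta_2) \ \in \ \Sddp,
\end{equation*}
where $\bar\theta := t\theta_1+(1-t)\theta_2$. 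Combining the two observations with the convexity of $j_+$ (which is already established as a consequence of \eqref{eq:jplus_star}, derived in the previous proposition) yields $g(\bar\xi,\bar\theta)\le t\,g(\xi_1,\theta_1)+(1-t)\,g(\xi_2,\theta_2)$. Continuity of $g$ then follows from convexity and finite-valuedness on the finite-dimensional space $\Sdd\times\Rd$.

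For the formula for $g^*$, I would perform the change of variables $\eta=\xi+\tfrac12\theta\otimes\theta$ in $\sup_{\xi,\theta}\{\langle\xi,\sigma\rangle+\langle\theta,q\rangle-g(\xi,\theta)\}$, which decouples the problem into
\begin{equation*}
g^*(\sigma,q) \ = \ \sup_{\eta\in\Sdd}\bigl\{\langle\eta,\sigma\rangle-j_+(\eta)\bigr\} \ + \ \sup_{\theta\in\Rd}\bigl\{-\tfrac12\langle\sigma\theta,\theta\rangle+\langle\theta,q\rangle\bigr\}.
\end{equation*}
The first supremum is $j_+^*(\sigma)$, which by \eqref{eq:jplus_star} equals $j^*(\sigma)+\chi_{\Sddp}(\sigma)$. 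For the second supremum, when $\sigma\in\Sddp$ I would split $\theta$ along $\IM(\sigma)\oplus\Ker(\sigma)$: the kernel component sends the supremum to $+\infty$ unless $q\in\IM(\sigma)$, while on $\IM(\sigma)$ the concave quadratic is maximized at any $\hat\theta$ with $\sigma\hat\theta=q$, producing the value $\tfrac12\langle\sigma\hat\theta,\hat\theta\rangle$. This value is independent of the choice of $\hat\theta$ (any two differ by an element of $\Ker\sigma$), so the definition \eqref{eq:jsq} is consistent and the claimed formula for $g^*$ follows.

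For the subdifferential characterization I would write out the Fenchel equality $g(\xi,\theta)+g^*(\sigma,q)=\langle\xi,\sigma\rangle+\langle\theta,q\rangle$. It forces $\sigma\in\Sddp$ and $q=\sigma\hat\theta$ for some $\hat\theta\in\Rd$. Using $\langle\theta,q\rangle=\langle\sigma\theta,\hat\theta\rangle$ and rewriting $\tfrac12\langle\sigma\theta,\theta\rangle+\tfrac12\langle\sigma\hat\theta,\hat\theta\rangle-\langle\sigma\theta,\hat\theta\rangle=\tfrac12\langle\sigma(\theta-\hat\theta),\theta-\hat\theta\rangle\ge0$, the equality decomposes into the two nonnegative contributions
\begin{equation*}
\bigl[\,j_+(\tfrac12\theta\otimes\theta+\xi)+j^*(\sigma)-\langle\sigma,\tfrac12\theta\otimes\theta+\xi\rangle\,\bigr] \ + \ \tfrac12\langle\sigma(\theta-\hat\theta),\theta-\hat\theta\rangle \ = \ 0,
\end{equation*}
each of which must vanish. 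The first vanishing is exactly $\sigma\in\partial j_+(\tfrac12\theta\otimes\theta+\xi)$, and the second, combined with $\sigma\in\Sddp$ and $q=\sigma\hat\theta$, gives $q=\sigma\theta$. The converse implication is a direct verification.

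The main subtlety I expect is handling the case when $\sigma$ is singular: the vector $\hat\theta$ in $q=\sigma\hat\theta$ is not unique, so one has to check that $\jsq$ as defined in \eqref{eq:jsq} is well-posed and that the equation $q=\sigma\theta$ (rather than the weaker $q\in\IM\sigma$) correctly emerges from the Fenchel equality. Positive semi-definiteness of $\sigma$, which provides the sign in $\tfrac12\langle\sigma(\theta-\hat\theta),\theta-\hat\theta\rangle\ge0$, is what makes this step work cleanly.
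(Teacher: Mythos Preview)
Your proof is correct. The computation of $g^*$ via the substitution $\eta=\xi+\tfrac12\theta\otimes\theta$ and the extraction of the subdifferential from the Fenchel equality are essentially the same as the paper's argument, only spelled out in more detail (the paper leaves the second step as a one-line remark).

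The genuine difference is in how you establish joint convexity. The paper writes $g(\xi,\theta)=\sup_{\sigma\in\Sddp}\{\langle\tfrac12\theta\otimes\theta+\xi,\sigma\rangle-j^*(\sigma)\}$ using $j_+=(j_+^*)^*$ together with \eqref{eq:jplus_star}, and then observes that each function in the supremum is convex in $(\xi,\theta)$ because $\sigma\succeq0$ makes $\theta\mapsto\langle\sigma\theta,\theta\rangle$ convex. Your route is purely primal: the rank-one identity for the convexity defect of $\theta\mapsto\tfrac12\theta\otimes\theta$, combined with the monotonicity $j_+(\zeta-\eta)\le j_+(\zeta)$ for $\eta\in\Sddp$ and the convexity of $j_+$. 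Both are short; the paper's dual representation has the small advantage of giving lower semicontinuity for free (before invoking finite-dimensionality), while your argument makes the mechanism---that positive semidefiniteness of the quadratic defect is absorbed by the infimal correction in $j_+$---completely explicit and does not require first knowing \eqref{eq:jplus_star}.
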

\begin{proof}
	Since of $\xi \mapsto j(\xi) $ is convex continuous,  we have $j_+ = (j^*_+)^*$, and, as a result, by \eqref{eq:jplus_star} we find that
	\begin{alignat*}{1}
	g(\xi,\theta)=\jp\!\left(\tfrac{1}{2}\,  \te \otimes \te + \xi \right) = &\sup\limits_{\sig \in \Sdd} \left\{\pairing{\tfrac{1}{2}\, \te \otimes \te + \xi,\sig} - j^*(\sig) - \ind_{\Sddp}(\sig) \right\} \\
	= &\sup\limits_{\sig \in \Sddp} \left\{\pairing{\tfrac{1}{2}\, \te \otimes \te + \xi,\sig} - j^*(\sig)  \right\}.
	\end{alignat*}
	For any positive semi-definite matrix $\sig$ the function $ (\xi,\te) \mapsto \pairing{\tfrac{1}{2}\, \te \otimes \te + \xi,\sig}$ is convex continuous. Since the function $g$ is a point-wise supremum of a family of such functions, it is convex and lower semi-continuous. In addition it admits finite values on the whole space $\Sdd \times \Rd$, and, therefore, it is continuous as well,  cf. \cite{rockafellar1970convex}. We compute its convex conjugate:
	\begin{align*}
	g^*(\sig,\q) &= \sup_{\te\in \R^d} \sup_{\xi\in\Sdd} 
	\left\{\pairing{\xi,\sig} + \pairing{\te,\q} - \jp\!\left(\tfrac{1}{2}\,  \te \otimes \te + \xi \right)  \right\}\\
	&=\sup_{\te\in \R^d} \sup_{\tilde{\xi}\in\Sdd} 
	\left\{\pairing{\te,\q} + \Big\langle\tilde{\xi} -\tfrac{1}{2}\,  \te \otimes \te,\sig \Big\rangle - \jp\!\big(\tilde{\xi}\big)   \right\}\\
	&=\sup_{\te\in \R^d} 
	\Big\{\pairing{\te,\q} - \tfrac{1}{2} \pairing{\sig \te,\te}\Big\} +   \sup_{\tilde{\xi}\in\Sdd}\left\{ \langle\tilde{\xi},\sig \rangle - \jp \big(\tilde{\xi}\big) \right\}  = \jsq + j_+^*(\sig)
	\end{align*} 
	where to pass to the second line we made a change of variables: $\tilde{\xi} = \tfrac{1}{2}\,  \te \otimes \te + \xi$. The final formula for $g^* (\sig,q)$ in \eqref{eq:g_star} follows once we recall \eqref{eq:jplus_star}. From the above we infer that the extremality equality  $\pairing{\xi,\sig} + \pairing{\te,\q} =  g(\xi,\te) +	g^*(\sig,\q)$ holds true if and only if $\pairing{\te,q}  = \tfrac{1}{2} \pairing{\sig \te,\te} + \jsq$ and $\langle\tilde{\xi},\sig \rangle = \jp \big(\tilde{\xi}\big) + j_+^*\big({\sig}\big)$. Characterization \eqref{eq:g_subdiff} follows owing to \eqref{eq:subdiff}.
\end{proof}

Since $e$ and $\nabla$ are linear operators, an immediate corollary of the above statement is the following:
\begin{corollary}
	\label{cor:convexity_of_VK_energy}
	For any $\mu \in \Mes_+(\Ob)$ the functional $$\D\bigl(\Omega;\R^2 \times \R\bigr) \ni (u,w) \mapsto \int_\Ob j_+\bigl(\tfrac{1}{2} \, \nabla w \otimes \nabla w + e(u) \bigr) \, d\mu$$
	is convex.
\end{corollary}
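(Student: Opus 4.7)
The strategy is to factor the functional through the jointly convex function $g$ from Proposition \ref{prop:convexity_g} and then integrate, exploiting the linearity of the differential operators and the positivity of $\mu$.

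First I would rewrite the integrand pointwise as a composition: for $(u,w) \in \D(\Omega;\R^2\times\R)$ and $x\in\Ob$ one has
\begin{equation*}
j_+\bigl(\tfrac{1}{2}\,\nabla w(x) \otimes \nabla w(x) + e(u)(x)\bigr) = g\bigl(e(u)(x),\nabla w(x)\bigr),
\end{equation*}
where $g$ is defined as in \eqref{eq:g_0}. The map $\Lambda:(u,w)\mapsto (e(u),\nabla w)$ is linear from $\D(\Omega;\R^2\times\R)$ into $C(\Ob;\Sdd\times\Rd)$, and hence for any two smooth pairs $(u_0,w_0),(u_1,w_1)$ and any $t\in[0,1]$ we have $\Lambda\bigl(t(u_1,w_1)+(1-t)(u_0,w_0)\bigr) = t\,\Lambda(u_1,w_1) + (1-t)\,\Lambda(u_0,w_0)$ pointwise on $\Ob$.

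Next I would invoke the joint convexity of $g$ established in Proposition \ref{prop:convexity_g} to conclude that, pointwise for every $x\in\Ob$,
\begin{equation*}
g\bigl(\Lambda(t(u_1,w_1)+(1-t)(u_0,w_0))(x)\bigr) \leq t\, g\bigl(\Lambda(u_1,w_1)(x)\bigr) + (1-t)\, g\bigl(\Lambda(u_0,w_0)(x)\bigr).
\end{equation*}
Since $g$ is continuous and $\Lambda(u,w)$ is continuous on the compact set $\Ob$, both sides are continuous functions on $\Ob$ and hence $\mu$-integrable for any finite positive Radon measure $\mu\in\Mes_+(\Ob)$.

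Finally, integrating the pointwise inequality against $\mu$ -- which preserves the inequality precisely because $\mu$ is a positive measure -- yields convexity of the functional in question. There is no real obstacle here: the result is a routine consequence of Proposition \ref{prop:convexity_g} combined with the linearity of the strain-gradient operator $\Lambda$ and the monotonicity of integration with respect to a positive measure; the only mild point worth noting is that no integrability issues arise because the arguments $(u,w)$ are smooth and compactly supported, so all functions involved are bounded and continuous on $\Ob$.
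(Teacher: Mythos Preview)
Your proof is correct and follows exactly the approach the paper takes: the corollary is stated there as an immediate consequence of Proposition \ref{prop:convexity_g} together with the linearity of the operators $e$ and $\nabla$, which is precisely what you have spelled out. The paper gives no further details, so your version is simply a more explicit rendering of the same argument.
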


\noindent As a consequence we recover the statement from \cite{conti2006} that the variational formulation \ref{eq:nlcomp} is a convex minimization problem. As a result, the convex duality tools may be employed to derive the stress-based formulation:
\begin{proposition}
	\label{prop:dual_comp_mem}
	For any material distribution $\mu \in \Mes_+(\Ob)$ there holds
	\begin{equation}
	\label{eq:dual_comp_memb}\tag*{$(\mathrm{CoF\ddot{o}}^*_\mu)$}
	\Comp(\mu) = \inf \biggl\{ \int_\Ob j^*(\sig) \, d\mu + \int_\Ob \jsq\,d\mu : \!
	\begin{array}{cc}
		\sig \in L^{p'}_\mu (\Ob;\Sddp),\!  & -\DIV (\sig\mu) = 0\\
	q \in L^{(2p)'}_{\mu} \! (\Ob;\Rd),\!  & -\dive(q\mu)=f
	\end{array} \text{ in } \O
	\biggr\}
	\end{equation}
	where $(2p)' = 2p/(2p-1)$. Moreover, the minimum is achieved whenever $\Comp(\mu) <\infty$.
\end{proposition}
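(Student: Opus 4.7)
The plan is to derive \ref{eq:dual_comp_memb} by convex duality applied to the primal problem defining $-\Comp(\mu)$. Using the jointly convex and continuous function $g(\xi,\theta)=j_+\bigl(\tfrac{1}{2}\theta\otimes\theta+\xi\bigr)$ from Proposition \ref{prop:convexity_g}, rewrite
\begin{equation*}
-\Comp(\mu) \;=\; \inf_{v=(u,w)\in\D(\Omega;\R^2\times\R)}\left\{\int_\Ob g(\Lambda v)\,d\mu \;-\; \int_\Ob w\,df\right\},\qquad \Lambda v=\bigl(e(u),\nabla w\bigr).
\end{equation*}
Substituting the Fenchel--Young identity $g(\xi,\theta)=\sup_{\tau=(\sigma,q)}\{\langle\tau,(\xi,\theta)\rangle-g^*(\tau)\}$ inside the integral and formally interchanging $\sup_\tau$ with $\inf_v$ yields
\begin{equation*}
\Comp(\mu) \;=\; \inf_{\tau}\left\{\int_\Ob g^*(\tau)\,d\mu \;+\; \sup_v\Big[\int_\Ob w\,df-\int_\Ob\langle\tau,\Lambda v\rangle\,d\mu\Big]\right\}.
\end{equation*}
The inner supremum is a linear functional of $v$, hence equals $0$ exactly when the distributional equilibrium equations $-\DIV(\sigma\mu)=0$ and $-\dive(q\mu)=f$ hold in $\D'(\Omega)$, by the definitions \eqref{eq:div}--\eqref{eq:DIV}, and is $+\infty$ otherwise. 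Combining this observation with the explicit expression \eqref{eq:g_star} for $g^*$, which already enforces $\sigma\in\Sddp$ and $q\in\IM(\sigma)$ on its domain, produces exactly the formula \ref{eq:dual_comp_memb}.

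To rigorously justify the inf--sup swap I would apply the Fenchel--Rockafellar duality theorem \cite[Ch.~III, Thm.~4.1 and Rem.~4.2]{ekeland1999} after extending the primal to a Banach-space setting. The natural choice takes strains in $X=L^p_\mu(\Ob;\Sdd)\times L^{2p}_\mu(\Ob;\Rd)$ and stresses in $X^*=L^{p'}_\mu(\Ob;\Sdd)\times L^{(2p)'}_\mu(\Ob;\Rd)$. The upper bound in \ref{as:coercivity} and Young's inequality give $g(\xi,\theta)\le C(|\xi|^p+|\theta|^{2p})$, so $G_\mu(\xi,\theta):=\int g(\xi,\theta)\,d\mu$ is convex and continuous on $X$, while its conjugate on $X^*$ equals $\int g^*(\tau)\,d\mu$ by Rockafellar's integral representation of convex functionals. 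The operator $\Lambda$ then extends continuously to a Banach space $V$ obtained as the completion of $\D(\Omega;\R^2\times\R)$ under a norm that controls $\|\Lambda v\|_X$, and a standard density argument shows the primal infimum over $V$ equals that over $\D$. Taking the adjoint identifies $\Lambda^*\tau$ with the pair of distributions $\bigl(-\DIV(\sigma\mu),\,-\dive(q\mu)\bigr)$. Finally, the integrability exponent $(2p)'$ for $q$ is forced by the elementary bound $\jsq\ge\tfrac{1}{2}|q|^2/|\sigma|$: via H\"older's inequality and the identity $\tfrac{1}{(2p)'}=\tfrac{1}{2p'}+\tfrac{1}{2}$, the quantity $\int|q|^{(2p)'}d\mu$ is controlled by $\int|\sigma|^{p'}d\mu$ and $\int\jsq\,d\mu$.

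When $\Comp(\mu)<\infty$, attainment follows by the direct method: the coercivity of $j^*$ together with the H\"older estimate above bounds any minimizing sequence $(\sigma_n,q_n)$ in $X^*$; after extracting a weakly convergent subsequence, the equilibrium constraints survive by linearity and the objective $\int g^*\,d\mu$ is weakly lower semicontinuous (being convex and strongly lower semicontinuous on $X^*$), so the weak limit is a minimizer. The chief obstacle lies in the careful handling of the Banach-space extension --- choosing $V$ so that $\D(\Omega;\R^2\times\R)$ is dense, $\Lambda$ is continuous into $X$, and the adjoint $\Lambda^*$ genuinely realizes the divergence operations on $\mu$-weighted measures. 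Since $\mu$ may be singular with respect to Lebesgue measure, the appropriate Sobolev-type space is in the spirit of the Bouchitt\'e--Buttazzo theory of Sobolev spaces tangent to a measure, and extra care is needed to exclude spurious boundary terms in the distributional equilibrium equations.
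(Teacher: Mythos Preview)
Your approach is essentially the same as the paper's --- Fenchel--Rockafellar duality applied through the convex function $g$ of Proposition~\ref{prop:convexity_g}, with strain space $L^p_\mu\times L^{2p}_\mu$ and dual $L^{p'}_\mu\times L^{(2p)'}_\mu$, and Rockafellar's theorem on integral functionals to identify $\Psi^*(\sigma,q)=\int g^*(\sigma,q)\,d\mu$. The paper's proof is nearly verbatim what you wrote in your first two paragraphs.

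However, you create an unnecessary difficulty in the rigorous step. You propose to pass from $\D(\Omega;\R^2\times\R)$ to its completion $V$ under a norm controlling $\|\Lambda v\|_X$, and correctly diagnose that this drags in Sobolev spaces with respect to the measure $\mu$ --- a genuinely delicate theory when $\mu$ is singular. The paper avoids this entirely: it applies the duality theorem (its Theorem~\ref{thm:duality_classical}, which is stated for arbitrary locally convex spaces, not just Banach spaces) with the primal space taken to be $X=\D(\Omega;\R^2\times\R)$ itself, endowed with its usual test-function topology. No completion, no density argument, no $\mu$-tangential calculus. The operator $\Lambda$ is trivially continuous from $\D$ into $Y=L^p_\mu\times L^{2p}_\mu$, the functional $\Psi$ is continuous at $\Lambda(0,0)=(0,0)$ because $g$ is continuous, and that is all the theorem requires. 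Attainment of the infimum then comes for free from the duality theorem, so your separate direct-method argument is also superfluous. In short: the ``chief obstacle'' you identify is self-inflicted, and disappears once you resist the urge to complete $\D$.
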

\begin{proof}
	After noticing that \ref{eq:nlcomp} can be rewritten as a supremum, the assertion can be proved almost directly by employing Theorem \ref{thm:duality_classical}. We define:
	\begin{equation*}
	X = \Big\{ (u,w) \in \DO \Big\}, \quad  Y = L^p_\mu(\Ob;\Sdd) \times L^{2p}_\mu(\Ob;\Rd), \quad  \Lambda(u,w) = \bigl(e(u),\nabla w\bigr),
	\end{equation*}
	and $\Phi(u,w) = - \int_\Ob w \,df$, \  $\Psi(\xi,\theta) =  \int_\Ob g(\xi,\theta)\, d\mu$.
	Then, for $(q,\sig) \in Y^*=L^{p'}_\mu(\Ob;\Sdd) \times  L^{(2p)'}_\mu \! (\Ob;\Rd)$ we can check that $ \Lambda^*(\q,\sig) = \big(-\DIV(\sig \mu) ,-\dive(\q \mu)\big)$, and that  $\Phi^*\big(-\Lambda^*(\q,\sig) \big)$ is finite if and only if $-\DIV (\sig\mu) = 0$ and $-\dive(\q \mu)=f$ in $\O$, see the convention \eqref{eq:div}, \eqref{eq:DIV}.
	
	Due to the celebrated Rockafellar's result we find that  $\Psi^*(\q,\sig) =  \int_\Ob g^*(\q,\sig)\, d\mu$. Indeed, the function $g$ is continuous, and, therefore, it is a normal convex integrand in the understanding of \cite{rockafellar1968}. The equality now follows by \cite[Theorem 2]{rockafellar1968}. Now, the main assertion can be obtained by combining Theorem \ref{thm:duality_classical} and formula \eqref{eq:g_star}, noting that instead of writing the $\int_\Ob j^*_+(\sig)\,d\mu$ in the minimized functional with  $\sig \in L^{p'}_\mu(\Ob;\Sdd)$, we may write $\int_\Ob j^*(\sig)\,d\mu$ if we explicitly require that $\sig \in L^{p'}_\mu(\Ob;\Sddp)$.
\end{proof}

\begin{remark}
	Let us assume that $\Omega$ has a smooth boundary, and consider displacement functions $(u,w) \in C^1(\Ob;\Rd \times \R)$ satisfying $(u,w) = (0,0)$ on $\bO$, and stresses $(\sigma,q) \in L^p_\mu(\Ob;\Sdd) \times L^{2p}_\mu(\Ob;\Rd)$.
	
	Then, functions $(u,w)$ and $(\sigma,q)$ solve the problems \ref{eq:nlcomp} and, respectively, \ref{eq:dual_comp_memb} if and only if the following system holds true:
	\begin{equation}
	\label{eq:Foppl_system}
	\begin{cases}
	(i) &   -\DIV(\sig \mu) = 0, \quad -\dive(\q \mu) = f \quad \text{ in } \O,\\
	(ii) & \sig \in  \partial j_+\big(\frac{1}{2} \, \nabla w \otimes \nabla w + e(u) \big) \quad \mu\text{-a.e.},\\
	(iii) & \q = \sig \,\nabla w \quad \mu\text{-a.e.}
	\end{cases}
	\end{equation}
	Let us now sketch the argument behind this fact. From the proof of Proposition \ref{prop:dual_comp_mem}, we can deduce that optimality occurs if and only if $\pairing{e(u),\sigma}+\pairing{\nabla w,q} = g\big(e(u),\nabla w\big) + g^*(\sigma,q)$ $\ \mu$-a.e. in $\Ob$ or, equivalently, $(\sigma,q) \in \partial g \big(e(u),\nabla w\big)$. By the virtue of \eqref{eq:g_subdiff} this is in turn equivalent with conditions (ii),\,(iii) above. Meanwhile, (i) is simply the admissibility condition for \ref{eq:dual_comp_memb}.
	
		It should be stressed, however, that one should not expect solutions $(u,w)$ of \ref{eq:nlcomp} to be of class $C^1$, especially considering that $\mu$ can degenerate to zero in subdomains of $\O$. The question of the right functional spaces for the solution in case of an arbitrary measure $\mu$ lies outside of the scope of this paper. The reader is referred to \cite{bouchitte1997,bouchitte2003} for an insight on the Sobolev spaces with respect to measure $\mu$.
	\end{remark}

	\begin{remark}[\textbf{A heuristic connection to 3D non-linear elasticity}]
	In terms of mathematics the role of the functions $\sigma$ and $q$ is clear, for they naturally serve as the dual variables. Their mechanical interpretation is less obvious. In the introduction they were called the in-plane and, respectively, out-of-plane components of stress. To be more specific, in this remark we make an attempt to put $\sigma,q$,  and the displacement functions in the context of 3D non-linear elasticity \cite{ciarlet2000a}. With complete success this was done in \cite{ledret1995} for the fully nonlinear membrane model. Contrarily, in the realm of the "partially geometrically nonlinear" relaxed F\"{o}ppl's model, the equations of the 3D elasticity can be revived approximately only: by getting rid of the term $\nabla u$ in the strategic places.

	Let us restrict ourselves to a membrane of constant thickness, i.e. to $\mu = b_0 \mathcal{L}^2 \mres \O$. Alternatively, we describe the membrane it a 3D cylinder $\Oh = \O \times (-b_0/2,b_0/2)$. With $\mathrm{I}_d$ being the $d \times d$ identity matrix and with $0_d$ denoting the $d \times 1$ column vector of zeros, we introduce the following matrix/vector valued functions:
	\begin{equation*}
		\ub{u} = \begin{bmatrix}
			\,u\,\\
			\,w\,
		\end{bmatrix},\qquad
		\ub{F}_{\mathrm{ap}}  = \begin{bmatrix}
			\mathrm{I}_2  &  0_2\,\\
			\,(\nabla w)^\top\, & 1\,
		\end{bmatrix}, \qquad
		\ub{\mathscr{E}}_{\mathrm{ap}} =
		\begin{bmatrix}
			\,\frac{1}{2} \, \nabla w \otimes \nabla w + e(u)   &  \nabla w\,\\
			\,(\nabla w)^\top\, & 0\,
		\end{bmatrix}.
	\end{equation*}
	The left hand sides are to be understood as functions of the 3D coordinate $\ub{x} = (x,x_3) \in \Oh$. In fact, these functions do not depend on $x_3$; namely, the functions on the right hand sides depend on $x$ only. We recall that by the \textit{deformation gradient} we understand the $3 \times 3$ matrix function $\ub{F} = \ub\nabla \ub{u} + \mathrm{I}_3$. We can thus check that $\ub{F} - \ub{F}_{\mathrm{ap}}  = \nabla u$. Therefore, heuristically speaking,  $\ub{F}_{\mathrm{ap}}$ can be deemed an approximation of the deformation gradient under the assumption of smallness of $\nabla u$. By a similar token, the formula $\ub{\mathscr{E}} = \frac{1}{2} (\ub{F}^\top\! \ub{F} - \mathrm{I}_3)$ defines the \textit{Green-Saint Venant strain tensor}. We then check that  $\ub{\mathscr{E}}  - \ub{\mathscr{E}}_{\mathrm{ap}} = \frac{1}{2}(\nabla u)^\top \nabla u$. Hence, this time, we can think of  $\ub{\mathscr{E}}_{\mathrm{ap}}$ as of the approximate strain tensor which ignores the higher order terms with respect to $\nabla u$.
	
	Next, we assume that the 3D elastic body is \textit{hyper-elastic}, and we start off with the potential $\ub{j}: \mathcal{S}^{3 \times 3} \to \R_+$ modeling an \textit{isotropic Saint Venant-Kirchhoff material} (see \cite{ciarlet2000a} for details):
	\begin{equation*}
		\ub{j}(\ub{\xi}) =  \tfrac{1}{2} \pairing{\ub{\mathscr{H}}\ub{\xi},\ub{\xi}} \qquad \text{where} \qquad 	\ub{\mathscr{H}}=  \frac{E}{1-2\nu}\Bigl( \tfrac{1}{3}\, \mathrm{I}_3 \otimes \mathrm{I}_3 \Bigr) + \frac{E}{1+\nu} \, \Bigl( \mathrm{Id}_3- \tfrac{1}{3}\, \mathrm{I}_3 \otimes \mathrm{I}_3 \Bigr);
	\end{equation*}
	above $E>0$, $\nu \in (-1,1/2)$, and $\mathrm{Id}_3$ is the identity operator on $\mathcal{S}^{3 \times 3}$. Further, we introduce the following relaxation of the potential:
	\begin{equation*}
		\ub{j}_{\mathrm{2D},+}(\ub{\xi}) = \min_{\ub\zeta \in \mathcal{S}^{3 \times 3}_+} \min_{\ \ub\eta \in \R^3} \ \ub{j}\big(\ub{\xi} + \ub\zeta + \ub{\eta} \symtens \mathrm{e}_3 \big),
	\end{equation*}
	where $\e_3$ is the basis vector orthogonal to $\O \times \{0\}$, while $\symtens$ stands for the symmetrized product tensor. Minimization with respect to $\ub{\zeta}$ has the interpretation revolving around wrinkling, similarly as in definition \eqref{eq:j_plus} of $j_+$. Meanwhile, the second minimization expresses the thinness of the structure.
	
	Assume now that in \eqref{eq:Foppl_system} the potential $j_+$ is a relaxation \eqref{eq:j_plus} of  $j(\xi) = \min_{\ \ub\eta \in \R^3} \ \ub{j}({\xi} + \ub{\eta} \symtens \mathrm{e}_3 )$.
	Then, straightforward computations readily show that the system \eqref{eq:Foppl_system} is equivalent to the one below:
	\begin{equation}
		\label{eq:Foppl_system_3D}
		\begin{cases}
			(i') &  -\ub{\DIV} \, (\ub{P} \mathcal{L}^3) = \ub{\mathcal{F}},\\
			(ii') & \ub{S} \in \partial \ub{j}_{\mathrm{2D},+}(\ub{\mathscr{E}}_{\mathrm{ap}} ),\\
			(iii') & \ub{P} = \ub{F}_{\mathrm{ap}}  \ub{S}
		\end{cases}
		\qquad \quad \text{ in \ \ } \Oh = \O\times(-b_0/2,b_0/2),
	\end{equation}
	provided that we define:
	\begin{equation*}
		\ub{P} = 
		\begin{bmatrix}
			\,\sig  &  0_2\\
			\,q^\top\, & 0\,
		\end{bmatrix}, \qquad
		\ub{S}= 
		\begin{bmatrix}
			\,\sig  &  0_2\\
			\,0_2^\top & 0\,
		\end{bmatrix},\qquad
		\ub{\sigma}= 
		\begin{bmatrix}
			\,\sig  &  q\,\\
			\,q^\top\, & \langle \sigma^{-1}q,q  \rangle\,
		\end{bmatrix},\qquad
		\ub{\mathcal{F}}= 
		\begin{bmatrix}
			\,0_2\,\\
			\,f/b_0\,
		\end{bmatrix};\qquad
	\end{equation*}
	moreover, there holds		
	\begin{equation}
		\label{eq:Cauchy}
		\ub{\sigma} = \frac{1}{\mathrm{det} \, \ub{F}_{\mathrm{ap}}} \, \ub{P} \ub{F}^\top_{\mathrm{ap}} = \frac{1}{\mathrm{det} \, \ub{F}_{\mathrm{ap}}} \, \ub{F}_{\mathrm{ap}} \ub{S} \ub{F}^\top_{\mathrm{ap}}, \qquad \mathrm{Im}\, \ub{\sigma}  \subset  \mathrm{Im}  \begin{bmatrix}
			\mathrm{I}_2 \,\\
			\,(\nabla w)^\top\,
		\end{bmatrix}.
	\end{equation}
	Up to the fact that $F_\mathrm{ap}$ and $\mathscr{E}_\mathrm{ap}$ are \textit{approximate} deformation gradient and strain tensor and not the exact ones,  \eqref{eq:Foppl_system_3D} is the system of equations of nonlinear hyper-elasticity \cite{ciarlet2000a}. Indeed, the functions $\ub{P}$ and $\ub{S}$ play the roles of \textit{the first} and, respectively, \textit{the second Piola-Kirchhoff} stress tensors, while $\ub{\sigma}$ is \textit{the Cauchy stress tensor}. Note that each function involved is independent of the out-of-plane variable. What is more, from the inclusion in \eqref{eq:Cauchy} we conclude that $\ub{\sigma}$ has rank two. It represents a 2D stress acting in the plane that is inclined with respect to $\O \times \{0\}$ at the slope $\nabla w$.

	\end{remark}

\subsection{Formulation of the compliance minimization problem}

As announced in the introduction, by the \textit{optimal membrane problem} (OM) we shall understand finding the material distribution that minimizes the membrane's compliance:
\begin{equation}
\label{eq:OEM}\tag*{$(\mathrm{OM})$}
\Cmin =  \min \Big \{ \Comp(\mu) \ : \ \mu \in \Mes_+(\Ob), \ \  \mu(\Ob) \leq \Totc \Big\} 
\end{equation}
\begin{proposition}
	\label{prop:existence}
	For any bounded domain $\Omega \subset \Rd$, load $f \in \Mes(\Ob;\R)$, and potential $j$ satisfying \ref{as:convex}-\ref{as:coercivity} the optimal membrane problem \ref{eq:OEM} admits a solution.
\end{proposition}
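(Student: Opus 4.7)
The plan is to apply the Direct Method of the Calculus of Variations. The two ingredients needed are weak-$\ast$ compactness of the admissible set and weak-$\ast$ lower semi-continuity of the compliance functional $\Comp$ on $\Mes_+(\Ob)$.

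First I would verify that the admissible set
\[
\mathcal{A} := \bigl\{\mu \in \Mes_+(\Ob) \, : \, \mu(\Ob) \leq V_0 \bigr\}
\]
is weakly-$\ast$ compact. Since $\Mes(\Ob)$ is the topological dual of $C(\Ob)$, Banach-Alaoglu yields compactness of the closed ball of radius $V_0$. Both the positivity condition $\mu \geq 0$ and the mass bound $\mu(\Ob) \leq V_0$ pass to the weak-$\ast$ limit (testing against non-negative functions in $C(\Ob)$ and the constant function $1$, respectively), so $\mathcal{A}$ is weakly-$\ast$ closed inside this ball, hence compact.

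Next I would establish weak-$\ast$ lower semi-continuity of $\Comp$. Rewriting \ref{eq:nlcomp} as a supremum,
\begin{equation*}
\Comp(\mu) = \sup_{(u,w) \in \D(\Omega;\R^2 \times \R)} \biggl\{ \int_\Ob w\, df - \int_\Ob j_+\bigl(\tfrac{1}{2}\, \nabla w \otimes \nabla w + e(u)\bigr) \, d\mu \biggr\},
\end{equation*}
each term inside the braces, for a fixed test pair $(u,w)$, is an affine function of $\mu$. The first summand does not depend on $\mu$ at all. For the second, since $u,w$ are smooth with compact support in $\Omega$ and $j_+$ is continuous (by the first proposition of Section \ref{ssec:const_law}), the integrand $x \mapsto j_+\bigl(\tfrac{1}{2}\, \nabla w(x) \otimes \nabla w(x) + e(u)(x)\bigr)$ is a bounded continuous function on $\Ob$. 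Hence $\mu \mapsto \int_\Ob j_+(\cdots)\, d\mu$ is weakly-$\ast$ continuous. As a pointwise supremum of weakly-$\ast$ continuous affine functionals, $\Comp$ is weakly-$\ast$ lower semi-continuous on $\Mes_+(\Ob)$.

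With these two facts, existence follows routinely. Taking a minimizing sequence $(\mu_n) \subset \mathcal{A}$, by compactness we extract (without relabeling) a subsequence with $\mu_n \weakstar \check{\mu}$ for some $\check{\mu} \in \mathcal{A}$. Lower semi-continuity then gives
\[
\Comp(\check{\mu}) \leq \liminf_{n \to \infty} \Comp(\mu_n) = \Cmin,
\]
so $\check{\mu}$ solves \ref{eq:OEM}. (If $\Cmin = +\infty$, any $\mu \in \mathcal{A}$ is trivially a minimizer, so the statement is vacuously attained; no regularity of $\Omega$ nor any growth beyond \ref{as:convex}--\ref{as:coercivity} is needed.) There is no genuine obstacle here: the argument hinges only on the dual representation of $\Comp$ as a supremum, which makes the seemingly delicate nonlinearity of the F\"{o}ppl strain map irrelevant at the level of the outer optimization in $\mu$.
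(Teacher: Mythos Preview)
Your proof is correct and follows essentially the same approach as the paper: both rewrite $\Comp$ as a supremum of weak-$\ast$ continuous affine functionals of $\mu$ (using continuity of $j_+$ to ensure the integrand lies in $C(\Ob)$), deduce weak-$\ast$ lower semi-continuity, note weak-$\ast$ compactness of the admissible set, and invoke the Direct Method. The only difference is that you spell out the Banach--Alaoglu argument and the minimizing-sequence extraction explicitly, whereas the paper states these in one line; the paper also defers the finiteness of $\Cmin$ to a forward reference rather than handling the $\Cmin=+\infty$ case as you do.
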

\begin{proof}
	For any fixed pair $(u,w) \in \D(\O;\Rd\times \R)$ the mapping $\Ob\ni x \mapsto j_+\big(\frac{1}{2} \nabla w(x) \otimes \nabla w(x) + e(u)(x)\big)$ is continuous thanks to the continuity of $j_+$ itself. Accordingly, the mapping  $\Mes_+(\Ob) \ni \mu \mapsto \int_\Ob w\, df -   \int_\Ob j_+\bigl(\tfrac{1}{2} \, \nabla w \otimes \nabla w + e(u) \bigr) d\mu$ is affine and continuous with respect to weak-* topology on $\Mes(\Ob;\R)$. In turn, $\mu \mapsto \Comp(\mu)$ is convex and lower semi-continuous on $\Mes(\Ob;\R)$ as a point-wise supremum of the family of previous mappings. Moreover, the set of admissible measures $\mu$ in \ref{eq:OEM} is weakly-* compact. The existence now follows by the Direct Method of the Calculus of Variations.
\end{proof}

\begin{remark}
	The proof of Proposition \ref{prop:existence} should include showing that $\Cmin$ is finite. This fact will be proved in the next section -- one should combine the results: Lemma \ref{lem:problem_P_m}, Proposition \ref{prop:supP=infP*}, and the estimate \eqref{eq:finite_Z}.
\end{remark}

\section{The underlying pair of mutually dual variational problems}
\label{sec:pair}

\subsection{Formulation of the pair $(\mathcal{P})$, $(\mathcal{P}^*)$}
\label{ssec:rho}

Let us define a $\rho_+:\Sdd \to \R_+$ through the following relation:
\begin{equation}
\label{eq:recall_rhop}
\frac{1}{p} \bigl(\rho_+(\xi) \bigr)^{p} =	j_+(\xi).
\end{equation}
Owing to \ref{as:convex},\,\ref{as:p-hom}, one can show that $\rho_+$ is a positively 1-homogeneous convex continuous function. To that aim one can use \cite[Corollary 15.3.1]{rockafellar1970convex}, which also furnishes:
\begin{equation*}
	\frac{1}{p'} \bigl(\rho_+^0(\sig) \bigr)^{p'} = j_+^*(\sig) \qquad \text{where} \qquad \rho^0_+(\sig) := \sup\limits_{\xi \in \Sdd}\Big\{ \pairing{\xi,\sigma} \, : \, \rho_+(\xi) \leq 1 \Big\};
\end{equation*}
above $p'=p/(p-1)$ is the H\"{o}lder conjugate exponent to $p$.
Above $\rho_+^0:\Sdd \to \Rb$ is the polar function of $\rho_+$, a positively 1-homogeneous convex lower semi-continuous function. In contrast to $\rho_+$, the function $\rho_+^0$ is not continuous; in particular it takes the value $+\infty$ for $\sigma \notin \Sddp$.

In addition, we define the mutually polar functions $\rho,\dro:\Sdd \to\R_+$ that satisfy:
\begin{equation}
\label{eq:recall_rho}
\frac{1}{p} \bigl(\rho(\xi) \bigr)^{p} =	j(\xi), \qquad \frac{1}{p'} \bigl(\rho^0(\sig) \bigr)^{p'} = j^*(\sig),  \qquad \rho^0(\sig) = \sup\limits_{\xi \in \Sdd}\Big\{ \pairing{\xi,\sigma} \, : \, \rho(\xi) \leq 1 \Big\}.
\end{equation}
The following equalities hold true:
\begin{equation}
\label{eq:rhop_rho}
\rho_+(\xi) =  \min\limits_{\zeta \in \Sddp} \rho(\xi +\zeta), \qquad \rho_+^0(\sig) = \dro(\sig) + \chi_{\Sddp}(\sig).
\end{equation}
Let us now study the properties of the set:
\begin{equation}
\label{eq:Cm}
\Cm:=\Big\{ (\xi,\te)\in \Sdd \times \Rd\, :\, \rho_+\!\Big(\tfrac1{2}\, \te\otimes \te + \xi \Big) \leq 1   \Big\} = \Big\{(\xi,\te)\in \Sdd \times \Rd\, :\, g(\xi,\te) \leq 1/p \Big\}.
\end{equation}
The latter equality is thanks to: $g(\xi,\theta) = j_+\big(\frac{1}{2} \theta\otimes\theta + \xi\big) =\frac{1}{p} \bigl(\rho_+(\frac{1}{2} \theta\otimes\theta + \xi) \bigr)^{p}$. Since $\Cm$ is a sub-level set of $g$, and $g$ is convex continuous, we deduce that $\Cm$ is closed and convex. In addition, $g(0,0) = 0$ guarantees that $\Cm$ contains a neighbourhood of the origin. On top of that, for every negative semi-definite $\xi_0$ the unbounded set $\big\{(t \xi_0,0) \in \Sdd \times \Rd \, : \, t\geq 0  \big\}$ is contained in $\Cm$, thus rendering $\Cm$ unbounded itself.

We define $\vro:\Sdd \times \Rd \to \R_+$ as the \textit{gauge} of the set $\Cm$ (alternatively: the \textit{Minkowski functional} of $\Cm$): 
\begin{equation}
\label{eq:vro}
\vro(\xi,\te) := \inf_{t > 0} \left\{ t \, : \, \frac{1}{t}\,(\xi,\te) \in \Cm \right\}.
\end{equation}
being a positively 1-homogeneous convex continuous function. Its polar $\vro^0$ coincides with the support function $\chi_\Cm^*$ of the set $\Cm$:
\begin{equation}
\label{eq:vro_and_Cm}
\vro^0(\sig,\q)  = \sup_{(\xi,\te) \in \Sdd \times \Rd} \Big\{  \pairing{\xi,\sig}+\pairing{\te,\q} : (\xi,\te) \in \Cm \Big\} =  \ind_{\Cm}^*(\sig,q),
\end{equation}
where we acknowledged yet another characterization of $C$
\begin{equation}
\label{eq:Cm_vro}
\Cm = \Big\{(\xi,\te)\in \Sdd \times \Rd\, :\, \vro(\xi,\te) \leq 1\Big\}
\end{equation}
(see \cite[Corollary 15.1.2]{rockafellar1970convex} for a general relation between gauges and support functions).

\begin{remark}
	Let us observe that the two convex continuous mappings $(\xi,\theta) \mapsto \rho_+(\tfrac1{2}\, \te\otimes \te + \xi )$ and $(\xi,\theta) \mapsto \vro(\xi,\theta)$ share the sub-level set $\Cm$ for the value $1$. Nonetheless, the functions do not coincide. The argumentation is simple: the first mapping is not positively homogeneous. Notwithstanding this, we can write down the equivalence that will be useful in the sequel:
	\begin{equation}
		\label{eq:zero_vro}
		\vro(\xi,\theta) = 0 \qquad \Leftrightarrow \qquad \rhop\big(\tfrac{1}{2}\,\theta \otimes \theta + \xi \big) =0 \qquad \Leftrightarrow \qquad \tfrac{1}{2}\,\theta \otimes \theta + \xi \in \Sddm,
	\end{equation}
	where the last equivalence is due to \eqref{eq:zero_jp}.
\end{remark}

\begin{proposition}\label{prop:varrho_polar}
	The following statements hold true:
	\begin{enumerate}[label={(\roman*)}]
		\item
		The  set $\Cm$ is a closed unbounded convex subset of $\Sdd \times \Rd$ that contains a neighbourhood of the origin. Its support function reads
		\begin{align}\label{eq:vro_polar}
		\chi_\Cm^*(\sig,q) = \vro^0(\sig,\q) &= \rho_+^0(\sig) + \jsq \\
		&=\begin{cases} \dro(\sig) +  \frac1{2} \langle\sig\hat{\te},\hat{\te} \rangle  & \text{if $\sig\in \Sddp$ \ and \  $\q = \sig \hat{\te}$ \ for \ $\hat{\te}\in \Rd$,   }\\
		\nonumber
		\infty & \text{if $\sig\notin \Sddp$ \ or if \  $\q \notin  \IM(\sig).$ }
		\end{cases}
		\end{align}
		\item Assume $(\xi,\theta) \in \Sdd \times \Rd$ such that $\vro(\xi,\theta) \leq 1$ (equivalently $(\xi,\theta) \in \Cm$ or $\rho_+(\tfrac{1}{2} \,\theta \otimes \theta +\xi) \leq 1$). Then, for any $(\sig,q) \in \Sdd \times \Rd$ there holds the following characterization of the extremality relation:
		\begin{equation*}
		\pairing{\xi,\sig} + \pairing{\theta,q} = \vro^0(\sig,q) \qquad \Leftrightarrow \qquad
		\begin{cases}
		\sig \in \Sddp,\\
		\pairing{\frac{1}{2}\,\theta \otimes \theta + \xi,\sig} = \dro(\sig),\\
		q = \sig \theta.
		\end{cases}
		\end{equation*}
		\item The function $\vro^0$ is coercive on $\Sdd \times \Rd$ in the sense that there exists a constant $C_1>0$ such that the lower bound holds true:
		\begin{equation}
		\label{coercif}
		\vro^0(\sig,\q)\ \ge\ C_1 \bigl( |\sig| + \abs{\q} \bigr).
		\end{equation}
		\item For each $\theta \in \Rd$ the function $\vro(\argu,\theta):\Sdd \to \R_+$ is monotone in the following sense: 
		\begin{equation*}
			\xi_1 \preceq \xi_2 \qquad \Rightarrow \qquad \vro(\xi_1,\theta) \leq \vro(\xi_2,\theta),
		\end{equation*}
		where the inequality $\preceq$ is understood in the sense of the induced quadratic forms.
	\end{enumerate}
\end{proposition}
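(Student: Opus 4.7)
I will prove the four parts in order, with part (i) doing most of the work and parts (ii)--(iv) following as consequences.

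For part (i), the structural properties of $\Cm$ are immediate: $\Cm = \{(\xi,\theta) : g(\xi,\theta) \leq 1/p\}$ is closed convex because $g$ is convex continuous (Proposition \ref{prop:convexity_g}), it contains a neighbourhood of the origin because $g(0,0)=0 < 1/p$, and the unboundedness follows from the fact that $j_+ \equiv 0$ on $\Sddm$ (see \eqref{eq:zero_jp}), so $\{(t\xi_0,0) : t \geq 0\} \subset \Cm$ for any $\xi_0 \in \Sddm$. For the support function, I would compute directly
\begin{equation*}
\vro^0(\sigma,q) \ = \ \sup\Bigl\{\pairing{\xi,\sigma} + \pairing{\theta,q} \ : \ j_+\bigl(\tfrac{1}{2}\theta\otimes\theta + \xi\bigr) \leq 1/p\Bigr\}.
\end{equation*}
The key step is the change of variables $\eta = \tfrac{1}{2}\theta\otimes\theta + \xi$, which decouples the two variables since $\sigma$ is symmetric:
\begin{equation*}
\vro^0(\sigma,q) \ = \ \sup_{\rho_+(\eta)\leq 1} \pairing{\eta,\sigma} \ + \ \sup_{\theta \in \Rd} \Bigl\{\pairing{\theta,q} - \tfrac{1}{2}\pairing{\sigma\theta,\theta}\Bigr\}.
\end{equation*}
The first supremum is precisely $\rho_+^0(\sigma)$ by definition. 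For the second, if $\sigma \notin \Sddp$ the quadratic form is unbounded above along a negative eigendirection; if $\sigma \in \Sddp$ but $q \notin \mathrm{Im}\,\sigma$ one can send the value to $+\infty$ along $\ker\sigma$; and if $\sigma \in \Sddp$ with $q = \sigma\hat\theta$ the maximum equals $\tfrac{1}{2}\pairing{\sigma\hat\theta,\hat\theta}$. Combined with $\rho_+^0 = \rho^0 + \chi_{\Sddp}$, this yields \eqref{eq:vro_polar}.

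For part (ii), the extremality $\pairing{\xi,\sigma}+\pairing{\theta,q} = \vro^0(\sigma,q)$ with $(\xi,\theta) \in \Cm$ is equivalent (under the decomposition above) to both suprema being attained simultaneously. Attainment of the second forces $\sigma \in \Sddp$ and $q = \sigma\theta$; attainment of the first forces $\pairing{\eta,\sigma} = \rho_+^0(\sigma)$ with $\rho_+(\eta)\leq 1$, which under $\sigma\in\Sddp$ reduces to $\pairing{\tfrac{1}{2}\theta\otimes\theta + \xi,\sigma} = \rho^0(\sigma)$.

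For part (iii), I start from the formula of (i) on the effective domain $\sigma\in\Sddp$, $q=\sigma\hat\theta$. The lower bound $\widetilde{C}_2|\sigma|^{p'}\leq j^*(\sigma)$ from \ref{as:coercivity} transfers to $\rho^0(\sigma) \geq c_1 |\sigma|$. For the transverse component, the Cauchy--Schwarz inequality with respect to the semi-inner product induced by $\sigma \in \Sddp$ gives $|q|^2 = |\sigma\hat\theta|^2 \leq |\sigma| \cdot \pairing{\sigma\hat\theta,\hat\theta}$, whence $\tfrac{1}{2}\pairing{\sigma\hat\theta,\hat\theta} \geq |q|^2/(2|\sigma|)$. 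Combining and applying AM--GM to $c_1|\sigma| + |q|^2/(2|\sigma|)$ produces a lower bound of the form $C_1(|\sigma|+|q|)$.

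Finally, for part (iv), I will first show that $\rho_+:\Sdd \to \R_+$ itself is monotone with respect to $\preceq$. From \eqref{eq:rhop_rho}, if $\xi_1 \preceq \xi_2$ then $(\xi_2 + \Sddp) \subset (\xi_1 + \Sddp)$, so the infimum defining $\rho_+(\xi_2)$ ranges over a subset of that for $\rho_+(\xi_1)$, hence $\rho_+(\xi_1) \leq \rho_+(\xi_2)$. Then by the gauge characterization, $(\xi_2,\theta)\in t\Cm$ means $\rho_+\bigl(\tfrac{1}{2t^2}\theta\otimes\theta + \tfrac{1}{t}\xi_2\bigr)\leq 1$; the analogous quantity for $\xi_1$ differs only by $\tfrac{1}{t}(\xi_1-\xi_2)\in\Sddm$ (for $t>0$), so monotonicity of $\rho_+$ gives $(\xi_1,\theta)\in t\Cm$ as well, whence $\vro(\xi_1,\theta)\leq\vro(\xi_2,\theta)$. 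The main subtlety, I anticipate, is ensuring the min-max/sup-sup separation in (i) is rigorously justified (including the handling of the boundary cases $\sigma\notin\Sddp$ and $q\notin\mathrm{Im}\,\sigma$), but the argument is direct and requires no minimax theorem since the two suprema genuinely decouple after the affine change of variables.
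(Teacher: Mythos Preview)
Your proof is correct. Parts (i) and (ii) follow exactly the paper's approach: the same change of variables $\tilde\xi = \tfrac{1}{2}\theta\otimes\theta + \xi$ to decouple the supremum, and then reading off the extremality conditions from simultaneous attainment.

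Parts (iii) and (iv), however, take a different route from the paper. For (iii), the paper simply observes that since $\Cm$ contains a neighbourhood of the origin in a finite-dimensional space, its support function $\chi_\Cm^* = \vro^0$ automatically dominates a multiple of the norm; this is a one-line abstract argument. Your explicit computation via $\rho^0(\sigma)\geq c_1|\sigma|$, the estimate $|q|^2 \leq |\sigma|\,\langle\sigma\hat\theta,\hat\theta\rangle$, and AM--GM is correct but more laborious, and has the minor advantage of producing a trackable constant. For (iv), the paper uses the bipolar identity $\vro = (\vro^0)^0$ together with the fact (from part (i)) that $\vro^0(\sigma,q)<\infty$ forces $\sigma\in\Sddp$; monotonicity of $\xi\mapsto\langle\xi,\sigma\rangle$ for $\sigma\in\Sddp$ then gives the result immediately. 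Your argument instead establishes monotonicity of $\rho_+$ directly from its infimal definition and pushes it through the gauge. Both are valid; the paper's dual approach is shorter, while yours is self-contained and avoids invoking the bipolar theorem.
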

\begin{proof}
	The properties of $\Cm$ stated in (i) have been already established in the text above. The proof of the rest of the assertions is similar to the one of Lemma \ref{prop:convexity_g}. We compute
	\begin{align*}
	\vro^0(\sig,q) &= \sup_{\theta\in \R^d} \sup_{\xi\in\Sdd} 
	\Big\{\pairing{\xi,\sig}+\pairing{\theta,\q} : \rhop\!\left(\tfrac{1}{2}\,  \theta \otimes \theta + \xi \right) \leq 1 \Big\}\\
	\nonumber
	&=\sup_{\theta\in \R^d} \sup_{\tilde{\xi}\in\Sdd} 
	\left\{\pairing{\theta,\q} + \Big\langle\tilde{\xi} -\tfrac{1}{2}\,  \theta \otimes \theta,\sig \Big\rangle: \rhop\big(\tilde{\xi}\,\big) \leq 1  \right\}\\
	\nonumber
	&=\sup_{\theta\in \R^d} 
	\Big\{\pairing{\theta,\q} - \tfrac{1}{2} \pairing{\sig \theta,\theta}\Big\} +   \sup_{\tilde{\xi}\in\Sdd}\left\{ \langle\tilde{\xi},\sig \rangle : \rhop\big(\tilde{\xi}\,\big) \leq 1 \right\}  = \jsq + \rho_+^0(\sig),
	\end{align*} 
	where we performed the substitution $\tilde{\xi} = \tfrac{1}{2}\, \theta \otimes \theta + \xi$. Under the prerequisite of the claim (ii) we have $\rho_+(\tilde{\xi}) \leq 1$. Then, the assertion (ii) follows from the last line above, while in addition we acknowledge the fact that $\rho^0_+(\sig) < \infty$ if and only if $\sig \in \Sddp$, and $\rho^0_+(\sig) = \dro(\sig)$ in this case. Considering the finite dimension of $\Sdd \times \Rd$, the statement (iii) is a straightforward consequence of the fact that $\Cm$ contains a neighbourhood of the origin. The assertion (iv) is an easy consequence of the fact that $\vro = (\vro^0)^0$, whereas the condition $\vro^0(\sig,q) \leq 1$  implies that $\sigma \in \Sddp$.
\end{proof}

We are now in a position to formulate the pair of mutually dual convex variational problems:
\bigskip
\begin{align}
\label{eq:PM}\tag*{$(\mathcal{P})$}
Z := &\,\sup \left \{ \int_\Ob w \,df \, : \, (u,w) \in \DO,  \  \rho_+\Big(\tfrac{1}{2} \, \nabla w \otimes \nabla w + e(u)\Big) \leq 1  \ \ \text{in } \Ob  \right\}\\
\nonumber \\
\label{eq:dPM}\tag*{$(\mathcal{P}^*)$}
&\,\inf \biggl\{ \int_\Ob \vro^0(\TAU,\vartheta) \ : \ (\TAU,\vartheta) \in \Mes(\Ob;\Sddp \times \Rd), \  -\DIV \,\TAU = 0, \ \ -\dive \, \vartheta = f \ \   \text{in } \Omega \biggr \}
\end{align}
\bigskip

\noindent In the problem \ref{eq:dPM} the symbol $\int_\Ob \vro^0(\TAU,\vartheta)$ stands for a convex lower semi-continuous functional on the space of measures that should be understood in the sense of the Goffman-Serrin convention \cite{goffman1964}. Namely, for any positive measure $\mu \in \Mes_+(\Ob)$ such that $\TAU \ll \mu$ and $\vartheta \ll \mu$, by definition there holds:
\begin{equation*}
	\int_\Ob \vro^0(\TAU,\vartheta) = \int_{\Ob} \vro^0(\sig,q)\,d\mu = \int_{\Ob} \Big( \rho_+^0(\sig) +\jsq  \Big) d\mu \qquad \text{where} \quad \sigma = \frac{d \TAU}{d\mu}, \ \ q=\frac{d \vartheta}{d\mu}.
\end{equation*}
The value of the functional $\int_\Ob \vro^0(\TAU,\vartheta)$ does not depend on the particular choice of $\mu$ thanks to the positive 1-homogeneity of $\vro^0$. In addition, let us remark that finiteness of $\int_\Ob \vro^0(\TAU,\vartheta)$ entails the absolute continuity of $\vartheta$ with respect to $\TAU$. More precisely:
\begin{equation}
	\label{eq:vro_fin}
	\int_\Ob \vro^0(\TAU,\vartheta) < \infty \qquad \Rightarrow \qquad 
	\begin{cases}
		\TAU \in \Mes(\Ob;\Sddp) \quad \text{($\TAU$ is valued in positive semi-definite matrices)},\\
		\vartheta \ll \mu \qquad \text{for \quad $\mu:=\dro(\vartheta)$},\\
		q(x) \in \mathrm{Im}\,\sigma(x) \quad \text{for $\mu$-a.e. $x$}, \qquad \text{where} \quad \sigma = \tfrac{d \TAU}{d\mu}, \ \ q=\tfrac{d \vartheta}{d\mu}.
	\end{cases}
\end{equation}
The above implication may be showed in analogy with the proof of \cite[Lemma 3.10 (i)]{bolbotowski2022a}. As one of the consequences of \eqref{eq:vro_fin} we can equivalently require that $\TAU \in \Mes(\Ob;\Sdd)$ or $\TAU \in \Mes(\Ob;\Sddp)$ in \ref{eq:dPM}.

\begin{proposition}
	\label{prop:supP=infP*}
	There holds the zero-gap equality:
	\begin{equation*}
	Z=\sup \mathcal{P} = \min \mathcal{P}^* < \infty
	\end{equation*}
	whereas the minimum is achieved.
\end{proposition}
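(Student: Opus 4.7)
My approach is to recognize \ref{eq:PM} as a standard Fenchel--Rockafellar primal and apply the duality theorem cited in the paper (Theorem~\ref{thm:duality_classical}), essentially mirroring the derivation of \ref{eq:dual_comp_memb} in Proposition~\ref{prop:dual_comp_mem}. I would set $X = \DO$ and $Y = C(\Ob;\Sdd\times\Rd)$ with the sup norm, equipped with the continuous linear operator
\begin{equation*}
\Lambda:X \to Y, \qquad \Lambda(u,w) = \bigl(e(u),\nabla w\bigr),
\end{equation*}
and define $\Phi(u,w) = -\int_{\Ob} w\,df$ together with $\Psi(\eta) = \chi_{K_C}(\eta)$ where $K_C := \{\eta \in Y : \eta(x) \in C \text{ for all } x\in \Ob\}$. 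By construction $-Z$ equals $\inf_X \bigl[\Phi + \Psi \circ \Lambda\bigr]$.

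Next I would compute the Fenchel conjugates. Since $\Phi$ is linear, $\Phi^*(-\Lambda^*(\TAU,\vartheta))$ is zero if the dual admissibility holds and $+\infty$ otherwise; by the integration-by-parts conventions \eqref{eq:div}--\eqref{eq:DIV}, this amounts to $-\DIV\TAU = 0$ and $-\dive\vartheta = f$ in $\Omega$. For the conjugate of $\Psi$, I would invoke the Rockafellar/Goffman--Serrin integral representation theorem: because $\chi^*_C = \vro^0$ is a convex, proper, lower semi-continuous, positively one-homogeneous integrand (Proposition~\ref{prop:varrho_polar}(i)) and because the coercivity estimate \eqref{coercif} together with \eqref{eq:vro_fin} forces any finite-cost dual measure into $\Mes(\Ob;\Sddp\times\Rd)$ automatically, one obtains $\Psi^*(\TAU,\vartheta) = \int_\Ob \vro^0(\TAU,\vartheta)$. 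Combining these computations, the Fenchel dual of our minimization is exactly $-\inf\text{\ref{eq:dPM}}$.

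To close the duality gap and secure attainment in the dual, I need the standard qualification condition of Theorem~\ref{thm:duality_classical}. This is immediate: by Proposition~\ref{prop:varrho_polar}(i) the set $C$ contains a Euclidean neighbourhood of the origin in $\Sdd\times \Rd$, so $K_C$ contains a sup-norm neighbourhood of $0\in Y$, meaning $\Psi$ is continuous at $\Lambda(0,0) = 0$, while $\Phi(0,0) = 0 < \infty$. Thus the theorem applies, yielding $Z = \sup\text{\ref{eq:PM}} = \min\text{\ref{eq:dPM}}$ with the minimum attained whenever this common value is finite. Finiteness of $Z$ can be established by exhibiting a single admissible test pair for \ref{eq:dPM}: e.g.\ take $\TAU_0 = c\,\mathrm{I}_2 \mathcal{L}^2 \mres \Ob$ (which satisfies $-\DIV\TAU_0 = 0$ in $\Omega$) with $c$ large, together with any $\vartheta_0$ absolutely continuous w.r.t.\ $\TAU_0$ and equilibrating $f$ in the sense $-\dive\vartheta_0 = f$; then $\int \vro^0(\TAU_0,\vartheta_0) = \int \bigl[\dro(c\mathrm{I}_2) + \tfrac{1}{2c}|q_0|^2\bigr]\,dx < \infty$ for a sufficiently regular $\vartheta_0 = q_0\,\mathcal{L}^2$. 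For a general signed Radon measure $f$ such $\vartheta_0$ is produced by a mollification argument.

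The main obstacle in the plan is the integral representation $\Psi^* = \int_\Ob \vro^0(\argu)$, because $\vro^0$ is extended real-valued (taking $+\infty$ outside $\Sddp$ and outside $\IM(\sigma)$) and $K_C$ itself is unbounded. The saving feature is the coercivity \eqref{coercif}: any $(\TAU,\vartheta)\in Y^*$ with finite $\Psi^*$-value is automatically concentrated on the effective domain of $\vro^0$, so the representation falls within the scope of classical results on convex integrands of measure arguments. Once this point is handled, the remainder of the argument is a routine application of convex duality.
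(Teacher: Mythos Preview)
Your duality setup is essentially identical to the paper's: same choice of $X,\,Y,\,\Lambda,\,\Phi,\,\Psi$, same Rockafellar-type integral representation for $\Psi^*$, and the same qualification via the fact that $\Cm$ contains a neighbourhood of the origin so that $\Psi=\chi_{K_\Cm}$ is continuous at $\Lambda(0,0)=0$. On that part there is nothing to add.

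Where you diverge is in establishing $Z<\infty$, and here there is a genuine gap. With $\TAU_0 = c\,\mathrm{I}_2\,\mathcal{L}^2\mres\Ob$ you need $\vartheta_0 = q_0\,\mathcal{L}^2$ with $q_0 \in L^2(\Omega;\Rd)$ satisfying $-\dive\,q_0 = f$ in $\Omega$, so that $\int \tfrac{1}{2c}|q_0|^2\,dx<\infty$. But the paper allows any $f\in\Mes(\Ob;\R)$, in particular point loads $f=P\,\delta_{x_0}$. For such $f$ no $L^2$ vector field $q_0$ with $-\dive\,q_0=f$ exists: the gradient of the Newtonian potential behaves like $|x-x_0|^{-1}$ near $x_0$, which lies in $L^p_{\mathrm{loc}}$ only for $p<2$ in dimension two. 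Your ``mollification argument'' does not rescue this, since replacing $f$ by $f_\eps$ and sending $\eps\to 0$ gives no uniform control on $\int|q_\eps|^2\,dx$. To make a dual-feasibility argument work for singular $f$ you would have to let $\TAU_0$ itself concentrate on lower-dimensional sets (strings joining $\mathrm{sp}\,f$ to $\bO$, as in Section~\ref{ssec:two_point}), and then arranging $-\DIV\,\TAU_0=0$ becomes a nontrivial construction in its own right.

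The paper avoids this entirely by bounding $Z$ from the primal side: Proposition~\ref{prop:estimates_uw}(i) gives the uniform estimate $\|w\|_{L^\infty(\Omega)}\leq C\,\mathrm{diam}(\Omega)$ for every admissible $(u,w)$, whence $Z\leq C\,\mathrm{diam}(\Omega)\,|f|(\Ob)<\infty$ immediately (this is \eqref{eq:finite_Z}). That bound in turn rests on the two-point inequality \eqref{eq:two-point_0}, which is where the specific structure of the constraint set $\Cm$ is used. You should replace your dual-feasibility sketch with this primal argument.
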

\begin{proof}
	The assertion follows by a direct application of Theorem \ref{thm:duality_classical}. Indeed, let us take:
	\begin{equation*}
	X = \D(\O;\Rd \times \R), \qquad Y = C(\Ob;\Sdd \times \Rd), \qquad \Lambda(u,w) = \big(e(u),\nabla w\big),
	\end{equation*}
	\begin{equation*}
	\Phi(u,w) = - \int_\Ob w\,df, \qquad \Psi(\xi,\theta)= \chi_{\mathscr{C}}(\xi,\theta),
	\end{equation*}
	where $\mathscr{C} = \big \{ (\xi,\theta) \in C(\Ob;\Sdd\times\Rd)\,  : \, \big(\xi(x),\theta(x)\big) \in \mathrm{C} \quad  \forall\,x \in \Ob \big\}$, while $\mathrm{C}$ is the closed convex set defined in \eqref{eq:Cm}. We readily recognize the maximization problem on the left hand side of \eqref{eq:sup=inf} as $\Prob$. For any $(\TAU,\vartheta) \in Y^* = \Mes(\Ob;\Sdd\times\Rd)$ it is straightforward to check that 
	\begin{equation*}
	\Phi^*(-\Lambda^*\TAU) = 
	\begin{cases}
	0 & \text{if}\quad \  -\DIV \,\TAU = 0, \ \ -\dive \, \vartheta = f \ \   \text{in } \Omega,  \\
	\infty & \text{otherwise,}
	\end{cases}\qquad
	\Psi^*(\TAU,\vartheta) = \chi^*_{\mathscr{C}}(\TAU,\vartheta) = \int_\Ob \chi^*_\mathrm{C}(\TAU,\vartheta),
	\end{equation*}
	whilst the integral representation of $\chi^*_{\mathscr{C}}$ is due to the  Rockafellar's result, cf. \cite[Theorem 6]{rockafellar1971}. Since $\chi^*_\mathrm{C} = \vro^0$ according to \eqref{eq:vro_and_Cm}, we recognize the minimization problem on the right hand side of \eqref{eq:sup=inf} in Theorem \ref{thm:duality_classical} as $\dProb$. Since the set $\mathrm{C}$ contains a neighbourhood of the origin in $\Sdd \times \Rd$, one can check that for $(u_0,w_0)=(0,0)$ clearly $\Phi(u_0,w_0) < \infty$, and $\Psi$ is continuous at $\Lambda (u_0,w_0) = (0,0)$ in the topology of uniform convergence. The proof will be complete if we show that $\sup \mathcal{P} < \infty$, which we postpone to Section \ref{sec:relaxation_and_optcond}, see \eqref{eq:finite_Z}.
\end{proof}

\subsection{The link between the optimal membrane problem and the pair \ref{eq:PM},\,\ref{eq:dPM}}

In this subsection we expound how does the pair \ref{eq:PM},\,\ref{eq:dPM} connect to the original optimal membrane problem \ref{eq:OEM}. Let us begin with the simple relation between the value functions $Z$ and $\Cmin$:
\begin{lemma}
	\label{lem:problem_P_m}
	The minimum value of compliance in \ref{eq:OEM} equals
	\begin{equation}
	\label{eq:Cmin_Z_m}
	\Cmin = \frac{2p-1}{2 p} \left( \frac{\Z^{2p}}{2 \Totc}\right)^{\frac{1}{2p-1}},
	\end{equation}
	where $\Z = \sup \mathcal{P}$.
\end{lemma}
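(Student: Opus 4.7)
Both inequalities are reached by exploiting the positive $p$-homogeneity of $j_+$. Substituting $(u,w) \mapsto (c^2 u, cw)$ in \ref{eq:nlcomp} rescales the strain $\xi$ by $c^2$, so that $\int j_+(\xi)\,d\mu \mapsto c^{2p}\int j_+(\xi)\,d\mu$ and $\int w\,df \mapsto c\int w\,df$. Elementary maximization in $c$ of $c|\alpha| - c^{2p}\beta$ (with $\alpha := \int w\,df$ and $\beta := \int j_+(\xi)\,d\mu$) produces the scaled representation
\begin{equation*}
	\Comp(\mu)\ =\ \frac{2p-1}{2p}\,\sup_{(u,w) \in \D}\left(\frac{\abs{\alpha}^{2p}}{2p\,\beta}\right)^{\!1/(2p-1)}.
\end{equation*}
The very same scaling applied to \ref{eq:PM} gives the identity $Z^{2p} = \sup_{(u,w)} \abs{\alpha}^{2p}/\norm{\rho_+(\xi)}_{L^\infty(\Ob)}^p$ (recall $p\,j_+ = \rho_+^p$).

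\textbf{Lower bound.} For admissible $\mu$ the crude estimate $\beta \leq V_0\,\norm{j_+(\xi)}_{L^\infty} = V_0\,\norm{\rho_+(\xi)}_{L^\infty}^p/p$ plugged into the representation of $\Comp(\mu)$, combined with the identity for $Z^{2p}$, immediately yields $\Comp(\mu) \geq \frac{2p-1}{2p}(Z^{2p}/(2V_0))^{1/(2p-1)}$, which is the ``$\geq$'' inequality in the claim.

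\textbf{Upper bound.} Let $(\hat\TAU,\hat\vartheta)$ be a minimizer of \ref{eq:dPM} provided by Proposition~\ref{prop:supP=infP*}. Fix $\mu_0 \in \Mes_+(\Ob)$ dominating $\abs{\hat\TAU}$ and $\abs{\hat\vartheta}$, and set $\hat\sigma := d\hat\TAU/d\mu_0$, $\hat q := d\hat\vartheta/d\mu_0$, $Z_1 := \int \rho^0(\hat\sigma)\,d\mu_0$, $Z_2 := \int \tfrac{1}{2}\langle\hat\sigma^{-1}\hat q, \hat q\rangle\,d\mu_0$, so that $Z_1 + Z_2 = Z$. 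The pair $(\alpha\hat\TAU, \hat\vartheta)$ remains admissible in \ref{eq:dPM} for every $\alpha > 0$ (rescaling $\hat\vartheta$ would break $-\dive\hat\vartheta = f$, while $-\DIV\hat\TAU = 0$ is preserved under scaling of $\hat\TAU$). Optimality thus forces $\alpha Z_1 + Z_2/\alpha \geq Z_1 + Z_2$ for all $\alpha > 0$, which pins $Z_1 = Z_2 = Z/2$. For parameters $\alpha,\lambda > 0$ I introduce the competitor
\begin{equation*}
\mu := \lambda\alpha\,\rho^0(\hat\TAU) \in \Mes_+(\Ob), \qquad \sigma := \frac{\hat\sigma}{\lambda\,\rho^0(\hat\sigma)}, \qquad q := \frac{\hat q}{\lambda\alpha\,\rho^0(\hat\sigma)},
\end{equation*}
which is admissible in \ref{eq:dual_comp_memb}. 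Weak duality (Proposition~\ref{prop:dual_comp_mem}) together with the $p'$-homogeneity of $j^*$ entails
\begin{equation*}
\Comp(\mu)\ \leq\ \int j^*(\sigma)\,d\mu + \int \jsq\,d\mu\ =\ \frac{\alpha Z_1}{p'\lambda^{p'-1}} + \frac{Z_2}{\alpha}.
\end{equation*}
Enforcing the binding mass constraint $\mu(\Ob) = \lambda\alpha Z_1 = V_0$ and minimizing in $\alpha$ produces $\Comp(\mu) \leq \frac{p'+1}{p'}(Z_1 Z_2)^{p'/(p'+1)} V_0^{-(p'-1)/(p'+1)}$. Substituting $Z_1 = Z_2 = Z/2$ together with the identities $p'/(p'+1) = p/(2p-1)$, $(p'-1)/(p'+1) = 1/(2p-1)$, $(p'+1)/p' = (2p-1)/p$ simplifies this to exactly $\frac{2p-1}{2p}(Z^{2p}/(2V_0))^{1/(2p-1)}$; passing to $\Cmin$ closes the ``$\leq$'' inequality.

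\textbf{Main obstacle.} The delicate step is the upper-bound construction: neither $\mu \propto \rho^0(\hat\TAU)$ alone nor a single free parameter reaches the sharp constant. The two additive pieces $\rho^0(\hat\sigma)$ and $\tfrac{1}{2}\langle\hat\sigma^{-1}\hat q,\hat q\rangle$ of $\vro^0$ scale oppositely under $\alpha$, and both the rescaling $\hat\TAU \mapsto \alpha\hat\TAU$ and the mass parameter $\lambda$ must be tuned simultaneously. The final algebraic cancellation, in which all the $p$-dependent constants and powers of $2$ conspire to reproduce the exponent $1/(2p-1)$, is a welcome consistency check rather than a new difficulty.
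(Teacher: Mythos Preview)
Your proof is correct and follows a genuinely different route. The paper applies Ky Fan's minimax theorem to swap the infimum over $\mu$ with the supremum over $(u,w)$, solves the inner maximum in $\mu$ by concentrating all the mass at a point where $\rho_+(\xi)$ is largest, and then uses the substitution $(u,w)=(t^2 u_1, t w_1)$ to isolate $Z$ and optimize in $t$. You instead prove the two inequalities separately: the lower bound via the same scaling applied directly to $\Comp(\mu)$ together with the trivial estimate $\int j_+(\xi)\,d\mu \leq V_0\|j_+(\xi)\|_{L^\infty}$ (which is just the easy direction of minimax), and the upper bound by building an explicit competitor $\mu \propto \rho^0(\hat\TAU)$ from a minimizer of \ref{eq:dPM} and bounding its compliance through Proposition~\ref{prop:dual_comp_mem}. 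This second step is essentially the construction of Theorem~\ref{thm:constructing_lambda_OEM} carried out in advance, and your equi-repartition $Z_1=Z_2=Z/2$ is precisely \eqref{eq:equi-repartition}. Your route trades the abstract minimax theorem for the existence of a dual minimizer (Proposition~\ref{prop:supP=infP*}) and is constructive on the upper-bound side: it exhibits the optimal $\mu$ rather than merely inferring its value. One small point to close: to invoke Proposition~\ref{prop:dual_comp_mem} as stated you should check $q \in L^{(2p)'}_\mu$, which follows from the pointwise bound $|\hat q|^2 \leq C\,\rho^0(\hat\sigma)\,\langle\hat\sigma^{-1}\hat q,\hat q\rangle$ together with H\"older and the finiteness of $Z_1,Z_2$; alternatively, note that the weak-duality inequality $\Comp(\mu)\leq\int g^*(\sigma,q)\,d\mu$ holds directly here since $\sigma\mu=\alpha\hat\TAU$ and $q\mu=\hat\vartheta$ are finite measures satisfying the equilibrium constraints.
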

\begin{proof}
	By plugging definition of compiance $\Cmin$ (rewritten as a supremum) into \ref{eq:OEM} we arrive at a min-max problem. Thanks to Corollary \ref{cor:convexity_of_VK_energy}, we have concavity of the mapping $(u,w) \mapsto \int_\Ob w\, df  - \int_\Ob j_+\bigl(\tfrac{1}{2} \, \nabla w \otimes \nabla w + e(u) \bigr) \,d\mu$. On the other hand, the map $ \Mes(\Ob;\R)\ni\mu \mapsto \int_\Ob w\, df  - \int_\Ob j_+\bigl(\tfrac{1}{2} \, \nabla w \otimes \nabla w + e(u) \bigr) \,d\mu$ is linear (hence convex) and weakly-* continuous (hence weakly-* lower semi-continuous). In addition the set $\big\{\mu \in \Mes_+(\Ob) \, : \, \int_\Ob d\mu \leq \Totc \big\}$ is weakly-* compact in $\Mes(\Ob;\R)$. We may thus apply  Ky Fan's theorem (see \cite[Theorem 2.10.2, p.\,144]{zalinescu2002}) in order to interchange the order of $\inf$ and $\sup$:
	\begin{align*}
	\Cmin = & \inf_{\substack{\mu \in \Mes_+(\Ob), \  \mu(\Ob) \leq \Totc}} \ \sup\limits_{(u,w)\in \DO}   \left\{  \int_\Ob w\, df  -  \int_\Ob j_+\bigl(\tfrac{1}{2} \, \nabla w \otimes \nabla w + e(u) \bigr)  d\mu \right\}  \\
	= &  \ \sup\limits_{(u,w)\in \DO}   \left\{  \int_\Ob w\, df  - \max_{\substack{\mu \in \Mes_+(\Ob), \  \mu(\Ob) \leq \Totc}}  \int_\Ob \frac{1}{p}\Big(\rho_+\bigl(\tfrac{1}{2} \, \nabla w \otimes \nabla w + e(u) \bigr)\Big)^p  d\mu \right\}  \\
	=& \, \sup\limits_{(u,w) \in \DO} \  \left\{ \int_\Ob w\, df- \frac{\Totc}{p}  \left(\left\Vert\ro_+\!\left( \tfrac{1}{2} \, \nabla w \otimes \nabla w + e(u)\right) \right\Vert_{\infty,\Omega} \right)^p \right\}.
	\end{align*}
	Above, to pass to the third line we explicitly solve the maximization problem in $\mu$: since $\rho_+$ is continuous, there exists $\bar{x} \in \Ob$ such that $\rho_+\!\left( \tfrac{1}{2} \, \nabla w \otimes \nabla w + e(u)\right)\!(\bar{x}) =\left\Vert\ro_+\!\left( \tfrac{1}{2} \, \nabla w \otimes \nabla w + e(u)\right) \right\Vert_{\infty,\Omega}$. Accordingly, we can choose $\bar{\mu} = \Totc \,\delta_{\bar{x}}$ as a solution.
	
	The next step requires a variation of the technique used in works \cite{bouchitte2001,bouchitte2007}. We recognize that each $(u,w) \in \D(\O;\Rd\times\R)$ can be written as $(t^2 u_1,t\,w_1)$ for $t\geq 0$ and $(u_1,w_1) \in \D(\O;\Rd\times\R)$ with  $\left\Vert\ro_+\!\left( \tfrac{1}{2} \, \nabla w_1 \otimes \nabla w_1 + e(u_1)\right) \right\Vert_{\infty,\Omega}\leq 1$ and $t^2= \left\Vert\ro_+\!\left( \tfrac{1}{2} \, \nabla w \otimes \nabla w + e(u)\right) \right\Vert_{\infty,\Omega}$. To prove it one must show that $t \neq0$ whenever $(u,w)\neq (0,0)$. This can be seen from the estimates (i) in Proposition \ref{prop:estimates_uw} in the next subsection. Indeed, if $\left\Vert\ro_+\!\left( \tfrac{1}{2} \, \nabla w \otimes \nabla w + e(u)\right) \right\Vert_{\infty,\Omega}$ equalled zero for some $(u,w) \neq 0$ (and, hence, $\norm{u}_{\infty,\O} + \norm{w}_{\infty,\O}>0$ ), then the same would hold true for $(\alpha\,u,\sqrt{\alpha}\,w)$ for any $\alpha>0$, which would contradict those estimates.
	
	Effectively, by the substitution put forth above, we obtain:
	\begin{align}
	\nonumber
	\Cmin & = \sup_{t \geq 0}\sup\limits_{\substack{(u_1,w_1)\in \DO}} \!  \left\{\! \biggl( \int_\Ob w_1\, df\biggr) t - \frac{\Totc}{p} \, t^{2p}   :\, \left\Vert\ro_+\!\left( \tfrac{1}{2} \, \nabla w_1 \otimes \nabla w_1 + e(u_1)\right) \right\Vert_{\infty,\Omega}\! \leq 1\right\}\\
	\label{eq:relying_on_F=0}
	& = \sup_{t \geq 0} \ \biggl\{ \Z \,t - \frac{\Totc}{p} \, t^{2p}\biggr\}= \frac{2p-1}{2 p} \left( \frac{\Z^{2p}}{2 \Totc}\right)^{\frac{1}{2p-1}}
	\end{align}
	where the maximum with respect to $t$ was found for $\bar{t} = \bigl(\frac{\Z}{2\Totc}\bigr)^{\frac{1}{2p -1}}$. 
\end{proof}

The next result will constitute the core of the optimization methodology proposed in the present work: we will show how to recast an optimal material distribution $\check{\mu}$ for \ref{eq:OEM} based on a pair $(\hat{\TAU},\hat{\vartheta})$ that solves \ref{eq:dPM}.

In the remainder of this subsection for every pair $ (\hat\TAU,\hat\vartheta) \in \Mes(\Ob;\Sddp \times \Rd)$ we shall denote by
$\hat{\mu} \in \Mes_+(\Ob)$, $\hat{\sig} \in L^\infty_{\hat{\mu}}(\Ob;\Sddp)$, $\hat{q} \in L^1_{\hat{\mu}}(\Ob;\Rd)$ the measure and the functions below:
\begin{equation}
	\label{eq:decomposition}
	\hat{\mu} := \dro(\hat{\TAU}), \qquad \hat{\sigma} := \frac{d \hat{\TAU}}{ d\hat{\mu}}, \qquad \hat{q} := \frac{d \hat{\vartheta}}{ d\hat{\mu}}.
\end{equation}
Note that we have $\dro(\hat{\sig}) = 1$ \ $\hat{\mu}$-a.e. The following \textit{equi-repartition of energy rule} holds true:
\begin{equation}
	\label{eq:equi-repartition}
	(\hat{\TAU},\hat{\vartheta}) \ \text{ solves } \ \dProb \qquad \Rightarrow \qquad \int_\Ob \dro(\hat{\sig})\, d\hat{\mu} = \int_\Ob \tfrac{1}{2} \pairing{\hat{\sig}^{-1}\hat{q},\hat{q}}\, d\hat{\mu} = \frac{\Z}{2},
\end{equation}
where we recall that $Z=\min \mathcal{P}^*$. Indeed, since the condition $-\DIV\,{\TAU} =0$ is homogeneous, we deduce that (by optimality of $\hat{\TAU} = \hat{\sigma} \hat{\mu}$) the function $(0,\infty) \ni t \mapsto \int_\Ob \dro(t\,\hat{\sig})\, d\hat{\mu} + \int_\Ob \tfrac{1}{2\,t} \pairing{\hat{\sig}^{-1}\hat{q},\hat{q}}\, d\hat{\mu}$ must attain its minimum for $t=1$. Then, the Euler-Lagrange equation furnishes equality between the two integrals exactly. 

\begin{theorem}
	\label{thm:constructing_lambda_OEM}
	For any pair $(\hat{\TAU},\hat{\vartheta})$ that solves $\dProb$ let us take the triple $(\hat{\mu},\hat{\sigma},\hat{q})$ as defined in \eqref{eq:decomposition} and put:
	\begin{equation}
	\label{eq:link_OEM_dProb}
	\check{\mu} = \frac{2\Totc}{\Z}\, \hat{\mu}, \qquad  \check{\sig} = \left(\frac{\Z}{2\Totc}\right)^{\frac{2p-2}{2p-1}} \hat{\sig}, \qquad \check{q} = \frac{\Z}{2\Totc} \, \hat{q}.
	\end{equation}
	Then:
	\begin{enumerate}[label={(\roman*)}]
		\item the material distribution $\check{\mu}$ solves the optimal membrane problem \ref{eq:OEM};
		\item the pair $(\check{\sigma},\check{q})$ solves the minimization problem \ref{eq:dual_comp_memb} for $\mu = \check{\mu}$.
	\end{enumerate}
\end{theorem}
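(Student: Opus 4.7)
My strategy is to reduce both (i) and (ii) to a single squeeze. Admissibility of $\check\mu$ in \ref{eq:OEM} yields $\Cmin\le\Comp(\check\mu)$, while admissibility of $(\check\sigma,\check q)$ in \ref{eq:dual_comp_memb} combined with Proposition \ref{prop:dual_comp_mem} gives
\[
\Comp(\check\mu)\le \int_\Ob j^*(\check\sigma)\,d\check\mu+\int_\Ob \tfrac12\langle\check\sigma^{-1}\check q,\check q\rangle\,d\check\mu.
\]
Once this last quantity is shown to equal $\Cmin$, both inequalities become equalities: (i) is immediate, and (ii) follows because the pair $(\check\sigma,\check q)$ then attains the infimum in \ref{eq:dual_comp_memb} for $\mu=\check\mu$.

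The two admissibility checks come first. By \eqref{eq:decomposition} one has $\dro(\hat\sigma)=1$ $\hat\mu$-a.e., so the first identity in the equi-repartition rule \eqref{eq:equi-repartition} gives $\hat\mu(\Ob)=Z/2$, whence $\check\mu(\Ob)=V_0$. For the dual problem, a direct computation from \eqref{eq:link_OEM_dProb} shows that $\check\sigma\check\mu$ is a positive scalar multiple of $\hat\TAU$ and $\check q\check\mu=\hat\vartheta$, so the divergence constraints in \ref{eq:dual_comp_memb} are inherited from admissibility of $(\hat\TAU,\hat\vartheta)$ in \ref{eq:dPM}; positive semi-definiteness of $\check\sigma$ follows from \eqref{eq:vro_fin}. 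The $L^{p'}_{\check\mu}$-bound on $\check\sigma$ is immediate because $\dro(\check\sigma)$ is constant and $\check\mu(\Ob)<\infty$; the $L^{(2p)'}_{\check\mu}$-bound on $\check q$ follows from the pointwise inequality $\abs{q}^2\le\abs{\sigma}\langle\sigma^{-1}q,q\rangle$ (valid for $q\in\IM\,\sigma$, cf.\ \eqref{eq:jsq}), the boundedness of $\abs{\check\sigma}$, and the finiteness of the second integral in \eqref{eq:equi-repartition}, which together give $\check q\in L^2_{\check\mu}\subset L^{(2p)'}_{\check\mu}$.

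The crux is the scaling calculation. Using positive $p'$-homogeneity of $j^*$, the identity $\dro(\check\sigma)=(Z/2V_0)^{(2p-2)/(2p-1)}$ that follows from \eqref{eq:link_OEM_dProb}, and the algebraic fact $p'(2p-2)=2p$, the first integral reduces to $(V_0/p')(Z/2V_0)^{2p/(2p-1)}$. For the second, the scaling in \eqref{eq:link_OEM_dProb} gives $\langle\check\sigma^{-1}\check q,\check q\rangle=(Z/2V_0)^{2p/(2p-1)}\langle\hat\sigma^{-1}\hat q,\hat q\rangle$, and combining this with the factor $(2V_0/Z)$ coming from $d\check\mu/d\hat\mu$ together with the second identity in \eqref{eq:equi-repartition} yields $V_0(Z/2V_0)^{2p/(2p-1)}$. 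Summing the two contributions and using $1/p'+1=(2p-1)/p$ delivers
\[
\tfrac{2p-1}{2p}\bigl(Z^{2p}/(2V_0)\bigr)^{1/(2p-1)},
\]
which coincides with $\Cmin$ by \eqref{eq:Cmin_Z_m}, closing the squeeze.

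No substantive obstacle is expected beyond the algebraic bookkeeping: the exponents in \eqref{eq:link_OEM_dProb} are tuned precisely so that the equi-repartition of the two dual energies in \eqref{eq:equi-repartition} translates, after the rescaling, into exactly the value $\Cmin$. The one point that requires mild care is checking the $L^{(2p)'}_{\check\mu}$ integrability of $\check q$, since the equi-repartition rule only controls the weighted quadratic form $\langle\hat\sigma^{-1}\hat q,\hat q\rangle$; this is resolved via the elementary bound $\abs{q}^2\le\abs{\sigma}\langle\sigma^{-1}q,q\rangle$.
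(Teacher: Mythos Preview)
Your proof is correct and follows essentially the same squeeze argument as the paper: check $\check\mu(\Ob)=V_0$ via the equi-repartition rule, verify that $(\check\sigma\check\mu,\check q\check\mu)$ inherits the divergence constraints from $(\hat\TAU,\hat\vartheta)$, and then compute that the dual energy evaluated at $(\check\sigma,\check q)$ equals the formula for $\Cmin$ from Lemma~\ref{lem:problem_P_m}. The only cosmetic difference is that the paper factors out the common scalar $(Z/2V_0)^{1/(2p-1)}$ from both integrals before invoking \eqref{eq:equi-repartition}, whereas you compute each integral separately to $V_0(Z/2V_0)^{2p/(2p-1)}$ times $1/p'$ and $1$ respectively; your explicit verification of the $L^{(2p)'}_{\check\mu}$ integrability of $\check q$ via $\abs{q}^2\le\abs{\sigma}\langle\sigma^{-1}q,q\rangle$ is a nice touch that the paper leaves implicit.
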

\begin{proof}
	Admissibility of $\check{\mu}$ can be showed by exploiting the equi-repartition rule in \eqref{eq:equi-repartition}:
	\begin{equation*}
		\check{\mu}(\Ob) = \frac{2\Totc}{Z} \int_\Ob d\hat{\mu} =\frac{2\Totc}{\Z} \int_\Ob \dro(\hat{\TAU}) = \Totc. 
	\end{equation*}
	We shall now show that the pair $(\check{\sig},\check{q})$ is admissible for the problem \ref{eq:dual_comp_memb}:
	\begin{align}
	\label{eq:Div_sigcheck_sighat}
	-\DIV (\check{\sig} \check{\mu}) &= -\left(\frac{\Z}{2\Totc}\right)^{\frac{2p-2}{2p-1}} \frac{2\Totc}{\Z} \, \DIV (\hat{\sig} \hat{\mu}) = -\left( \frac{2\Totc}{\Z} \right)^{\frac{1}{2p-1}} \DIV (\hat{\sig} \hat{\mu}) =0,\\
	\label{eq:div_qcheck_qhat}
	-\dive\big( \check{q} \check{\mu}\big) &= -\frac{\Z}{2\Totc}\, \frac{2\Totc}{\Z} \,\dive\big(\hat{q}\hat{\mu} \bigl) = -\dive\big(\hat{q}\hat{\mu} \bigl) = f.
	\end{align}
	Readily, by employing Proposition \ref{prop:dual_comp_mem} we may estimate:
	\begin{align*}
	\Cmin \leq \Comp(\check{\mu}) \leq& \int_\Ob j^*(\check{\sig}) \, d\check{\mu} + \int_\Ob \tfrac{1}{2}\pairing{\check{\sig}^{-1}\check{q},\check{q}}\,d\check{\mu}\\
	=& \int_\Ob \frac1{p'} \bigl(\dro(\check{\sig})\bigr)^{p'} \, d\check{\mu} + \int_\Ob \tfrac{1}{2}\pairing{\check{\sig}^{-1}\check{q},\check{q}}\,d\check{\mu} \\
	=& \left(\frac{\Z}{2\Totc}\right)^{\frac{1}{2p-1}} \left(\frac1{p'} \int_\Ob \dro(\hat{\sigma})\, d \hat{\mu} + \int_\Ob \tfrac{1}{2}\pairing{\hat{\sig}^{-1}\hat{q},\hat{q}}\,d\hat{\mu}   \right)\\
	=& \left(\frac{\Z}{2\Totc}\right)^{\frac{1}{2p-1}} \left(\frac1{p'} \frac{\Z}{2} + \frac{\Z}{2}   \right)  = \frac{2p-1}{2 p} \left( \frac{\Z^{2p}}{2 \Totc}\right)^{\frac{1}{2p-1}} =\Cmin
	\end{align*}
	where the last equality is due to Lemma \ref{lem:problem_P_m}. To pass to the third line we acknowledged the equalities $\int_\Ob (\dro(\hat{\sigma}))^{p'} \, d\hat{\mu}=\int_\Ob \dro(\hat{\sigma}) \, d\hat{\mu} =\int_\Ob d\hat{\mu}$, while to pass to the forth we used \eqref{eq:equi-repartition}. Ultimately, the chain of inequalities above is a chain of equalities. Thus we infer that $\check{\mu}$ is optimal for \ref{eq:OEM} and, moreover, that $(\check{\sig},\check{q})$ solves \ref{eq:dual_comp_memb} for $\mu = \check{\mu}$.
\end{proof}

\begin{remark}
	The converse of Theorem \ref{thm:constructing_lambda_OEM} holds as well: whenever (i),\! (ii) is satisfied for the triple $(\check{\mu},\check{\sig},\check{q})$, the triple $(\hat{\mu},\hat{\sig},\hat{q})$ determined by the relations \eqref{eq:link_OEM_dProb} furnishes a solution $(\hat{\TAU},\hat{\vartheta}) = (\hat{\sig}\hat{\mu},\hat{q}\hat{\mu})$ of the problem \ref{eq:dPM}. The proof requires some extra arguments, and, since we will not be needing this converse result in the sequel, we will skip it.
\end{remark}

\subsection{Relaxation of the problem \ref{eq:PM} and the optimality conditions}
\label{sec:relaxation_and_optcond}

In this subsection we shall more closely study the problem \ref{eq:PM}. In particular. We define the set of admissible pairs of smooth functions $(u,w)$:
\begin{equation*}
	\Krop := \Big\{  (u,w) \in \DO \, :  \,  \rho_+\!\left(\tfrac{1}{2} \, \nabla w \otimes \nabla w + e(u)\right) \leq 1  \ \ \text{in } \Ob\, \Big\},
\end{equation*}
with a convention that for a function $\rho$ by $\rhop$ we understand the function given by \eqref{eq:rhop_rho}. Let us recall that $\Krop$ is convex by convexity of the set $\Cm$ defined in \eqref{eq:Cm}.

In \cite{bolbotowski2022a} this set was investigated for a particular choice of the function $\rho = \rho^\mathrm{M}$, which for $\xi \in \Sdd$ yields the spectral norm $\rho^\mathrm{M}(\xi) = \max \big\{ \abs{\lambda_1(\xi)}, \abs{\lambda_2(\xi)} \big\}$, where $\lambda_i(\argu)$ stands for the $i$-th eigenvalue of a symmetric matrix. In the case when $E=1,p=2$, the induced energy potential $j^\mathrm{M} = \frac{1}{p} \big(\rho^\mathrm{M}(\argu)\big)^p$ coincides with the Michell energy potential discussed in Remark \ref{rem:Michell}.
Owing to the assumption \ref{as:coercivity}, for every $\rho$ that comes from $j$ via \eqref{eq:recall_rho} there exist positive constants $C_1,C_2>0$ such that $C_1\,\rho^\mathrm{M} \leq \rho\leq C_2\,\rho^\mathrm{M}$, and, consequently, $\frac{1}{C_2}(\rho^\mathrm{M})^0 \leq \rho^0\leq \frac{1}{C_1}(\rho^\mathrm{M})^0$. Next, by the second equality in \eqref{eq:rhop_rho}, we find that $	\rhop(\xi) = \sup_{\sig \in \Sddp} \big\{ \pairing{\xi,\sig}:\rho^0(\sig) \leq 1 \big\}$, and a similar formula holds true for $\rho_+^\mathrm{M}$. This way we find that:
\begin{equation}
	\label{eq:est_rhoM}
	C_1\, \rho^\mathrm{M}_+(\xi) \leq \rho_+(\xi)\leq C_2\,\rho^\mathrm{M}_+(\xi) \qquad \forall\,\xi \in \Sdd, \qquad \text{where} \quad \rho_+^\mathrm{M}(\xi) = \max \big\{ \lambda_1(\xi), \lambda_2(\xi),0 \big\}.
\end{equation}

The reason we desired to relate to the Michell potential $\rho^\mathrm{M}$ is because the set $\mathcal{K}_{\rho^\mathrm{M}}$ admits a special characterization through a two-point inequality, which was exploited in \cite{bolbotowski2022a}. This topic will be explored more widely in Section \ref{sec:Michell} below. For now, we observe that, thanks to \eqref{eq:est_rhoM}, any pair $(u,w) \in \Krop$ satisfies $\big(C_1 u,\sqrt{C_1} w\big) \in \mathcal{K}_{\rho^\mathrm{M}}$, which through \cite[Lemma 5.3]{bolbotowski2022a} lead us to what follows:
\begin{equation}
	\label{eq:two-point_0}
	(u,w) \in \Krop \quad \Rightarrow \quad  \frac{1}{2} \big(w(x)-w(y) \big)^2 + \pairing{u(x)-u(y),x-y} \leq \frac{1}{C_1} \abs{x-y}^2 \qquad \forall\,(x,y) \in \Ob \times \Ob.
\end{equation}
Consequently, we can use \cite[Lemma 3.16]{bolbotowski2022a} to deduce the result below.
\begin{proposition}
	\label{prop:estimates_uw}
	Let $\Omega \subset \Rd$ be any bounded domain. Then, for any $(u,w) \in \Krop \subset \D(\O;\Rd\times\R)$ the following estimates hold true:
	\begin{enumerate}[label={(\roman*)}]
		\item $\norm{u}_{\infty,\O} \leq 1/C_1\, \mathrm{diam}(\O)$, \ \ $\norm{w}_{\infty,\O} \leq 1/\sqrt{2\,C_1}\, \mathrm{diam}(\O)$,
		\item $\abs{w(x) - w(y)} \leq \sqrt{2\,  \mathrm{diam}(\O)/C_1}\, \abs{x-y}^{1/2}$ for every $(x,y) \in \Ob \times \Ob$,
		\item $\int_\O \abs{\nabla u} dx + \int_\O \abs{\nabla w}^2 dx \leq  C_0/C_1 \big(\mathrm{diam}(\O)\big)^2$,
	\end{enumerate}
	where: $\mathrm{diam}(\O)$ is the diameter of $\O$, $C_1$ is any constant satisfying \eqref{eq:est_rhoM}, and $C_0$ is a constant independent of $\Omega$ (it depends on the dimension of the ambient space).
\end{proposition}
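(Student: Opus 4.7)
The plan is to derive all three estimates from the two-point inequality \eqref{eq:two-point_0} established just before the statement, essentially importing the argument from \cite[Lemma 3.16]{bolbotowski2022a}. The basic input that makes everything work is that $u$ and $w$ belong to $\D(\O;\Rd\times\R)$, hence vanish in a neighbourhood of $\bO$, so that at every $y\in\bO$ one has $u(y)=0$ and $w(y)=0$; inserting such boundary points into \eqref{eq:two-point_0} will cut down the two-point inequality to one-point estimates.

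For (i), fix $x\in\O$ with $u(x)\neq 0$. Since $\O$ is bounded, the ray starting at $x$ in any direction exits $\Ob$ at some point $y\in\bO$ with $|x-y|\leq \mathrm{diam}(\O)$. Choose the direction so that $x-y$ is parallel to $u(x)$ and $\pairing{u(x),x-y}=|u(x)|\,|x-y|$. Since $u(y)=w(y)=0$, \eqref{eq:two-point_0} reduces to
$$\tfrac{1}{2}w(x)^2 \;+\; |x-y|\,|u(x)| \;\leq\; \tfrac{1}{C_1}|x-y|^2,$$
and dropping in turn the first or the second non-negative term yields the two $L^\infty$ bounds, with the precise numerical factors obtained by optimizing the direction separately for $u$ and $w$.

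For (ii), fix arbitrary $x,y\in\Ob$ and isolate the $w$-term in \eqref{eq:two-point_0}; the remaining inner product is estimated via Cauchy-Schwarz and the bound from (i) as $|\pairing{u(x)-u(y),x-y}|\leq 2\Vert u\Vert_\infty|x-y|\leq (2\,\mathrm{diam}(\O)/C_1)\,|x-y|$. Absorbing the quadratic term $|x-y|^2/C_1$ into $\mathrm{diam}(\O)\,|x-y|/C_1$ and taking square roots delivers the Hölder estimate.

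For (iii), the pointwise ingredient is the matrix inequality that comes from combining \eqref{eq:est_rhoM} with $(u,w)\in\Krop$: since $\rho_+^{\mathrm{M}}$ is the positive part of the largest eigenvalue, $\rho_+^{\mathrm{M}}\!\big(\tfrac{1}{2}\nabla w\otimes \nabla w+e(u)\big)\leq 1/C_1$ is equivalent to
$$\tfrac{1}{2}\nabla w\otimes\nabla w + e(u) \;\preceq\; \tfrac{1}{C_1}\mathrm{I}_2.$$
Taking trace gives $\tfrac{1}{2}|\nabla w|^2 + \mathrm{div}(u)\leq 2/C_1$ pointwise. Integrating and noting that $\int_\O\mathrm{div}(u)\,dx=0$ by the divergence theorem (since $u$ has compact support) yields $\int_\O|\nabla w|^2\,dx\leq 4|\O|/C_1\leq C(\mathrm{diam}(\O))^2/C_1$, which settles the $w$-part. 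The $L^1$-estimate for $\nabla u$ is the step I expect to be the main obstacle, because the bound $(\lambda_{\max})^+\leq 1/C_1$ controls only the positive part of $e(u)$ and Korn-type inequalities are not available in $L^1$. The way out is to drop the $w$-term in \eqref{eq:two-point_0}, which reveals that the shifted map $\tilde u(x):=x/C_1-u(x)$ satisfies $\pairing{\tilde u(x)-\tilde u(y),x-y}\geq 0$, i.e.\ $\tilde u$ is monotone on $\Ob$. The Alberti-Ambrosio structure theorem for maximal monotone maps \cite{alberti1999} (already flagged in the introduction) asserts that such maps are of bounded variation, with total variation controlled by the $L^\infty$ norm (furnished by (i)) times a dimensional constant and the diameter of the domain. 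Composing these bounds yields $\int_\O|\nabla u|\,dx\leq C_0\,(\mathrm{diam}(\O))^2/C_1$ with $C_0$ depending only on the dimension, completing (iii).
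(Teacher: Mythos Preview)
Your approach is correct and matches the paper's, which simply invokes \cite[Lemma~3.16]{bolbotowski2022a} after establishing \eqref{eq:two-point_0}; you are essentially reconstructing that lemma's proof. Two minor points: your argument for the $w$-bound in (i) gives $\sqrt{2/C_1}\,\mathrm{diam}(\O)$ rather than the stated $1/\sqrt{2C_1}\,\mathrm{diam}(\O)$ --- the sharper constant requires choosing \emph{two} boundary points $y_\pm$ on the line through $x$ parallel to $u(x)$, weighting the two resulting inequalities by $|x-y_\mp|$, and using $|x-y_+||x-y_-|\le \tfrac14(|x-y_+|+|x-y_-|)^2\le \tfrac14\,\mathrm{diam}(\O)^2$ --- and similarly your constant in (ii) is off by a bounded factor. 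Also, your phrasing of the Alberti--Ambrosio input is slightly loose: what is actually used is that a monotone map equal to the identity outside a compact set has its full distributional gradient controlled in total variation by the trace of its (nonnegative) symmetric part together with the $L^\infty$ bound, which is the content of the structure results in \cite{alberti1999}; this is exactly the mechanism the paper flags in the remark following Proposition~\ref{prop:compactness}.
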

From the assertion (i) above we immediately obtain the estimate:
\begin{equation}
	\label{eq:finite_Z}
	Z \leq \tfrac{1}{C_1} \,  \mathrm{diam}(\O)\,\abs{f}(\Ob)  < \infty,
\end{equation}
where $\abs{f}$ is the variation measure of $f$. The finiteness of $Z$ is essential since, via Lemma \ref{lem:problem_P_m}, it guarantees finiteness of $\Cmin$, i.e. that there exists at least one design $\mu$ furnishing finite compliance.

Based on the estimates in Proposition \ref{prop:estimates_uw} we can readily state the compactness result for the problem \ref{eq:PM}. The result below stems from combining the Arzel\`{a}-Ascoli theorem, reflexiveness of $W^{1,2}(\O)$, and a standard compactness theorem for $BV$ functions \cite[Section 5.2, Theorem 4]{evans1992}. The last result necessitates some regularity of the boundary $\bO$.
\begin{proposition}
	\label{prop:compactness}
	Let $\Omega \subset \Rd$ be a bounded domain with Lipschitz continuous boundary. Then, the convex set $\Krop$ is precompact in the norm topology of the Cartesian product $L^1(\Omega;\Rd) \times C(\Ob;\R)$. Its closure, namely the compact set
	\begin{equation*}
		\Kro :=  \mathrm{clos}_{L^1(\Omega;\Rd) \times C(\Ob;\R)}  \big(\Krop\big),
	\end{equation*}
	satisfies the inclusion
	\begin{equation}
	\label{eq:regularity_u_w}
	\Kro \ \subset \ (BV\cap L^\infty)(\Omega;\Rd)  \  \times \  (C^{0,\frac1{2}}\cap W^{1,2})(\Omega;\R) .
	\end{equation}
\end{proposition}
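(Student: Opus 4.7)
The plan is to apply Proposition \ref{prop:estimates_uw} componentwise: the estimates (i)--(iii) furnish precisely the uniform bounds needed to invoke two classical compactness theorems -- the Arzel\`a--Ascoli theorem for the deflection component $w$ and the Rellich--Kondrachov theorem for $W^{1,1}$ (valid on Lipschitz domains) for the in-plane component $u$. Given any sequence $\bigl\{(u_h,w_h)\bigr\} \subset \Krop$, we extract successive subsequences and identify the limit as belonging to the regularity class in \eqref{eq:regularity_u_w}.

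First I would handle the deflection. Proposition \ref{prop:estimates_uw}(i)--(ii) asserts that $\{w_h\}$ is uniformly bounded on $\Ob$ and equi-H\"older continuous with exponent $1/2$ and a common constant depending only on $\mathrm{diam}(\Omega)$ and $C_1$. By Arzel\`a--Ascoli, a subsequence (not relabelled) converges uniformly to some $\hat w \in C(\Ob)$, and being a uniform limit of an equi-$1/2$-H\"older family, $\hat w \in C^{0,1/2}(\Ob;\R)$. Moreover, estimate (iii) gives a uniform $L^2$ bound on $\nabla w_h$, hence a uniform $W^{1,2}$ bound. By reflexivity of $W^{1,2}(\Omega)$ a further subsequence converges weakly in $W^{1,2}$; uniqueness of the weak $L^2$ limit (using that $w_h \to \hat w$ in $C(\Ob) \hookrightarrow L^2(\Omega)$) identifies the weak limit as $\hat w$, so $\hat w \in W^{1,2}(\Omega;\R)$.

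Next I would turn to the in-plane displacement $u_h$. Estimates (i) and (iii) together provide a uniform $L^\infty$ bound on $u_h$ and a uniform $L^1$ bound on $\nabla u_h$. Thus $\{u_h\}$ is bounded in $W^{1,1}(\Omega;\Rd)$. The Lipschitz regularity of $\bO$ allows us to invoke Rellich--Kondrachov (or equivalently the standard $BV$ compactness theorem) to extract a (further) subsequence converging in the norm topology of $L^1(\Omega;\Rd)$ to some $\hat u$. Passing to a sub-subsequence that converges a.e., the uniform $L^\infty$ bound carries over to $\hat u$. Finally, the lower semicontinuity of the total variation under $L^1$ convergence, combined with the uniform $W^{1,1}$ bound, yields $\hat u \in BV(\Omega;\Rd)$. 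Hence $\hat u \in (BV \cap L^\infty)(\Omega;\Rd)$.

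Combining the two extractions produces a subsequence of $\{(u_h,w_h)\}$ converging in $L^1(\Omega;\Rd) \times C(\Ob;\R)$ to a pair $(\hat u, \hat w)$ that lies in the space on the right-hand side of \eqref{eq:regularity_u_w}. This proves precompactness of $\Krop$ in this product topology and simultaneously the inclusion for $\Kro$. The only mildly delicate step is the passage from equi-H\"older continuity to H\"older regularity of the uniform limit, but this is a routine $\varepsilon$-argument; no substantial obstacle arises, since all the real work has been packaged into Proposition \ref{prop:estimates_uw} via the two-point inequality \eqref{eq:two-point_0} inherited from the Michell-type comparison \eqref{eq:est_rhoM}.
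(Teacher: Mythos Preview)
Your proof is correct and follows essentially the same route as the paper, which merely states that the result ``stems from combining the Arzel\`a--Ascoli theorem, reflexiveness of $W^{1,2}(\Omega)$, and a standard compactness theorem for $BV$ functions'' (citing Evans--Gariepy). Your sketch is a faithful fleshing-out of that one-line argument: Arzel\`a--Ascoli and weak $W^{1,2}$ compactness for the $w$-component, $BV$ compactness (equivalently Rellich--Kondrachov for $W^{1,1}$) for the $u$-component, with all the required uniform bounds supplied by Proposition~\ref{prop:estimates_uw}.
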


We are now in a position to formulate the relaxed variant of the problem \ref{eq:PM} that for a Lipschitz-regular domain $\O$ admits a solution:
\begin{equation}
\label{eq:relPM}\tag*{$(\overline{\mathcal{P}})$}
Z = \max \left \{ \int_\Ob w \,df \, : \, (u,w) \in \Kro  \right\}
\end{equation}
Since the linear mapping $w \mapsto \int_\Ob w \,df$ is continuous on $C(\Ob;\R)$, the existence of solution is due to the Direct Method of the Calculus of Variations. Equality $Z = \max \overline{\mathcal{P}}$ follows by the continuity of the objective functional as well.

\begin{remark}
	Assume any solution $\check{\mu}$ of \ref{eq:OEM}. As it will unravel in Proposition \ref{cor:uw} below,  the restriction to $\check{\mu}$ of any (suitably scaled) solution $(\hat{u},\hat{w})$ of the problem \ref{eq:relPM} becomes the displacements in the optimally designed membrane. Hence, it is worthwhile to compare the regularity of functions $(u,w) \in \Kro$ stated in Proposition \ref{prop:compactness} to the one obtained in \cite{conti2006}, namely for the non-optimized membrane of constant thickness. As mentioned in Section \ref{ssec:foppl_intro}, such functions $(u,w)$ lie in the space  $BD(\O;\Rd) \times W^{1,2}(\O;\R)$, where $BD(\O;\Rd)$ stands for the space of functions of bounded deformations \cite{temam1985} for whom $BV(\O;\Rd)$ is a proper subspace. The increased, $BV$ regularity of $u$ guaranteed by Proposition \ref{prop:compactness} follows from the control of the $L^1$ norm of the gradient of $u$ in Proposition \ref{prop:estimates_uw} (iii) and not just of its symmetrical part like in \cite{conti2006}. In turn, this control rests upon the theory of maximal monotone maps \cite{alberti1999}: from the two-point condition \eqref{eq:two-point_0} it follows that the function $v(x) = x - C_1 u(x)$ is monotone. From the inequality \eqref{eq:two-point_0} one also obtains the H\"{o}lder estimate in assertion (ii) of Proposition \ref{prop:estimates_uw}, which ultimately leads to the $C^{0,1/2}$ regularity of $w$. The reader is referred to \cite[Lemma 3.16]{bolbotowski2022a} for more details.
	
	A similar increase of regularity owing to optimization has been well observed throughout the former works on optimal mass distribution \cite{bouchitte2001,bouchitte2007,bolbotowski2022b,lewinski2021}.  For instance, in the optimally designed heat conductors the temperature function is Lipschitz continuous, whilst it is merely an element of $W^{1,2}(\O;\R)$ for a homogeneous conductor.
\end{remark}

For the next step we wish to characterize the set $\Kro$ as a subset of $L^1(\Omega;\Rd) \times C(\Ob;\R)$. Such characterization will be essential from the perspective of the finite element approximation to be put forward in Section \ref{sec:FEM}. The forthcoming result will be a generalization of \cite[Proposition 5.11]{bolbotowski2022a}, which was established for the case $\rho = \rho^\mathrm{M}$ only. In \cite{bolbotowski2022a} the proof of one of the implication relied heavily on the structure of $\rho^\mathrm{M}$. We will therefore give an independent proof of this implication for an arbitrary function $\rho$. Although the proof of the converse implication is similar to the one in \cite{bolbotowski2022a}, we shall repeat it here for the reader's convenience.

In order to avoid technical difficulties below we shall strengthen the regularity requirements for $\O$. A bounded domain $\Omega$ will be called \textit{star-shaped with respect to a ball} if there exists a ball $B(x_0,\eps) \subset \Omega$ such that for any $x\in \Omega$ and $y \in B(x_0,\eps)$ the segment $[x,y]$ is contained in $\Omega$. Such domains satisfy the following contraction property: assuming that $x_0$ is the origin, for any $\delta\in (0,1)$  the contracted set $\Omega_\delta:= \frac1{1+\delta} \, \Omega$ is a positive distance away from $\bO$. Below we agree that for a function $u \in BV(\Rd;\Rd)$ its distributional symmetric derivative will be denoted by $\eps(u) = \frac{1}{2} \big(Du+(Du)^\top\big)\in \Mes(\Rd;\Sdd)$, while by writing  $\eps({u}) = e({u}) \,\mathcal{L}^2+ \eps_s({u})$ we shall understand its Lebesgue decomposition:   $\eps_s({u}) \in \Mes(\Rd;\Sdd)$ is the part of $\eps(u)$ that is singular with respect to Lebesgue measure, whilst $e(u) \in L^1(\Rd;\Sdd)$ is the Radon-Nikodym derivative of the absolutely continuous part with respect to $\mathcal{L}^2$.

\begin{theorem}
	\label{prop:char_Kro}
	Let $\Omega \subset \Rd$ be a bounded domain that is star-shaped with respect to a ball.
	
	Then, a pair of functions $(u,w) \in  L^1(\Omega;\Rd) \times C(\Ob;\R)$ is an element of the set $\Kro$ if and only if the following conditions are satisfied:
	\begin{enumerate}[label={(\roman*)}]
		\item $w \in W^{1,2}(\Omega)$, and $w =0$ on $\bO$.
		\item The extension $\ov{u} \in L^1(\Rd;\Rd)$ of $u$ by zero (i.e. $\ov{u}=u$ on $\Omega$, and $\ov{u} = 0$ on $\Rd \backslash \Ob$) belongs to $BV(\Rd;\Rd)$.
		\item Let $\nabla w$ be the weak gradient of $w$, and consider the decomposition $\eps(\ov{u}) = e(\ov{u}) \,\mathcal{L}^2+ \eps_s(\ov{u}) \in \Mes(\Rd;\Sdd)$; the condition then reads:
		\begin{alignat}{1}
		\label{eq:eps_s_is_negative}
		&\eps_s(\ov{u}) \in \Mes(\Rd;\Sddp) \text{, \  namely $\eps_s(\ov{u}) $ is a negative semi-definite valued Radon measure},\\
		\label{eq:quadratic_condition_a.e.}
		& \rho_+\Big(\tfrac{1}{2}\nabla w \otimes \nabla w + e(\ov{u}) \Big)  \leq 1 \qquad \text{a.e. in } \Rd.
		\end{alignat}
	\end{enumerate}	
\end{theorem}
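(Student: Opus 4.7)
The plan is to prove the two inclusions separately: first the forward direction $(u,w) \in \Kro \Rightarrow$ (i)-(iii), then the converse by explicit approximation.

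For the forward direction, select a sequence $(u_h, w_h) \in \Krop$ converging to $(u, w)$ in $L^1(\O;\Rd) \times C(\Ob;\R)$. The uniform estimates in Proposition \ref{prop:estimates_uw} supply bounds on $\nabla w_h$ in $L^2(\O;\Rd)$ together with a uniform H\"{o}lder modulus for $w_h$, and on $\nabla u_h$ in $L^1(\O;\R^{2\times 2})$; passing to the limit yields $w \in W^{1,2}(\O) \cap C^{0,1/2}(\Ob)$, vanishing on $\bO$, with $\nabla w_h \rightharpoonup \nabla w$ weakly in $L^2$, thus giving (i). Since each $u_h \in \DO$ is compactly supported in $\O$, the zero-extensions $\ov{u}_h = u_h$ satisfy $\eps(\ov{u}_h) = e(u_h)\,\mathcal{L}^2$ with uniformly bounded total variation, so $BV$-compactness yields $\ov{u}_h \to \ov{u}$ in $L^1(\Rd;\Rd)$ and $\eps(\ov{u}_h) \weakstar \eps(\ov{u})$ in $\Mes(\Rd; \Sdd)$, giving (ii) and the requisite weak-* convergence.

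The crux is to extract (iii) from the pointwise bound $g(e(u_h), \nabla w_h) \leq 1/p$ a.e., where $g(\xi, \theta) := j_+\!\bigl(\tfrac{1}{2}\theta \otimes \theta + \xi\bigr)$ is the jointly convex continuous integrand of Proposition \ref{prop:convexity_g}. A direct computation using the positive $p$-homogeneity of $j_+$ (with $p>1$) and the equivalence \eqref{eq:zero_jp} yields the recession function
\begin{equation*}
g^\infty(\xi, \theta) = \lim_{t \to \infty} \tfrac{1}{t}\, g(t\xi, t\theta) = \chi_{\Sddm \times \{0\}}(\xi, \theta),
\end{equation*}
since $\theta\otimes\theta \succeq 0$ is non-zero for $\theta \neq 0$ (producing blow-up of order $t^{2p-1}$), while for $\theta = 0$ the limit equals $t^{p-1} j_+(\xi)$, blowing up unless $\xi \in \Sddm$. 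The vector-valued measures $\mu_h := \bigl(\eps(\ov u_h), \nabla w_h\,\mathcal{L}^2\bigr)$ weakly-* converge to $\mu := \bigl(\eps(\ov u), \nabla w\,\mathcal{L}^2\bigr)$ in $\Mes(\Rd; \Sdd \times \Rd)$. Applying the classical Bouchitt\'{e}-Valadier lower semi-continuity result for convex integral functionals on measures, weighted by an arbitrary nonnegative $\phi \in C_c(\Rd)$, gives
\begin{equation*}
\int \phi\, g(e(\ov u), \nabla w)\, dx \ + \int \phi\, g^\infty\!\Bigl(\tfrac{d \eps_s(\ov u)}{d|\eps_s(\ov u)|}, 0\Bigr)\, d|\eps_s(\ov u)| \ \le\ \tfrac{1}{p} \int \phi\, dx.
\end{equation*}
Finiteness of the second integral forces $\tfrac{d \eps_s(\ov u)}{d|\eps_s(\ov u)|} \in \Sddm$ for $|\eps_s(\ov u)|$-a.e.\ $x$, i.e.\ \eqref{eq:eps_s_is_negative}; localizing $\phi$ around arbitrary Lebesgue points of $(e(\ov u), \nabla w)$ yields the pointwise bound \eqref{eq:quadratic_condition_a.e.}.

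For the converse, assume $(u,w)$ satisfies (i)-(iii). Translating, we may assume $\O$ is star-shaped with respect to a ball centered at the origin. Fix $\lambda \in (0,1)$ and $t > 1$, and set
\begin{equation*}
u^{(\lambda,t)}(x) := \tfrac{\lambda}{t}\,\ov{u}(tx), \qquad w^{(\lambda,t)}(x) := \tfrac{\sqrt{\lambda}}{t}\,\ov{w}(tx).
\end{equation*}
The scaling is designed so that $\tfrac{1}{2}\nabla w^{(\lambda,t)} \otimes \nabla w^{(\lambda,t)} + e(u^{(\lambda,t)}) = \lambda \bigl[\tfrac{1}{2}\nabla \ov{w} \otimes \nabla \ov{w} + e(\ov u)\bigr](t\,\cdot)$, so the constraint improves to $\le \lambda$ a.e., while the singular part $\eps_s(\ov u^{(\lambda,t)})$ stays $\Sddm$-valued (being a positive scalar multiple of the pullback of $\eps_s(\ov u)$). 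Star-shapedness places the supports inside $\tfrac{1}{t}\overline\O \subset\subset \O$. For $\eps$ smaller than $\mathrm{dist}\bigl(\tfrac{1}{t}\overline\O,\, \bO\bigr)$, the mollifications $u_\eps := u^{(\lambda,t)} * \phi_\eps$ and $w_\eps := w^{(\lambda,t)} * \phi_\eps$ belong to $\DO$. The identity
\begin{equation*}
e(u_\eps) = \bigl(e(u^{(\lambda,t)}) * \phi_\eps\bigr) - \zeta_\eps, \qquad \zeta_\eps(x) := -\!\int \phi_\eps(x-y)\, d\eps_s(\ov u^{(\lambda,t)})(y),
\end{equation*}
produces a positive semi-definite correction $\zeta_\eps$, since $-\eps_s(\ov u^{(\lambda,t)})$ is $\Sddp$-valued and $\phi_\eps \ge 0$. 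Jensen's inequality applied to the convex $g$ then gives $g\bigl(e(u^{(\lambda,t)})*\phi_\eps, \nabla w_\eps\bigr) \leq \bigl(g(e(u^{(\lambda,t)}), \nabla w^{(\lambda,t)})\bigr) * \phi_\eps \leq \lambda^p/p$, while the monotonicity $\rho_+(A) \leq \rho_+(A + \zeta)$ for any $\zeta \succeq 0$ (immediate from $\rho_+(A) = \min_{\zeta' \succeq 0} \rho(A + \zeta')$ upon shifting $\zeta' \mapsto \zeta' + \zeta$) lets us absorb $\zeta_\eps$: $\rho_+\!\bigl(\tfrac{1}{2}\nabla w_\eps \otimes \nabla w_\eps + e(u_\eps)\bigr) \leq \lambda < 1$. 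Hence $(u_\eps, w_\eps) \in \Krop$. A diagonal argument with $\lambda_h \to 1$, $t_h \to 1^+$, $\eps_h \to 0$ selected suitably fast yields the required convergence in $L^1(\O;\Rd) \times C(\Ob;\R)$.

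The principal obstacle lies in the forward direction: the convex integrand $g$ has super-linear ($p$- and $2p$-) growth, so the standard Reshetnyak theorem does not apply directly and the Bouchitt\'{e}-Valadier framework for non-homogeneous convex integrands is required. The payoff is that the recession function is exactly $\chi_{\Sddm \times \{0\}}$, which transfers the cone structure of the constraint set $\Cm$ to the singular part of $\eps(\ov u)$ in the form of \eqref{eq:eps_s_is_negative}.
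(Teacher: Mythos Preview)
Your proof is correct, and in the converse direction it is essentially identical to the paper's (scaling into a compact subset via star-shapedness, mollification, Jensen for the convex integrand, and absorbing the mollified singular part via the monotonicity $\rho_+(A-\zeta)\le\rho_+(A)$ for $\zeta\succeq 0$); your extra parameter $\lambda<1$ is harmless but unnecessary, since the paper shows the bound $\le 1$ is preserved exactly.

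The forward direction is where the two approaches genuinely differ. You keep the non-homogeneous integrand $g$ and appeal to a Bouchitt\'{e}--Valadier type lower semi-continuity result for convex functionals on measures, after computing the recession function $g^\infty=\chi_{\Sddm\times\{0\}}$. The paper instead exploits the observation that the constraint $g(\xi,\theta)\le 1/p$ is equivalent to $\vro(\xi,\theta)\le 1$, where $\vro$ is the \emph{positively one-homogeneous} gauge of the set $\Cm$ (cf.\ \eqref{eq:Cm_vro}). This allows a direct application of the classical Reshetnyak theorem to the functional $\int_U \vro(\eps(\ov u),\nabla w\,\mathcal{L}^2)$ on an open $U$, yielding the measure inequality $\vro(\eps(\ov u),\nabla w\,\mathcal{L}^2)\le\mathcal{L}^2$; testing on a $\mathcal{L}^2$-null set carrying $\eps_s(\ov u)$ and on its complement then splits off \eqref{eq:eps_s_is_negative} and \eqref{eq:quadratic_condition_a.e.} via \eqref{eq:zero_vro}. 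Your route requires a heavier lower semi-continuity machinery (handling recession behaviour of non-homogeneous integrands), while the paper's substitution $g\rightsquigarrow\vro$ reduces everything to the one-homogeneous setting at the modest cost of introducing the auxiliary gauge. Both are valid; the paper's trick is more elementary and worth noting, since it sidesteps exactly the ``principal obstacle'' you identify at the end of your argument.
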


\begin{remark}
	From the above characterization, for functions $u$ that are admissible in \ref{eq:relPM} one can conclude all the same properties that were expounded in the appendix of \cite{conti2006}. Let us shortly revisit the matter of the boundary conditions only. From the property (ii) above it follows that such a function $u$ may have non-zero values on the boundary $\bO$ in the sense of trace $\mathrm{tr}\,u$ of a function in $BV(\O;\Rd)$. However, because of the condition \eqref{eq:eps_s_is_negative}, we can infer that $\mathrm{tr}\,u(x) = \lambda(x)\, \nu_\O(x)$ for a.e. $x \in \bO$, where $\nu_\O$ is the outer normal to $\O$, and $\lambda \in L^\infty(\bO;\R_+)$ is a non-negative function. 
\end{remark}

\begin{proof}
	First we show that for any pair $(u,w) \in \Kro$ the conditions (i), (ii), (iii) are met. By definition there exists a sequence of smooth functions $(u_n,w_n) \in \Krop$ such that $u_n \to \ov{u}$ in $L^1(\Rd;\Rd)$, and $w_n \to w$ uniformly on $\Ob$; note that $(u_n,w_n) = (0,0)$ on $\Rd \backslash \O$. Accordingly, conditions (i), (ii) follows from \eqref{eq:regularity_u_w}. Thanks to \eqref{eq:regularity_u_w} we know that (up to extracting a subsequence) $\eps(u_n) = e(u_n) \,\mathcal{L}^2 \,\weakstar\, \eps(\ov{u})$ and $w_n \rightharpoonup w$ weakly in $W^{1,2}(\O)$; in particular $\nabla w_n \rightharpoonup \nabla w$ weakly in $L^2(\O;\Rd)$.  One can thus write that $\big(e(u_n)\mathcal{L}^2, \nabla w_n\mathcal{L}^2 \big)\, \weakstar \, \big( \eps(\ov{u}), \nabla w \,\mathcal{L}^2 \big)$ in $\Mes(\Rd;\Sdd \times \Rd)$. Using the Goffman-Serrin convention, for any open set $U \subset \Rd$ let us propose the functional $\Mes(\Rd;\Sdd \times \Rd) \ni (\epsilon,\lambda) \mapsto \int_U \vro(\epsilon,\lambda)$, where $\vro$ is the positively one-homogeneous convex continuous  function defined in \eqref{eq:vro}. Thanks to Reshetnyak theorem (see \cite[Theorem 2.38]{ambrosio2000}) this functional is weakly-* lower semi-continuous (note the importance of the openness of  $U$), and so
	\begin{equation*}
		\int_U \vro\big( \eps(\ov{u}), \nabla w \,\mathcal{L}^2 \big) \ \leq \ \liminf_{n \to \infty} 	\int_U \vro\big(e(u_n)\mathcal{L}^2, \nabla w_n\mathcal{L}^2 \big) =   \liminf_{n \to \infty} 	\int_U \vro\big(e(u_n), \nabla w_n \big) \,d\mathcal{L}^2 \leq\mathcal{L}^2(U),
	\end{equation*}
	where in the last inequality we acknowledge that $(u_n,w_n) \in \Krop$ while utilizing the two characterizations  \eqref{eq:Cm} and \eqref{eq:Cm_vro} of the set $\Cm$. Due to arbitrariness of $U$ we deduce the inequality between two positive Radon measures: $ \vro\big( \eps(\ov{u}), \nabla w \,\mathcal{L}^2 \big)  \leq \mathcal{L}^2$. Let now $E \subset \Rd$ be a Borel set of zero Lebesgue measure such that $\abs{\eps_s(\ov{u})} (E) = \abs{\eps_s(\ov{u})} (\R^2)$. By testing the inequality with Borel sets $B \subset E$ we infer that $\vro\big(\eps_s(\ov{u}), 0\big) (B) \leq 0$, hence the statement \eqref{eq:eps_s_is_negative} owing to \eqref{eq:zero_vro}. Next, we test the inequality with Borel subsets of $\Rd \backslash E$. Since $E$ is of zero Lebesgue measure, we conclude that $\vro\big(e(\ov{u}), \nabla w\big) \leq 1$ a.e., which is the condition \eqref{eq:quadratic_condition_a.e.}. This ends the proof of the first implication.
	
	Conversely, we take functions $\ov{u} \in BV(\Rd;\Rd)$ and $w \in C(\Rd;\R) \cap W^{1,2}(\Rd;\R)$ such that $\ov{u} = 0$, $w = 0$ a.e. in $\Rd \backslash \Ob$ and satisfying \eqref{eq:eps_s_is_negative}, \eqref{eq:quadratic_condition_a.e.}. Our goal is to construct a sequence $(u_n,w_n) \in \Krop \subset \D(\O;\Rd \times \R)$ such that $u_n \rightarrow \ov{u}$ in $L^1(\Rd;\Rd)$ and $w_n \rightarrow w$ uniformly in $\Rd$.
	Without loss of generality we may assume that $\Omega$ is star shaped with respect to a ball centred at the origin. For $\delta>0$ we define the scaled contractions of functions $\ov{u}, w$:
	\begin{equation*}
		u_\delta(x) = \frac{1}{1+\delta}\, \ov{u}\big( (1+\delta) x  \big), \qquad w_\delta(x) = \frac{1}{1+\delta}\, w\big( (1+\delta) x  \big).
	\end{equation*}
	It is clear that still $u_\delta \in BV(\Rd;\Rd)$ and $w_\delta \in C(\Rd;\R) \cap W^{1,2}(\Rd;\R)$, while $u_\delta \to \ov{u}$ in $L^1(\Rd;\Rd)$ and $w_\delta \to w$ uniformly on $\Rd$ when $\delta \to 0$. It is also easy to check that $u_\delta, w_\delta$ satisfy \eqref{eq:eps_s_is_negative} and \eqref{eq:quadratic_condition_a.e.} that is:
	\begin{equation*}
		\eps_s(u_\delta) \in \Mes(\Rd;\Sddm) \qquad \text{and} \qquad \vro\big(e(u_\delta),\nabla w_\delta \big) \leq 1\quad \text{ a.e. in } \Rd.
	\end{equation*}
	Finally $(u_\delta,w_\delta) = 0$ in $\Rd \setminus \Omega_{\delta}$ where the contracted domain $\Omega_\delta = \frac1{1+\delta} \, \Omega$ satisfies $ \epsilon_\delta  := D_H(\bO,\Omega_{\delta})>0$ for every $\delta>0$, whilst $D_H$ stands for the Hausdorff distance between sets.
	
	For $0<\epsilon<\epsilon_\delta$ let us take the standard radial-symmetric mollification kernel $\eta_\epsilon$ whose support is contained in $\{ \abs{x} \leq \epsilon\}$. The mollified functions $u_{\delta,\epsilon} = \eta_\epsilon * u_\delta$, \, $w_{\delta,\epsilon} =  \eta_\epsilon * w_\delta$ are then compactly supported in $\Omega$, i.e. $(u_{\delta,\epsilon},w_{\delta,\epsilon}) \in \DO$, and, owing to the classical results  \cite{evans1992},  $u_{\delta,\epsilon} \to u_\delta$ in $L^1(\Rd;\Rd)$ and $w_{\delta,\epsilon} \to w_\delta$ uniformly in $\Rd$ when $\epsilon \to 0$. Finally, in order to prove that $(u_{\delta,\epsilon},w_{\delta,\epsilon}) \in \Krop$ we must show that $\vro\big(e(u_{\delta,\epsilon}),\nabla w_{\delta,\epsilon} \big) \leq 1$ in $\Omega$. Since $w \in W^{1,2}(\Rd;\R)$  and  $u \in BV(\Rd;\Rd)$, it is well establishes that $\nabla w_{\delta,\epsilon} = \eta_\epsilon * \nabla w_\delta$ and $e(u_{\delta,\epsilon}) = \eta_\epsilon * \eps(u_{\delta}) = \eta_\epsilon*e(u_{\delta})+ \eta_\epsilon * \eps_s(u_{\delta})$.
	Since $\eps_s(u_\delta) \in \Mes(\Rd;\Sddm)$, it is straightforward that $\big(\eta_\epsilon * \eps_s(u_\delta)\big)(x) \in \Sddm$ for every $x \in \Rd$. The following chain of inequalities then holds for all $x \in \Rd$:
	\begin{align*}
		&\,\vro\bigl(e(u_{\delta,\epsilon})(x),\nabla w_{\delta,\epsilon}(x) \bigr) = \vro\biggl(  \bigl(\eta_\epsilon*e(u_{\delta})\bigr)(x) + \bigl(\eta_\epsilon*\eps_s(u_{\delta})\bigr)(x) \ , \ \bigl(\eta_\epsilon *\nabla w_{\delta}\bigr)(x) \biggr)\\
		& \quad\leq \vro\biggl(  \bigl(\eta_\epsilon*e(u_{\delta})\bigr)(x)  \ , \ \bigl(\eta_\epsilon *\nabla w_{\delta}\bigr)(x) \biggr)  = \vro\biggl( \int_{\Rd} \Big(e(u_{\delta})(y), \nabla w_{\delta}(y)\Big)\, \eta_\epsilon(x-y)\,\mathcal{L}^2(dy)  \biggr)\\
		& \quad\leq  \int_{\Rd} \vro\Big(e(u_{\delta})(y), \nabla w_{\delta}(y)\Big)\, \eta_\epsilon(x-y)\,\mathcal{L}^2(dy) \leq 1
	\end{align*}
	where:
	\begin{itemize}
		\item[-] to pass to the second line we used the fact that for $\te \in \Rd$, $\xi \in \Sdd$, and $\zeta\in \Sddm$ there holds $\vro(\xi + \zeta,\theta) \leq \vro(\xi, \theta)$, which is precisely the monotonicity property stated in the assertion (iv) of Proposition \ref{prop:varrho_polar};
		\item[-] to pass to the third line Jensen's inequality was employed for the convex function $\vro: \Sdd \times \Rd \to \R_+$.
	\end{itemize}
	We have thus showed that $(u_{\delta,\epsilon},w_{\delta,\epsilon}) \in \Krop$. Owing to the aforementioned convergences in $\delta$ and $\epsilon$ the sequence $(u_n,w_n) := (u_{\delta_n,\epsilon_n},w_{\delta_n,\epsilon_n})$ converging to $(u,w)$ in  $L^1(\Rd;\Rd) \times C_0(\Rd;\R)$ may be found by using the diagonalization argument. The proof of the converse implication, and thus of the whole theorem, is complete.
\end{proof}

Let us next address the matter of the optimality criteria for pairs $(u,w)$ and $(\TAU,\vartheta)$ to solve \ref{eq:relPM} and \ref{eq:dPM}, respectively. Once again, we will be generalizing the result from \cite{bolbotowski2022a} that was given for $\rho=\rho^\mathrm{M}$ only. Let us remark, however, that even in \cite{bolbotowski2022a} the conditions were not derived for an arbitrary pair $(u,w) \in \overline{\mathcal{K}}_{\rho^\mathrm{M}}$. The issue lies in the necessity of derivatives $e(u),\nabla w$ to be defined $\mu = \dro(\TAU)$-a.e. Accordingly, in the paper \cite{bolbotowski2022a} $u,w$ are assumed to be Lipschitz continuous, which unlocks the \textit{$\mu$-tangential calculus} first put forward in \cite{bouchitte1997} (see also \cite{bouchitte2003,bouchitte2007}), where one uses the $\mu$-tangential operators $e_\mu(u), \nabla_\mu w$. In contrast to \cite{bolbotowski2022a}, this level of generality is not essential in this paper, and, because employing the $\mu$-tangential calculus is very delicate, we will be satisfied by confining ourselves to the $C^1$ setting.
\begin{proposition}
	\label{prop:OPM_optimality_conditions}
	For a bounded domain $\Omega$ that is star-shaped with respect to a ball let us take a quintuple $(u,w,\mu,\sig,\q)$ such that: $(u,w)\in C^1(\Ob;\Rd \times \R)$, ${\mu} \in \Mes_+(\Ob)$, ${\sig} \in L^1_\mu(\Ob;\Sddp)$, $\q \in L^1_\mu(\Ob;\Rd)$; in addition assume that $(u,w) = (0,0)$ on $\bO$. 
	
	Then, the pairs $(u,w)$ and  $(\TAU,\vartheta) = (\sig\mu,\q \mu)$ solve problems $\relProb$ and $\dProb$, respectively, if and only if the following optimality conditions are met:
	\begin{equation}
	\label{eq:opt_cond_memb}
	\begin{cases}
	(i) &  -\DIV(\sig \mu) = 0, \quad -\dive(\q \mu) = f \quad \text{ in } \O,\\
	(ii) & \rho_+\big(\frac{1}{2} \, \nabla w \otimes \nabla w + e(u) \big) \leq 1  \quad \text{everywhere in } \Ob,\\
	(iii) & \pairing{\frac{1}{2} \, \nabla w \otimes \nabla w + e(u),\sig} = \dro(\sig) \quad \mu\text{-a.e.},\\
	(iv) & \q = \sig \,\nabla w \quad \mu\text{-a.e.,}\\
	(v) & (\sig\mu,\q \mu) \mres \bO = 0.
	\end{cases}
	\end{equation}
\end{proposition}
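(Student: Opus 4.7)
The plan is to treat Proposition \ref{prop:OPM_optimality_conditions} as a textbook convex-duality characterization of optimality for the pair $\relProb,\dProb$, leaning on the zero-gap equality of Proposition \ref{prop:supP=infP*} and the Fenchel--Young extremality recorded in Proposition \ref{prop:varrho_polar}(ii). Throughout I set $\TAU = \sig\mu$ and $\vartheta = \q\mu$, and I regard $(u,w)$ as a primal candidate and $(\TAU,\vartheta)$ as a dual candidate.

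For sufficiency (\eqref{eq:opt_cond_memb} $\Rightarrow$ optimality), condition (ii) together with the $C^1$ regularity of $(u,w)$ and Theorem \ref{prop:char_Kro} place $(u,w)\in \Kro$, while (i) is precisely the admissibility of $(\TAU,\vartheta)$ in $\dProb$. Conditions (iii) and (iv), combined via Proposition \ref{prop:varrho_polar}(ii), deliver the $\mu$-a.e.\ pointwise extremality $\pairing{e(u),\sig} + \pairing{\nabla w, \q} = \vro^0(\sig,\q)$. Integration against $\mu$ and the integration-by-parts identity discussed below then yield
\begin{equation*}
\int_\Ob w\, df \ = \ \int_\Ob \pairing{e(u),\sig}\, d\mu + \int_\Ob \pairing{\nabla w,\q}\, d\mu \ = \ \int_\Ob \vro^0(\sig,\q)\, d\mu,
\end{equation*}
so both $(u,w)$ and $(\TAU,\vartheta)$ realize the common duality value of Proposition \ref{prop:supP=infP*}, and are therefore optimal.

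For necessity, (i) and (ii) are simply the dual and primal admissibility conditions (the latter upgraded from ``a.e.''\ to ``everywhere'' by continuity of $\rho_+$ and of $(e(u),\nabla w)$). To extract (v), observe that the restricted pair $(\TAU\mres\O,\vartheta\mres\O)$ is still admissible in $\dProb$, because the distributional equilibria in (i) test only against $\DO$ functions which vanish on $\bO$; the coercivity estimate $\vro^0(\sig,\q)\ge C_1(\abs{\sig}+\abs{\q})$ from Proposition \ref{prop:varrho_polar}(iii) then forces $(\TAU,\vartheta)\mres\bO = 0$ by optimality, giving (v). Once (v) is in hand, the same integration-by-parts argument reduces the zero-gap equality of Proposition \ref{prop:supP=infP*} to $\int_\Ob \pairing{e(u),\sig}\, d\mu + \int_\Ob \pairing{\nabla w,\q}\, d\mu = \int_\Ob \vro^0(\sig,\q)\, d\mu$; combined with the pointwise Fenchel--Young inequality granted by (ii), equality must hold $\mu$-a.e., whereupon Proposition \ref{prop:varrho_polar}(ii) delivers (iii) and (iv) simultaneously.

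The main technical obstacle is precisely this integration by parts: the equilibria in (i) are tested against $\DO$, while $(u,w)\in C^1(\Ob;\Rd\times\R)$ is not compactly supported in $\O$. The remedy is to approximate $(u,w)$ by $(u_n,w_n)\in\DO$ using a cut-off-and-mollify scheme -- multiply by a smooth $\eta_{1/n}$ that vanishes in a $\frac{1}{2n}$-neighbourhood of $\bO$ and equals $1$ outside the $\frac{1}{n}$-neighbourhood, then mollify at scale $\ll \frac{1}{n}$. Because $u,w$ vanish continuously on $\bO$ and are $C^1$ up to $\bO$, the sequence $(\eta_{1/n} u,\eta_{1/n} w)$ is uniformly bounded in $C^1(\Ob)$ and converges pointwise on $\O$, with $e(\eta_{1/n}u)\to e(u)$ and $\nabla(\eta_{1/n}w)\to\nabla w$ pointwise. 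Dominated convergence against the finite measures $\sig\mu,\q\mu$ -- which by (v) place no mass on $\bO$ -- then legitimises the passage from the test-function identities $\int\pairing{e(u_n),\sig}\, d\mu = 0$ and $\int\pairing{\nabla w_n,\q}\, d\mu = \int w_n\, df$ to their $(u,w)$-limit counterparts, supplying the identity used in both halves of the proof.
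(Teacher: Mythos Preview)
Your proof is correct and follows essentially the same route as the paper. The paper packages both directions into a single chain
\[
\int_\Ob w\,df = \int_\O w\,df = \int_\O\big(\pairing{e(u),\sig}+\pairing{\nabla w,q}\big)\,d\mu \leq \int_\O \vro^0(\sig,q)\,d\mu \leq \int_\Ob \vro^0(\sig,q)\,d\mu,
\]
reading (v) off the last inequality and (iii),(iv) off the first, whereas you treat the two implications separately and extract (v) first in the necessity direction via the restriction-to-$\O$ competitor; these are equivalent observations. One small simplification the paper exploits: by integrating over the open set $\O$ rather than $\Ob$, the density argument for the integration by parts does not need (v) at all, since the cut-off test functions already vanish on $\bO$; this lets (v) emerge purely from the final inequality rather than being needed as an input to the limiting step.
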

\begin{proof}
	Equations (i) are clearly the admissibility conditions for $(\sig\mu,\q \mu)$ in the problem $\dProb$.  Considering the characterization in Theorem  \ref{prop:char_Kro}, condition (ii) plays the analogous role for the smooth functions $(u,w)$ and the problem $\relProb$. For these reasons, both (i) and (ii) can be assumed to be true in the remainder of the proof.
	
	Owing to the duality result in Proposition \ref{prop:supP=infP*}, the pairs $(u,w)$ and $(\sig \mu,q\mu)$ are solutions of $\relProb$ and, respectively, $\dProb$ if and only if the global optimality condition is satisfied: $\int_\Ob w\,df = \int_\Ob \vro^0(\sig,q) \,d\mu$. Meanwhile, we can write down the following chain:
	\begin{equation*}
		\int_\Ob w\,df = \int_\O w\,df = \int_\Omega \Big(\pairing{e(u),\sigma} + \pairing{\nabla w,q} \Big)\,d\mu \leq   \int_\Omega \vro^0(\sig,q) \,d\mu \leq \int_\Ob \vro^0(\sig,q) \,d\mu,
	\end{equation*}
	where:
	\begin{itemize}
		\item[-] the first equality is due to the fact that $w =0 $ on $\bO$;
		\item[-] the second equality is the integration by parts formula which acknowledges (i) and a simple density argument;
		\item[-] to obtain the first inequality we recall that condition (ii) can be rewritten as $\vro\big(e(u),\nabla w\big) \leq 1$ in $\Ob$.
	\end{itemize}
	Due to the assertion (iii) of Proposition \ref{prop:varrho_polar}, the last inequality in the chain above is an equality if and only if (v) holds. The first inequality is an equality if and only if $\big(\pairing{e(u),\sigma} + \pairing{\nabla w,q} \big) = \vro^0(\sig,q)$ \ $\mu$-a.e. By virtue of assertion (ii) in Proposition \ref{prop:varrho_polar} the latter condition is equivalent to the pair of conditions (iii), (iv), which completes the proof.
\end{proof}

We recall that Theorem \ref{thm:constructing_lambda_OEM} -- apart from the optimal material distribution itself -- yields precise stress functions $(\check{\sigma},\check{q})$ that are induced in the optimally designed membrane $\check{\mu}$ due to the load $f$. Those informations are recast based on a solution $(\hat{\TAU},\hat{\vartheta})$ of the dual problem \ref{eq:dPM}. Let us now turn to the issue of recovering displacements $(\check{u},\check{w})$ in the optimal membrane based on solutions $(\check{u},\check{w})$ of the relaxed primal problem \ref{eq:relPM}. For each such optimal pair let us introduce the scaling:
\begin{equation}
	\label{eq:scaling_uw}
	\check{u} = \left(\frac{\Z}{2\Totc}\right)^{\frac{2}{2p-1}} \hat{u}, \qquad \check{w} = \left(\frac{\Z}{2\Totc}\right)^{\frac{1}{2p-1}} \hat{w}.
\end{equation}
In order to exploit the optimality conditions we assume that the above functions are of $C^1$ class:
\begin{corollary}
	\label{cor:uw}
	For a bounded domain $\Omega$ that is star-shaped with respect to a ball assume that functions $(\hat{u},\hat{w}) \in C^1(\Ob;\Rd\times\R)$ satisfying $(\hat{u},\hat{w}) = (0,0)$ solve the problem \ref{eq:relPM}. In addition, take a triple $(\check{\mu},\check{\sig},\check{q})$ in accordance with Theorem \ref{thm:constructing_lambda_OEM}.
	
	Then, the quadruple $({u},{w},{\sig},{q})=(\check{u},\check{w},\check{\sig},\check{q})$ solves the system \eqref{eq:Foppl_system} for $\mu = \check{\mu}$. Namely, $(\check{u},\check{w})$ and $(\check{\sigma},\check{q})$ are the displacements and, respectively, stresses in a F\"{o}ppl's membrane of an optimal material distribution $\check{\mu}$. 
\end{corollary}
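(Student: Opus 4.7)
The plan is to verify each of the three conditions in the F\"{o}ppl system \eqref{eq:Foppl_system} for the scaled checked quintuple by combining the optimality criteria of Proposition \ref{prop:OPM_optimality_conditions} applied to the hatted quintuple with the scaling relations \eqref{eq:link_OEM_dProb}, \eqref{eq:scaling_uw} and the positive $p$-homogeneity of $j_+$.

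First I would check that Proposition \ref{prop:OPM_optimality_conditions} applies to the quintuple $(\hat u,\hat w,\hat\mu,\hat\sigma,\hat q)$ assembled from $(\hat u,\hat w)$ and the decomposition \eqref{eq:decomposition} of the chosen solution $(\hat\TAU,\hat\vartheta)$ of \ref{eq:dPM}. Indeed, $(\hat u,\hat w) \in C^1(\Ob;\R^2\times\R)$ with boundary vanishing by assumption; the pair $(\hat u,\hat w)$ solves \ref{eq:relPM} by assumption and $(\hat\sigma\hat\mu,\hat q\hat\mu)=(\hat\TAU,\hat\vartheta)$ solves \ref{eq:dPM} by construction; the identity $\rho^0(\hat\sigma)=1$ $\hat\mu$-a.e.\ (from $\hat\mu=\rho^0(\hat\TAU)$) together with \eqref{eq:vro_fin} gives $\hat\sigma\in L^\infty_{\hat\mu}(\Ob;\Sddp)$ and $\hat q\in L^1_{\hat\mu}(\Ob;\R^2)$. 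Thus the five conditions in \eqref{eq:opt_cond_memb} are available with hats.

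Setting $\alpha:=(Z/(2\Totc))^{1/(2p-1)}$, the scaling relations \eqref{eq:link_OEM_dProb}--\eqref{eq:scaling_uw} read
\[
\check u=\alpha^{2}\hat u,\quad \check w=\alpha\hat w,\quad \check\sigma=\alpha^{2p-2}\hat\sigma,\quad \check q=\alpha^{2p-1}\hat q,\quad \check\mu=\alpha^{1-2p}\hat\mu,
\]
and consequently $\check\xi:=\tfrac{1}{2}\nabla\check w\otimes\nabla\check w+e(\check u)=\alpha^{2}\hat\xi$ with $\hat\xi$ defined analogously. The equilibrium conditions $(i)$ of \eqref{eq:Foppl_system} were already established in \eqref{eq:Div_sigcheck_sighat}--\eqref{eq:div_qcheck_qhat}. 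Condition $(iii)$ of \eqref{eq:Foppl_system}, namely $\check q=\check\sigma\,\nabla\check w$, reduces upon substitution to $\alpha^{2p-1}\hat q=\alpha^{2p-1}\hat\sigma\,\nabla\hat w$, which is exactly condition $(iv)$ of \eqref{eq:opt_cond_memb} for the hatted variables; the pointwise identity transfers from $\hat\mu$-a.e.\ to $\check\mu$-a.e.\ by mutual absolute continuity.

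The main obstacle is condition $(ii)$ of \eqref{eq:Foppl_system}: $\check\sigma\in\partial j_+(\check\xi)$ \ $\check\mu$-a.e. I would first upgrade condition $(iii)$ of \eqref{eq:opt_cond_memb} to the full subdifferential inclusion $\hat\sigma\in\partial j_+(\hat\xi)$ $\hat\mu$-a.e.\ by a Fenchel-Young saturation argument. Combining $\langle\hat\xi,\hat\sigma\rangle=\rho^0(\hat\sigma)=1$, the everywhere bound $\rho_+(\hat\xi)\leq 1$ from condition $(ii)$ of \eqref{eq:opt_cond_memb}, the positive semi-definiteness of $\hat\sigma$, and the formula \eqref{eq:jplus_star}, one obtains the chain
\[
1=\langle\hat\xi,\hat\sigma\rangle\leq j_+(\hat\xi)+j_+^*(\hat\sigma)=\tfrac{1}{p}\bigl(\rho_+(\hat\xi)\bigr)^{p}+\tfrac{1}{p'}\bigl(\rho^{0}(\hat\sigma)\bigr)^{p'}\leq \tfrac{1}{p}+\tfrac{1}{p'}=1,
\]
forcing equality in Fenchel-Young $\hat\mu$-a.e.\ and hence $\hat\sigma\in\partial j_+(\hat\xi)$ $\hat\mu$-a.e. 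The last step exploits the positive $p$-homogeneity of $j_+$: the scaling law $\partial j_+(t\xi)=t^{p-1}\partial j_+(\xi)$ for every $t>0$, applied with $t=\alpha^{2}$, gives
\[
\check\sigma=\alpha^{2p-2}\hat\sigma\in\alpha^{2(p-1)}\partial j_+(\hat\xi)=\partial j_+(\alpha^{2}\hat\xi)=\partial j_+(\check\xi)\quad\hat\mu\text{-a.e.},
\]
which transfers to $\check\mu$-a.e. This completes the verification of the F\"{o}ppl system \eqref{eq:Foppl_system} for $(\check u,\check w,\check\sigma,\check q)$ with $\mu=\check\mu$, whence the mechanical interpretation of the checked quantities as displacements and stresses of the optimally designed membrane follows at once.
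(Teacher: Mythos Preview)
Your proof is correct and follows essentially the same route as the paper: invoke Proposition~\ref{prop:OPM_optimality_conditions} for the hatted quintuple, transfer conditions $(i)$ and $(iii)$ of \eqref{eq:Foppl_system} directly via the scaling relations, and handle the subdifferential inclusion $(ii)$ by combining $\langle\hat\xi,\hat\sigma\rangle=\rho^0(\hat\sigma)=1$, $\rho_+(\hat\xi)\le 1$ with the $p$-homogeneity of $j_+$. The only difference is expository: the paper packages the last step as a single general fact (``if $\langle\xi,\sigma\rangle=1$ with $h(\xi)\le 1$, $h^0(\sigma)=1$, then $t_2\sigma\in\partial f(t_1\xi)$ whenever $t_1^p=t_2^{p'}$''), whereas you unfold it into a Fenchel--Young saturation chain followed by the subdifferential scaling law $\partial j_+(t\xi)=t^{p-1}\partial j_+(\xi)$---same argument, more detail.
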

\begin{proof}
	By virtue of Proposition \ref{prop:OPM_optimality_conditions} conditions \eqref{eq:opt_cond_memb}(i,iii,iv) are met for $({u},{w},\mu,{\sig},{q})=(\hat{u},\hat{w},\hat{\mu},\hat{\sig},\hat{q})$, where $\hat{\mu},\hat{\sig},\hat{q}$ are as in Theorem \ref{thm:constructing_lambda_OEM}. Using the scaling formulas \eqref{eq:link_OEM_dProb} and \eqref{eq:scaling_uw} we recover the conditions \eqref{eq:Foppl_system}(i,ii,iii). To verify \eqref{eq:Foppl_system}(ii) we use the following fact.
	If for a positive 1-homogeneous convex l.s.c function $h:\Sdd\to \Rb$ there holds $\pairing{\xi,\sig} = 1$ for a pair with $h(\xi)\leq 1$ and $h^0(\sig)=1$, then $(t_2 \sig) \in \partial f(t_1 \xi)$ for $f(\argu) = \frac{1}{p} \big(h(\argu)\big)^p$ whenever $t_1^p = t_2^{p'}$. In our case $h =\rhop$, $f = j_+$, and $t_1 = (Z/(2V_0))^{2/(2p-1)}$, $t_2 = (Z/(2V_0))^{(2p-2)/(2p-1)}$.
\end{proof}

\begin{remark}
	\label{rem:uw}
	In the case when solutions $(\hat{u},\hat{w}) \in \Kro$ of the problem \ref{eq:relPM} are non-smooth the functions $(\check{u},\check{w})$ in \eqref{eq:scaling_uw} can be still interpreted as the displacements in the optimal membrane $\check{\mu}$ provided we employ the variational formulation \ref{eq:nlcomp} for $\mu=\check{\mu}$. More accurately, one can prove that there always exists a minimizing sequence $(u_h,w_h) \in \D(\O;\Rd \times \R)$ for \ref{eq:nlcomp} such that $u_h \to \check{u}$ in $L^1(\O;\Rd)$ and $w_h \to \check{w}$ uniformly on $\Ob$. We skip the proof here.
\end{remark}

\section{The finite element approximation for the pair $\Prob$, $\dProb$ }
\label{sec:FEM}

\subsection{Formulation of the finite element problem}

In the whole section we shall assume that $\Omega \subset \Rd$ is a polygonal domain that is star-shaped with respect to a ball. For a mesh parameter $h \in  (0,\infty)$ we consider a triangulation of $\Omega$:
\begin{equation*}
\mathcal{T}^h = \left\{E^h_i \, : \, E^h_i \subset \O \ \text{ is an open simplex},\ E^h_{i_1} \cap E^h_{i_2} = \varnothing \ \text{for } i_1 \neq i_2, \ \Ob = \bigcup_{i=1}^n \overline{E_i^h}, \  \mathrm{diam}(E_i^h) \leq h \right\},
\end{equation*} 
where $n$ stands for the number of elements in $\mathcal{T}^h$.
Of course, the condition $\Ob = \bigcup_{i=1}^n \overline{E_i^h}$ is possible only thanks to the fact that $\Omega$ is a polygon. We define the finite dimensional linear space of continuous displacements that are element-wise affine:
\begin{align*}
\label{eq:Uh}
\mathcal{V}^h := \Big\{ (u,w) \in & \, C(\Ob;\Rd\times \R)\, : \  (u,w) = (0,0) \text{ on } \bO, \\
& \qquad \big(u(x),w(x)\big) = a_i + A_i x \quad \forall\,x\in E_i^h, \ a_i \in \R^3, \ A_i \in \R^{3 \times 2} \Big\}.
\end{align*}
The finite dimensional counterpart of the relaxed primal problem $\relProb$ may be readily proposed:
\begin{equation}
\label{eq:PMh}\tag*{$(\mathcal{P}_h)$}
Z_h := \max \left \{ \int_\Ob w \,df \, : \, (u,w) \in \mathcal{K}^h_\rho  \right\} \quad
\end{equation}
where
\begin{equation*}
\mathcal{K}^h_\rho :=\, \Kro \cap \mathcal{V}^h.
\end{equation*}
Naturally, since $\mathcal{V}^h$ is a finite dimensional subspace of $L^1(\O;\Rd) \times C(\Ob;\R)$, the convex set $\mathcal{K}^h_\rho$ is compact, and, therefore, solution of \ref{eq:PMh} always exists. By comparing the formulations \ref{eq:relPM} and \ref{eq:PMh} we immediately deduce that
\begin{equation}
\label{eq:Z_h_leq_Z}
Z_h \leq Z.
\end{equation}
By a duality argument, (that is a variant of the one in the proof of Proposition \ref{prop:supP=infP*} with  $X = \mathcal{V}^h$, $Y = L^\infty(\O;\Sdd\times\Rd)$), we arrive at the dual problem:
\begin{equation}
\label{eq:dPMh}\tag*{$(\mathcal{P}^*_h)$}
\inf \left \{ \int_\Omega \Big(\dro(\sig) + \jsq \Big) d\mathcal{L}^2 \, : \, (\sigma,q) \in \Sigma^h(f)  \right\} \quad
\end{equation}
where the \textit{set of approximately admissible stress functions} reads
\begin{equation*}
\Sigma^h(f) : = \left\{  (\sig,q) \in L^1(\Omega;\Sddp \times \Rd) \, : \, \int_\Omega\Big( \pairing{e(\phi) , \sig} + \pairing{\nabla \varphi,  q} \Big) \, d\mathcal{L}^2 = \int_\Ob \varphi\, df \quad  \forall \, (\phi,\varphi) \in \mathcal{V}^h \right\}.
\end{equation*}
The problem $(\mathcal{P}^*_h)$ above is \textit{a priori} infinite dimensional and using standard tools we merely obtain inequality $Z_h=\sup \mathcal{P}_h \leq \inf \mathcal{P}_h^*$. In due course, however, we will prove that the converse inequality holds as well while $(\mathcal{P}_h^*)$ will turn out to be equivalent to a finite dimensional problem, cf. Corollary \ref{cor:Ph_Ph*_finite_and_infinite} below.

\subsection{Algebraic reformulation to a conic programming problem}
\label{ssec:algebraic}
 We assume a standard Cartesian frame $\e_1 = (1,0,0), \, \e_2=(0,1,0),\, \e_3=(0,0,1)$ in the space $\R^3$. Let $X^h$ stand for the set of vertices (nodes) of the triangulation $\mathcal{T}^h$ \textit{without the vertices belonging to $\bO$}. With $m$ being the number of those vertices, let us assume that $X^h = \{ \varv_1, \ldots,\varv_m \}$ for $\varv_j \in \Rd$. In the sequel the elements of the space $\mathcal{V}^h$ will be represented through vectors of nodal displacements. Those vectors will be denoted in bold font: $\mbf{u}_1,\mbf{u}_2,\mbf{w} \in \R^m$, whilst by $\mbf{u}_1(j),\mbf{u}_2(j),\mbf{w}(j) \in \R$ we will understand their $j$-th components.
 
Let us introduce the interpolation operator $\iota^h$ being an isomorphism between $(\R^m)^3$ and $\mathcal{V}^h$
that, upon denoting $\iota^h(\mbf{u}_1,\mbf{u}_2,\mbf{w})=\big(\iota_\shortparallel^h(\mbf{u}_1,\mbf{u}_2),\iota_\perp^h(\mbf{w})\big)$, satisfies 
\begin{equation*}
\begin{cases}
	u = \iota_\shortparallel^h(\mbf{u}_1 ,\mbf{u}_2),\\
	w = \iota_\perp^h(\mbf{w})
\end{cases}
 \quad \Leftrightarrow \qquad  (u,w) \in \mathcal{V}^h, \quad \ \
\begin{cases}
	u(\varv_j) = \mbf{u}_1(j)\, \e_1 + \mbf{u}_2(j)\, \e_2,\\
	w(\varv_j) = \mbf{w}(j)
\end{cases}
 \ \  \forall\, j \in \{1,\ldots,m\}.
\end{equation*}
Next, we define two isomorphisms $\chi_2 :\R^3 \to \Sdd$ and $\chi_3:\R^6 \to \mathcal{S}^{3\times 3}$:
\begin{align}
\nonumber
\chi_2\bigl(a^{11}, a^{22}, a^{12} \bigr) :=&\ a^{11}\,\e_1 \otimes \e_1 + a^{22}\, \e_2 \otimes \e_2 + \sqrt{2} \, a^{12} \,\e_1 \symtens \e_2,\\
\label{eq:isometry}
\chi_3\bigl(a^{11}, a^{22}, a^{33},a^{12},a^{13},a^{23} \bigr) :=&\ a^{11}\,\e_1 \otimes \e_1 + a^{22}\, \e_2 \otimes \e_2 + a^{33}\, \e_3 \otimes \e_3\\
\nonumber
	&\  \qquad + \sqrt{2} \, a^{12} \,\e_1 \symtens \e_2 + \sqrt{2} \, a^{13} \,\e_1 \symtens \e_3 + \sqrt{2} \, a^{23} \,\e_2 \symtens \e_3,
\end{align}
where $\e_i \odot \e_j = \frac{1}{2}(\e_i\otimes \e_j + \e_j \otimes \e_i)$ is the symmetrized tensor product. Both maps are linear isometries, namely
\begin{equation}
\label{eq:isometry_chi}
\Big\langle\bigl(a^{11}, a^{22}, a^{12} \bigr),\bigl(b^{11}, b^{22}, b^{12} \bigr) \Big\rangle_{\R^3} = \Big\langle \chi_2\bigl(a^{11}, a^{22}, a^{12} \bigr),\chi_2\bigl(b^{11}, b^{22}, b^{12} \bigr)\Big\rangle_{\mathcal{S}^{2 \times 2}},
\end{equation}
and an analogous equality holds true for $\chi_3$.

Any pair $(u,w) \in \mathcal{V}^h$ is Lipschitz continuous; therefore, by the Rademacher theorem, $u$ and $w$ are a.e. differentiable. Moreover, $e(u)$ and $\nabla w$ are constant in each element $E_i^h$. As a consequence, there exist \textit{geometric matrices} $ \mbf{B}^{11}, \mbf{B}^{22}, \mbf{B}^{12}_1, \mbf{B}^{12}_2, \mbf{D}^{1},  \mbf{D}^{2} \in \R^{n\times m}$ that are uniquely defined by the following relation:
\begin{equation}
\label{eq:geo_mat_FMD_def}
\begin{cases}
	e\bigl(\iota_\shortparallel^h(\mbf{u}_1 ,\mbf{u}_2) \bigr) = \chi_2 \bigl( \bm{\epsilon}^{11}(i),\bm{\epsilon}^{22}(i),\bm{\epsilon}^{12}(i)\bigr),\\
	\nabla \big(\iota_\perp^h(\mbf{w})\big) = \bm{\theta}^1(i)\,\e_1 + \bm{\theta}^2(i)\,\e_2
\end{cases} 
\quad \text{a.e. in } E_i^h
\qquad
\Leftrightarrow
\qquad
\begin{cases}
\bm{\epsilon}^{11} = \mbf{B}^{11} \mbf{u}_1, \\
\bm{\epsilon}^{22} =  \mbf{B}^{22} \mbf{u}_2,\\
\bm{\epsilon}^{12} = \mbf{B}_1^{12} \mbf{u}_1 + \mbf{B}_2^{12} \mbf{u}_2,\\
\bm{\theta}^{1} = \mbf{D}^1 \mbf{w},\\
\bm{\theta}^{2} = \mbf{D}^2 \mbf{w}.
\end{cases}
\end{equation}
where $\bm{\epsilon}^{11},\bm{\epsilon}^{22},\bm{\epsilon}^{12},\bm{\theta}^{1},\bm{\theta}^{2} \in \R^n$, $n$ being the number of elements in $\mathcal{T}^h$.

To every pair of integrable functions $(\sigma,q) \in L^1(\O;\Sdd\times\Rd)$ we will assign vectors $\bm{\sigma}^{11},\bm{\sigma}^{22}, \bm{\sigma}^{12},  \mbf{q}^{1},  \mbf{q}^{2} \in \R^n$ as below:
\begin{equation}
\label{eq:mbsigma}
\begin{cases}
\bm{\sigma}^{11}(i) = \int_{E_i^h} \pairing{\sigma,\e_1 \otimes \e_1} \, d\mathcal{L}^2,\\
\bm{\sigma}^{22}(i) = \int_{E_i^h} \pairing{\sigma,\e_2 \otimes \e_2} \, d\mathcal{L}^2, \\
\bm{\sigma}^{12}(i) = \int_{E_i^h} \sqrt{2}\,\langle \sigma,\e_1 \symtens \e_2 \rangle \, d\mathcal{L}^2,\\
\mbf{q}^{1}(i) = \int_{E_i^h} \pairing{q,\e_1} \, d\mathcal{L}^2, \\
\mbf{q}^{2}(i) = \int_{E_i^h} \pairing{q,\e_2} \, d\mathcal{L}^2.
\end{cases}
\end{equation}
Finally, the load $f$ will be represented by $\mbf{f} \in \R^m$ being the unique vector that satisfies the relation:
\begin{equation}
\label{eq:nodal_load_vector}
\pairing{\mbf{w},\mbf{f}} = \int_{\Ob}\iota_\perp^h(\mbf{w}) \, df \qquad \qquad  \forall\, \mbf{w} \in \R^m.
\end{equation} 
By means of a simple argument that combines \eqref{eq:isometry_chi} and \eqref{eq:geo_mat_FMD_def}, we arrive at an algebraic representation of the set $\Sigma^h(f)$.
For any pair of functions $(\sigma,q) \in L^1(\Omega;\Sddp\times \Rd)$ there holds the equivalence:
	\begin{equation}
	\label{eq:algebraic_EqEq_FMD}
	(\sigma,q) \in \Sigma^h(f) \qquad \Leftrightarrow \qquad	
	\begin{cases}
	(\mbf{B}^{11})^\top  \bm{\sigma}^{11} +(\mbf{B}_1^{12})^\top  \bm{\sigma}^{12} = \mbf{0},  \\
	(\mbf{B}_2^{12})^\top  \bm{\sigma}^{12} + (\mbf{B}^{22})^\top  \bm{\sigma}^{22} = \mbf{0}, \\
	(\mbf{D}^{1})^\top  \mbf{q}^{1} + (\mbf{D}^{2})^\top  \mbf{q}^{2} = \mbf{f},
	\end{cases}
	\end{equation}
	where $\bm{\sigma}^{11},\bm{\sigma}^{22}, \bm{\sigma}^{12},  \mbf{q}^{1},  \mbf{q}^{2} \in \R^n$ are given by \eqref{eq:mbsigma}.

We shall now give the algebraic characterization of the set $\mathcal{K}^h_\rho =\, \Kro \, \cap \, \mathcal{V}^h$. First we recall that any pair $(u,w) \in \mathcal{V}^h$ is Lipschitz continuous, and $(u,w) = (0,0)$ on $\bO$. Therefore, conditions (i),\,(ii) in Theorem  \ref{prop:char_Kro} are automatically satisfied for any such $(u,w)$. In turn, for $(u,w)$ to be an element of $\Kro$ it is enough that the condition (iii) is met, which for Lipschitz continuous functions reduces to: $\rho_+\!\left(\tfrac{1}{2} \, \nabla w \otimes \nabla w + e(u)\right) \leq 1$ a.e. in $\O$. But the derivatives $e(u), \nabla w$ are element-wise constant, hence
\begin{align}
	\label{eq:Kroh_char}
	\mathcal{K}^h_\rho =
	\Bigg\{ \big(\iota_\shortparallel^h(\mbf{u}_1 ,\mbf{u}_2),\iota_\perp^h(\mbf{w}) \big) \, : & \, \  \mbf{u}_1,\, \mbf{u}_2, \, \mbf{w} \in \R^m, \\
	\nonumber	
	 & \quad\rho_+\!\left( \frac{1}{2} 
	\begin{bmatrix}
		\bm{\theta}^1(i)\\
		\bm{\theta}^2(i)
	\end{bmatrix}
	\otimes
	\begin{bmatrix}
	\bm{\theta}^1(i)\\
	\bm{\theta}^2(i)
	\end{bmatrix}
	+
	\begin{bmatrix}
		\bm{\epsilon}^{11}(i) & \tfrac{1}{\sqrt{2}}\bm{\epsilon}^{12}(i)\\
		\tfrac{1}{\sqrt{2}}\bm{\epsilon}^{12}(i) & \bm{\epsilon}^{22}(i)
	\end{bmatrix}
	 \right) \leq 1 \quad \forall\,i \Bigg\},
\end{align}
where $\bm{\epsilon}^{11},\bm{\epsilon}^{22},\bm{\epsilon}^{12},\bm{\theta}^{1},\bm{\theta}^{2} \in \R^n$ are ruled by the relations \eqref{eq:geo_mat_FMD_def}.
\bigskip

The equality above, together with \eqref{eq:nodal_load_vector}, already furnishes an algebraic formulation of the problem \ref{eq:PMh}. Our goal is now to rewrite it as a convex conic programming problem \cite{andersen2003}.
We begin by observing that the following set is a closed convex cone in $\R^4$:
\begin{equation*}
\mathrm{K}^4_\rho := \Big\{ (r,\epsilon^{11},\epsilon^{22},\epsilon^{12}) \in \R^4 \, : \, \rho\Big( \chi\bigl(\epsilon^{11}, \epsilon^{22}, \epsilon^{12} \bigr) \Big) \leq r \Big\}.
\end{equation*}
Then, the dual cone $(\mathrm{K}^4_\rho)^*$ can be expressed by means of the polar function $\dro$. Let us recall that, for any cone $\mathrm{K} \subset \R^k$, by its \textit{dual cone} we understand the closed convex cone $\mathrm{K}^* := \big\{ z^*\in \R^k \, : \, \pairing{z,z^*} \geq 0 \ \ \  \forall\,z \in \mathrm{K} \big\}$.
\begin{proposition}
	\label{prop:cone_rho_dual}
	The cone dual to $\mathrm{K}^4_\rho$ enjoys the following characterization:
	\begin{equation*}
	\bigl(r^0,-\sigma^{11},-\sigma^{22},-\sigma^{12}\bigr) \in \bigl(\mathrm{K}^4_\rho\bigr)^*  \qquad \Leftrightarrow \qquad \bigl(r^0,\sigma^{11},\sigma^{22},\sigma^{12} \bigr)\in \mathrm{K}^4_\dro
	\end{equation*}
	where 
	\begin{equation*}
	\mathrm{K}^4_\dro = \Big\{ (r^0,\sigma^{11},\sigma^{22},\sigma^{12}) \in \R^4 \, : \, \dro\Big( \chi\bigl(\sigma^{11}, \sigma^{22}, \sigma^{12} \bigr) \Big) \leq r^0 \Big\}.
	\end{equation*}
\end{proposition}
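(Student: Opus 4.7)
The proof should be a direct computation from the definition of dual cone, exploiting the fact that $\chi_2$ is an isometry (equation \eqref{eq:isometry_chi}) and the defining relation between $\rho$ and $\rho^0$ (the last equation in \eqref{eq:recall_rho}).

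The plan is as follows. For $z = (r,\epsilon^{11},\epsilon^{22},\epsilon^{12}) \in \R^4$ and $z^* = (r^0,-\sigma^{11},-\sigma^{22},-\sigma^{12})\in \R^4$, the isometry \eqref{eq:isometry_chi} gives
\begin{equation*}
\pairing{z,z^*}_{\R^4} \ =\  r\,r^0 \ -\ \Big\langle \chi_2(\epsilon^{11},\epsilon^{22},\epsilon^{12})\,,\,\chi_2(\sigma^{11},\sigma^{22},\sigma^{12})\Big\rangle_{\Sdd}.
\end{equation*}
Writing $\xi = \chi_2(\epsilon^{11},\epsilon^{22},\epsilon^{12})$ and $\sig = \chi_2(\sigma^{11},\sigma^{22},\sigma^{12})$, membership of $z^*$ in $(\mathrm{K}^4_\rho)^*$ amounts to requiring
\begin{equation*}
r\,r^0 \ \geq\ \pairing{\xi,\sig} \qquad \text{for every $(r,\xi)\in\R\times\Sdd$ with } \rho(\xi)\leq r.
\end{equation*}

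First I would reduce to the tight case $r = \rho(\xi)$, which is the worst case since $r^0$ must be non-negative: indeed, plugging in $\xi=0$ and letting $r\to+\infty$ forces $r^0\geq 0$. Then the condition becomes
\begin{equation*}
\rho(\xi)\, r^0 \ \geq \ \pairing{\xi,\sig} \qquad \forall\,\xi\in\Sdd.
\end{equation*}

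The final step is to split into the two cases $r^0=0$ and $r^0>0$. In the former case, the displayed inequality forces $\pairing{\xi,\sig}\leq 0$ for every $\xi$, hence $\sig=0$, and so $\rho^0(\sig)=0\leq r^0$. In the latter case, positive $1$-homogeneity of $\rho$ permits us to restrict $\xi$ to the sub-level set $\{\rho(\xi)\leq 1\}$, which upon taking the supremum yields precisely
\begin{equation*}
r^0 \ \geq\ \sup_{\rho(\xi)\leq 1} \pairing{\xi,\sig} \ = \ \rho^0(\sig),
\end{equation*}
by the defining formula of $\rho^0$ in \eqref{eq:recall_rho}. In either case we obtain $\rho^0(\sig)\leq r^0$, i.e.\ $(r^0,\sigma^{11},\sigma^{22},\sigma^{12})\in \mathrm{K}^4_{\rho^0}$, and conversely every such $(r^0,\sig)$ satisfies the tested inequality by the very definition of the polar. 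There is no genuine obstacle here; the argument is a bookkeeping exercise on the polarity between $\rho$ and $\rho^0$, and the only subtle point is the case $r^0=0$ (one must not accidentally divide by $r^0$).
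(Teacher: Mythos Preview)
Your proof is correct and follows essentially the same approach as the paper's own proof: both use the isometry \eqref{eq:isometry_chi} to rewrite the pairing as $r\,r^0 - \pairing{\xi,\sig}$ and then reduce the dual-cone condition to the polar inequality $\rho^0(\sig)\leq r^0$. Your version is simply more explicit in justifying $r^0\geq 0$ and in treating the borderline case $r^0=0$, whereas the paper compresses this into the single sentence ``This condition precisely reads $\dro(\sig) \leq r^0$''.
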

\begin{proof}
	Since $\chi$ is a linear isometry, cf. \eqref{eq:isometry_chi}, we observe that
	\begin{equation*}
	\pairing{\,(r,\epsilon^{11},\epsilon^{22},\epsilon^{12}) \, , \, (r^0,-\sigma^{11},-\sigma^{22},-\sigma^{12}) \, }_{\R^4} = r\,r^0 - \pairing{\xi,\sig}_{\mathcal{S}^{2 \times 2}}
	\end{equation*}
	where $\xi = \chi_2\bigl(\epsilon^{11}, \epsilon^{22}, \epsilon^{12} \bigr)$ and $\sig = \chi_2\bigl(\sigma^{11}, \sigma^{22}, \sigma^{12} \bigr)$. The problem thus reduces to determining necessary and sufficient condition for $r^0,\sig$ for which $\pairing{\xi,\sig}_{\mathcal{S}^{2 \times 2}} \leq r \, r^0$ for any $r,\xi$ satisfying $\rho(\xi) \leq r$. This condition precisely reads $\dro(\sig) \leq r^0$, which furnishes the claim.
\end{proof}
Next, we define a cone in  $\R^6$:
\begin{equation*}
	\mathrm{K}_+^6 := \Big\{ (\zeta^{11}, \zeta^{22},\zeta^{33},\zeta^{12},\zeta^{13},\zeta^{23}) \in \R^6 \ : \ \chi_3(\zeta^{11}, \zeta^{22},\zeta^{33},\zeta^{12},\zeta^{13},\zeta^{23}) \in \mathcal{S}^{3 \times 3}_+  \Big\}.
\end{equation*}
It is well established that $\mathcal{S}^{3 \times 3}_+$ is a self-dual convex closed cone as a subset of $\mathcal{S}^{3 \times 3}_+$. Since $\chi_3$ is a linear isometry, the same holds for $ \mathrm{K}_+^6$, namely
\begin{equation*}
	\big(\mathrm{K}_+^6\big)^* = \mathrm{K}_+^6.
\end{equation*}

Classically, a conic programming problem involves a linear objective function, linear equality constraints, and conic constraints. When reformulating \ref{eq:PMh} to a conic program one of the obstacles to overcome lies in the quadratic term with respect to $\theta$ in the constraint $\rho_+(\frac{1}{2}\theta\otimes\theta +\xi) \leq 1$ or in its algebraic counterpart in \eqref{eq:Kroh_char}. By employing some extra variables, the following result resolves this issue and, along the way, unlocks the use of the cones $\mathrm{K}^4_\rho$ and $\mathrm{K}_+^6$.
\begin{lemma}
	\label{lem:quadruple_instead_of_pair}
	For any matrix $\xi \in \Sdd$ and any vector $\theta \in \Rd$ the following conditions are equivalent:
	\begin{enumerate}[label={(\roman*)}]
		\item $\rho_+(\frac{1}{2} \, \theta \otimes \theta + \xi) \leq 1$,
		\item $\vro(\xi,\theta) \leq 1$,
		\item there exist matrices $\epsilon \in \Sdd$ and $\zeta \in \mathcal{S}^{3 \times 3}_+$ such that:
		\begin{equation}
		\label{eq:quadruple_instead_of_pair}
		\begin{bmatrix}
		\,\xi  &  \tfrac{1}{\sqrt{2}}\,\theta\,\\
		\tfrac{1}{\sqrt{2}}\,\theta^\top\, & 0\,
		\end{bmatrix} + \zeta = \begin{bmatrix}
		\,\epsilon  &  0_2\,\\
		\,0_2^\top\, & 1\,
		\end{bmatrix}  \qquad \text{and} \qquad	\rho(\epsilon) \leq 1,
		\end{equation}
		where  $0_2 \in\Rd$ is the $2 \times 1$ column vector of zeros.
	\end{enumerate}
\end{lemma}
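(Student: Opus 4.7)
My plan is to first dispatch the equivalence (i)~$\Leftrightarrow$~(ii) as a purely definitional matter: by \eqref{eq:Cm} and \eqref{eq:Cm_vro}, a pair $(\xi,\theta)$ belongs to $\mathrm{C}$ iff either $\rho_+(\tfrac{1}{2}\theta\otimes\theta+\xi)\leq 1$ or $\varrho(\xi,\theta)\leq 1$. No work is needed beyond quoting those formulas.

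The content of the lemma lies in (i)~$\Leftrightarrow$~(iii), which I would handle by showing both conditions are equivalent to the same auxiliary statement, namely:
\begin{equation}\label{eq:plan_aux}
\exists\,\epsilon\in\mathcal{S}^{2\times 2}\ \text{ such that }\ \rho(\epsilon)\leq 1\ \text{ and }\ \epsilon-\xi-\tfrac{1}{2}\,\theta\otimes\theta\ \succeq\ 0.
\end{equation}
For (i)~$\Leftrightarrow$~\eqref{eq:plan_aux}, I would invoke the first identity in \eqref{eq:rhop_rho}, namely $\rho_+(\eta)=\min_{\hat\zeta\in\mathcal{S}^{2\times 2}_+}\rho(\eta+\hat\zeta)$, applied to $\eta=\xi+\tfrac{1}{2}\theta\otimes\theta$. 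Condition (i) then reads: there exists $\hat\zeta\in\mathcal{S}^{2\times 2}_+$ with $\rho(\xi+\tfrac{1}{2}\theta\otimes\theta+\hat\zeta)\leq 1$. Setting $\epsilon:=\xi+\tfrac{1}{2}\theta\otimes\theta+\hat\zeta$, respectively $\hat\zeta:=\epsilon-\xi-\tfrac{1}{2}\theta\otimes\theta$, this is exactly \eqref{eq:plan_aux}.

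For (iii)~$\Leftrightarrow$~\eqref{eq:plan_aux}, the key ingredient is the Schur complement. The matrix equation in \eqref{eq:quadruple_instead_of_pair} forces
\begin{equation*}
\zeta\ =\ \begin{bmatrix}\epsilon-\xi & -\tfrac{1}{\sqrt{2}}\,\theta\\[2pt] -\tfrac{1}{\sqrt{2}}\,\theta^\top & 1\end{bmatrix},
\end{equation*}
so that $\zeta\in\mathcal{S}^{3\times 3}_+$ becomes a block positive semi-definiteness condition. Since the $(3,3)$-entry equals $1>0$, the classical Schur complement criterion gives $\zeta\succeq 0\ \Leftrightarrow\ \epsilon-\xi-\tfrac{1}{2}\theta\otimes\theta\succeq 0$. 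Combining this with the constraint $\rho(\epsilon)\leq 1$ from (iii) yields exactly \eqref{eq:plan_aux}, and conversely any $\epsilon$ realising \eqref{eq:plan_aux} produces $\zeta\in\mathcal{S}^{3\times 3}_+$ satisfying \eqref{eq:quadruple_instead_of_pair} by the same Schur complement identity.

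I do not anticipate any real obstacle: the only non-trivial input is the standard $2\times 2$ block Schur complement, and the only mild care needed is to notice that it must be taken with respect to the positive scalar bottom-right entry (so that no pseudo-inversion issue arises) and to match the auxiliary variable $\hat\zeta$ in the definition of $\rho_+$ with the matrix $\epsilon$ in (iii) through the affine change of variables $\epsilon=\xi+\tfrac{1}{2}\theta\otimes\theta+\hat\zeta$.
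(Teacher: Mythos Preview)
Your argument is correct, and it takes a genuinely different route from the paper's own proof. The paper establishes (ii)~$\Leftrightarrow$~(iii) by a duality argument: it defines $\mathscr{C}$ as the set of pairs $(\xi,\theta)$ satisfying (iii), observes that $\mathscr{C}$ is closed convex and contains the origin, and then computes its support function, showing that it coincides with $\varrho^0$. That computation requires optimizing over the auxiliary variables $\epsilon$ and $\zeta$ (subject to $\zeta_{33}=1$), and the Schur complement enters only on the dual side, applied to the block matrix built from $(\sigma,q)$, in order to identify the contribution $\tfrac{1}{2}\langle\sigma^{-1}q,q\rangle$.

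By contrast, you work entirely on the primal side: the matrix equation pins down $\zeta$ explicitly, and a single Schur complement with respect to the strictly positive $(3,3)$-entry reduces $\zeta\succeq 0$ to the $2\times 2$ condition $\epsilon-\xi-\tfrac{1}{2}\theta\otimes\theta\succeq 0$, which you then match with the infimal-convolution formula \eqref{eq:rhop_rho} for $\rho_+$. This is shorter and avoids the bipolar machinery altogether. The paper's approach, while more circuitous, has the incidental benefit of re-confirming the explicit formula for $\varrho^0$ and meshes more visibly with the conic-duality framework exploited later in Theorem~\ref{thm:duality_for_FEM}; your approach buys simplicity and makes the role of the scalar entry $1$ in the block matrix transparent.
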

\begin{remark}
	The two block matrices in \eqref{eq:quadruple_instead_of_pair} can be equivalently written  as, respectively, $\xi + \sqrt{2}\,\theta \symtens \e_3$ and $\epsilon + \e_3 \otimes\e_3$.
\end{remark}
\begin{proof}
	Equivalence between (ii) and (iii) follows from the construction of the gauge $\vro$, cf. Section \ref{ssec:rho}. By $\mathscr{C}$ let us denote the set of pairs $(\xi,\theta)$ satisfying (iii). It can be easily checked that $\mathscr{C}$ is closed convex and contains the origin. Therefore, the equivalence between (ii) and (iii) will be proved if we show that for every $\sig \in \Sddp$ and $q \in \Rd$
	\begin{equation}
	\label{eq:vro=sup}
	\vro^0(\sig,q) = \sup_{\xi,\theta}\Big\{ \pairing{\xi,\sig} + \pairing{\theta , q} \ : \ (\xi,\theta) \in \mathscr{C}  \Big\}.
	\end{equation}
	The right hand side can be rewritten as below ($\zeta_{33}$ stands for $\pairing{\zeta,\e_3 \otimes \e_3}$):
	\begin{align*}
	&\sup_{\epsilon, \zeta} \left\{\bigg\langle\left( \begin{bmatrix}
	\,\epsilon  &  0_2\,\\
	\,0_2^\top\, & 1\,
	\end{bmatrix} -\zeta \right) , \begin{bmatrix}
	\,\sig  &  \tfrac{1}{\sqrt{2}}\,q\,\\
	\tfrac{1}{\sqrt{2}}\,q^\top\, & 0\,
	\end{bmatrix} \bigg\rangle \ : \ \rho(\epsilon) \leq 1, \ \ \zeta \in \mathcal{S}^{3 \times 3}_+, \ \  \zeta_{33} = 1 \right\}\\
	= & \sup_\epsilon\Big\{ \pairing{\epsilon,\sig} \ : \ \rho(\epsilon) \leq 1  \Big\} + \sup_\zeta \left\{ - \bigg\langle\zeta , \begin{bmatrix}
	\,\sig  &  \tfrac{1}{\sqrt{2}}\,q\,\\
	\tfrac{1}{\sqrt{2}}\,q^\top\, & 0\,
	\end{bmatrix}\bigg\rangle  \ : \  \zeta \in \mathcal{S}^{3 \times 3}_+, \ \  \zeta_{33} = 1  \right\}.
	\end{align*}
	Since $\vro^0(\sigma,q) = \infty$ whenever $\sigma \notin \Sddp$ or $q \notin \mathrm{Im}\,\sigma$, let us show that the second supremum above equals $+\infty$ in this case. We choose $\zeta = (t\,\theta + \e_3) \otimes (t\,\theta + \e_3)$ for $t\geq 0$ and $\theta \in \Rd$ with $\pairing{\theta,q}\leq 0$ and $\pairing{\sigma,\theta\otimes\theta}  \leq 0$. Provided that $\sigma \notin \Sddp$ or $q \notin \mathrm{Im}\,\sigma$, the vector $\theta$ can be found such that at least of the two inequalities is strict. Then we send $t$ to $+\infty$ to show the claim.
	
	Once $\sigma \in \Sddp$ and $q \in \mathrm{Im}\,\sigma$ we have $\jsq <\infty$, and the right hand side of \eqref{eq:vro=sup} can be further rewritten:
	\begin{align*}
	 \dro(\sig) + \sup_\zeta \left\{ - \bigg\langle\zeta , \begin{bmatrix}
	\,\sig  &  \tfrac{1}{\sqrt{2}}\,q\,\\
	\tfrac{1}{\sqrt{2}}\,q^\top\, & \jsq\,
	\end{bmatrix}\bigg\rangle + \zeta_{33}\, \jsq \ : \  \zeta \in \mathcal{S}^{3 \times 3}_+, \ \  \zeta_{33} = 1  \right\}.
	\end{align*}
	By the Schur complement lemma (see Proposition \ref{prop:pos_semdef_block_mat} in the appendix) the block matrix belongs to $\mathcal{S}^{3\times 3}_+$; hence, the supremum above cannot be greater than $\jsq$. One can check that this value is attained for $\zeta = (-\theta/\sqrt{2} + \e_3) \otimes (-\theta/\sqrt{2} + \e_3)$ for any $\theta \in \Rd$ such that $q = \sigma \theta$ (in this case $\zeta$ yields the zero scalar product above). Ultimately, the right hand side of \eqref{eq:vro=sup} equals $\dro(\sig)+\jsq$ which is $\vro^0(\sig,q)$ by Proposition \ref{prop:varrho_polar}. The proof is complete.
\end{proof}

Finally, by collecting the results \eqref{eq:nodal_load_vector}, \eqref{eq:Kroh_char}, and Lemma \ref{lem:quadruple_instead_of_pair}, we are in a position to pose the algebraic, conic programming formulation of the finite element primal formulation \ref{eq:PMh}:
\begin{equation*}\tag*{$(\mbf{P}_h)$}
\label{eq:PMhalg}
\sup_{\substack{\mbf{u}_1,\, \mbf{u}_2, \, \mbf{w} \in \R^m \\
		\bm{\epsilon}^{11},\, \bm{\epsilon}^{22},\, \bm{\epsilon}^{12}  \in \R^n, \\
		\bm{\zeta}^{11},\, \bm{\zeta}^{22},\, \bm{\zeta}^{33}\in \R^n\\ \bm{\zeta}^{12},\,\bm{\zeta}^{13},\,\bm{\zeta}^{23}  \in \R^n,\\
		\mbf{r}\in \R^n}} 
\left\{ \pairing{\mbf{w}, \mbf{f}}  \, : 
\begin{array}{r}
\mbf{r} = \mbf{1}\\
\mbf{B}^{11} \mbf{u}_1  + \bm{\zeta}^{11} - \bm{\epsilon}^{11}= \mbf{0}\\
\mbf{B}^{22} \mbf{u}_2  + \bm{\zeta}^{22}- \bm{\epsilon}^{22} = \mbf{0}\\
\bm{\zeta}^{33}  \qquad \ \ = \mbf{1}\\
\mbf{B}_1^{12} \mbf{u}_1 + \mbf{B}_2^{12} \mbf{u}_2  + \bm{\zeta}^{12}- \bm{\epsilon}^{12} = \mbf{0}\\
\mbf{D}^{1} \mbf{w} + \bm{\zeta}^{13} \qquad \ \ = \mbf{0}\\
\mbf{D}^{2} \mbf{w} +  \bm{\zeta}^{23}  \qquad \ \ = \mbf{0}\\
\bigl(\mbf{r}(i), \bm{\epsilon}^{11}(i), \bm{\epsilon}^{22}(i), \bm{\epsilon}^{12}(i) \bigr) \in \mathrm{K}^4_\rho \quad \ \ \forall\,i\\
\bigl(\bm{\zeta}^{11}(i), \bm{\zeta}^{22}(i), \bm{\zeta}^{33}(i), \bm{\zeta}^{12}(i), \bm{\zeta}^{13}(i), \bm{\zeta}^{23}(i)\bigr) \in \mathrm{K}^6_+ \quad  \ \forall\,i
\end{array}	
\right\}
\end{equation*}
where $\mbf{1} \in \R^n$ stands for the vector whose all components are equal to $1$. Let us remark that the lack of the square root $\sqrt{2}$ (as opposed to \eqref{eq:quadruple_instead_of_pair}) is due to the presence of $\sqrt{2}$ in \eqref{eq:isometry}.

Below we shall show that the dual of $(\mbf{P}_h)$  reads
\begin{equation*}\tag*{$(\mbf{P}^*_h)$}
\label{eq:dPMhalg}
\inf_{\substack{
		\bm{\tau}^{11},\, \bm{\tau}^{22},\, \bm{\tau}^{33}\in \R^n,\\ \bm{\tau}^{12},\,\bm{\tau}^{13},\,\bm{\tau}^{23}  \in \R^n, \\ 
		\mbf{r}^0\in \R^n}} 
\left\{ \pairing{\mbf{1}, \mbf{r}^0} + \pairing{\mbf{1}, \bm{\tau}^{33}}  \, : \!
\def\arraystretch{1.3}
\begin{array}{r}
(\mbf{B}^{11})^\top  \bm{\tau}^{11} +(\mbf{B}_1^{12})^\top  \bm{\tau}^{12} = \mbf{0}  \\
(\mbf{B}_2^{12})^\top  \bm{\tau}^{12} + (\mbf{B}^{22})^\top  \bm{\tau}^{22} = \mbf{0} \\
(\mbf{D}^{1})^\top  \bm{\tau}^{13} + (\mbf{D}^{2})^\top  \bm{\tau}^{23} = \mbf{f} \\
\bigl(\mbf{r}^0(i), \bm{\tau}^{11}(i), \bm{\tau}^{22}(i), \bm{\tau}^{12}(i) \bigr) \in \mathrm{K}^4_\dro \quad \ \ \forall\,i\\
\bigl(\bm{\tau}^{11}(i), \bm{\tau}^{22}(i), \bm{\tau}^{33}(i), \bm{\tau}^{12}(i), \bm{\tau}^{13}(i), \bm{\tau}^{23}(i)\bigr) \in \mathrm{K}^6_+\ \, \quad  \ \forall\,i
\end{array}	
\right\} \quad 
\end{equation*}
The connection between problems \ref{eq:dPMhalg} and \ref{eq:dPMh} is less apparent and will be explained by means of the two results to come. The proof of the first one is moved to Appendix \ref{app:algebraic}.

\begin{theorem}
	\label{thm:duality_for_FEM}
	The following zero duality gap holds true:
	\begin{equation*}
	Z_h = \max \mbf{P}_h = \min \mbf{P}_h^* < \infty;
	\end{equation*}
	in particular, there exist solutions of both problems.
	
	Moreover, the variables $(\mbf{u}_1,\mbf{u}_2,\mbf{w})$ and $(\bm{\tau}^{11},\, \bm{\tau}^{22},\, \bm{\tau}^{33},\bm{\tau}^{12},\,\bm{\tau}^{13},\,\bm{\tau}^{23},\mbf{r}^0)$ that are admissible for problems  $(\mbf{P}_h)$ and $(\mbf{P}^*_h)$, respectively, are solutions of those problems if and only if for any $i \in \{1,\ldots,n\}$ there hold the optimality conditions:
	\begin{enumerate}[label={(\roman*)}]
		\item $\pairing{\tfrac{1}{2} \theta_i \otimes \theta_i + \xi_i,\sig_i} = \dro(\sig_i)$,
		\item $q_i = \sigma_i \theta_i$,
		\item $\mbf{r}^0(i) = \dro(\sig_i)$ \quad and \quad $\bm{\tau}^{33}(i) = \frac{1}{2}\pairing{\sigma_i^{-1} q_i,q_i}$,
	\end{enumerate}
	where $\xi_i,\sig_i \in \Sdd$ and $\theta_i,q_i \in \Rd$ are defined as follows:
	\begin{alignat*}{2}
		\xi_i &= \chi_2 \Bigl( \big(\mbf{B}^{11} \mbf{u}_1\big)\!(i),\big(\mbf{B}^{22} \mbf{u}_2\big)\!(i), \big(\mbf{B}^{12}_1 \mbf{u}_1\big)\!(i)+\big(\mbf{B}^{12}_2 \mbf{u}_2\big)\!(i) \Bigr), \qquad &&\theta_i = \big(\mbf{D}^{1} \mbf{w}\big)\!(i) \,\e_1 + \big(\mbf{D}^{2} \mbf{w}\big)\!(i) \,\e_2\\
		\sigma_i &= \chi_2\Big(\bm{\tau}^{11}(i), \bm{\tau}^{22}(i), \bm{\tau}^{12}(i) \Big), \qquad &&q_i =  \bm{\tau}^{13}(i) \,\e_1 + \bm{\tau}^{23}(i) \,\e_2.
	\end{alignat*}
\end{theorem}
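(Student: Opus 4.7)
My plan is to reduce $(\mbf{P}_h)$ to $(\mathcal{P}_h)$ in order to obtain primal existence and finiteness, then compute the conic Lagrangian dual to recover $(\mbf{P}^*_h)$, invoke Slater's condition for strong conic duality, and finally extract the optimality criteria from complementary slackness in each of the two cones.

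The first step is a direct application of the algebraic characterization \eqref{eq:Kroh_char} together with Lemma \ref{lem:quadruple_instead_of_pair}: for each element $i$, the constraint $\rho_+\!\bigl(\tfrac{1}{2}\theta_i\otimes\theta_i+\xi_i\bigr)\le 1$ is replaced by the existence of auxiliary $\epsilon_i,\zeta_i$ satisfying the linear block relation \eqref{eq:quadruple_instead_of_pair}, which is precisely what is encoded by the equations and cone memberships of $(\mbf{P}_h)$. Since the objective depends only on $\mbf{w}$, it follows that $\sup(\mbf{P}_h)=Z_h$, and primal attainment is immediate from the compactness of $\mathcal{K}^h_\rho$ (a closed bounded, hence compact, subset of the finite-dimensional space $\mathcal{V}^h$, using Proposition \ref{prop:compactness}).

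For the duality, I introduce Lagrange multipliers $\mbf{r}^0$ for $\mbf{r}=\mbf{1}$, $\bm{\tau}^{33}$ for $\bm{\zeta}^{33}=\mbf{1}$, and $\bm{\tau}^{11},\bm{\tau}^{22},\bm{\tau}^{12},\bm{\tau}^{13},\bm{\tau}^{23}$ for the remaining linear relations, and then maximize the Lagrangian over the primal variables. Vanishing of the coefficients of $\mbf{u}_1,\mbf{u}_2,\mbf{w}$ reproduces the three equilibrium equations of $(\mbf{P}^*_h)$; Proposition \ref{prop:cone_rho_dual} identifies the dual of $\mathrm{K}^4_\rho$ with $\mathrm{K}^4_\dro$ (up to the sign convention on the multipliers), yielding the fourth cone constraint and the objective contribution $\pairing{\mbf{1},\mbf{r}^0}$; self-duality of $\mathrm{K}^6_+$ (inherited from that of $\mathcal{S}^{3\times 3}_+$ under the Frobenius inner product via the isometry $\chi_3$) yields the fifth cone constraint and the contribution $\pairing{\mbf{1},\bm{\tau}^{33}}$. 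Strong duality then follows from Slater's condition: the point $\mbf{u}_1=\mbf{u}_2=\mbf{w}=\mbf{0}$, $\bm{\epsilon}^{11}=\bm{\epsilon}^{22}=\bm{\zeta}^{11}=\bm{\zeta}^{22}=\delta\mbf{1}$, $\bm{\zeta}^{33}=\mbf{r}=\mbf{1}$, and all remaining entries zero is strictly feasible for $\delta>0$ so small that $\rho(\delta\,\mathrm{I}_2)<1$, since then $\zeta_i=\mathrm{diag}(\delta,\delta,1)$ lies in the interior of $\mathcal{S}^{3\times 3}_+$ and $(1,\delta,\delta,0)$ in the interior of $\mathrm{K}^4_\rho$. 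This yields $\max(\mbf{P}_h)=\min(\mbf{P}^*_h)$ with attainment of the dual minimum.

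Complementary slackness in the two cones then delivers the optimality conditions (i)--(iii). For $\mathrm{K}^4_\rho$ the cone pairing combined with $\mbf{r}(i)=1$, $\rho(\epsilon_i)\le 1$, $\rho^0(\sigma_i)\le\mbf{r}^0(i)$, and the Fenchel inequality $\pairing{\epsilon_i,\sigma_i}\le\rho(\epsilon_i)\,\rho^0(\sigma_i)$ forces $\mbf{r}^0(i)=\rho^0(\sigma_i)$ (first half of (iii)) and $\pairing{\epsilon_i,\sigma_i}=\rho^0(\sigma_i)$. For $\mathrm{K}^6_+$ the Frobenius orthogonality $\pairing{\zeta_i,\tau_i}=0$ of two positive semidefinite $3\times 3$ matrices yields $\tau_i\zeta_i=0$; applying both sides to the basis vector $\mathrm{e}_3$ and using the block form of $\zeta_i$ (off-diagonal block $-\theta_i/\sqrt{2}$, $(3,3)$-entry $1$) and of $\tau_i$ (off-diagonal block $q_i/\sqrt{2}$, $(3,3)$-entry $\bm{\tau}^{33}(i)$) gives $q_i=\sigma_i\theta_i$ (condition (ii)) and $\bm{\tau}^{33}(i)=\tfrac{1}{2}\pairing{\sigma_i^{-1}q_i,q_i}$ under convention \eqref{eq:jsq} (second half of (iii)). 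Setting $R_i:=\zeta_i|_{2\times 2}-\tfrac{1}{2}\theta_i\otimes\theta_i\succeq 0$ (by the Schur complement, since $\zeta_i^{33}=1$), the $(1,1)$-block of $\tau_i\zeta_i=0$ reads $\sigma_iR_i=0$, hence $\pairing{R_i,\sigma_i}=0$, so $\pairing{\epsilon_i,\sigma_i}=\pairing{\xi_i+\tfrac{1}{2}\theta_i\otimes\theta_i,\sigma_i}$; combined with $\pairing{\epsilon_i,\sigma_i}=\rho^0(\sigma_i)$ this yields condition (i). The converse follows by reversing the chain of inequalities in weak duality to verify that the primal--dual gap vanishes pointwise. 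The main obstacle will be the careful bookkeeping of the $\sqrt{2}$-factors introduced by the isometries $\chi_2,\chi_3$ when writing out the Lagrangian and the complementary slackness identities in matrix form.
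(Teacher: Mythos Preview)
Your proposal is correct and follows the same overall strategy as the paper: cast $(\mbf{P}_h)$ and $(\mbf{P}^*_h)$ as a conic primal--dual pair, use strict feasibility (your Slater point coincides with the paper's) to obtain zero gap and dual attainment, get primal attainment from the equivalence with $(\mathcal{P}_h)$, and read off the optimality conditions from complementary slackness in the two cones.

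There is one genuine technical difference in how the semidefinite complementary slackness is exploited. The paper expands the \emph{scalar} identity $\langle\zeta_i,\tau_i\rangle=0$ as
\[
\rho^0(\sigma_i)+\bm{\tau}^{33}(i)-\bigl(\langle\xi_i,\sigma_i\rangle+\langle\theta_i,q_i\rangle\bigr)=0,
\]
notes via the Schur complement lemma that $\rho^0(\sigma_i)+\bm{\tau}^{33}(i)\ge\vro^0(\sigma_i,q_i)$ while $\vro(\xi_i,\theta_i)\le 1$, and then invokes Proposition~\ref{prop:varrho_polar}(ii) to split this single extremality equation into (i)--(iii). You instead use the stronger \emph{matrix} identity $\tau_i\zeta_i=0$ (valid since both factors are positive semidefinite with vanishing Frobenius pairing), and read off the block relations directly: the third column gives (ii) and the second half of (iii), and the top-left $2\times 2$ block gives $\sigma_i R_i=0$, hence $\langle R_i,\sigma_i\rangle=0$, yielding (i). Your route is slightly more self-contained in that it avoids the call back to Proposition~\ref{prop:varrho_polar}, at the cost of the extra lemma on PSD orthogonality; the paper's route keeps the argument parallel to the continuous optimality conditions of Proposition~\ref{prop:OPM_optimality_conditions}. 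Both are clean, and your worry about the $\sqrt{2}$ bookkeeping is well-founded but, as your own block computations confirm, causes no trouble.
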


Based on solutions of the algebraic conic programs \ref{eq:PMhalg},\,\ref{eq:dPMhalg} we can recast solutions of the problems \ref{eq:PMh},\,\ref{eq:dPMh} via explicit formulas:
\begin{corollary}
	\label{cor:Ph_Ph*_finite_and_infinite}
	Let $(\mbf{u}_1,\mbf{u}_2, \mbf{w})$ and  $(\bm{\tau}^{11},\, \bm{\tau}^{22},\, \bm{\tau}^{33},\bm{\tau}^{12},\,\bm{\tau}^{13},\,\bm{\tau}^{23},\mbf{r}^0)$ be solutions of problems $(\mbf{P}_h)$ and $(\mbf{P}^*_h)$, respectively. We choose the unique functions $(\hat{u}_h,\hat{w}_h) \in \mathcal{V}^h$ and $(\hat{\sig}_h,\hat{q}_h) \in L^1(\Omega;\Sdd \times \Rd)$ satisfying
	\begin{alignat*}{2}
	\hat{u}_h &=\iota_\shortparallel^h(\mbf{u}_1 ,\mbf{u}_2), \qquad  && \hat{w}_h =\iota_\perp^h(\mbf{w}),\\
	\hat{\sig}_h &= \frac{1}{\vert{E_i^h}\vert}\, \chi_2 \bigl( \bm{\tau}^{11}(i),\bm{\tau}^{22}(i),\bm{\tau}^{12}(i)\bigr), \qquad && \hat{q}_h = \frac{1}{\vert{E_i^h}\vert}\, \big(\bm{\tau}^{13}(i) \,\e_1 + \bm{\tau}^{23}(i) \,\e_2 \big) \quad \text{a.e. in } E_i^h.
	\end{alignat*}
	Then, the pairs of functions $(\hat{u}_h,\hat{w}_h)$ and $(\hat{\sig}_h,\hat{q}_h)$ solve problems $(\mathcal{P}_h)$ and $(\mathcal{P}^*_h)$, respectively, while $\min \mathcal{P}^*_h =Z_h$.
\end{corollary}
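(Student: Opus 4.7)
The plan is to transfer the two algebraic optimizers to the function-space setting piece by piece, using Lemma \ref{lem:quadruple_instead_of_pair} on the primal side and the $1$-homogeneity of $\vro^0$ together with the optimality conditions of Theorem \ref{thm:duality_for_FEM} on the dual side. Weak duality between \ref{eq:PMh} and \ref{eq:dPMh} will then close the loop.

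\emph{Step 1 (primal admissibility).} By construction $(\hat u_h,\hat w_h)\in\mathcal{V}^h$. On each element $E_i^h$ the functions $e(\hat u_h)$ and $\nabla\hat w_h$ are constant and, via the isometries $\chi_2,\chi_3$ and the definitions of the geometric matrices in \eqref{eq:geo_mat_FMD_def}, they are exactly the pair $(\xi_i,\theta_i)$ appearing in Theorem \ref{thm:duality_for_FEM}. The first six equality constraints in $(\mbf P_h)$, together with $\mathbf r(i)=1$, $\bm\zeta^{33}(i)=1$, and the membership $\bm\zeta(i)\in\mathrm K^6_+$ and $(\mathbf r(i),\bm\epsilon(i))\in\mathrm K^4_\rho$, are precisely the realization of condition (iii) in Lemma \ref{lem:quadruple_instead_of_pair} for the pair $(\xi_i,\theta_i)$, with the auxiliary matrices $\epsilon=\chi_2(\bm\epsilon^{11}(i),\bm\epsilon^{22}(i),\bm\epsilon^{12}(i))$ and $\zeta=\chi_3(\bm\zeta(i))$. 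Therefore $\rho_+(\tfrac12\theta_i\otimes\theta_i+\xi_i)\le 1$ in every element, so by \eqref{eq:Kroh_char} we have $(\hat u_h,\hat w_h)\in\mathcal K^h_\rho$. The identity \eqref{eq:nodal_load_vector} and Theorem \ref{thm:duality_for_FEM} then give $\int_{\Ob}\hat w_h\,df=\pairing{\mathbf w,\mathbf f}=\max\mathbf P_h=Z_h$, so $(\hat u_h,\hat w_h)$ attains $\sup\mathcal P_h$.

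\emph{Step 2 (dual admissibility).} The element-wise definition of $(\hat\sigma_h,\hat q_h)$ together with the $1/|E_i^h|$ rescaling is tailored so that the moments in \eqref{eq:mbsigma} read $\bm\sigma^{11}(i)=\bm\tau^{11}(i)$, $\bm\sigma^{22}(i)=\bm\tau^{22}(i)$, $\bm\sigma^{12}(i)=\bm\tau^{12}(i)$, $\mathbf q^{1}(i)=\bm\tau^{13}(i)$, $\mathbf q^{2}(i)=\bm\tau^{23}(i)$; hence by \eqref{eq:algebraic_EqEq_FMD} the three equilibrium equations of $(\mbf P_h^*)$ translate verbatim into $(\hat\sigma_h,\hat q_h)\in\Sigma^h(f)$, modulo the positivity/integrability requirement. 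That requirement follows from $\bm\tau(i)\in\mathrm K^6_+$: applying $\chi_3$ shows that the $3\times 3$ block matrix with leading block $\sigma_i$, off-diagonal $q_i/\sqrt 2$, and corner $\bm\tau^{33}(i)$ is positive semi-definite. The Schur complement result (Proposition \ref{prop:pos_semdef_block_mat}) yields $\sigma_i\in\Sddp$, $q_i\in\mathrm{Im}\,\sigma_i$, and $\tfrac12\pairing{\sigma_i^{-1}q_i,q_i}\le\bm\tau^{33}(i)$.

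\emph{Step 3 (matching the objective values).} On each $E_i^h$ the density $\hat\sigma_h$ equals $\sigma_i/|E_i^h|$ and $\hat q_h$ equals $q_i/|E_i^h|$; using the positive $1$-homogeneity of $\rho^0$ (degree $1$) and of $(\sigma,q)\mapsto\tfrac12\pairing{\sigma^{-1}q,q}$ (also degree $1$, in the sense of \eqref{eq:jsq}) one obtains
\begin{equation*}
\int_{E_i^h}\!\Bigl(\rho^0(\hat\sigma_h)+\tfrac12\pairing{\hat\sigma_h^{-1}\hat q_h,\hat q_h}\Bigr)d\mathcal L^2=\rho^0(\sigma_i)+\tfrac12\pairing{\sigma_i^{-1}q_i,q_i}.
\end{equation*}
At this point the main technical step is invoking the optimality conditions (iii) of Theorem \ref{thm:duality_for_FEM}, which give $\mathbf r^0(i)=\rho^0(\sigma_i)$ and $\bm\tau^{33}(i)=\tfrac12\pairing{\sigma_i^{-1}q_i,q_i}$ for every $i$. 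Summing over $i$ produces
\begin{equation*}
\int_\Omega\!\Bigl(\rho^0(\hat\sigma_h)+\tfrac12\pairing{\hat\sigma_h^{-1}\hat q_h,\hat q_h}\Bigr)d\mathcal L^2=\pairing{\mathbf 1,\mathbf r^0}+\pairing{\mathbf 1,\bm\tau^{33}}=\min\mathbf P_h^*=Z_h.
\end{equation*}

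\emph{Step 4 (conclusion).} Weak duality between \ref{eq:PMh} and \ref{eq:dPMh} gives $\inf\mathcal P_h^*\ge Z_h=\sup\mathcal P_h$, while Step~3 exhibits an admissible $(\hat\sigma_h,\hat q_h)$ whose value is $Z_h$. Therefore $\min\mathcal P_h^*=Z_h$, the pair $(\hat\sigma_h,\hat q_h)$ solves $(\mathcal P_h^*)$, and $(\hat u_h,\hat w_h)$ solves $(\mathcal P_h)$. The only non-routine point I anticipate is the bookkeeping in Step~3 --- in particular, that the Schur-type inequality from Step~2 is actually saturated at optimality, which is exactly what condition (iii) of Theorem \ref{thm:duality_for_FEM} provides.
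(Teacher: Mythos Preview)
Your proposal is correct and follows essentially the same approach as the paper's proof: verify admissibility of $(\hat u_h,\hat w_h)$ and $(\hat\sigma_h,\hat q_h)$ via the algebraic characterizations, invoke optimality condition (iii) of Theorem \ref{thm:duality_for_FEM} to compute the dual objective value as $Z_h$, and close with weak duality $Z_h=\max\mathcal P_h\le\inf\mathcal P_h^*$. Your version is simply more explicit on the primal side (spelling out the role of Lemma \ref{lem:quadruple_instead_of_pair}) and on the positivity check for $\hat\sigma_h$ via the Schur complement, whereas the paper dispatches the primal part in one line by noting that $(\mathbf P_h)$ and $(\mathcal P_h)$ are equivalent.
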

\begin{proof}
	The problem $(\mbf{P}_h)$ is equivalent to the already finite dimensional problem $(\mathcal{P}_h)$; therefore, optimality of $(\hat{u}_h,\hat{w}_h)$ is clear. It is easy to check that the element-wise constant function $(\hat{\sig}_h,\hat{q}_h)$ and vectors $\big(\bm{\sigma}^{11}, \bm{\sigma}^{22}, \bm{\sigma}^{12},\mbf{q}^1,\mbf{q}^2 \big) = \big(\bm{\tau}^{11}, \bm{\tau}^{22},\bm{\tau}^{12},\bm{\tau}^{13},\bm{\tau}^{23}\big)$ satisfy relations \eqref{eq:mbsigma}, and, thus, $(\hat{\sig}_h,\hat{q}_h) \in \Sigma^h(f)$ thanks to \eqref{eq:algebraic_EqEq_FMD}. By utilizing the optimality condition (iii) in Theorem \ref{thm:duality_for_FEM} we obtain
	\begin{align*}
	\int_\Omega \Big(\dro(\hat{\sig}_h) + \tfrac{1}{2} \,\pairing{\hat\sigma_h^{-1} \hat{q}_h,\hat{q}_h} \Big) d\mathcal{L}^2  &= \sum_{i =1}^n \frac{1}{\vert{E_i^h}\vert} \int_{E_i^h} \Big(\dro(\sig_i) + \tfrac{1}{2} \,\pairing{\sigma_i^{-1} q_i,q_i} \Big)\,d\mathcal{L}^2 \\
	&= \sum_{i =1}^n \Big(\mbf{r}^0(i) + \bm{\tau}^{33}(i) \Big) = \pairing{\mbf{1}, \mbf{r}^0} + \pairing{\mbf{1}, \bm{\tau}^{33}} = Z_h,
	\end{align*}
	rendering $(\hat\sig_h,\hat{q}_h)$ optimal for $(\mathcal{P}^*_h)$ since $Z_h = \max \mathcal{P}_h \leq \inf \mathcal{P}^*_h$.
\end{proof}

\bigskip

\subsection{Convergence of the finite element method}

In order to examine the convergence of the proposed finite element approximation we will assume that we work with regular triangulations $\mathcal{T}^h$ of a polygonal domain $\O$. Since the elements are triangular, such regularity condition may be referred to the triangles' angles, see \cite{ciarlet2002}. We shall say that a triangulation $\mathcal{T}^h$ is \textit{$\alpha_0$-regular} for $\alpha_0 \in (0,\pi/3]$ if all the angles in each triangular element $E_i^h$ are non-smaller than $\alpha_0$.

Let us introduce the interpolation operator $P_h:C_0(\O;\Rd\times \R)\to\mathcal{V}^h$ being the linear mapping satisfying
\begin{equation}
	\label{eq:projection}
	(u_h,w_h) = P_h (\tilde{u},\tilde{w}) \qquad \Leftrightarrow \qquad 	(u_h,w_h) \in \mathcal{V}^h, \quad 	(u_h,w_h)|_{X^h} = (\tilde{u},\tilde{w})|_{X^h}
\end{equation}
where $X^h$ is the set of those vertices of $\mathcal{T}^h$ which do not belong to $\bO$. For any $\alpha_0$-regular triangulation $\mathcal{T}^h$ functions $P_h (\tilde{u},\tilde{w})$ satisfy the standard interpolation estimates; for the $L^\infty$ norm those estimates are given in Lemma \ref{lem:interpolation} in the appendix.

\begin{theorem}
	\label{thm:convergence_P_P*}
	Assume a polygonal bounded domain $\Omega$ that is star-shaped with respect to a ball and a sequence of $\alpha_0$-regular triangulations $\mathcal{T}^h$ such that $h \to 0$. Next, let $(\hat{u}_h,\hat{w}_h) \in \mathcal{K}^h_\rho$ and $(\hat{\sig}_h,\hat{q}_h) \in L^\infty(\Omega;\Sdd \times \Rd)$ be functions in accordance with Theorem \ref{cor:Ph_Ph*_finite_and_infinite}. Let us put $(\hat{\TAU}_h,\hat{\vartheta}_h) := (\hat{\sigma}_h,\hat{q}_h) \, \mathcal{L}^2 \mres \Omega$.
	
	Then, there exist solutions $(\hat{u},\hat{w}) \in \Kro$ and $(\hat{\TAU},\hat{\vartheta}) \in \Mes(\Ob;\Sdd\times\Rd)$ of $\relProb$ and $\dProb$, respectively, and a subsequence $h(k)$ such that
	\begin{align}
	\label{eq:conv_u}
	\hat{u}_{h(k)} \to&\ \hat{u}   \ \ \text{ strongly in } L^1(\O;\Rd),\\
	\label{eq:conv_w}
	\hat{w}_{h(k)} \to&\ \hat{w}   \ \ \text{ uniformly in } \Ob,\\
	\label{eq:conv_sig}
	\hat{\TAU}_{h(k)}\ \weakstar & \ \hat{\TAU}  \ \ \text{ in } \Mes(\Ob;\Sdd),\\
	\label{eq:conv_q}
	\hat{\vartheta}_{h(k)}\ \weakstar & \ \hat{\vartheta}  \ \ \text{ in } \Mes(\Ob;\Rd).
	\end{align}
	Moreover, we have
	\begin{equation}
	\label{eq:Zh_to_Z}
	Z_h \to Z,
	\end{equation}
	where $Z_h = \max \mbf{P}_h = \min \mbf{P}_h^*$, and $Z = \max \ov{\mathcal{P}} = \min \mathcal{P}^*$.
\end{theorem}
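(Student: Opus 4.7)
The plan is to establish $Z_h \to Z$ first and then derive the stated subsequential convergences via compactness, weak-* compactness, admissibility, and lower semi-continuity. For uniform bounds: the sequence $(\hat{u}_h,\hat{w}_h)$ lies in $\mathcal{K}^h_\rho \subset \Kro$, which is compact in $L^1(\O;\Rd) \times C(\Ob;\R)$ by Proposition \ref{prop:compactness}. On the dual side, the coercivity estimate \eqref{coercif} together with Corollary \ref{cor:Ph_Ph*_finite_and_infinite} gives
\begin{equation*}
\abs{\hat{\TAU}_h}(\Ob) + \abs{\hat\vartheta_h}(\Ob) \ \leq \ \frac{1}{C_1} \int_\O \vro^0(\hat\sigma_h,\hat{q}_h)\,d\mathcal{L}^2 \ = \ \frac{Z_h}{C_1} \ \leq \ \frac{Z}{C_1},
\end{equation*}
where the last inequality is \eqref{eq:Z_h_leq_Z}. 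By the Arzel\`a-Ascoli/Rellich-type compactness and weak-* compactness of bounded sets of Radon measures, one can extract a common subsequence $h(k)$ such that \eqref{eq:conv_u}--\eqref{eq:conv_q} hold towards some $(\hat{u},\hat{w}) \in \Kro$ and $(\hat\TAU,\hat\vartheta) \in \Mes(\Ob;\Sdd \times \Rd)$.

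The main work is proving $\liminf_{h \to 0} Z_h \geq Z$, which combined with \eqref{eq:Z_h_leq_Z} yields \eqref{eq:Zh_to_Z}. Fix $\eps > 0$ and pick $(u,w) \in \Krop$ with $\int_\Ob w\,df \geq Z - \eps/2$. The nodal interpolant $P_h(u,w)$ might violate the pointwise constraint $\rho_+ \leq 1$, so I introduce strict slack via the scaling $(u_t,w_t) := (t^2 u,\, t w)$ for $t \in (0,1)$: then $e(u_t) = t^2 e(u)$, $\nabla w_t = t\nabla w$, so by positive $1$-homogeneity of $\rho_+$ one gets $\rho_+\bigl(\tfrac{1}{2}\nabla w_t\otimes\nabla w_t + e(u_t)\bigr) = t^2\,\rho_+\bigl(\tfrac{1}{2}\nabla w\otimes\nabla w+e(u)\bigr) \leq t^2 < 1$ uniformly on $\Ob$. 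Choose $t$ close to $1$ so that $t(Z - \eps/2) \geq Z - \eps$. Since $(u_t,w_t) \in \D(\O;\Rd\times\R) \subset C^2(\Ob;\Rd \times \R)$, the $\alpha_0$-regularity of $\mathcal{T}^h$ furnishes the uniform interpolation bounds $\|e(P_h u_t) - e(u_t)\|_{L^\infty} + \|\nabla(P_h w_t) - \nabla w_t\|_{L^\infty} = O(h)$ (cf. the interpolation lemma in the appendix). By continuity of $\rho_+$ on $\Sdd$, for each element the value of $\rho_+\bigl(\tfrac{1}{2}\nabla(P_h w_t) \otimes \nabla(P_h w_t) + e(P_h u_t)\bigr)$ differs from $\rho_+\bigl(\tfrac{1}{2}\nabla w_t \otimes \nabla w_t + e(u_t)\bigr)$ at an appropriate point by at most $O(h)$, hence is bounded by $t^2 + O(h) \leq 1$ for $h$ sufficiently small. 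Therefore $P_h(u_t,w_t) \in \mathcal{K}^h_\rho$ eventually, and
\begin{equation*}
Z_h \ \geq\ \int_\Ob (P_h w_t)\,df \ \xrightarrow[h\to 0]{}\ \int_\Ob w_t\,df \ =\ t\int_\Ob w\,df \ \geq\ Z - \eps,
\end{equation*}
the convergence above resting on the uniform convergence $P_h w_t \to w_t$ on $\Ob$ and the finiteness of $|f|(\Ob)$. Sending $\eps \to 0$ yields $\liminf Z_h \geq Z$, hence \eqref{eq:Zh_to_Z}.

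Admissibility of $(\hat\TAU,\hat\vartheta)$: for any $(\phi,\varphi) \in \D(\O;\Rd \times \R)$, the interpolant $P_h(\phi,\varphi) \in \mathcal{V}^h$ is an admissible test in $\Sigma^h(f)$, giving
\begin{equation*}
\int_\O \Bigl(\pairing{e(P_h\phi),\hat{\sig}_{h}} + \pairing{\nabla(P_h\varphi),\hat{q}_{h}}\Bigr)\,d\mathcal{L}^2 \ =\ \int_\Ob (P_h\varphi)\,df.
\end{equation*}
The interpolation estimates give $e(P_h\phi) \to e(\phi)$ and $\nabla(P_h\varphi) \to \nabla\varphi$ uniformly on $\Ob$, while $P_h\varphi \to \varphi$ uniformly; combined with the uniform bound on the total variation $|\hat\TAU_h|(\Ob) + |\hat\vartheta_h|(\Ob)$ and weak-* convergence, one passes to the limit along $h(k)$:
\begin{equation*}
\int_\Ob \pairing{e(\phi),d\hat\TAU} + \int_\Ob \pairing{\nabla\varphi, d\hat\vartheta} \ =\ \int_\Ob \varphi\,df.
\end{equation*}
Specializing to $\varphi = 0$ gives $-\DIV\hat\TAU = 0$ in $\O$, and to $\phi = 0$ gives $-\dive\hat\vartheta = f$ in $\O$; thus $(\hat\TAU,\hat\vartheta)$ is admissible for $\dProb$. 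For optimality, by uniform convergence $\hat{w}_{h(k)} \to \hat w$ on $\Ob$ and \eqref{eq:Zh_to_Z} we obtain $\int_\Ob \hat w\,df = \lim Z_{h(k)} = Z$, so $(\hat u,\hat w)$ attains the maximum in $\relProb$. For the dual, Reshetnyak's lower semi-continuity theorem applied to the positively $1$-homogeneous convex lower semi-continuous integrand $\vro^0$ (cf. \cite[Theorem 2.38]{ambrosio2000}) yields
\begin{equation*}
\int_\Ob \vro^0(\hat\TAU,\hat\vartheta) \ \leq\ \liminf_{k\to\infty} \int_\Ob \vro^0(\hat\TAU_{h(k)},\hat\vartheta_{h(k)}) \ =\ \liminf_{k\to\infty} Z_{h(k)} \ =\ Z,
\end{equation*}
while admissibility forces $\int_\Ob \vro^0(\hat\TAU,\hat\vartheta) \geq \min\dProb = Z$; hence equality and optimality of $(\hat\TAU,\hat\vartheta)$. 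The main obstacle is the recovery-sequence argument in the middle paragraph: the pointwise FEM constraint $\rho_+ \leq 1$ is not preserved by the interpolation $P_h$, and the fix relies crucially on the homogeneity-driven rescaling $(u_t,w_t)$, which opens a gap of size $1 - t^2 > 0$ that the $O(h)$ interpolation defect can be absorbed into for $h$ small.
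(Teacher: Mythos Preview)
Your proof is correct and follows the same strategy as the paper: establish $Z_h \to Z$ via a scaled recovery sequence that creates slack for the interpolation error, extract convergent subsequences by compactness, verify admissibility of the dual limit through interpolated test functions, and conclude optimality by lower semi-continuity of $\int \vro^0$. The only notable variation is in the slack mechanism. You use the scaling $(u_t,w_t)=(t^2u,\,tw)$ and work with $\rho_+$ directly, exploiting that $\tfrac12\nabla w_t\otimes\nabla w_t+e(u_t)=t^2\bigl(\tfrac12\nabla w\otimes\nabla w+e(u)\bigr)$; the paper instead uses the linear scaling $t(\tilde u_0,\tilde w_0)$ together with the gauge $\vro$ and its triangle inequality, which dovetails with Lemma~\ref{lem:interpolation} (stated in terms of $\vro$). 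Both devices achieve the same effect, and your Lipschitz-continuity argument for $\rho_+$ is valid since $\rho_+$ is convex, finite-valued, and positively $1$-homogeneous.
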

\begin{proof}
	First, we shall prove the convergence of the displacement functions $(\hat{u}_h,\hat{w}_h)$. Since $Z_h = \int_\Ob \hat{w}_h \,df$, it is enough to show \eqref{eq:Zh_to_Z}. Then, since $\mathcal{K}^h_\rho \subset \Kro$, $(\hat{u}_h,\hat{w}_h)$ is a maximizing sequence for $\relProb$, and existence of a subsequence $h(k)$ such that \eqref{eq:conv_u}, \eqref{eq:conv_w} hold true follows by the compactness of $\Kro$ as a subset of $L^1(\Omega;\Rd) \times C(\Ob;\R)$.
	
	As observed in \eqref{eq:Z_h_leq_Z}, there holds $Z_h \leq Z$ for any $h$; therefore to prove that $Z_h \to Z$ it is enough that for any $\eps >0$ we point to $h$ and $(u_h,w_h) \in \mathcal{K}^h_\rho$ such that
	\begin{equation}
	\label{eq:Z_h>=Z}
	\int_\Ob w_h\, df \geq Z -\eps.
	\end{equation}
	Let us choose any solution $(\hat{u}_0,\hat{w}_0) \in \Kro$ of the problem $\relProb$. Of course, we have $Z = \int_\Ob \hat{w}_0 \,df$; thus, to prove \eqref{eq:Z_h>=Z} it is enough to construct $(u_h,w_h)\in \mathcal{K}^h_\rho$ such that
	\begin{equation*}
	\norm{w_h - \hat{w}_0}_{L^\infty(\O)} \leq \eps_1 := \frac{\eps}{\abs{f}(\Ob)}.
	\end{equation*}
	By the very definition of the set $\Kro$ we may choose smooth functions $(\tilde{u}_0,\tilde{w}_0) \in \Krop \subset \D(\Omega;\Rd\times\R)$ satisfying
	\begin{equation*}
	\norm{\tilde{w}_0 - \hat{w}_0}_{L^\infty(\O)} \leq \frac{\eps_1}{3}
	\end{equation*}
	Note that $\vro\big(e(\tilde{u}_0),\nabla \tilde{w}_0 \big) \leq 1$ everywhere in $\Ob$. Let us put $t = 1 -\frac{\eps_1}{3 \norm{\tilde{w}_0}_{L^\infty(\O)}}$, whilst we assume that $\eps$ is small enough so that $t \in (0,1)$. We define
	\begin{equation*}
		(	\tilde{u} ,	\tilde{w} )= t \,\big(\tilde{u}_0,\tilde{w}_0\big).
	\end{equation*}
	Next, for every triangulation $\mathcal{T}^h$ we choose  
	$(u_h,w_h) = P_h (\tilde{u},\tilde{w}) \in \V^h(\O)$. Then, Lemma \ref{lem:interpolation} guarantees that the inequalities $\norm{w_h-\tilde{w}}_{L^\infty(\O)} \leq C_2 h^2$ and $\norm{\vro\big(e(u_h)-e(\tilde{u}),\nabla w_h-\nabla \tilde{w}\big)}_{L^\infty(\O)} \leq C_3 h$ hold true, where $C_2,C_3$ are computed for $\alpha_0$ and $(\tilde{u},\tilde{w})$, and they do not depend on $h$. By using the triangle inequality we obtain:
	\begin{align*}
	\norm{\vro\big(e(u_h),\nabla w_h\big)}_{L^\infty(\O)} &\leq \norm{\vro\big(e(\tilde{u}),\nabla \tilde{w}\big)}_{L^\infty(\O)} + \norm{\vro\big(e(u_h)-e(\tilde{u}),\nabla w_h-\nabla \tilde{w}\big)}_{L^\infty(\O)}\\
	& \leq t + C_3 \,h = 1 + C_3\,h-\frac{\eps_1}{3 \norm{\tilde{w}_0}_{L^\infty(\O)}}.
	\end{align*}
	Accordingly, we have $(u_h,w_h) \in \mathcal{K}^h_\rho$ provided that $h \leq \frac{\eps_1}{3\, C_3 \norm{\tilde{w}_0}_{L^\infty(\O)}}$. Moreover, there holds
	\begin{align*}
	\norm{w_h - \hat{w}_0}_{L^\infty(\O)} &\leq \norm{w_h - \tilde{w}}_{L^\infty(\O)} + \norm{\tilde{w} - \tilde{w}_0}_{L^\infty(\O)} + \norm{\tilde{w}_0 - \hat{w}_0}_{L^\infty(\O)}\\
	& \leq C_2 \, h^2 + (1-t) \norm{\tilde{w}_0}_{L^\infty(\O)} + \tfrac{\eps_1}{3} = C_2 h^2 +\tfrac{2 \eps_1}{3}.
	\end{align*}
	Ultimately, we find that $(u_h,w_h)$ is the element of $\mathcal{K}^h_\rho$ that meets the condition \eqref{eq:Z_h>=Z} provided we choose a triangulation $\mathcal{T}^h$ for which
	\begin{equation*}
	h \leq \min \left\{ \frac{\eps_1}{3\, C_3 \norm{\tilde{w}_0}_{L^\infty(\O)}} \, , \, \sqrt{\frac{\eps_1}{3\,C_2}}  \right\},
	\end{equation*}
	thus concluding the first part of the proof, from which we obtain \eqref{eq:conv_u}, \eqref{eq:conv_w}, and \eqref{eq:Zh_to_Z}.
	
	For the second step we shall prove the convergences \eqref{eq:conv_sig},\,\eqref{eq:conv_q}. By Corollary \ref{cor:Ph_Ph*_finite_and_infinite} we have $Z_h = \int_\Ob \vro^0(\hat{\TAU}_h,\hat{\vartheta}_h)$. Therefore, since $Z_h \leq Z$ and by the fact that $\vro^0(\sig,q) \geq C \,\big(\abs{\sig}+\abs{q}\big)$ for some $C>0$ owing to statement (iii) in Proposition \ref{prop:varrho_polar}, the sequence $(\hat{\TAU}_h,\hat{\vartheta}_h)$ is bounded in $\Mes(\Ob;\Sdd\times\Rd)$. Hence, there exist measures $(\hat{\TAU},\hat{\vartheta})\in \Mes(\Ob;\Sdd\times\Rd)$ such that (up to extracting a subsequence) $(\hat{\TAU}_h,\hat{\vartheta}_h) \ \weakstar \ (\hat{\TAU},\hat{\vartheta})$. Consequently, by the lower semi-continuity of $\vro^0$ we have  
	\begin{equation}
	\label{eq:optimality_TAU_hat}
	\int_\Ob \vro^0(\hat{\TAU},\hat{\vartheta}) \leq \liminf_{h \to 0} \int_\Ob \vro^0(\hat{\TAU}_h,\hat{\vartheta}_h) = \liminf_{h \to 0} Z_h = Z.
	\end{equation}
	We must next show that the pair $(\hat{\TAU},\hat{\vartheta})$ is admissible for $\dProb$, i.e. that $-\DIV \, \hat{\TAU} = 0$ and $-\dive \, \hat{\vartheta} = f$ understood as in \eqref{eq:div}, \eqref{eq:DIV}. Let us take smooth test functions $(\phi,\varphi) \in \D(\Rd;\Rd\times \R)$, and then for every triangulation $\mathcal{T}^h$ we choose functions $(\phi_h,\varphi_h) = P_h(\phi,\varphi)\in \V^h$ that again by virtue of Lemma \ref{lem:interpolation} satisfy $\norm{\varphi_h-{\varphi}}_{L^\infty(\O)} \leq C_2 h^2$ and $\norm{\vro\big(e(\phi_h)-e(\phi),\nabla \varphi_h-\nabla \varphi\big)}_{L^\infty(\O)} \leq C_3 h$ where $C_2,C_3$ are fixed for the chosen pair $(\phi,\varphi)$. The following chain of inequalities may be written down (all the integrals are computed on $\O$):
	\begin{align*}
	&\abs{\int \Big(\pairing{e(\phi),\hat{\TAU}}+\langle\nabla\varphi,\hat{\vartheta}\rangle\Big) - \int \varphi\,df } \\
	& \quad= \abs{\lim_{h \to 0} \int \Big(\pairing{e(\phi),\hat{\TAU}_h}+\langle\nabla\varphi,\hat{\vartheta}_h\rangle\Big) - \int \varphi\,df }\\
	& \quad= \bigg\vert\lim_{h \to 0}\bigg\{ \left(\int \Big(\pairing{e(\phi_h),\hat{\sig}_h}+\pairing{\nabla \varphi_h,\hat{q}_h}\Big) d\mathcal{L}^2 -  \int \varphi_h\,df \right) \\
	& \qquad \qquad \qquad \qquad \qquad + \int\Big(\pairing{e(\phi)-e(\phi_h),\hat{\sig}_h}+ \pairing{\nabla \varphi-\nabla \varphi_h,\hat{q}_h} \Big) d\mathcal{L}^2 +  \int (\varphi_h - \varphi)\,df  \bigg\} \bigg\vert\\
	& \quad = \abs{\lim_{h \to 0}\left\{  \int\Big(\pairing{e(\phi)-e(\phi_h),\hat{\sig}_h}+ \pairing{\nabla \varphi-\nabla \varphi_h,\hat{q}_h} \Big) d\mathcal{L}^2 +  \int (\varphi_h - \varphi)\,df  \right\}}\\
	& \quad \leq \limsup_{h \to 0} \abs{\int\Big(\pairing{e(\phi)-e(\phi_h),\hat{\sig}_h}+ \pairing{\nabla \varphi-\nabla \varphi_h,\hat{q}_h} \Big) d\mathcal{L}^2 } + \limsup_{h \to 0} \abs{\int (\varphi_h - \varphi)\,df }\\
	& \quad \leq \limsup_{h \to 0}  \biggl\{\norm{\vro\big(e(\phi_h)-e(\phi),\nabla \varphi_h-\nabla \varphi\big)}_{L^\infty(\O)} \int \vro^0(\hat{\sig}_h,\hat{q}_h)\, d\mathcal{L}^2\biggr\}  + \limsup_{h \to 0} \biggl\{ \norm{\varphi_h-\varphi}_{L^\infty(\O)} \,\abs{f}(\Ob) \biggr\}\\
	& \quad \leq \limsup_{h \to 0}  \Big\{C_3 \, h\, Z_h\Big\}  + \limsup_{h \to 0} \Big\{ C_2\,h^2\, \abs{f}(\Ob) \Big\} = 0.
	\end{align*}
	Above, the third equality is due to the fact that $(\hat\sig_h,\hat{q}_h) \in \Sigma^h(f)$ by virtue of \eqref{eq:algebraic_EqEq_FMD}. We have thus obtained that the pair $(\hat{\TAU},\hat{\vartheta})$ is indeed admissible for $\dProb$. Optimality of $(\hat{\TAU},\hat{\vartheta})$ for $\dProb$ now follows by \eqref{eq:optimality_TAU_hat}. The proof is complete.
\end{proof}

Of course, \ref{eq:relPM},\,\ref{eq:dPM} merely constitute an auxiliary pair of problems from the perspective of the original optimal membrane problem \ref{eq:OEM}. More precisely, based on the solution $\hat{\TAU}$ for \ref{eq:dPM}, one recasts an optimal material distribution $\check{\mu}$ via the formula \eqref{eq:link_OEM_dProb}. The next result shows that such passage can be done on the level of a finite element approximation:
\begin{corollary}
	\label{cor:mu_h}
	Assuming the prerequisites of Theorem \ref{thm:convergence_P_P*} define
	\begin{equation*}
		\check{\mu}_h := \frac{2 V_0}{Z_h} \,\rho^0(\hat{\TAU}_h) \quad \in \quad \Mes_+(\Ob).
	\end{equation*}
	Then, there exists a solution $\hat{\mu}$ of \ref{eq:OEM} and a subsequence $h(k)$ such that
	\begin{equation*}
		\check{\mu}_{h(k)} \ \weakstar \ \check{\mu}.
	\end{equation*}
\end{corollary}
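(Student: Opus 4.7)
The plan is to combine Theorem \ref{thm:convergence_P_P*} with Theorem \ref{thm:constructing_lambda_OEM}. First I would extract a subsequence (still indexed by $h$ for brevity) along which $\hat\TAU_h \weakstar \hat\TAU$ in $\Mes(\Ob;\Sdd)$, $\hat\vartheta_h \weakstar \hat\vartheta$ in $\Mes(\Ob;\Rd)$, and $Z_h \to Z$, with $(\hat\TAU,\hat\vartheta)$ being a solution of \ref{eq:dPM} produced by Theorem \ref{thm:convergence_P_P*}. By Theorem \ref{thm:constructing_lambda_OEM} the measure $\check{\mu} := \frac{2V_0}{Z}\,\rho^0(\hat\TAU)$ is a solution of \ref{eq:OEM}. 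The statement therefore reduces to showing $\rho^0(\hat\TAU_h) \weakstar \rho^0(\hat\TAU)$ in $\Mes_+(\Ob)$, since the scalar factor $\tfrac{2V_0}{Z_h}$ converges to $\tfrac{2V_0}{Z}$.

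The first ingredient is a discrete counterpart of the equi-repartition of energy \eqref{eq:equi-repartition}. Since admissibility of $\hat\sigma_h$ inside $\Sigma^h(f)$ is stable under the rescaling $\hat\sigma_h \mapsto t\hat\sigma_h$ for every $t>0$ (the equation $\int \pairing{e(\phi),\hat\sigma_h}\,d\mathcal{L}^2=0$ tested in $\V^h$ being homogeneous, whereas $\hat q_h$ is left untouched), the value of $(\mathcal{P}_h^*)$ along this family reads $tA_h + t^{-1}B_h$ with $A_h:=\int_\Omega \rho^0(\hat\sigma_h)\,d\mathcal{L}^2$ and $B_h:=\int_\Omega \tfrac{1}{2}\pairing{\hat\sigma_h^{-1}\hat q_h,\hat q_h}\,d\mathcal{L}^2$. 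Optimality at $t=1$ forces $A_h=B_h$, and hence $\int_\Ob \rho^0(\hat\TAU_h) = Z_h/2$. Consequently $\check\mu_h(\Ob) = V_0$, so that $(\check\mu_h)$ is weakly-$*$ precompact. Coupling this identity with the analogous $\int_\Ob \rho^0(\hat\TAU) = Z/2$ for the limit pair supplied by \eqref{eq:equi-repartition}, and with $Z_h \to Z$, one obtains the strict convergence of total masses
\begin{equation*}
	\int_\Ob \rho^0(\hat\TAU_h) \ \longrightarrow\ \int_\Ob \rho^0(\hat\TAU).
\end{equation*}

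The main step, and principal obstacle, lies in promoting $\hat\TAU_h \weakstar \hat\TAU$ to $\rho^0(\hat\TAU_h) \weakstar \rho^0(\hat\TAU)$ --- a nontrivial passage as $\rho^0$ is merely positively $1$-homogeneous, not linear, so weak-$*$ convergence does not automatically pass through it. For any $\varphi \in C(\Ob;\R)$ with $\varphi\ge 0$, Reshetnyak's lower semi-continuity theorem applied to the non-negative, lower semi-continuous, convex and positively $1$-homogeneous (in $\sigma$) integrand $(x,\sigma)\mapsto \varphi(x)\rho^0(\sigma)$ yields
\begin{equation*}
	\int_\Ob \varphi\,d\rho^0(\hat\TAU) \ \leq\ \liminf_h \int_\Ob \varphi\,d\rho^0(\hat\TAU_h).
\end{equation*}
Applying the same lsc inequality with $\varphi$ replaced by $M-\varphi$ (for any $M\geq \norm{\varphi}_{L^\infty(\Ob)}$) and rearranging by means of the strict mass convergence established above, one extracts the reverse bound $\limsup_h \int_\Ob \varphi\,d\rho^0(\hat\TAU_h) \leq \int_\Ob \varphi\,d\rho^0(\hat\TAU)$. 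Decomposing $\varphi = \varphi^+ - \varphi^-$ propagates the equality of limits to every $\varphi \in C(\Ob;\R)$, which is precisely the claimed weak-$*$ convergence $\rho^0(\hat\TAU_h) \weakstar \rho^0(\hat\TAU)$. Combined with $Z_h \to Z$, this delivers $\check\mu_h \weakstar \check\mu$ and concludes the proof.
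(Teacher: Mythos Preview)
Your proof is correct and follows essentially the same approach as the paper: both rely on the discrete equi-repartition identity $\int_\Ob \rho^0(\hat\TAU_h)=Z_h/2$ to obtain convergence of total masses, and then combine this with lower semi-continuity of the variation functional to force $\rho^0(\hat\TAU_h)\weakstar\rho^0(\hat\TAU)$. The only cosmetic difference is that the paper first extracts a further weak-$*$ limit $\tilde\mu$ of $\check\mu_h$, invokes \cite[Proposition~1.62(b)]{ambrosio2000} to obtain the measure inequality $\tilde\mu\geq\check\mu$, and concludes $\tilde\mu=\check\mu$ from the equality of total masses, whereas you carry out the same lsc-plus-complement argument explicitly against continuous test functions via Reshetnyak; the underlying idea is identical.
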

\begin{proof}
	Taking $h(k)$ and $\hat{\TAU}$ as in the assertion of Theorem \ref{thm:convergence_P_P*} we define $\check{\mu} =  \frac{2 V_0}{Z} \,\rho^0(\hat{\TAU})$. Then, $\check{\mu}$ solves \ref{eq:OEM} by Theorem \ref{thm:constructing_lambda_OEM}, and, in particular, $\check{\mu}(\Ob) = V_0$. An equi-repartition of energy rule similar to \eqref{eq:equi-repartition} can be inferred for an approximate solution $(\hat{\sigma}_h,\hat{\vartheta}_h)$ as well, from which we deduce that $\check{\mu}_h(\Ob)  = V_0$. Consequently, there exists $\tilde{\mu} \in \Mes_+(\Ob)$ such that (possibly upon extracting a subsequence) $\check{\mu}_{h(k)} \, \weakstar \, \tilde{\mu}$, and $\tilde{\mu}(\Ob)  = V_0$. By combining \eqref{eq:conv_sig} and \eqref{eq:Zh_to_Z} we can show that $\frac{2 V_0}{Z_{h(k)}} \,\hat{\TAU}_{h(k)} \, \weakstar \frac{2 V_0}{Z} \, \hat{\TAU}$. Owing to \cite[Proposition 1.62(b)]{ambrosio2000} there must hold $\tilde\mu \geq \check{\mu}$; however, since $\tilde{\mu}(\Ob)  = \check{\mu}(\Ob)  = V_0$, there must be $\tilde{\mu} = \check{\mu}$, thus completing the proof.
\end{proof}
\begin{remark} From Theorem \ref{thm:convergence_P_P*} we can see that the proposed finite element method furnishes approximates solutions $(\hat{\TAU}_h,\hat{\vartheta}_h)$ for the problem \ref{eq:dPM} that are absolutely continuous (with respect to Lebesgue measure). As a result, so is the material distribution $\check{\mu}_h$ in Corollary \ref{cor:mu_h} that approximates a solution of \ref{eq:OEM}. More accurately, we find that
	\begin{equation}
		\label{eq:bh}
		\check{\mu}_h = \check{b}_h \, \mathcal{L}^2 \mres \O, \qquad \text{where} \qquad  \check{b}_h = \frac{1}{\vert{E_i^h}\vert}\, \frac{2 V_0}{Z_h}\, \mbf{r}^0(i) \quad \text{a.e. in } E_i^h,
	\end{equation}
and $\mbf{r}^0$ is one of the vectors solving \ref{eq:dPMhalg}. The function $\check{b}_h$ has the interpretation of a subopitmal thickness distribution of the membrane.
One should note, however, that the optimal material distribution $\check{\mu}$, to which $\check{\mu}_h$ converges, may not be absolute continuous in general.
\end{remark}

\section{Numerical simulations}
\label{sec:simulations}

The finite element method put forth in the last section was implemented in MATLAB. To tackle the pair of algebraic conic programming problems \ref{eq:PMhalg},\,\ref{eq:dPMhalg} the MOSEK \cite{mosek2019} toolbox was exploited. This toolbox is aimed at solving large scale convex programs and it involves sparse matrix algebra. It mainly uses various interior point methods \cite{andersen2003}.

Naturally, the MOSEK toolbox is capable of solving convex conic programming problems for the conic constraints of specific structure only. Currently, it enables the use of: quadratic, exponential, or power cones. In addition, another module is devoted to semi-definite programming, i.e. to engaging variables that are positive semi-definite symmetric matrices. We shall employ such $3 \times 3$ matrix variables in order to realize the conic constraint $\mathrm{K}^6_+$. The second constraint, involving the cones $\mathrm{K}_\rho^4$, depends on the choice of the energy potential $j$. In this paper, we confine the implementation of the programs \ref{eq:PMhalg},\,\ref{eq:dPMhalg} to the case of $j$ being quadratic, cf. Example \ref{ex:iso}. Below we will show that the constraint $\mathrm{K}_\rho^4$ may be then slightly rewritten so that the classical quadratic cone is used. This way, we can reformulate the pair \ref{eq:PMhalg},\,\ref{eq:dPMhalg} to a conic-quadratic \& semi-definite programming problem that can be then tackled by the MOSEK toolbox directly.

\subsection{Implementation in MATLAB as a conic-quadratic \& semi-definite programming problem}
\label{ssec:implementation}

As announced above, in the sequel we assume that $j$ is quadratic as in Example \ref{ex:iso}. As a result, we obtain the positive 1-homogeneous functions as below:
\begin{equation*}
	\rho(\xi) = \big(\pairing{\mathscr{H} \xi,\xi}\big)^{\frac{1}{2}}, \qquad \rho^0(\sig) = \big(\pairing{\mathscr{H}^{-1} \sig,\sig}\big)^{\frac{1}{2}},
\end{equation*}
where for definition of $\mathscr{H},\,\mathscr{H}^{-1}$ we refer to Example \ref{ex:iso}. If $(\epsilon^{11},\epsilon^{22},\epsilon^{12}) = \chi_2^{-1}(\xi)$ and $(\sigma^{11},\sigma^{22},\sigma^{12}) = \chi_2^{-1}(\sig)$ then relations $\sig = \mathscr{H} \xi$ is transformed to $[\sigma^{11} \ \sigma^{22} \ \sigma^{12}]^\top  = H\, [\epsilon^{11} \ \epsilon^{22} \ \epsilon^{12} ]^\top$ for the matrix $H \in \mathcal{S}^{3\times 3}_+$ and its inverse as below:
\begin{equation*}
H = E
\begin{bmatrix}
\frac{1}{1-\nu^2} & \frac{\nu}{1-\nu^2}  & 0\\
\frac{\nu}{1-\nu^2}  & \frac{1}{1-\nu^2}  & 0\\
0 & 0 & \frac{1}{1+\nu} 
\end{bmatrix}, \qquad \qquad H^{-1} = \frac{1}{E}
\begin{bmatrix}
1 & - \nu & 0\\
-\nu & 1 & 0\\
0 & 0 & 1+\nu
\end{bmatrix}.
\end{equation*}
Accordingly, the cones $\mathrm{K}_\rho^4$,  $\mathrm{K}_\dro^4$ defined in Section \ref{ssec:algebraic} admit the following form: 
\begin{equation*}
	\mathrm{K}_\rho^4 = \Big\{ (r,\epsilon^{11},\epsilon^{22},\epsilon^{12}) \in \R^4 \, : \, \Big(\pairing{H [\epsilon^{11} \ \epsilon^{22} \ \epsilon^{12} ]^\top , \,  [\epsilon^{11} \ \epsilon^{22} \ \epsilon^{12} ]^\top} \Big)^{\frac{1}{2}} \leq r \Big\},
\end{equation*}
\begin{equation*}
\mathrm{K}_\dro^4 = \Big\{ (r^0,\sigma^{11},\sigma^{22},\sigma^{12}) \in \R^4 \, : \, \Big(\pairing{H^{-1} [\sigma^{11} \ \sigma^{22} \ \sigma^{12}]^\top , \, [\sigma^{11} \ \sigma^{22} \ \sigma^{12}]^\top} \Big)^{\frac{1}{2}} \leq r^0 \Big\}.
\end{equation*}
The above formulas pave a way to rewriting the conic constraints involving $\mathrm{K}_\rho^4$,  $\mathrm{K}_\dro^4$ as classical conic quadratic constraints. Let us recall that by the $d$-dimensional \textit{quadratic cone} we understand
\begin{equation}
\label{eq:quadratic_cone}
\mathrm{K}^d_{\mathrm{quad}} := \biggl\{ \mbf{z} \in \R^d \, : \,  \Big(\bigl(\mbf{z}(2)\bigr)^2 + \ldots + \bigl(\mbf{z}(d)\bigr)^2 \Big)^{\frac{1}{2}} \leq  \mbf{z}(1) \biggr\}.
\end{equation}
It is well established that the quadratic cone is self-dual, i.e. $(\mathrm{K}^d_{\mathrm{quad}})^* = \mathrm{K}^d_{\mathrm{quad}}$.  We make the Cholesky decomposition of the matrix $H$:
\begin{equation*}
	H= LL^\top, \qquad \text{where} \quad L=
	\sqrt{E} \begin{bmatrix}
	\frac{1}{\sqrt{1-\nu^2}} & 0  & 0\\
	\frac{\nu}{\sqrt{1-\nu^2}}  & 1  & 0\\
	0 & 0 & \frac{1}{\sqrt{1+\nu}} 
	\end{bmatrix}.
\end{equation*}
As a result, we have
\begin{equation*}
H^{-1}= M M^\top, \qquad \text{where} \quad M=(L^{-1})^\top=
\frac1{\sqrt{E}} \begin{bmatrix}
\sqrt{1-\nu^2} & -\nu  & 0\\
0  & 1  & 0\\
0 & 0 & \sqrt{1+\nu} 
\end{bmatrix}.
\end{equation*}
Readily, we can rewrite the conic constraints $\mathrm{K}_\rho^4$,  $\mathrm{K}_\dro^4$ via the conic quadratic constraint $\mathrm{K}^d_{\mathrm{quad}}$ and a linear transformation:
\begin{alignat*}{2}
	 (r,\epsilon^{11},\epsilon^{22},\epsilon^{12}) \in \mathrm{K}_\rho^4 \qquad &\Leftrightarrow \qquad \ov{L}^\top\![r \ \epsilon^{11} \ \epsilon^{22} \ \epsilon^{12}]^\top \in \mathrm{K}_\mathrm{quad}^4,  \qquad &&\text{where} \quad
	\ov{L} = \begin{bmatrix}
	1 & 0_3 \\
	0_3^\top & L
	\end{bmatrix},\\
	(r^0,\sigma^{11},\sigma^{22},\sigma^{12}) \in \mathrm{K}_\dro^4 \qquad &\Leftrightarrow \qquad \ov{M}^\top\! [r \ \sigma^{11} \ \sigma^{22} \ \sigma^{12}]^\top \in \mathrm{K}_\mathrm{quad}^4,  \qquad &&\text{where} \quad
	\ov{M} = \begin{bmatrix}
	1 & 0_3 \\
	0_3^\top & M
	\end{bmatrix}.
\end{alignat*}

Finally, the we can pose the problems \ref{eq:PMhalg},\,\ref{eq:dPMhalg} as a conic quadratic \& semi-definite programming problem, i.e. a convex program with linear objective, linear equality constraints, conic quadratic constraints on the variables $\ov{L}^\top\!\bigl(\mbf{r}(i), \bm{\epsilon}^{11}(i), \bm{\epsilon}^{22}(i), \bm{\epsilon}^{12}(i) \bigr)$, \ $\ov{M}^\top\!\bigl(\mbf{r}^0(i), \bm{\tau}^{11}(i), \bm{\tau}^{22}(i), \bm{\tau}^{12}(i) \bigr)$, and semi-definite variables $\zeta_i = \chi_3\bigl(\bm{\zeta}^{11}\!(i), \bm{\zeta}^{22}\!(i), \bm{\zeta}^{33}\!(i), \bm{\zeta}^{12}\!(i), \bm{\zeta}^{13}\!(i), \bm{\zeta}^{23}\!(i)\bigr)\! \in\! \mathcal{S}_+^{3\times 3}$ and $\tau_i = \chi_3\bigl(\bm{\tau}^{11}\!(i), \bm{\tau}^{22}\!(i), \bm{\tau}^{33}\!(i) , \bm{\tau}^{12}\!(i), \bm{\tau}^{13}\!(i), \bm{\tau}^{23}\!(i)\bigr)\! \in\! \mathcal{S}_+^{3\times 3}$.

\begin{remark}
	The MOSEK toolbox requires only one of the formulations \ref{eq:PMhalg},\,\ref{eq:dPMhalg} as an input, although it automatically returns solutions of both. The present author's experience is, however, that the solver is more stable once the dual \ref{eq:dPMhalg} is implemented, and this was indeed the choice for all the examples solved in this paper.
\end{remark}

\begin{remark}
	The matrices $L$ and $M$ are well defined for any Poisson ratio $\nu$ in the range $(-1,1)$. However, we have already learnt in Remark \ref{rem:Michell} that the optimal membrane problem remains meaningful for $\nu=-1$. There are several ways of incorporating this extreme case into our MATLAB code.
	
	First of all, since only \ref{eq:dPMhalg} is being programmed -- and thus only the matrix $M$ enters the code explicitly -- there is no issue of putting $\nu=-1$ directly. Then again, for such data the numerical experiments showed instabilities of the MOSEK solver, which most likely comes from the singularity of $M$.
	
	Another possibility is to take advantage of \eqref{eq:lim_nu_-1} and consider formulations \ref{eq:PMhalg},\,\ref{eq:dPMhalg} with the cones $\mathrm{K}^4_{\rho^\mathrm{M}}$, $\mathrm{K}^4_{(\rho^\mathrm{M})^0}$ where $\rho^\mathrm{M}$ is the spectral norm on $\Sdd$. Rewriting such constraints as conic quadratic constraints is possible, but a more involved linear transformation of variables is required.
	
	With the previous comment in mind, by utilizing a very particular form of $\rho_+^\mathrm{M}$ we could propose an alternative algebraic formulation \ref{eq:PMhalg},\,\ref{eq:dPMhalg} without conic constraints and with one semi-definite variable $\zeta_i$ per finite element. This is due to the following equivalences:
	\begin{equation*}
		\rho_+^\mathrm{M}\Big(\tfrac{1}{2} \theta \otimes \theta +\xi\Big) \leq 1 \quad \Leftrightarrow \quad \tfrac{1}{2} \theta \otimes \theta +\xi \preceq \mathrm{I}_2 \quad \Leftrightarrow \quad \exists\,\zeta \in \mathcal{S}_+^{3\times 3} \ \ \text{such that} \ \
		\begin{bmatrix}
		\,\xi  &  \tfrac{1}{\sqrt{2}}\,\theta\,\\
		\tfrac{1}{\sqrt{2}}\,\theta^\top\, & 0\,
		\end{bmatrix} + \zeta = \mathrm{I}_3,
	\end{equation*}
	where $\mathrm{I}_d$ is a $d\times d$ identity matrix, and the inequality $\preceq$ is in the sense of the induced quadratic forms. The last condition above may replace condition (iii) in Lemma \ref{lem:quadruple_instead_of_pair}. Such a semi-definite formulation was implemented by the present author.
	
	Finally, the simplest but also the least elegant way is to take $\nu$ very close to $-1$. Experiments have shown that for $\nu = -0.9999$ the algorithm was stable, and the results were practically the same as for $\nu=-1$ when using one of the foregoing approaches. In the examples below we shall choose this crude fashion to obtain the results for $\nu=-1$. This way we can stay consistent with the simulations for other $\nu$, in terms of computational efficiency, for instance.
\end{remark}

\subsection{Examples for a square design domain}

In all of the examples below we will consider a quadratic domain $\Omega =(0,a) \times (0,a)$ of the edge length $a$. We shall use a triangulation whose vertices form a regular rectangular mesh: $\big\{ (a \tfrac{i_1}{n}, a \tfrac{i_2}{n}) \, : \, i_1,i_2 \in \{0,1,\ldots,n\} \big\}$ for some natural $n$. Each square constituted by four adjacent vertices is divided into two triangular elements of one edge parallel to $\e_1$, one to $\e_2$, and one inclined at $ 45\degree$ angle. If the direction of this third edge is to be kept constant, e.g. pointing from south-east to north-west, then the triangulation is precisely: 
\begin{align}
	\label{eq:lower_upper}
	\mathcal{T}^h = \mathcal{T}^h_1 \cup \mathcal{T}^h_2 = &\, \bigg\{ \mathrm{int\,co}\big( \big\{ (a\tfrac{i_1}{n},a\tfrac{i_2}{n}), (a\tfrac{i_1+1}{n},a\tfrac{i_2}{n}),(a\tfrac{i_1}{n},a\tfrac{i_2+1}{n}) \big\} \big) \ : \ i_1, i_2 \in  \{0,1,\ldots,n-1\}  \bigg\}\\
	\nonumber
	& \cup   \bigg\{ \mathrm{int\,co}\big( \big\{ (a\tfrac{i_1+1}{n},a\tfrac{i_2+1}{n}), (a\tfrac{i_1+1}{n},a\tfrac{i_2}{n}),(a\tfrac{i_1}{n},a\tfrac{i_2+1}{n}) \big\} \big) \ : \ i_1, i_2 \in  \{0,1,\ldots,n-1\}  \bigg\},
\end{align}
where for $h$ we choose $\sqrt{2}\,a/n$; above $\mathrm{int\,co}$ stands for the interior of a convex hull. Such a triangulation consists of $2n^2$ triangular elements. 

In most of the examples tackled below the load $f$ will be symmetric, therefore we will take care of symmetry of the triangulation. Namely, depending on the quadrant of $\O$, the direction of the inclined edges changes from south-east -- north-west to south-west -- north-east. It is clear that each such triangulations are $\alpha_0$-regular for $\alpha_0= 45\degree$.

For each example we shall consider five different Poisson ratios $\nu$. In each case we will display the heat-map for the (sub)optimal thickness distribution $\check{b}_h$ computed in accordance with \eqref{eq:bh} (and modified in line with Remarks \ref{rem:checkerboard}, \ref{rem:trim}). The computations will be conducted for $n=800$, which gives a mesh counting $2n^2 = 1\,280\,000$ finite elements. The approximate value functions $Z_h$, as well as the computation times, are listed in Tab. \ref{tab:poisson}. In a separate Tab. \ref{tab:mesh} a comparison between meshes of three different sizes may be found (for the case $\nu=0$ only). The computations were performed on an Apple Macbook Pro M2 Pro laptop with 16GB RAM running Ventura 13.3.1 macOS; the version R2023a  of MATLAB was used (running via the  Rosetta 2 emulator) along with version 10.0 of the MOSEK toolbox.

\begin{table}[H]
	\scriptsize
	\centering
	\caption{Summary on numerical computations for the pair of problems \ref{eq:PMh},\,\ref{eq:dPMh}. Different Poisson ratios $\nu$.}
	\begin{tabular}{lcclcccc}
		\toprule
		Example & Mesh (vertices)  &  Mesh (element no.) & Poisson ratio    & CPU time & Objective value $\Z_h$\\
		\midrule[0.36mm]
		Ex. \ref{ex:3_point_loads}
		& $801\!\times\!801$ & $1\,280\,000$  & $ \ \nu= 0.5$ & $20$ min. $48$ sec. & $1.5479 \, P a/ \sqrt{E}$ \\
		(Fig. \ref{fig:3_point_loads}) &  &  & $ \ \nu= 0.0$    & $30$ min. $36$ sec. & $1.5765 \, P a / \sqrt{E}$\\
		&  &  & $ \ \nu=-0.6$    & $28$ min. $29$ sec. & $1.5680 \, P a/ \sqrt{E}$ \\
		&  &  & $ \ \nu=-0.9$    & $32$ min. $00$ sec. & $1.5343 \, P a/ \sqrt{E}$ \\
		&  &  & $ \ \nu=-1.0$    & $28$ min. $25$ sec. & $1.5148 \, P a/ \sqrt{E}$ \\
		\midrule[0.1mm]
		Ex. \ref{ex:multiple_point_loads}
		& $801\!\times\!801$ & $1\,280\,000$  & $ \ \nu= 0.5$ & $15$ min. $54$ sec. & $161.44 \, P a/ \sqrt{E}$ \\
		(Fig. \ref{fig:multiple_point_loads})&  &  & $ \ \nu= 0.3$    & $16$ min. $44$ sec. & $169.06 \, P a/ \sqrt{E}$\\
		&  &  & $ \ \nu=0.0$    & $16$ min. $39$ sec. & $175.85 \, P a/ \sqrt{E}$ \\
		&  &  & $ \ \nu=-0.6$    & $14$ min. $28$ sec. & $181.56 \, P a/ \sqrt{E}$ \\
		&  &  & $ \ \nu=-1.0$    & $22$ min. $51$ sec. & $179.48 \, P a/ \sqrt{E}$ \\
		\midrule[0.1mm]
		Ex. \ref{ex:diagonal_load}
		& $801\!\times\!801$ & $1\,280\,000$  & $ \ \nu= 0.0$ & $13$ min. $36$ sec. & $1.1091 \, t a^2/ \sqrt{E}$ \\
		(Fig. \ref{fig:diagonal_load})&  &  & $ \ \nu= -0.85$    & $14$ min. $22$ sec. & $1.1889 \, t a^2/ \sqrt{E}$\\
		&  &  & $ \ \nu=-0.92$    & $14$ min. $53$ sec. & $1.1850 \, t a^2/ \sqrt{E}$ \\
		&  &  & $ \ \nu=-0.95$    & $15$ min. $38$ sec. & $1.1807 \, t a^2/ \sqrt{E}$ \\
		&  &  & $ \ \nu=-1.0$    & $13$ min. $49$ sec. & $1.1713 \, t a^2/ \sqrt{E}$ \\
		\midrule[0.1mm]
		Ex. \ref{ex:pressure}
		& $801\!\times\!801$ & $1\,280\,000$  & $ \ \nu= 0.3$ & $12$ min. $44$ sec. & $0.26862 \, p a^3/ \sqrt{E}$ \\
		(Fig. \ref{fig:pressure})&  &  & $ \ \nu= 0.0$    & $13$ min. $00$ sec. & $0.28633 \, p a^3/ \sqrt{E}$\\
		&  &  & $ \ \nu=-0.6$    & $12$ min. $50$ sec. & $0.30630 \, p a^3/ \sqrt{E}$ \\
		&  &  & $ \ \nu=-0.95$    & $14$ min. $20$ sec. & $0.30899 \, p a^3/ \sqrt{E}$ \\
		&  &  & $ \ \nu=-1.0$    & $13$ min. $12$ sec. & $0.30823 \, p a^3/ \sqrt{E}$ \\
		\bottomrule
	\end{tabular}
	\label{tab:poisson}
\end{table}

\begin{table}[h]
	\scriptsize
	\centering
	\caption{Summary on numerical computations for the pair of problems \ref{eq:PMh},\,\ref{eq:dPMh}. Different finite element mesh sizes.}
	\begin{tabular}{lccccccc}
		\toprule
		Example & Mesh (vertices)  &  Mesh (element no.)  & Poisson ratio      & CPU time & Objective value $\Z_h$\\
		\midrule[0.36mm]
		Ex. \ref{ex:3_point_loads}
		& $101\!\times\!101$ & $20\,000$ & $ \ \nu= 0.0$  & $0$ min. $07$ sec. & $1.5676 \, P a/ \sqrt{E}$ \\
		(Fig. \ref{fig:3_point_loads_mesh})& $301\!\times\!301$ & $180\,000$ &  & $1$ min. $44$ sec. & $1.5738 \, P a/ \sqrt{E}$\\
		& $801\!\times\!801$ & $1\,280\,000$ &  & $30$ min. $36$ sec. & $1.5765 \, P a/ \sqrt{E}$ \\
		\midrule[0.1mm]
		Ex. \ref{ex:multiple_point_loads}
		& $101\!\times\!101$ & $20\,000$ & $ \ \nu= 0.0$  & $0$ min. $05$ sec. & $173.98 \, P a/ \sqrt{E}$ \\
		& $301\!\times\!301$ & $180\,000$ & & $1$ min. $01$ sec. & $175.33 \, P a/ \sqrt{E}$\\
		& $801\!\times\!801$ & $1\,280\,000$ &  & $16$ min. $39$ sec. & $175.85 \, P a/ \sqrt{E}$ \\
		\midrule[0.1mm]
		Ex. \ref{ex:diagonal_load}
		& $101\!\times\!101$ & $20\,000$ & $ \ \nu= 0.0$ & $0$ min. $04$ sec. & $1.1083 \, t a^2/ \sqrt{E}$ \\
		& $301\!\times\!301$ & $180\,000$ &  & $0$ min. $46$ sec. & $1.1089 \, t a^2/ \sqrt{E}$\\
		& $801\!\times\!801$ & $1\,280\,000$ &  & $13$ min. $36$ sec. & $1.1091 \, t a^2/ \sqrt{E}$ \\
		\midrule[0.1mm]
		Ex. \ref{ex:pressure}
		& $101\!\times\!101$ & $20\,000$ & $ \ \nu= 0.0$ & $0$ min. $04$ sec. & $0.28564 \, p a^3/ \sqrt{E}$ \\
		& $301\!\times\!301$ & $180\,000$ & & $0$ min. $48$ sec. & $0.28617 \, p a^3/ \sqrt{E}$\\
		& $801\!\times\!801$ & $1\,280\,000$ &  & $13$ min. $00$ sec. & $0.28633 \, p a^3/ \sqrt{E}$ \\
		\bottomrule
	\end{tabular}
	\label{tab:mesh}
\end{table}

\begin{figure}[h]
	\centering
	\subfloat[]{\includegraphics*[trim={0cm 0cm -0cm -0cm},clip,width=0.3\textwidth]{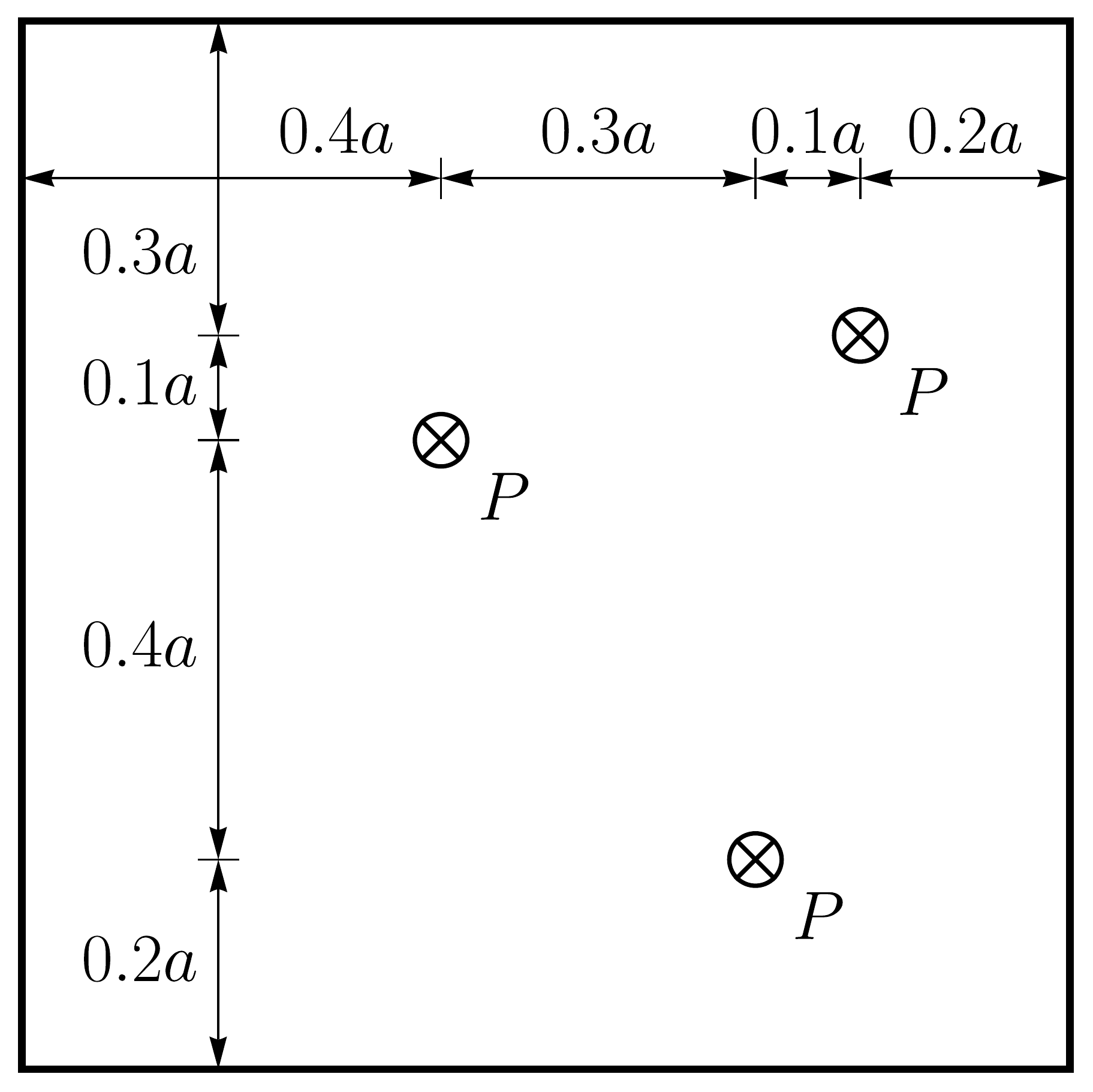}}\hspace{0.7cm}
	\subfloat[]{\includegraphics*[trim={0cm 0cm -0cm -0cm},clip,width=0.3\textwidth]{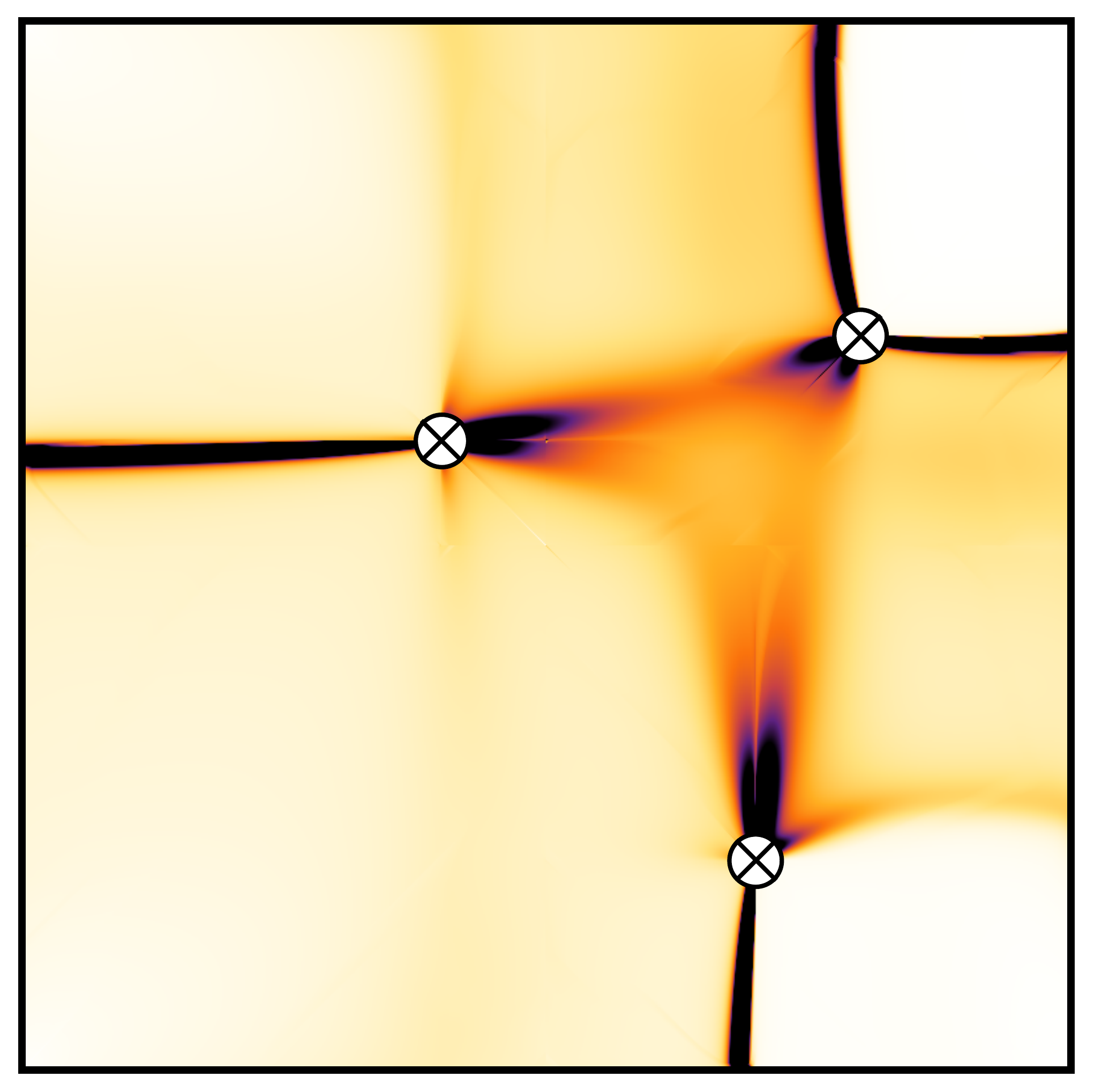}}\hspace{0.7cm}
	\subfloat[]{\includegraphics*[trim={0cm 0cm -0cm -0cm},clip,width=0.3\textwidth]{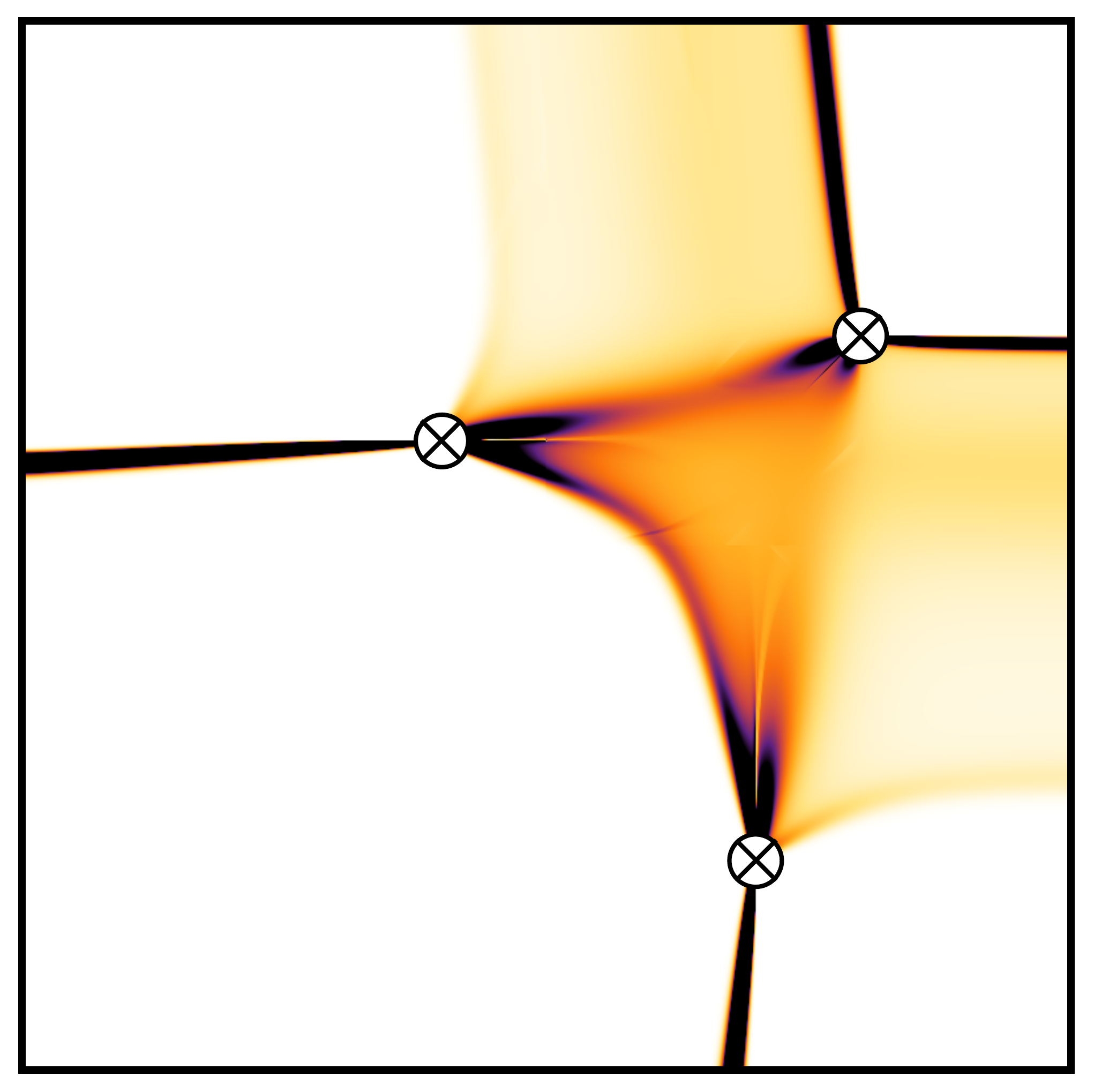}}\\
	\subfloat[]{\includegraphics*[trim={0cm 0cm -0cm -0cm},clip,width=0.3\textwidth]{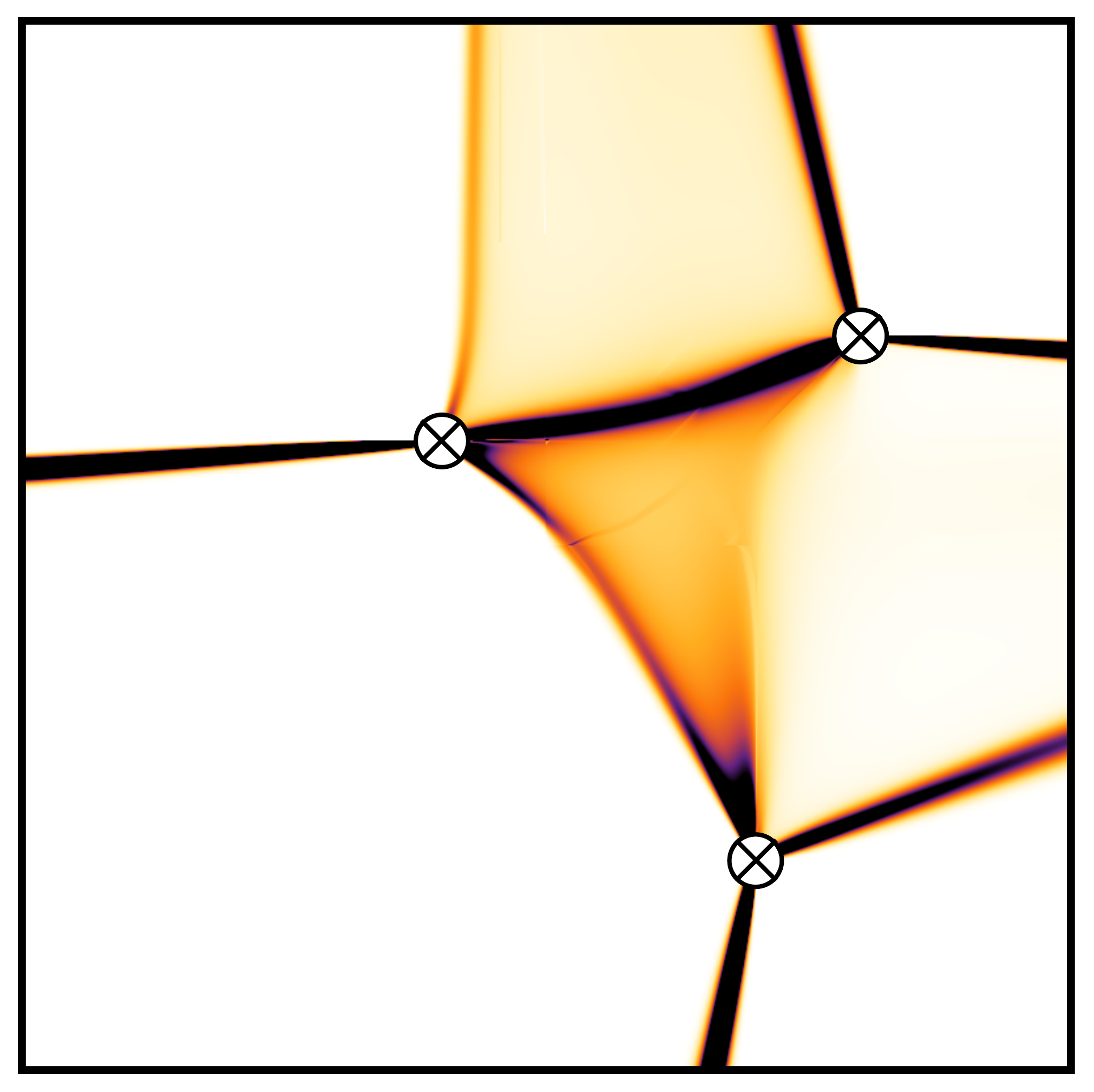}}\hspace{0.7cm}
	\subfloat[]{\includegraphics*[trim={0cm 0cm -0cm -0cm},clip,width=0.3\textwidth]{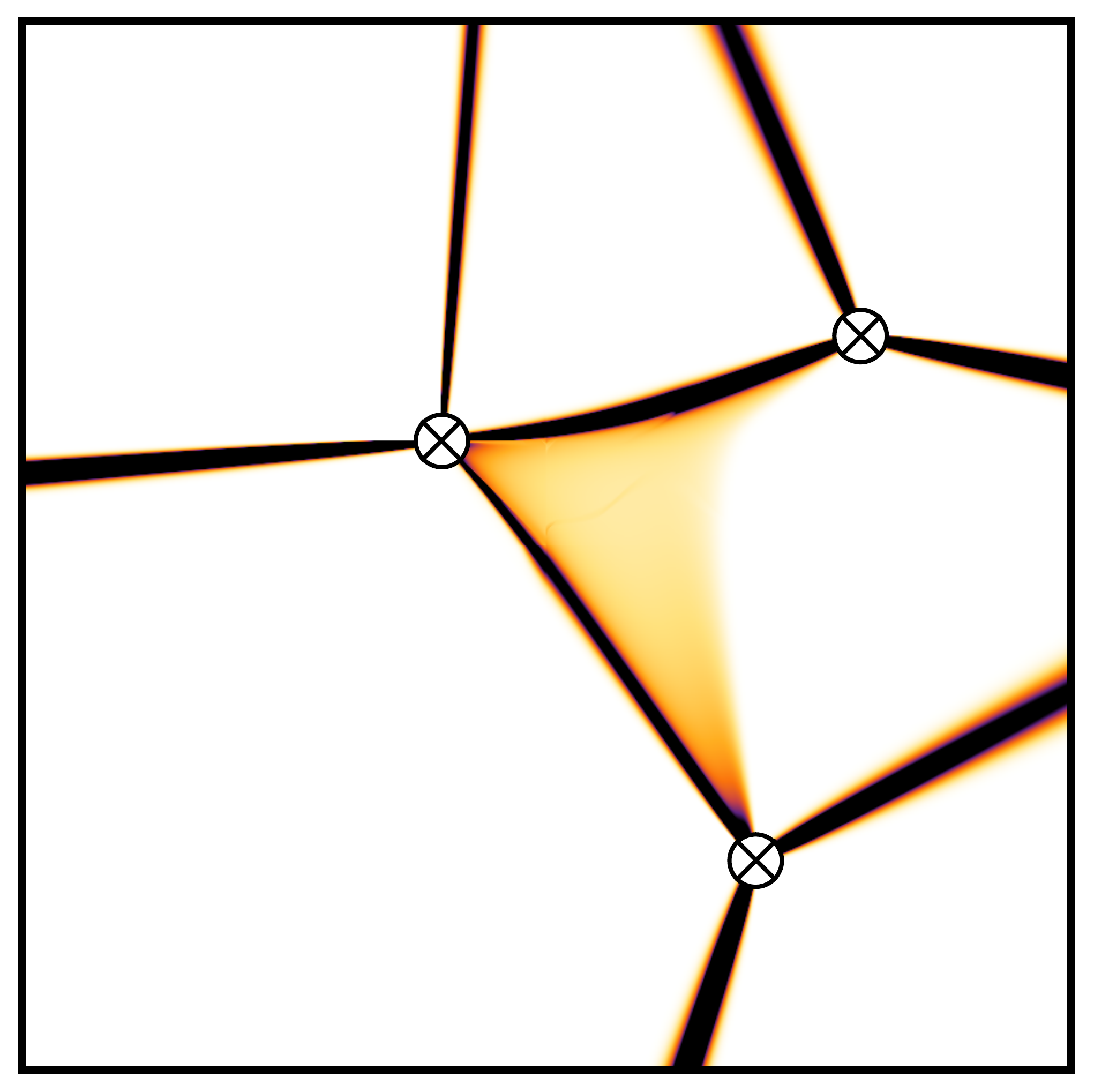}}\hspace{0.7cm}
	\subfloat[]{\includegraphics*[trim={0cm 0cm -0cm -0cm},clip,width=0.3\textwidth]{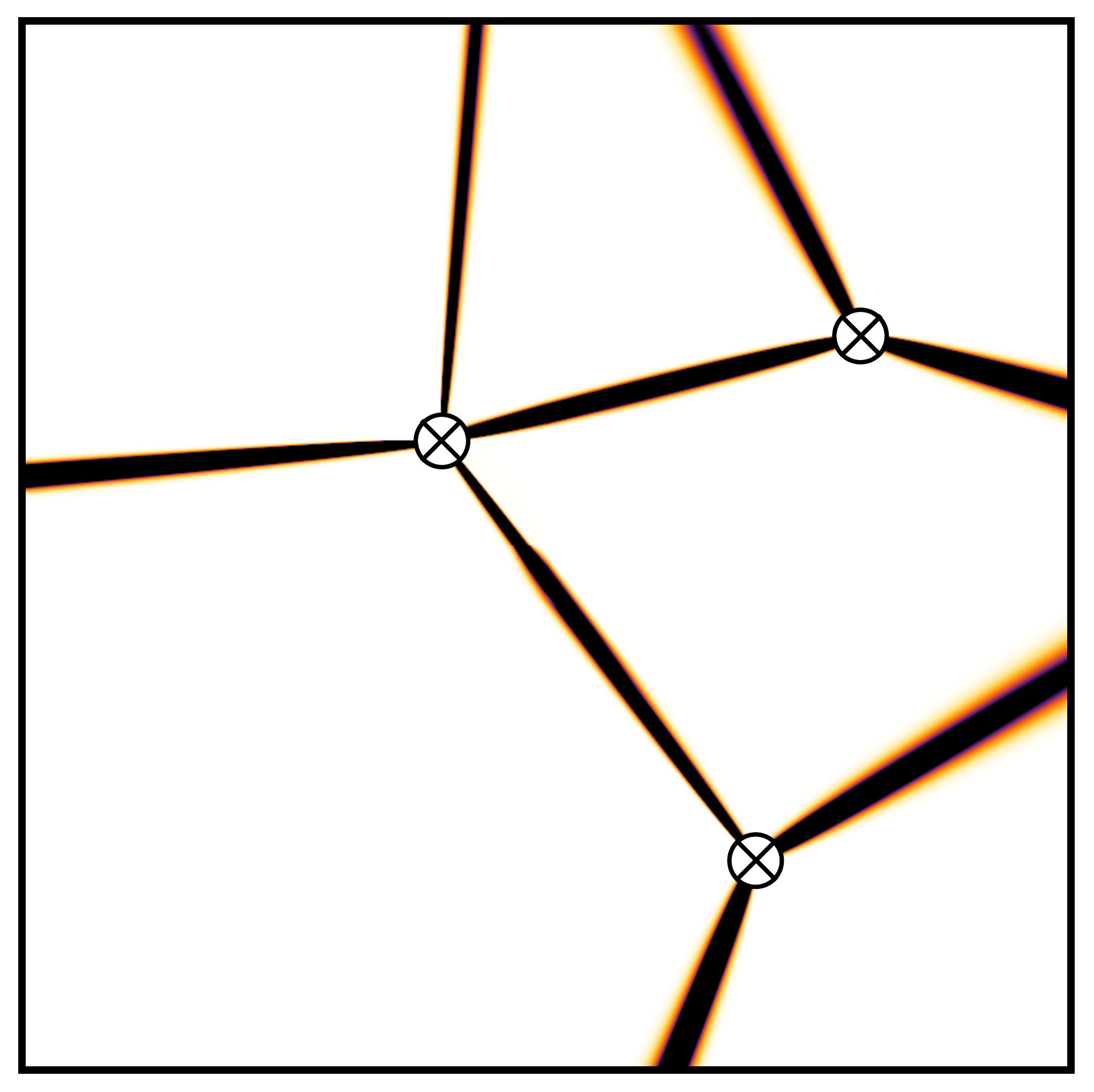}}
	\caption{(a) The optimal membrane problem for an asymmetric three-point load $f = \sum_{i = 1}^3 P\,\delta_{x_i}$. The trimmed averaged approximate optimal thickness $\bar{b}_h$ for Poisson ratios: (b) $\nu=0.5$, \ (c) $\nu=0.0$, \ (d) $\nu=-0.6$, \ (e) $\nu = -0.9$, \ (f) $\nu = -1.0$.}
	\label{fig:3_point_loads}       
\end{figure}

\begin{figure}[h]
	\centering
	\subfloat[]{\includegraphics*[trim={0cm 0cm -0cm -0cm},clip,width=0.3\textwidth]{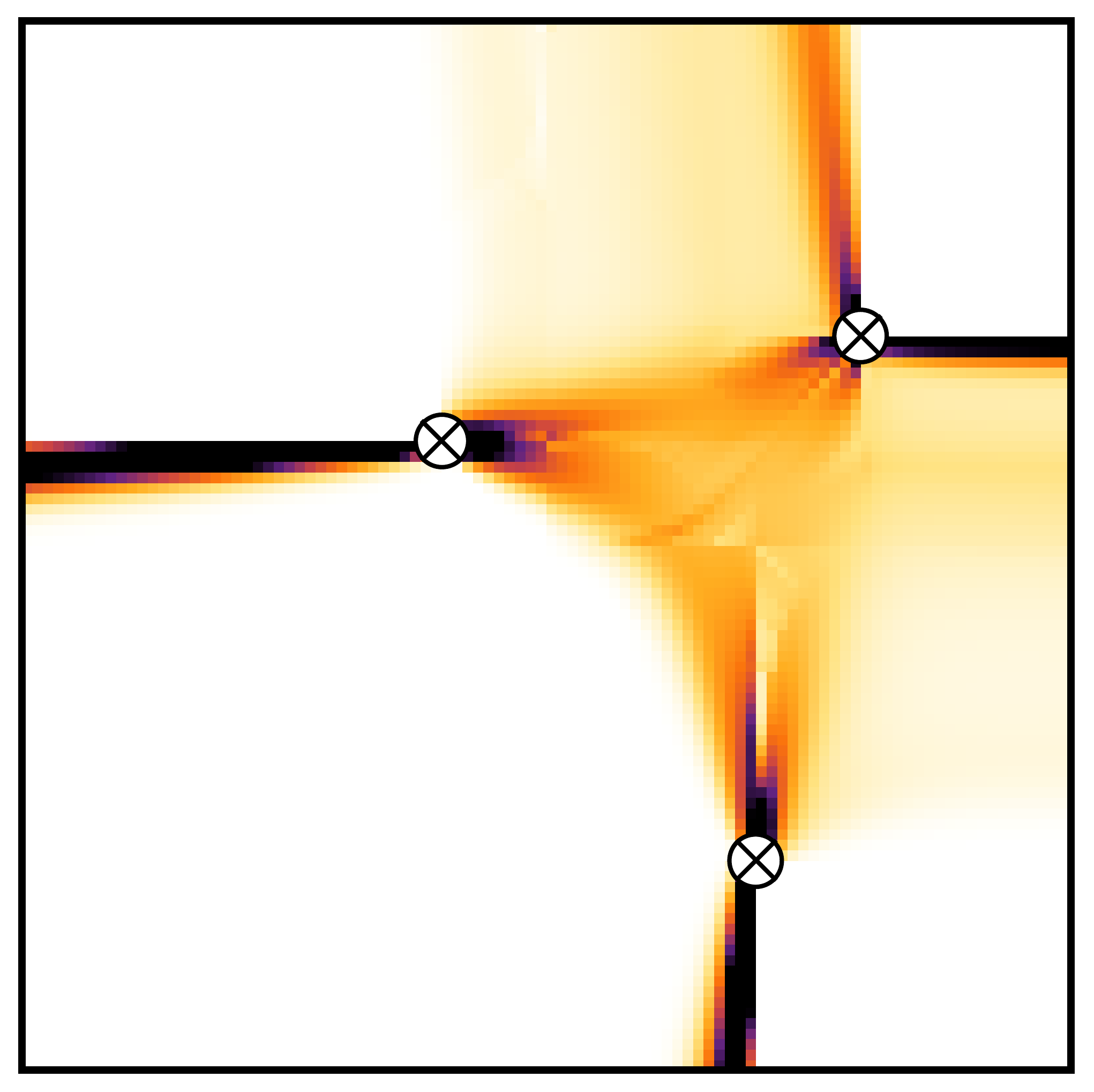}}\hspace{0.7cm}
	\subfloat[]{\includegraphics*[trim={0cm 0cm -0cm -0cm},clip,width=0.3\textwidth]{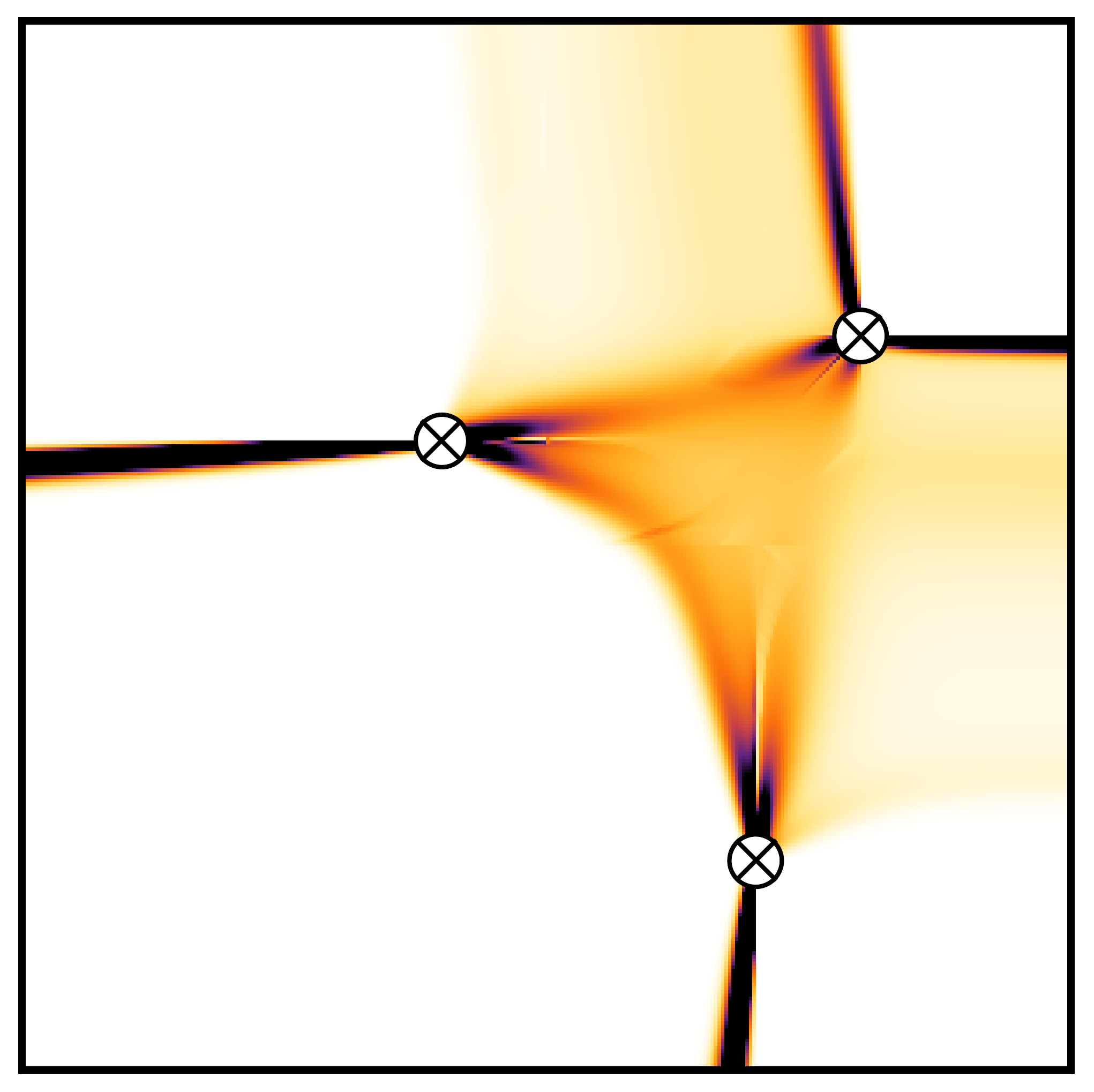}}\hspace{0.7cm}
	\subfloat[]{\includegraphics*[trim={0cm 0cm -0cm -0cm},clip,width=0.3\textwidth]{fig/3points_00.pdf}}
	\caption{The trimmed averaged approximate optimal thickness $\bar{b}_h$ for a three-point load $f = \sum_{i = 1}^3 P\,\delta_{x_i}$ and different mesh sizes: (a) $101 \times 101$, \ (b) $301 \times 301$, \ (c) $801 \times 801$ vertices.}
	\label{fig:3_point_loads_mesh}       
\end{figure}

\begin{figure}[h]
	\centering
	\subfloat[]{\includegraphics*[trim={0cm 0cm -0cm -0cm},clip,width=0.3\textwidth]{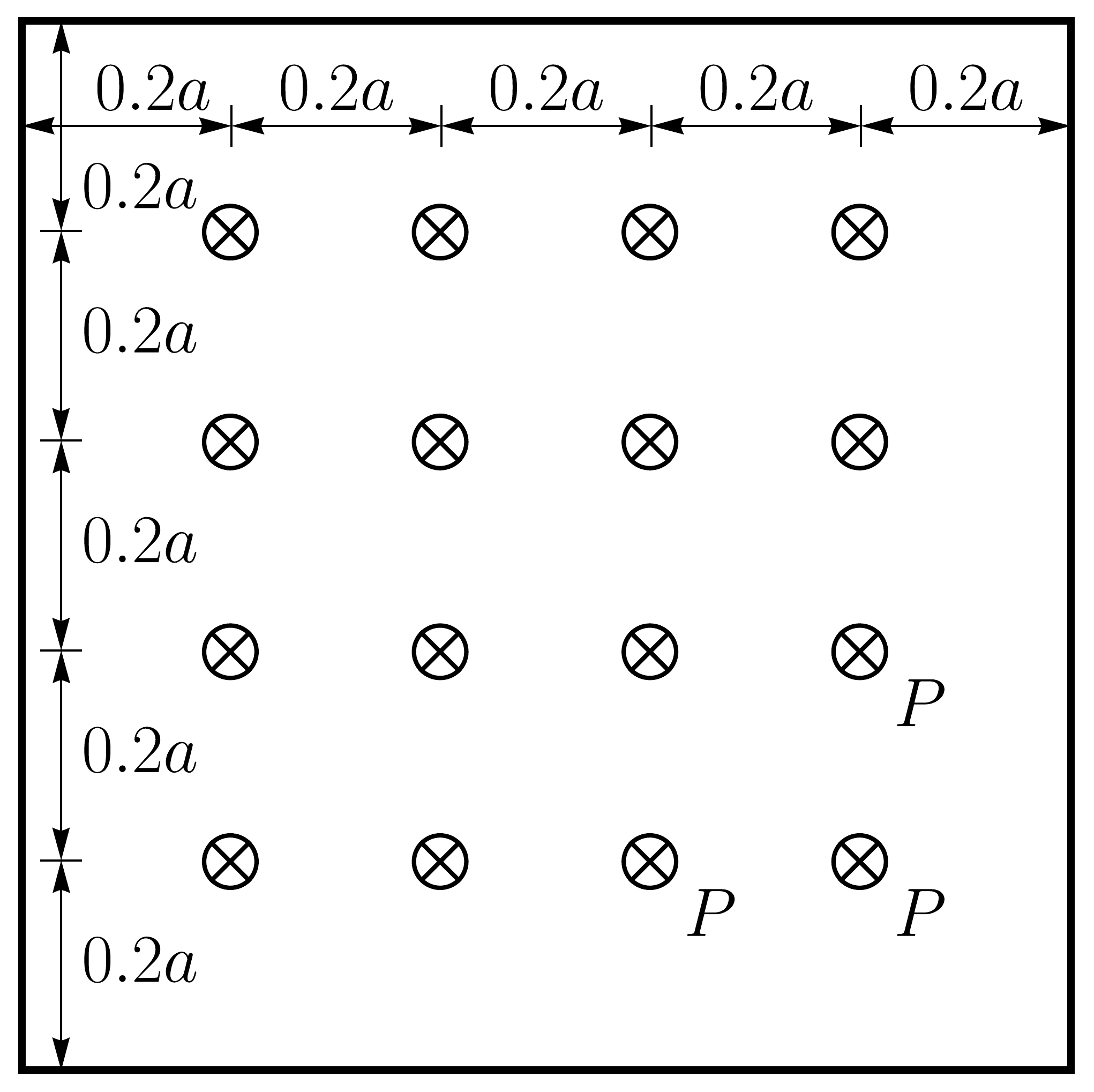}}\hspace{0.7cm}
	\subfloat[]{\includegraphics*[trim={0cm 0cm -0cm -0cm},clip,width=0.3\textwidth]{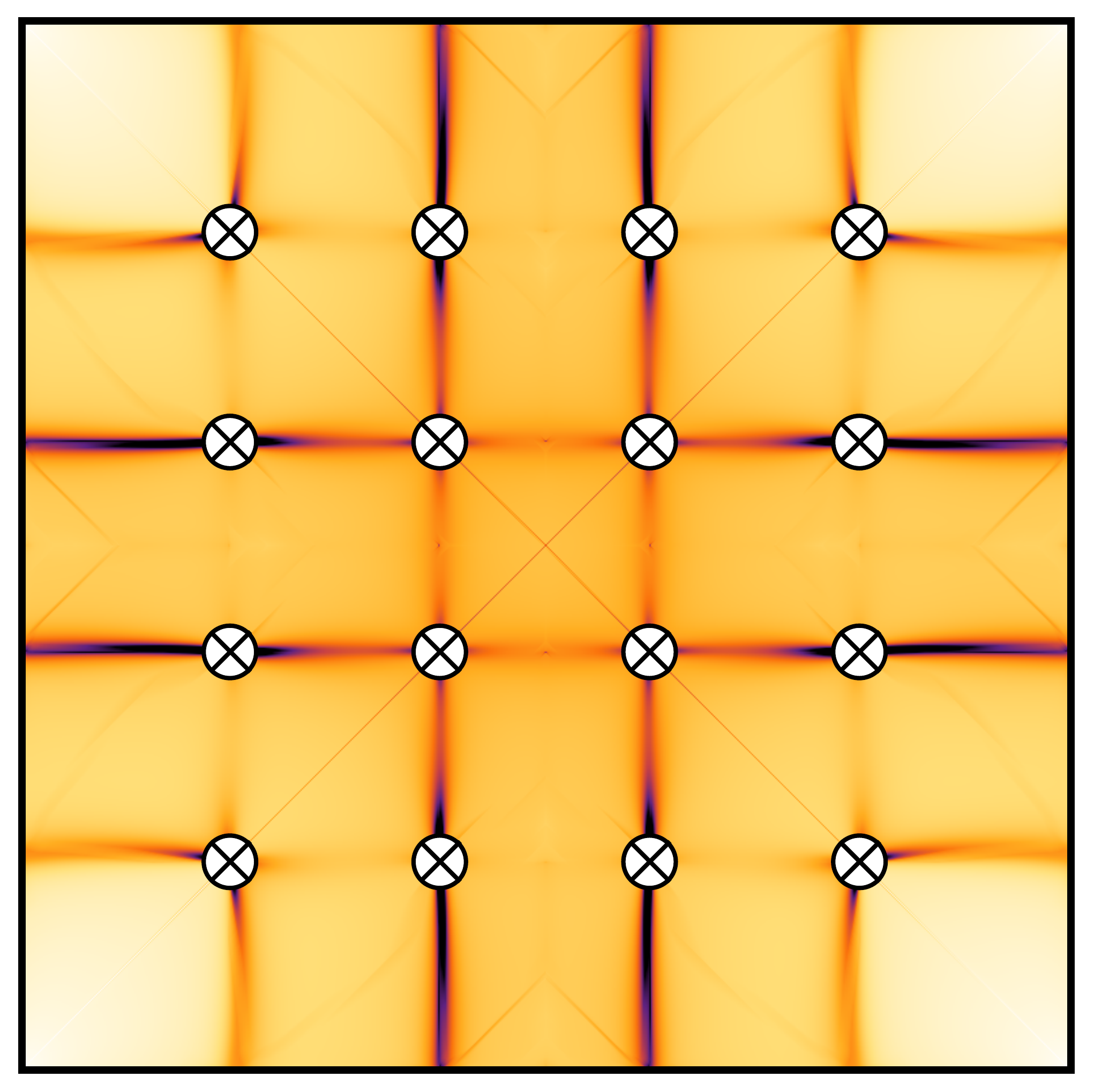}}\hspace{0.7cm}
	\subfloat[]{\includegraphics*[trim={0cm 0cm -0cm -0cm},clip,width=0.3\textwidth]{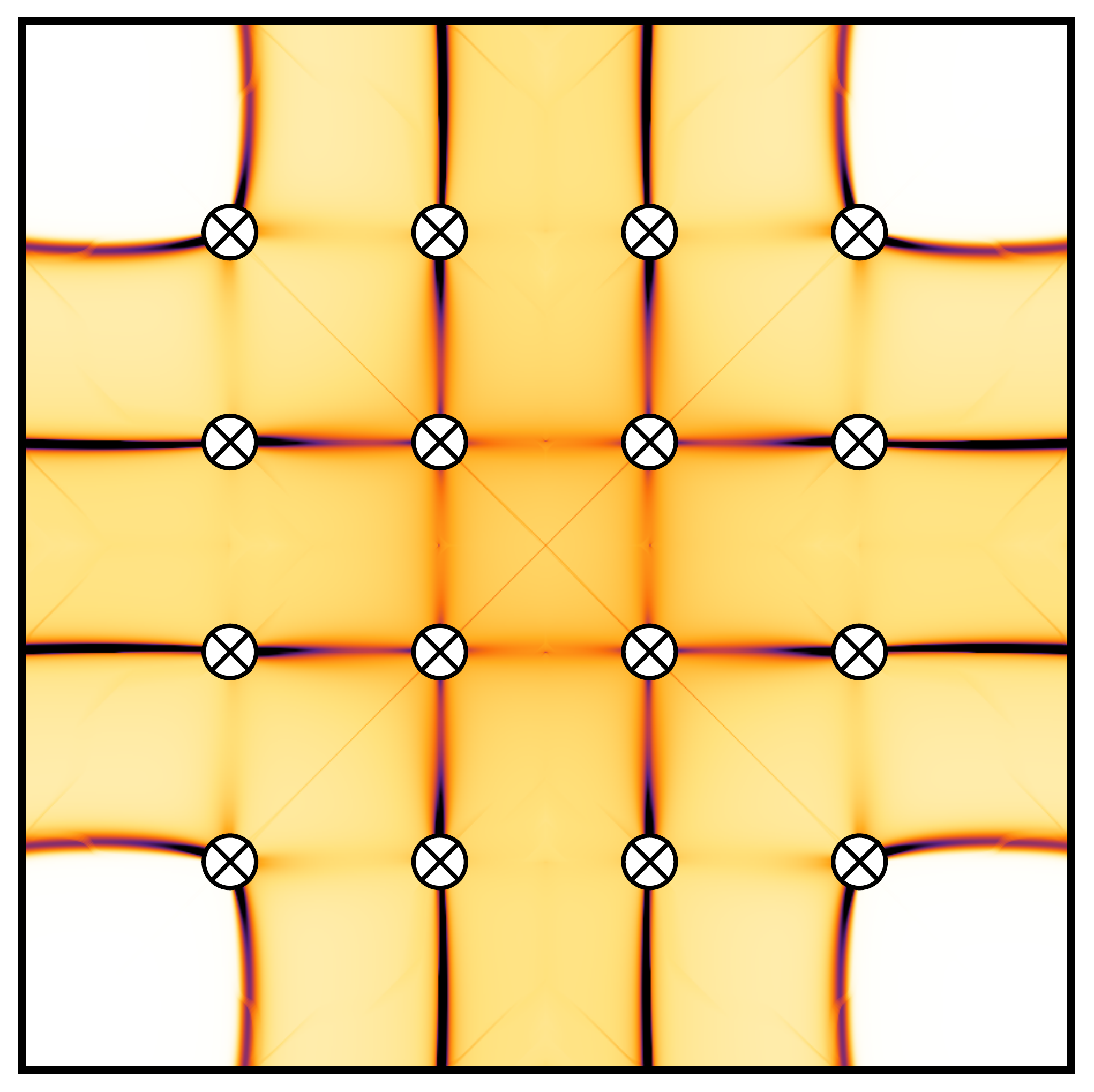}}\\
	\subfloat[]{\includegraphics*[trim={0cm 0cm -0cm -0cm},clip,width=0.3\textwidth]{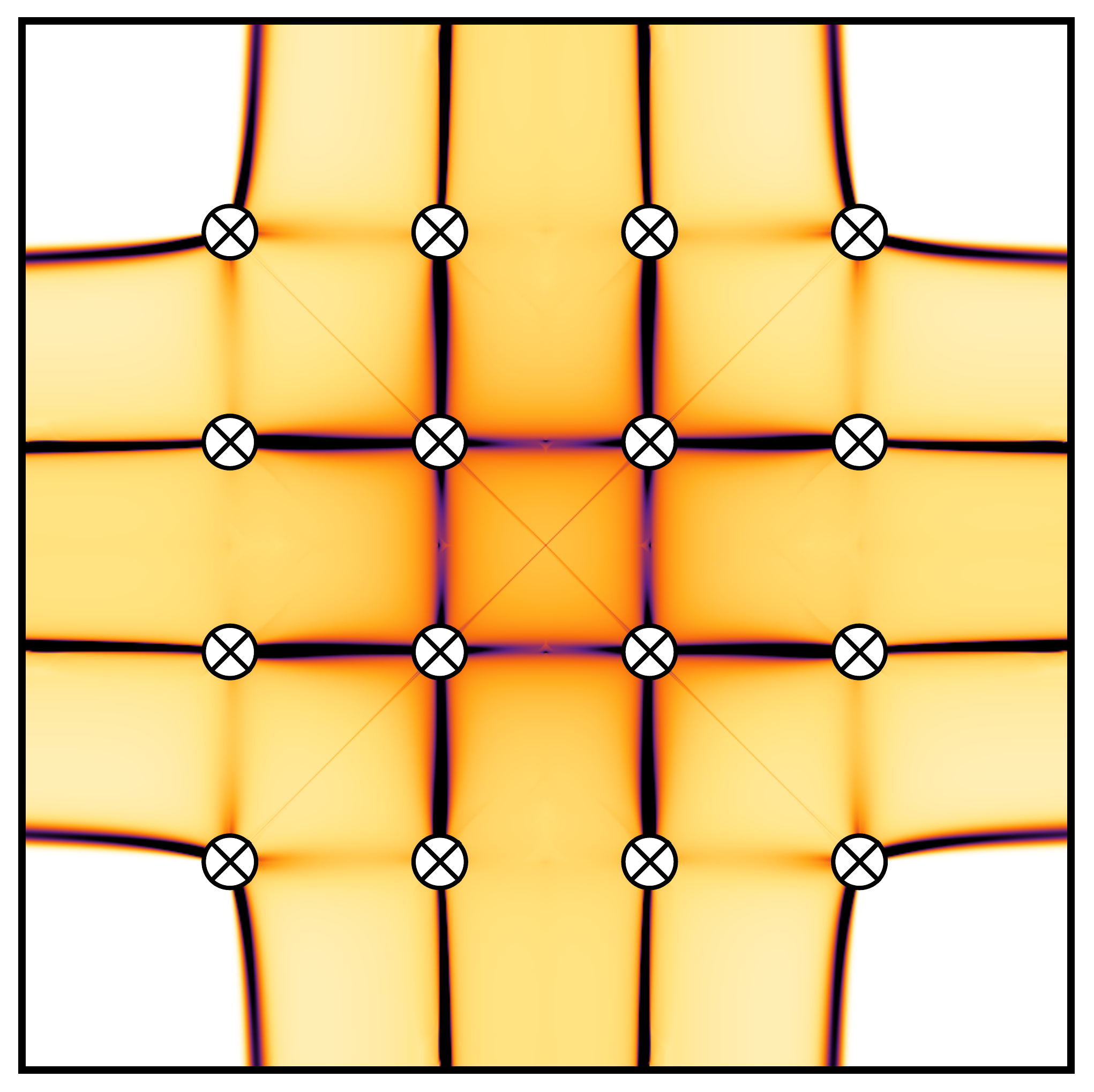}}\hspace{0.7cm}
	\subfloat[]{\includegraphics*[trim={0cm 0cm -0cm -0cm},clip,width=0.3\textwidth]{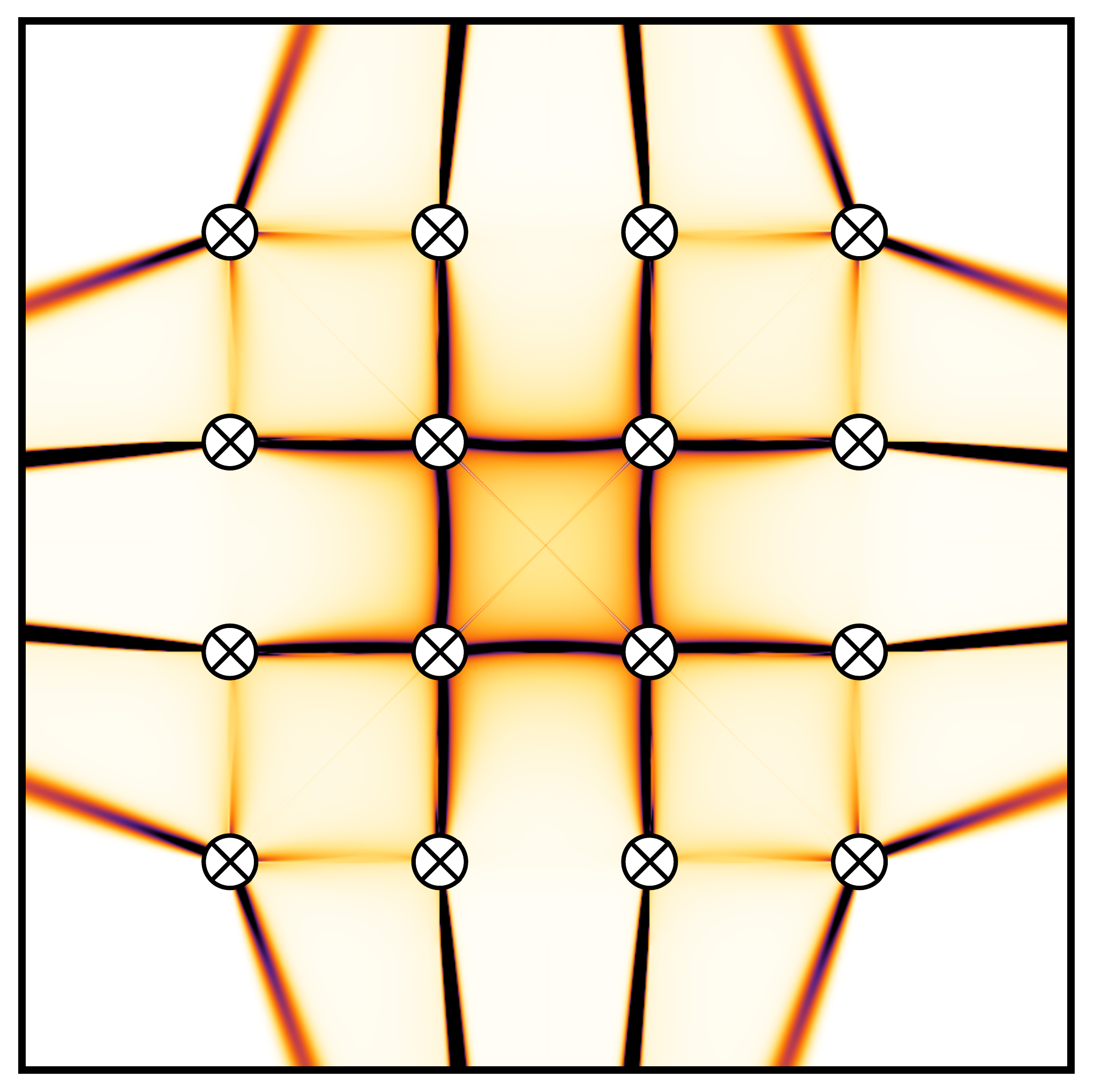}}\hspace{0.7cm}
	\subfloat[]{\includegraphics*[trim={0cm 0cm -0cm -0cm},clip,width=0.3\textwidth]{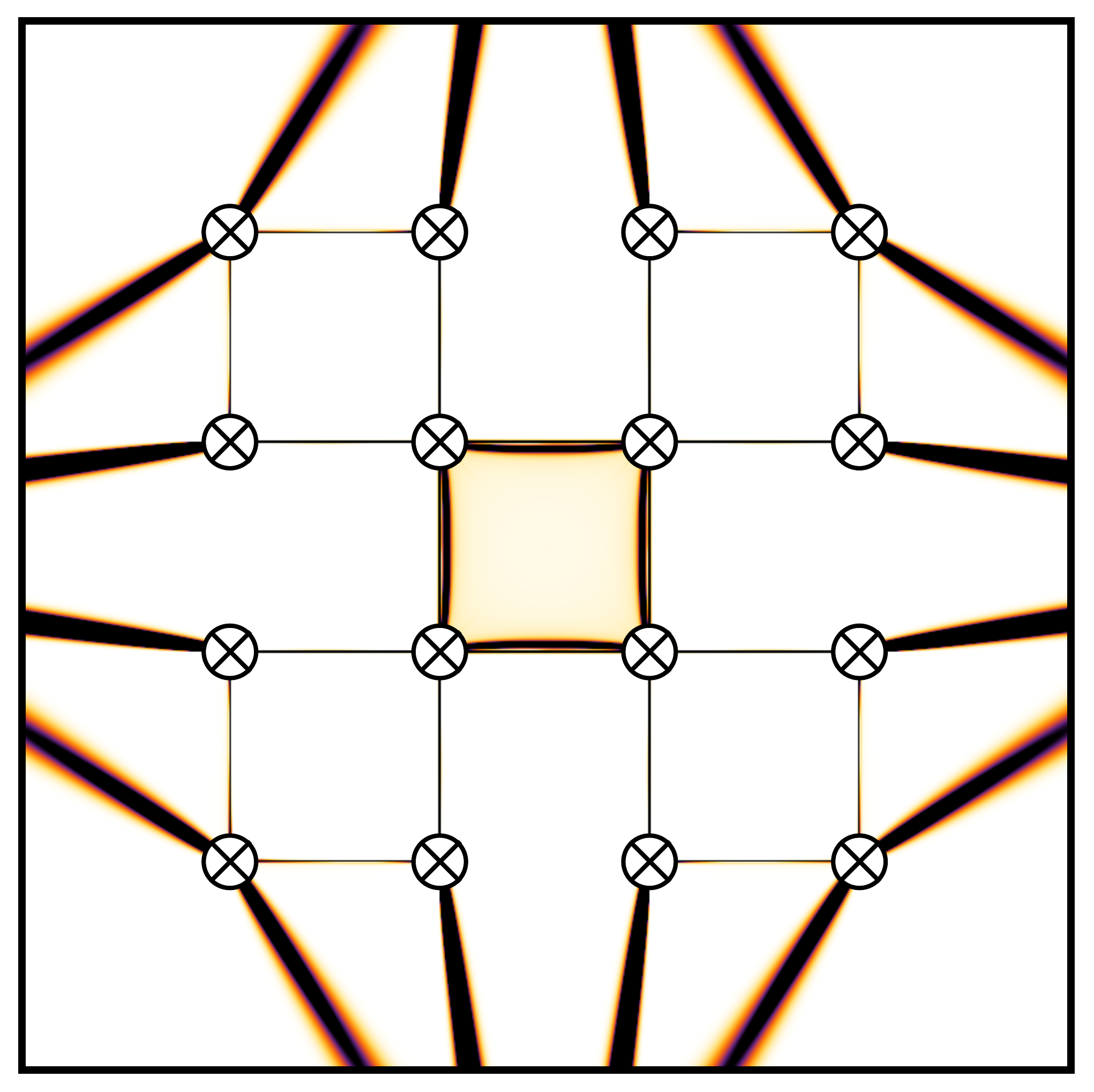}}
	\caption{(a) The optimal membrane problem for a symmetric multiple-point load $f = \sum_{i = 1}^{16} P\,\delta_{x_i}$. The trimmed averaged approximate optimal thickness $\bar{b}_h$ for Poisson ratios: (b) $\nu=0.5$, \ (c) $\nu=0.3$, \ (d) $\nu=0.0$, \ (e) $\nu = -0.6$, \ (f) $\nu = -1.0$.}
	\label{fig:multiple_point_loads}  
\end{figure}

	\begin{figure}[h]
	\centering
	\subfloat[]{\includegraphics*[trim={0cm 0cm -0cm -0cm},clip,width=0.3\textwidth]{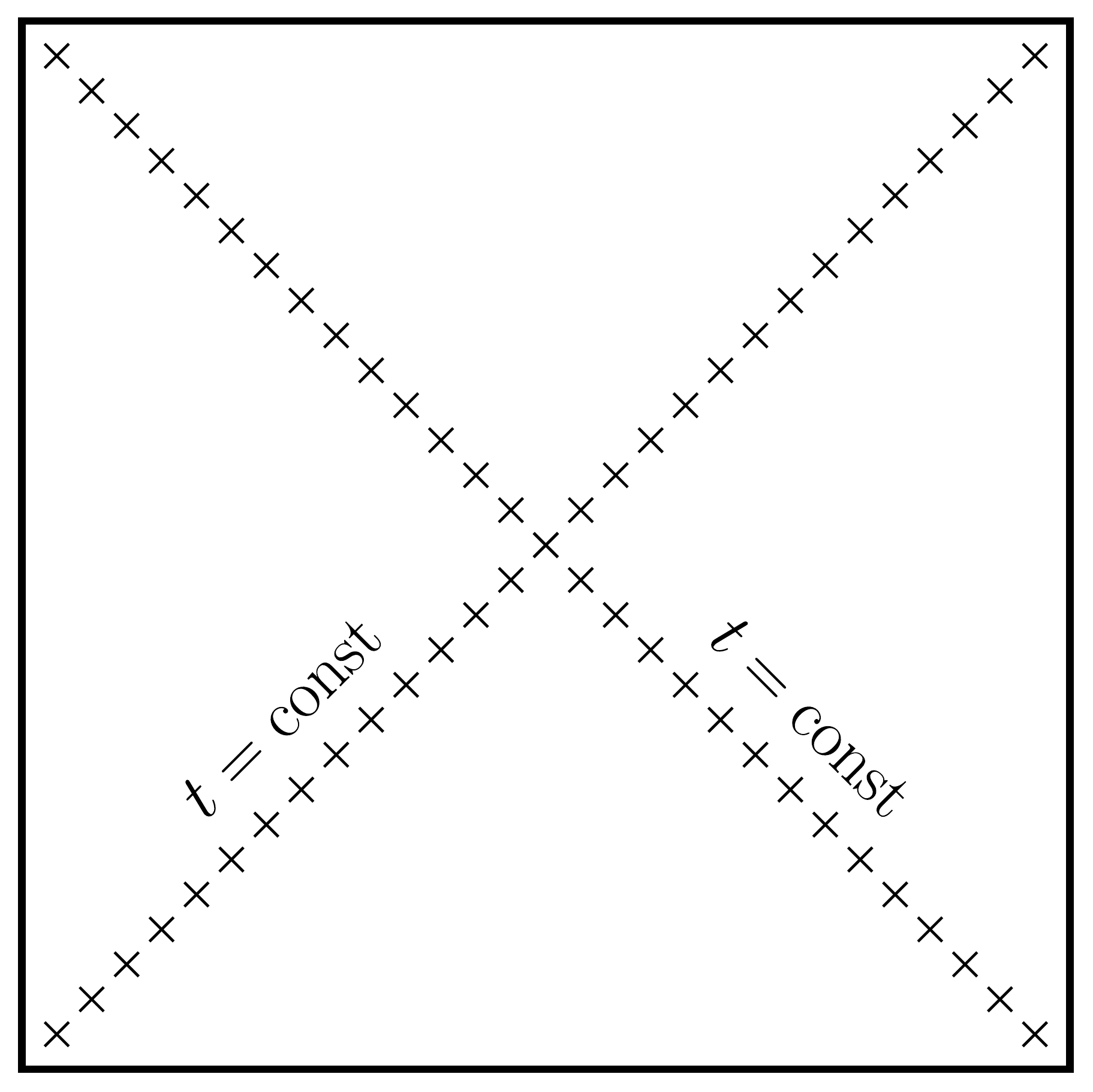}}\hspace{0.7cm}
	\subfloat[]{\includegraphics*[trim={0cm 0cm -0cm -0cm},clip,width=0.3\textwidth]{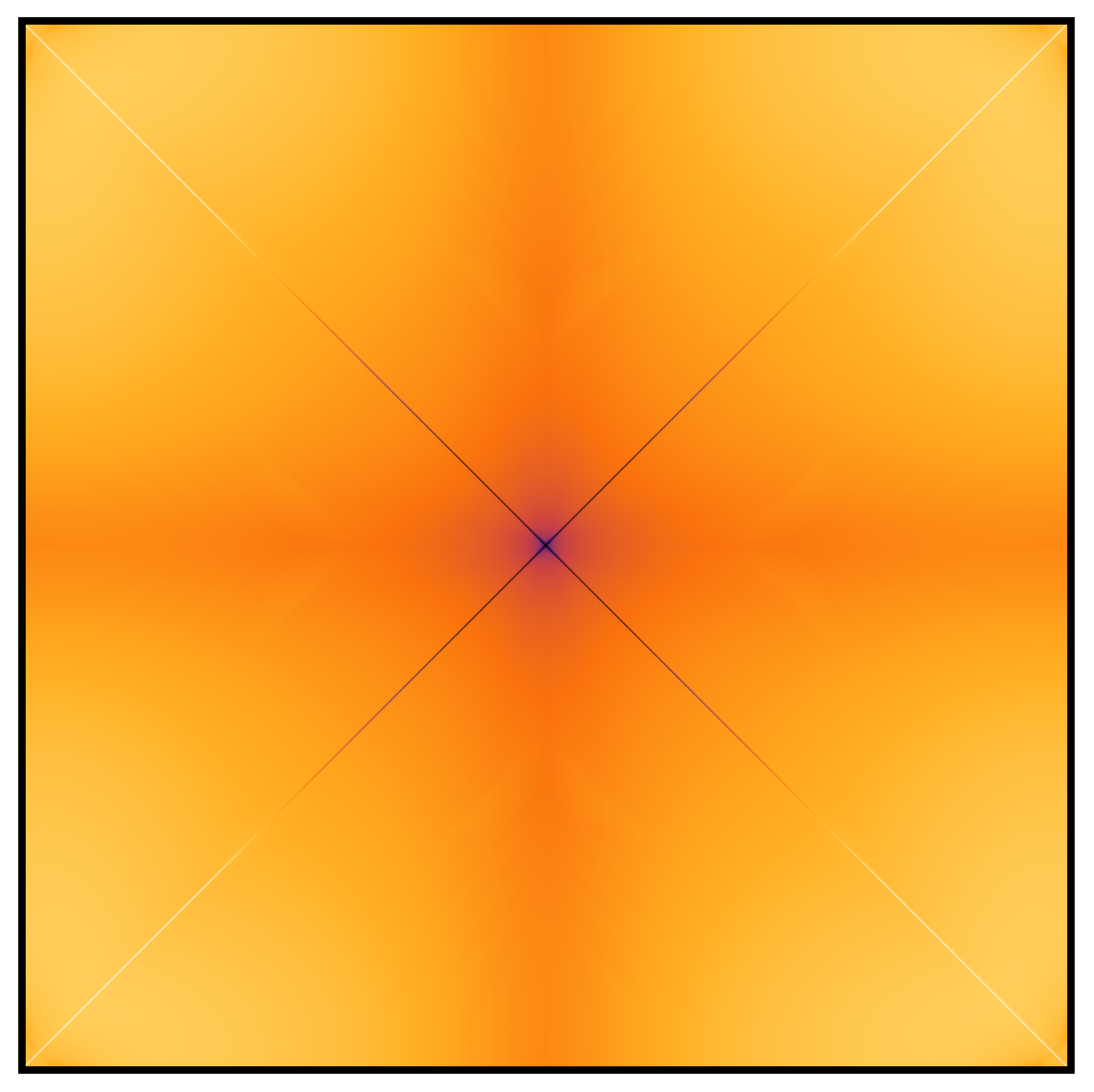}}\hspace{0.7cm}
	\subfloat[]{\includegraphics*[trim={0cm 0cm -0cm -0cm},clip,width=0.3\textwidth]{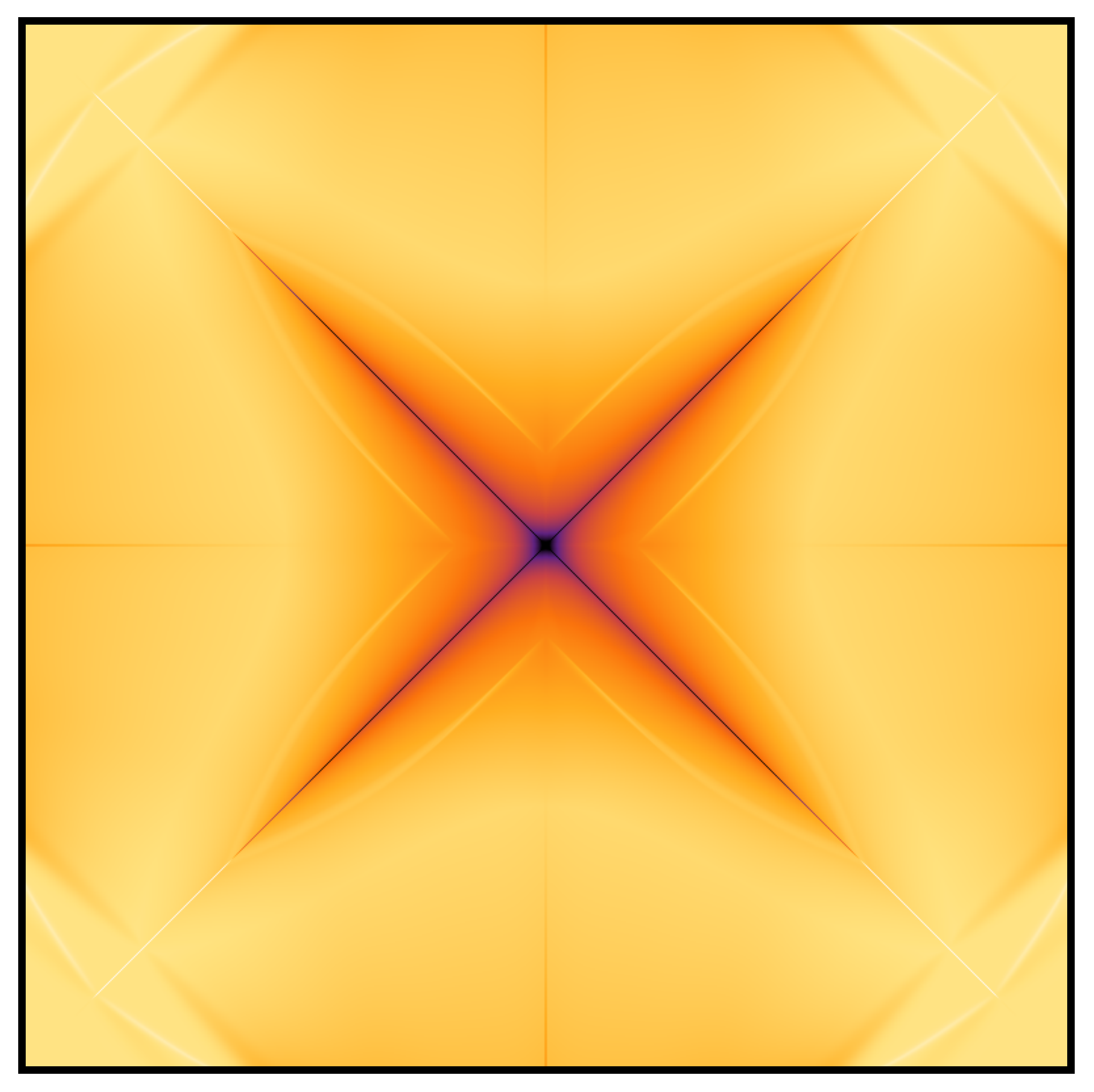}}\\
	\subfloat[]{\includegraphics*[trim={0cm 0cm -0cm -0cm},clip,width=0.3\textwidth]{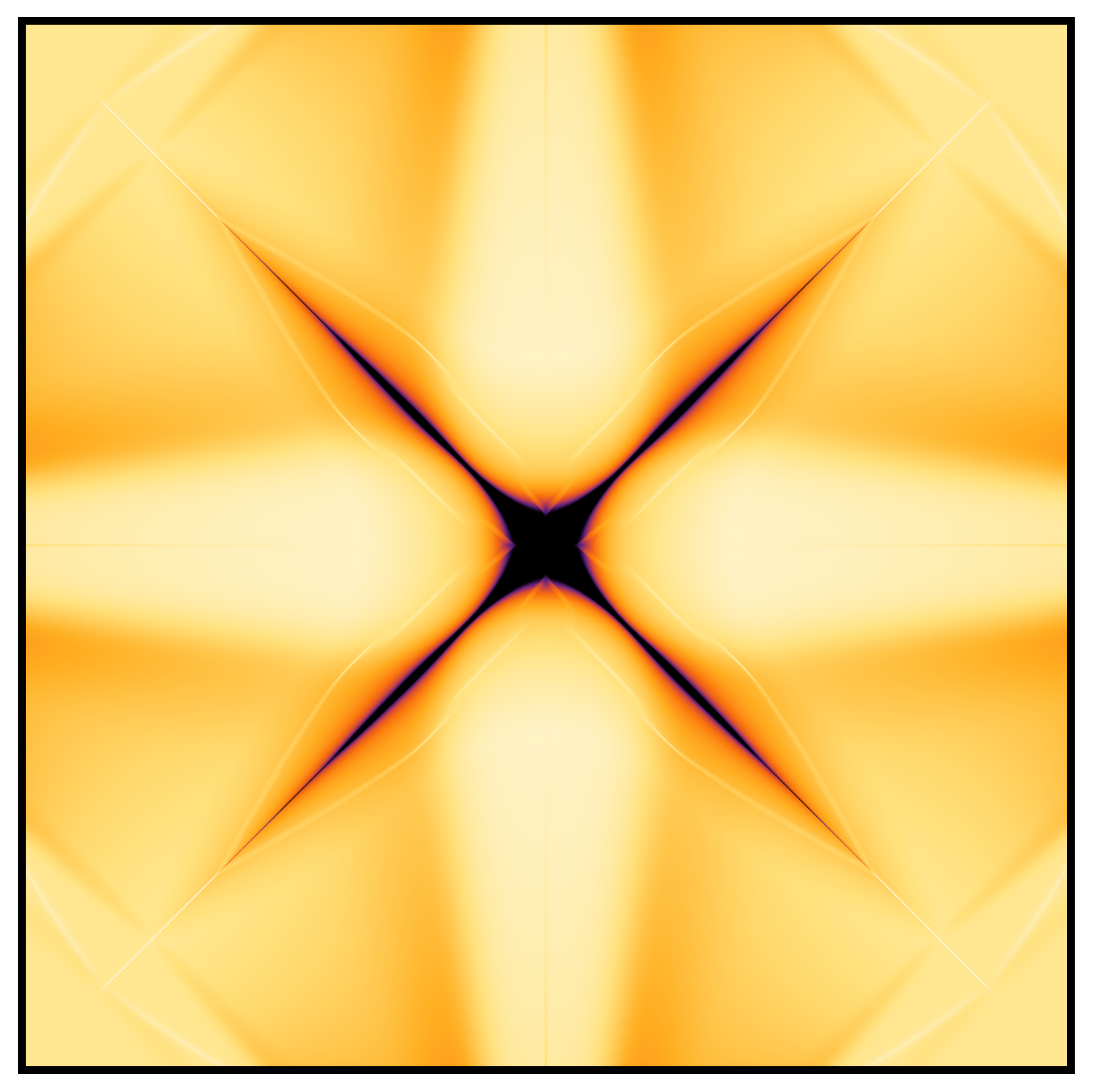}}\hspace{0.7cm}
	\subfloat[]{\includegraphics*[trim={0cm 0cm -0cm -0cm},clip,width=0.3\textwidth]{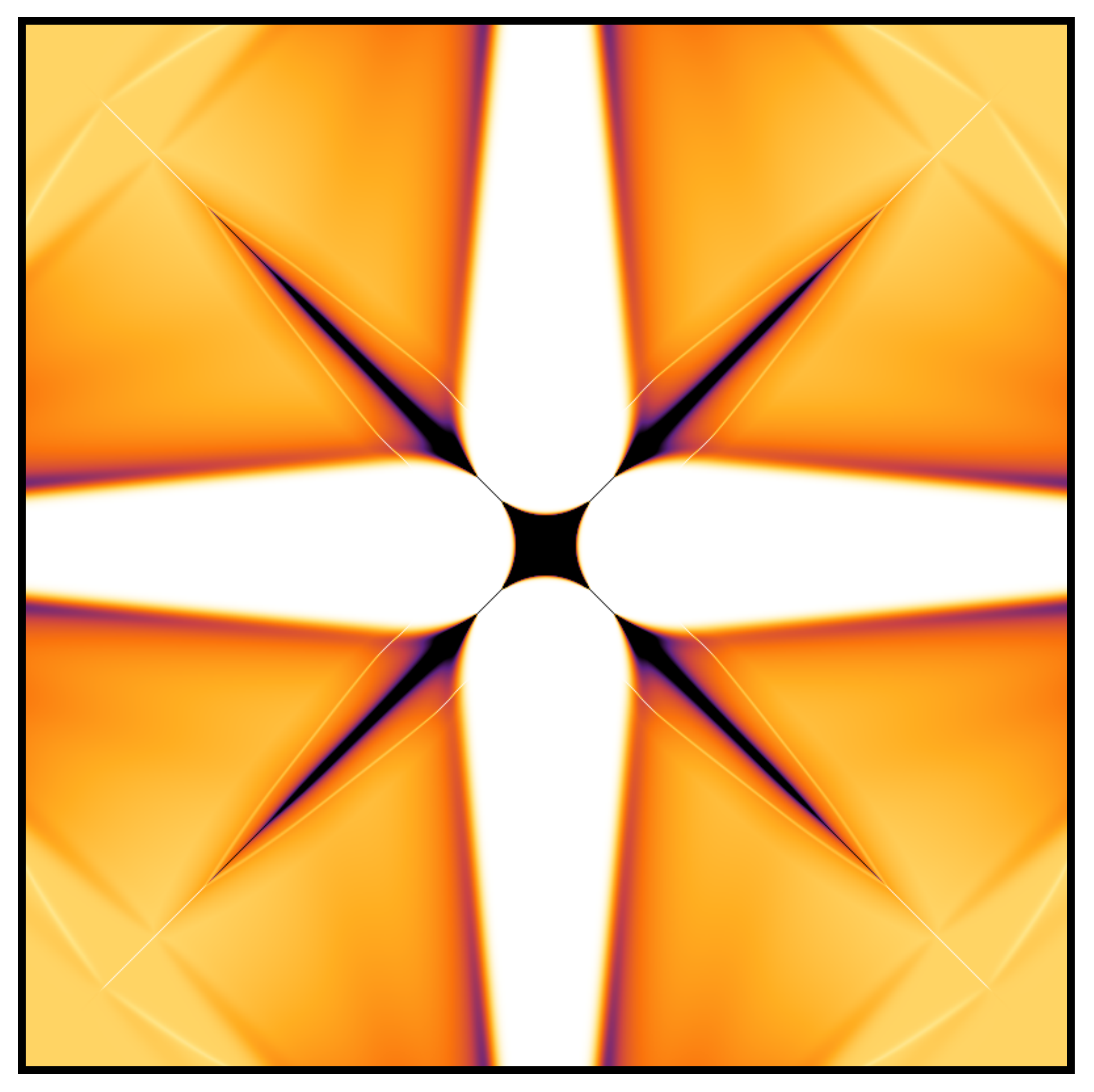}}\hspace{0.7cm}
	\subfloat[]{\includegraphics*[trim={0cm 0cm -0cm -0cm},clip,width=0.3\textwidth]{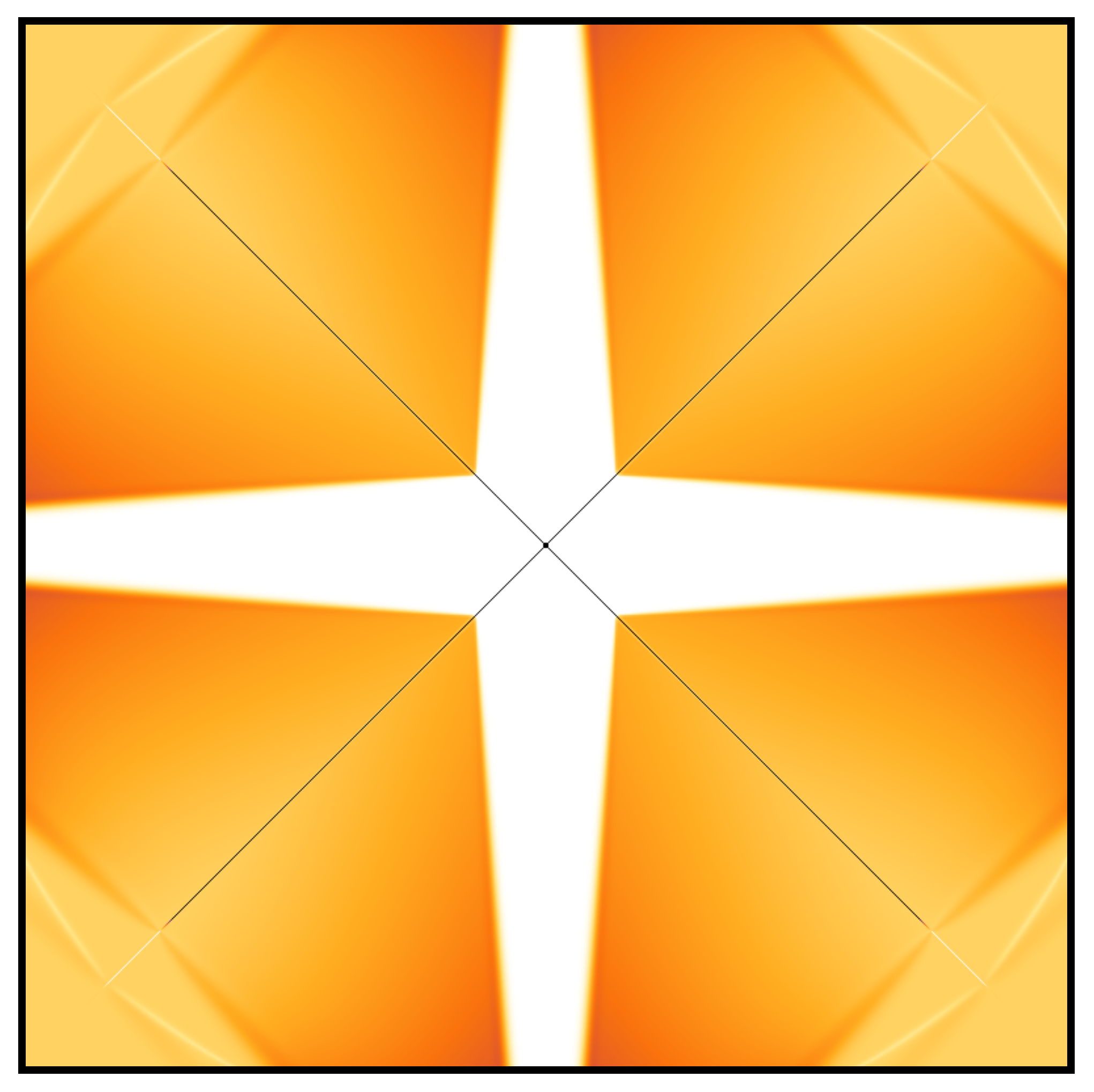}}
	\caption{(a) The optimal membrane problem for a diagonal load $f = t\,\Ha^1\mres[(0,0),(a,a)]+t\,\Ha^1\mres[(0,a),(a,0)]$. The trimmed averaged approximate optimal thickness $\bar{b}_h$ for Poisson ratios: (b) $\nu=0.0$, \ (c) $\nu=-0.85$, \ (d) $\nu=-0.92$, \ (e) $\nu = -0.95$, \ (f) $\nu = -1.0$.}
	\label{fig:diagonal_load}       
\end{figure}

\begin{remark}[\textbf{The checkerboard effect}]
	\label{rem:checkerboard}
	In almost all of simulations performed by the author the approximate optimal thickness $\check{b}_h$ found via the finite element method herein proposed suffers from a pathology. Patterns of inter-twining finite elements that admit zero and non-zero values $\check{b}_h$ may be discerned. For instance, in certain subregions of $\Omega$, zero values are found in the \textit{lower-triangle elements} (elements in the family $\mathcal{T}_1^h$) and non-zero in the \textit{upper-triangle elements} (family $\mathcal{T}_2^h$). Such occurrence is well recognized in optimal design when finite elements of low-order are used. In the case of regular meshes with quad elements this has come to be known as the \textit{checker-board phenomenon}, see \cite[Section 5.2.3]{allaire2002} or \cite{petersson1999} for more details. This issue, however, is easily remedied by averaging at the post-processing stage. In our case it turned out that averaging on each pair of lower- and upper-triangle elements does the trick. Namely, we shall compute \textit{the averaged thickness functions} $\tilde{b}_h \in L^\infty(\O;\R_+)$ as below:	
	\begin{equation*}
		\tilde{b}_h := \frac{1}{\vert E_i^h \cup E_{i+1}^h\vert}\int_{E_i^h \cup E_{i+1}^h} \hat{b}_h \,d\mathcal{L}^2 = \frac{n^2}{a^2}\, \frac{2 V_0}{Z_h}\, \big(\mbf{r}^0(i) + \mbf{r}^0(i+1)\big) \qquad \text{a.e. in }  E_i^h \cup E_{i+1}^h
	\end{equation*}
	where $i=2k-1$ for $k\in \{ 1,\ldots, n^2 \}$ (assuming that elements are numbered from left to right, row by row). Let us stress that such averaging does not spoil the convergence established in Corollary \ref{cor:mu_h}. More accurately, one can show that the sequence $\tilde{\mu}_h = \tilde{b}_h\,\mathcal{L}^2 \mres \O$ weakly-* converges to the very same optimal material distribution $\check{\mu}$ as $\check{\mu}_h = \check{b}_h\,\mathcal{L}^2 \mres \O$.
\end{remark}

\begin{remark}
	\label{rem:trim}
	In general the sequence $\norm{\tilde{b}_h}_{L^\infty(\O)}$ is unbounded. This will naturally be the case once the solution $\check{\mu}$, to which $\tilde{\mu}_h = \tilde{b}_h\,\mathcal{L}^2 \mres \O$ converges, is either not absolutely continuous, or $\check{\mu} = \check{b}\,\mathcal{L}^2 \mres \O$ for an unbounded $\check{b} \in L^1(\O)$. In general, displaying $\tilde{b}_h$ in the figures directly would thus lead to extreme contrast for fine meshes. To address this, in the figures we shall eventually display \textit{the trimmed averaged thickness function} as follows:
	\begin{equation}
		\label{eq:trim}
		\bar{b}_h(x) := \min\big\{\tilde{b}_h(x),b_0 \big\} \qquad \forall x\in \O,
	\end{equation}
	where $b_0$ is a positive constant. Unfortunately, the constant $b_0$ is chosen individually for each example by trial and error so that the best visual effect is achieved. One should bear that in mind when comparing solutions, especially for different loads $f$.
\end{remark}

\begin{example}[\textbf{Three point forces positioned asymmetrically}]
	\label{ex:3_point_loads}
	As the first example we consider a load consisting of three point forces positioned asymmetrically in the square $\Omega = (0,a)^2$, namely $f = \sum_{i = 1}^3 P\,\delta_{x_i}$ for $P>0$ and $x_1 = (\frac{7}{10}a,\frac{2}{10}a)$, $x_2 = (\frac{4}{10}a,\frac{6}{10}a)$, $x_3 = (\frac{8}{10}a,\frac{7}{10}a)$, see Fig. \ref{fig:3_point_loads}(a). In accordance with Section \ref{ssec:implementation} we choose the quadratic energy potential $j$ for Young modulus $E>0$ and for five different Poisson ratios listed in Table \ref{tab:poisson}. In Fig. \ref{fig:3_point_loads}(b)-(f) the heat-maps for the the function $\bar{b}_h$ (cf. Remark \ref{rem:trim}) found for the finite element mesh of $801 \times 801$ vertices are displayed; the darker the colour, the bigger the value of $\bar{b}_h$. In addition, Fig. \ref{fig:3_point_loads_mesh} shows how the solution changes with the mesh resolution. 
	
	Let us interpret the results, starting from the case $\nu = 0$, see Fig. \ref{fig:3_point_loads}(c). We can see two regions of relatively small and mildly varying thickness and one curved-triangle-like region of a bigger and less regular thickness. Then, large concentrations of thickness on four 1D subsets can be discerned. Based on this numerical approximation one may suspect that the exact optimal material distribution is of the form $\check{\mu} = b\,\mathcal{L}^2\mres\O +\sum_{i = 1}^4 s_i \Ha^1 \mres \mathrm{C}_i $ where: $b \in L^1(\O;\R_+)$, $\mathrm{C}_i$ are smooth curves, and $s_i$ are smooth functions. Two of those curves -- those which are not adjacent to the continuous regions -- are supposedly segments $C_1 = [x_1,y_1]$, $C_2 = [x_2,y_2]$ where $y_1,y_2 \in \bO$; in that case $s_1,s_2$ are constants. The function $b$ is unbounded in the neighbourhoods of forces' application points $x_1,x_2,x_3$. From the mechanical perspective, the singular part of $\check{\mu}$ models four strings, rather than 2D diffused film.
	
	Next, we study how the optimal solutions change with the Poisson ratio $\nu$. Fig. \ref{fig:3_point_loads} clearly indicates that $\nu$ affects the proportions of the diffused and the string part of the structure (absolutely continuous and singular part of $\check{\mu}$). The lower the Poisson ratio, the bigger the contribution of the string part. In particular, for $\nu=-1$ the diffused part appears to vanish completely, and the optimal structure becomes a system of eight strings.
\end{example}

\begin{example}[\textbf{Multiple point forces positioned symmetrically}]
	\label{ex:multiple_point_loads}
	We take a load consisting of point forces again, cf. Fig. \ref{fig:multiple_point_loads}. This time, there will be 16 of them ($f = \sum_{i = 1}^{16} P\,\delta_{x_i}$), and they will be positioned symmetrically in $\O = (0,a)^2$. Comments similar to those made in Example \ref{ex:3_point_loads} may be repeated here. Again, with the Poisson ratio decreasing, the optimal structure seems to rely more on the string elements rather than on the diffused membrane body. In case of $\nu = -1$ the structure is composed of strings almost exclusively, except for the small square in the center. However, a more thorough analysis points to a high non-uniqueness of the material distribution in this square. One can show that using only the four strings being the edges of the square (without the diffused part in the interior) is also optimal.
\end{example}

\begin{example}[\textbf{Diagonal load}]
	\label{ex:diagonal_load}
	For the next example we consider a 1D load that is uniformly distributed along the diagonals of the square $\O = (0,a)^2$, more accurately $f = t\,\Ha^1\mres[(0,0),(a,a)]+t\,\Ha^1\mres[(0,a),(a,0)]$ for $t>0$.
	
	In Fig. \ref{fig:diagonal_load}(b) the numerical solution for $\nu = 0$ is illustrated. The optimal thickness distribution is more or less uniform in the whole domain $\Omega$. The deviation of thickness on the two stripes of finite elements along the diagonals are is a finite element method artefact, and it was numerically verified to vanish with $h  \to 0$. In fact, for all positive Poisson ratios the solution barely changes.
	
	Considering the above, the remaining solutions in Fig. \ref{fig:diagonal_load} were generated for $\nu<0$ or even for $\nu$ close to $-1$. Already for $\nu=-0.85$, see Fig. \ref{fig:diagonal_load}(c) we can see a clear concentration of the material along central parts of the diagonals. This concentration becomes more crisp for $\nu=-0.92$ where also a certain disproportion in the thickness emerges -- the thickness seems to jump along straight lines. For $\nu = -0.95$ we see the regions of lower thickness turn into void. Finally, for $\nu=-1$ the material seems to perfectly concentrate along the diagonals -- two stripes, each one-element wide, admit thickness $\tilde{b}_h(x)$ about 100 times higher than in the diffused part of the membrane. Accordingly, we can suspect that $\check{\mu}$ admits a singular 1D part for $\nu=-1$. One should note that -- despite the visual resemblance -- this was not the case for $\nu=0$, where $\check{\mu}$ is most likely absolutely continuous. In fact, Fig. \ref{fig:diagonal_load}(b) displays $\tilde{b}_h$ ($\bar{b}_h$ for $b_0 = \norm{\tilde{b}_h}_{L^\infty}$), whereas in Fig. \ref{fig:pressure}(f) we see the heat map of $\bar{b}_h$ for $b_0 = 0.01  \norm{\tilde{b}_h}_{L^\infty}$.

\end{example}

\begin{example}[\textbf{Uniform pressure load}]
	\label{ex:pressure}
	For the last example we consider a uniform pressure load, i.e. $f = p\,\mathcal{L}^2\mres \O$, see Fig. \ref{fig:pressure}(a). Similarly as in Example \ref{ex:diagonal_load}, the optimal membranes for positive Poisson ratios admit a fairly uniform thickness distribution as in Fig. \ref{fig:pressure}(b). Once again, interesting shapes occur for negative $\nu$. By studying the solutions Fig. \ref{fig:pressure}(c)-(f) we observe that the round thickness concentration in the center gradually evolves in shape with $\nu$ decreasing.
	
	Finally, for $\nu =-1$ we discover a peculiar star-like shape, cf. Fig.  \ref{fig:pressure}(f). It is a fact that the thickness jump along the star's contour is around five-fold. In the very center of the shape we can discern another star-shaped concentration. The shape is similar to the external one, only smaller and rotated by $45\degree$. While Fig. \ref{fig:pressure}(f) shows the heat-map of $\bar{b}_h$ computed for $b_0 = 0.01\norm{\tilde{b}_h}_{L^\infty}$ (see \eqref{eq:trim}), in Fig. \ref{fig:fractals}(a) we observe the same result but for $b_0 = \norm{\tilde{b}_h}_{L^\infty}$; namely, it is the function $\tilde{b}_h$ whose heat-map we see. In the latter figure we get a better look at the thickness distribution in the smaller star shape, see the zoom-in window. A clear concentration can be discerned forming yet another star-like shape, again  smaller one and rotated by $45\degree$. These observations indicate a self-similarity that potentially emerges in the exact solution $\check{\mu}$.
	
	Such fractal-like numerical solutions may be discovered for different shapes of domain $\O$ when a uniform pressure is applied: $f = p \, \mathcal{L}^2 \mres \O$. In Fig. \ref{fig:fractals} a simulation for a triangular domain $\O = \mathrm{int \,co}\big( \big\{ (0,0),(0,a),(a,0) \big\} \big)$ can be found. This time around, the star-shape has three legs, each reaching towards one corner of the triangle $\O$. Inside the three-legged star there is another one, although the shape seems different; a higher resolution simulation would be helpful to examine the self-similarity effect more thoroughly.  
	
	Similar exotic shapes also occur in the optimal designs found for pressure loading restricted to subregions of $\O$. To give a flavour, in Fig. \ref{fig:fractals}(c) a simulation is shown for quadratic domain $\O = (0,a) \times (0,a)$ and for a pressure load restricted to the lower triangle, i.e. $f = p \, \mathcal{L}^2 \mres  \mathrm{co}\big( \big\{ (0,0),(0,a),(a,0) \big\} \big)$. 
	
	\begin{figure}[h]
		\centering
		\subfloat[]{\includegraphics*[trim={0cm 0cm -0cm -0cm},clip,width=0.3\textwidth]{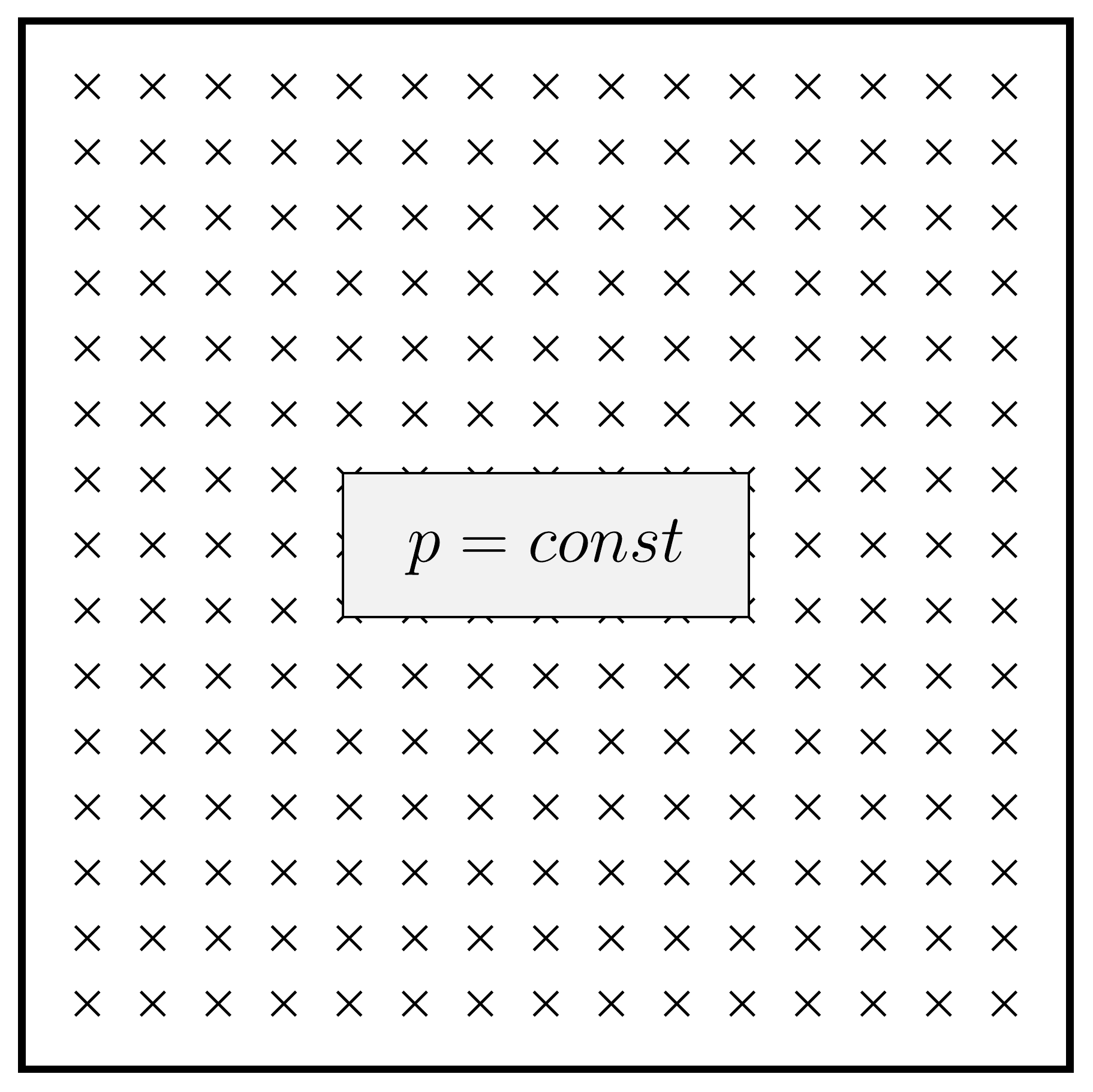}}\hspace{0.7cm}
		\subfloat[]{\includegraphics*[trim={0cm 0cm -0cm -0cm},clip,width=0.3\textwidth]{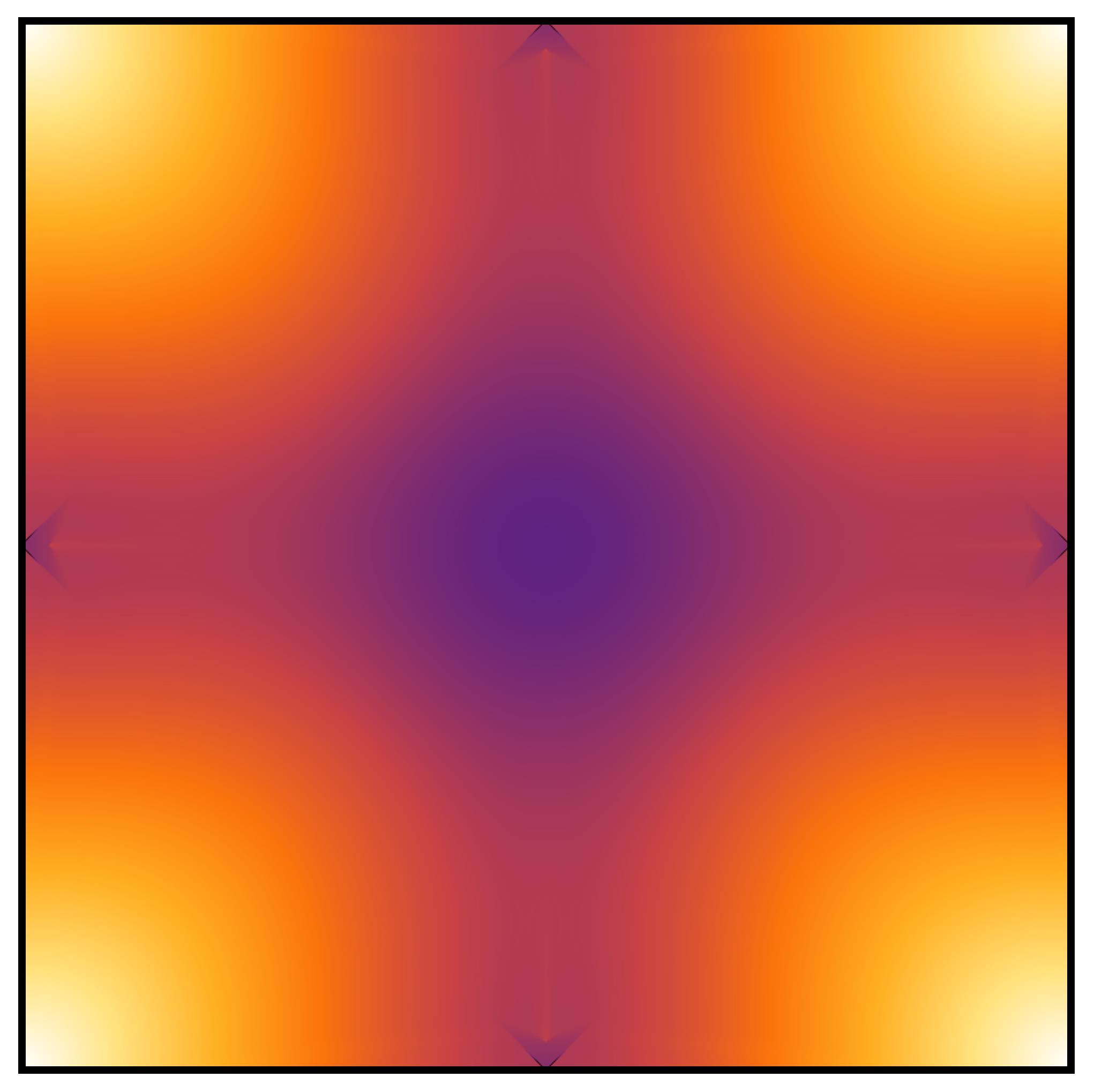}}\hspace{0.7cm}
		\subfloat[]{\includegraphics*[trim={0cm 0cm -0cm -0cm},clip,width=0.3\textwidth]{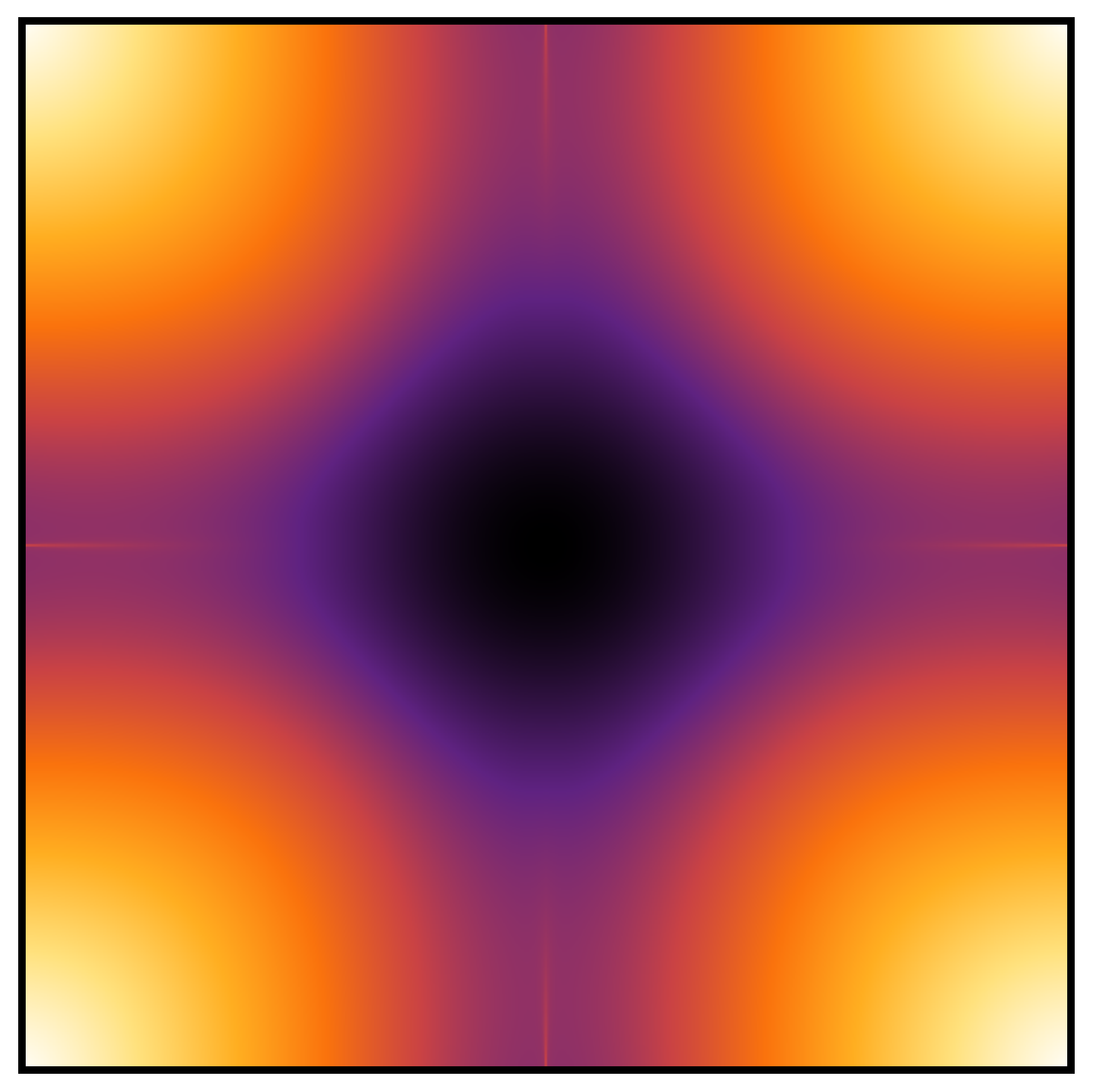}}\\
		\subfloat[]{\includegraphics*[trim={0cm 0cm -0cm -0cm},clip,width=0.3\textwidth]{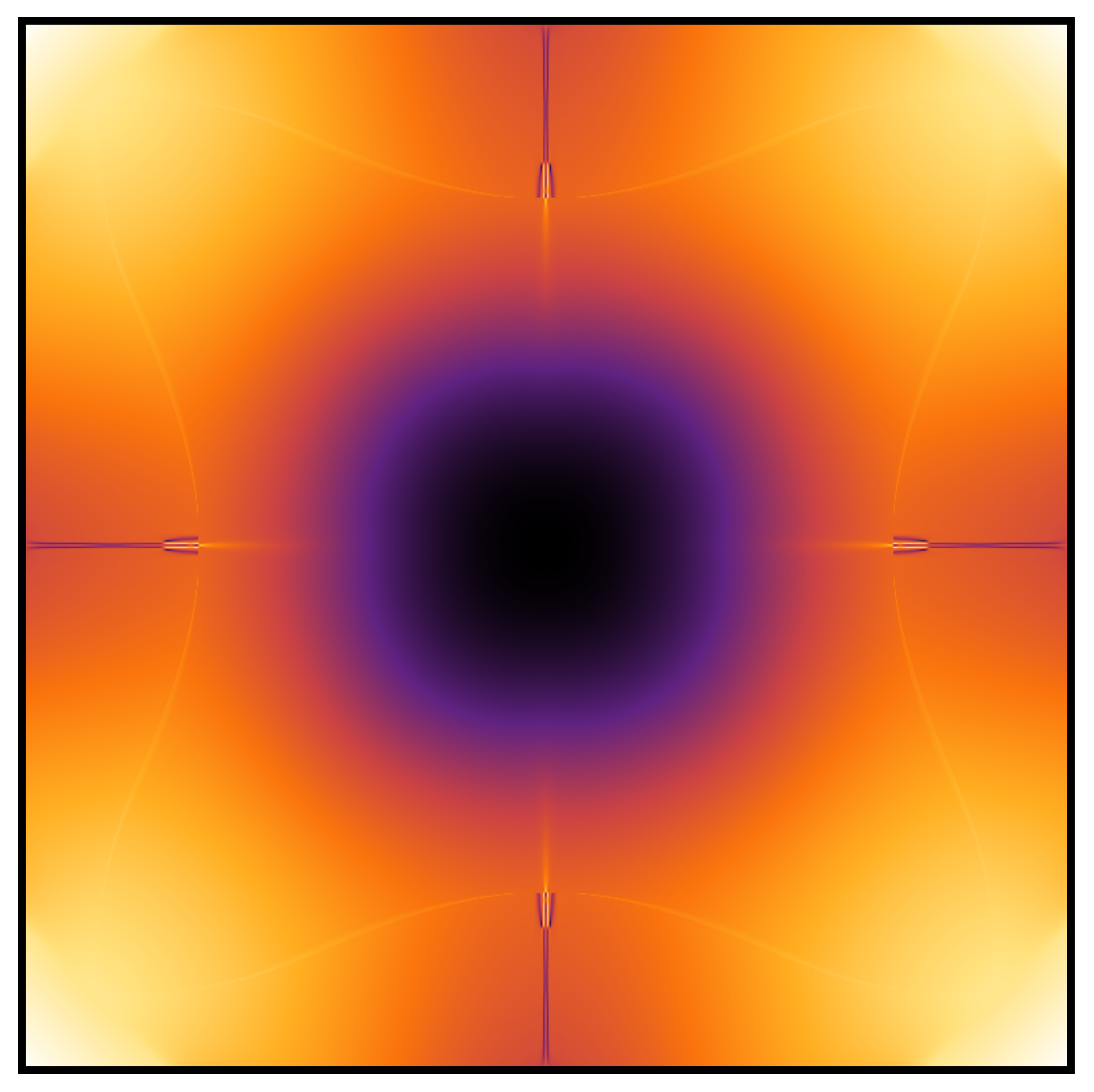}}\hspace{0.7cm}
		\subfloat[]{\includegraphics*[trim={0cm 0cm -0cm -0cm},clip,width=0.3\textwidth]{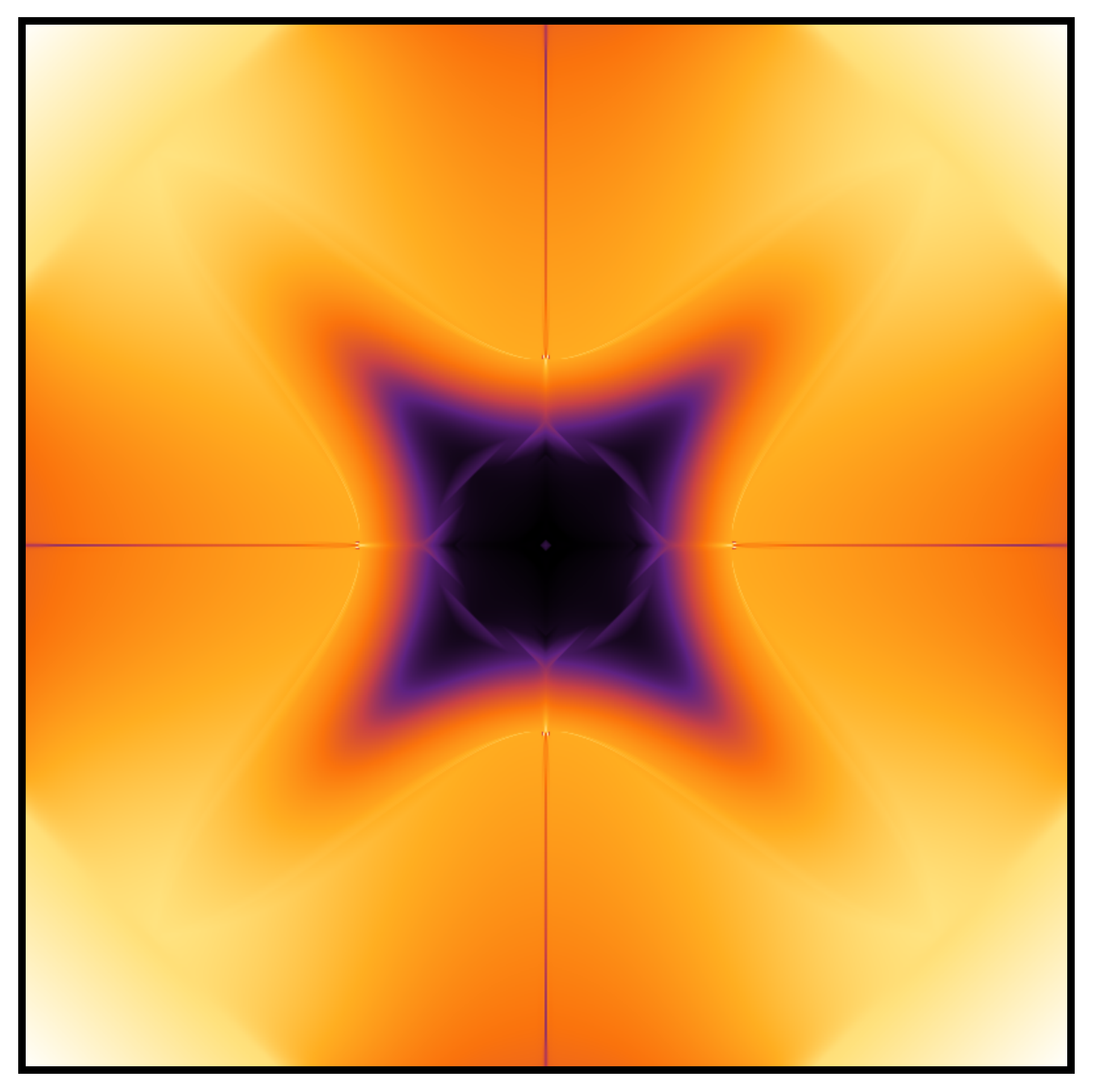}}\hspace{0.7cm}
		\subfloat[]{\includegraphics*[trim={0cm 0cm -0cm -0cm},clip,width=0.3\textwidth]{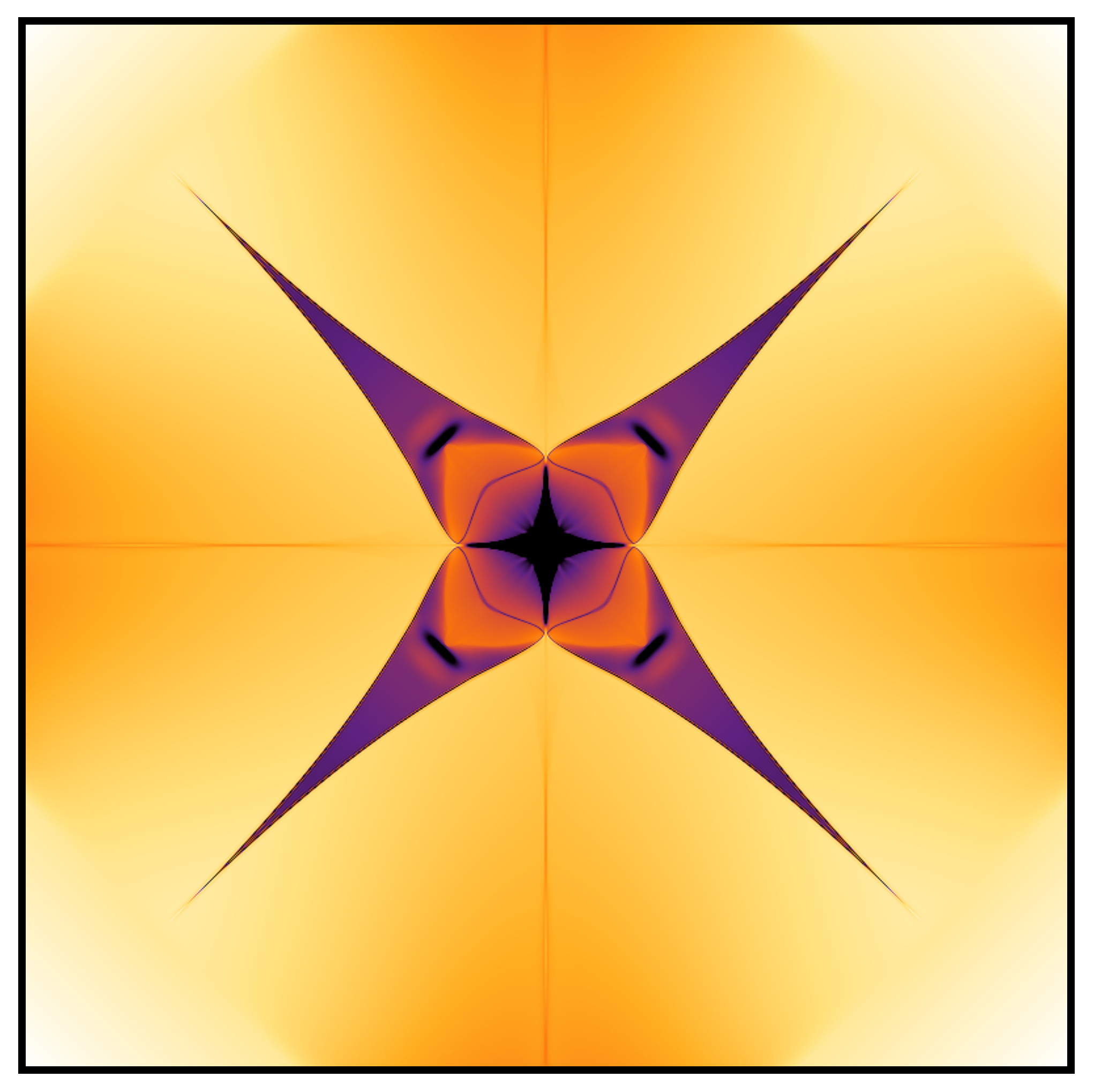}}
		\caption{(a) The optimal membrane problem for a pressure load $f = p\,\mathcal{L}^2\mres \O$. The trimmed averaged approximate optimal thickness $\bar{b}_h$ for Poisson ratios: (b) $\nu=0.3$, \ (c) $\nu=0.0$, \ (d) $\nu=-0.6$, \ (e) $\nu = -0.95$, \ (f) $\nu = -1.0$.}
		\label{fig:pressure}       
	\end{figure}
\end{example}

\begin{figure}[h]
	\centering
	\subfloat[]{\includegraphics*[trim={0cm 0cm -0cm -0cm},clip,width=0.3\textwidth]{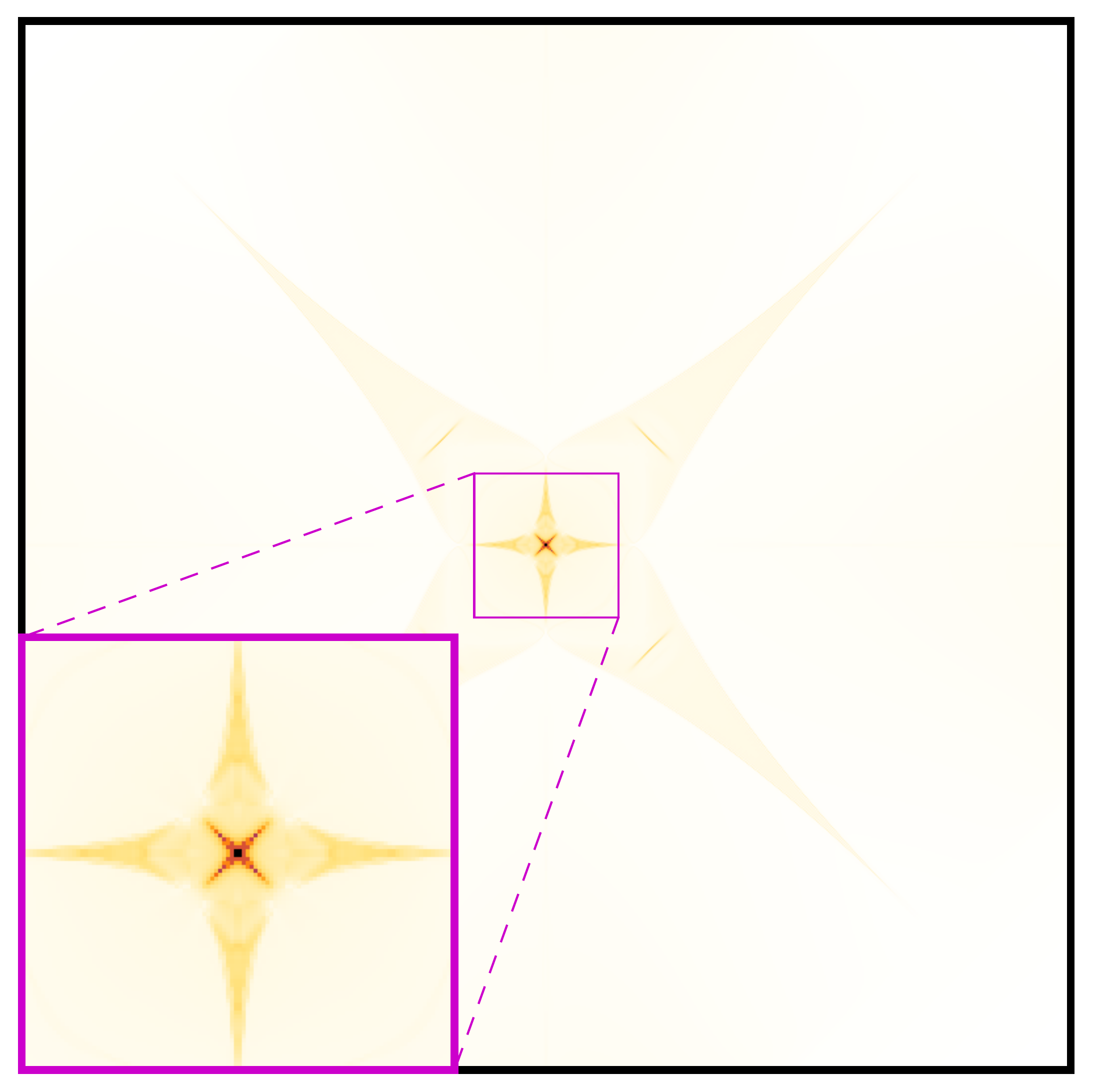}}\hspace{0.7cm}
	\subfloat[]{\includegraphics*[trim={0cm 0cm -0cm -0cm},clip,width=0.3\textwidth]{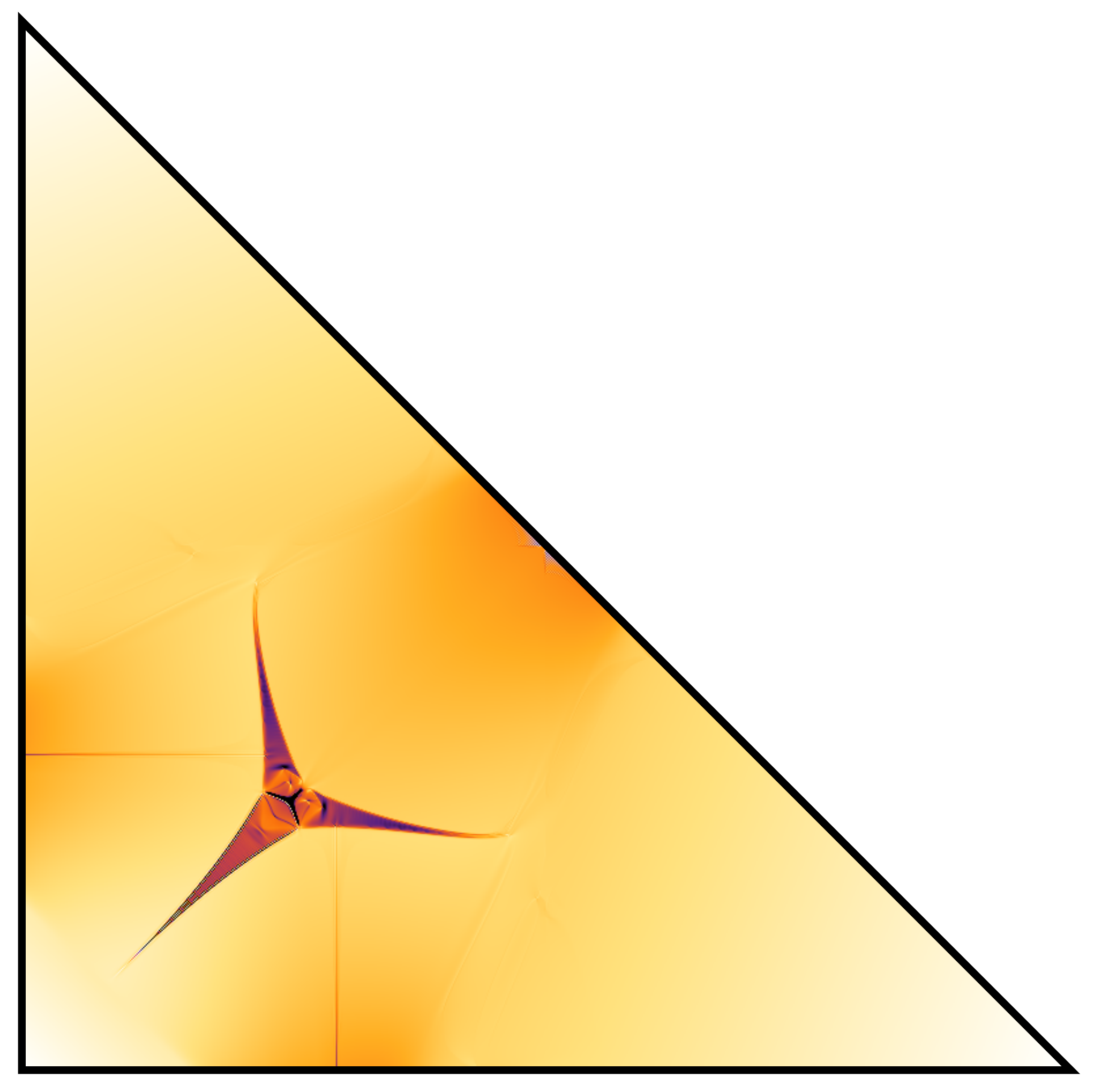}}\hspace{0.7cm}
	\subfloat[]{\includegraphics*[trim={0cm 0cm -0cm -0cm},clip,width=0.3\textwidth]{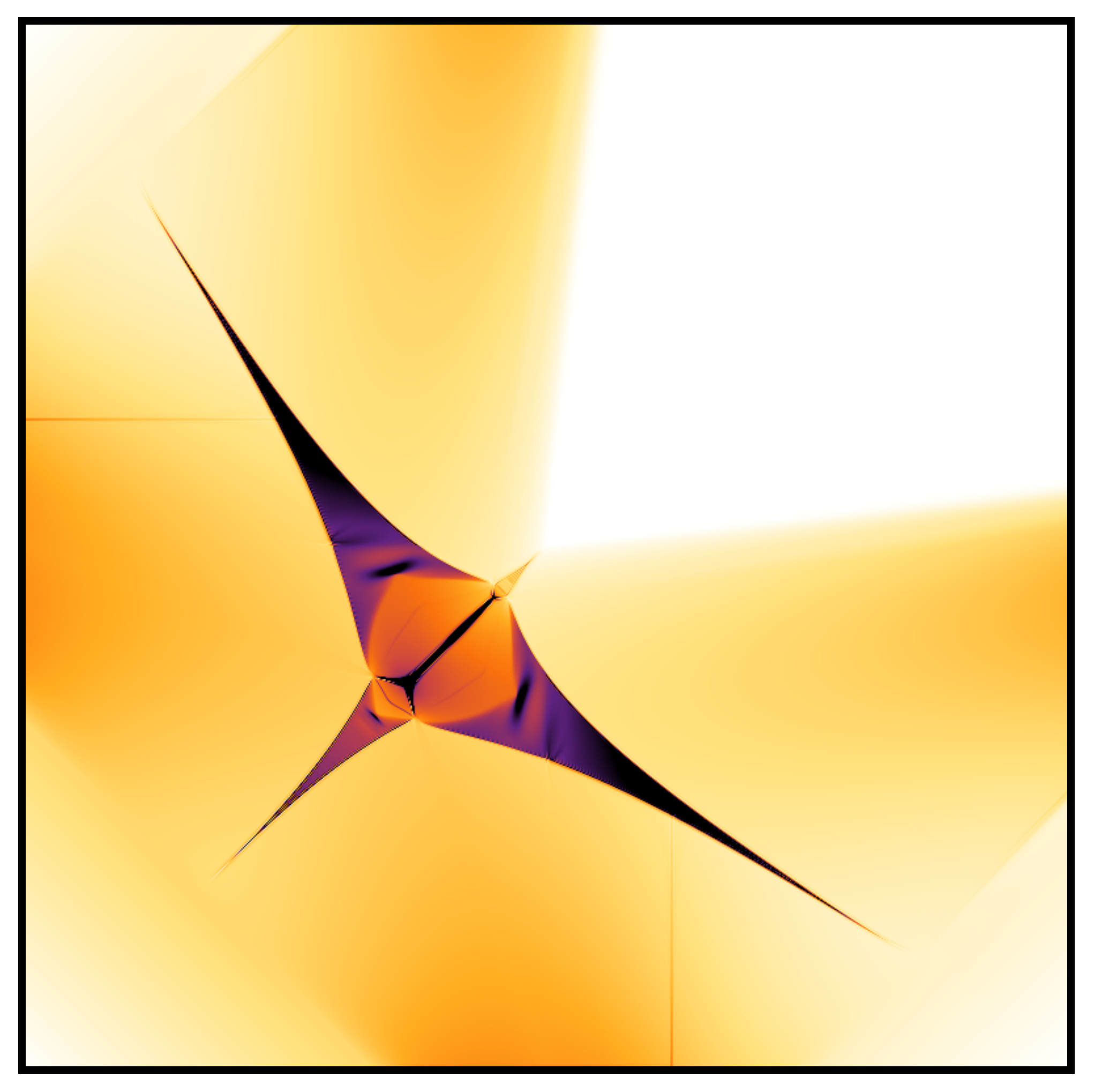}}
	\caption{(a)  The optimal membrane problem for a pressure load $f = p\,\mathcal{L}^2\mres \O$ revisited -- averaged approximate optimal thickness $\tilde{b}_h$ without trimming. The trimmed approximate optimal thickness $\bar{b}_h$ for: (b)  a triangular domain $\Omega$ and $f = p\,\mathcal{L}^2\mres \O$; (c) a square domain $\O$ and a pressure load restricted to a triangle $f = p \, \mathcal{L}^2 \mres  \mathrm{co}\big( \big\{ (0,0),(0,a),(a,0) \big\} \big)$.}
	\label{fig:fractals}       
\end{figure}

\section{The case of the Michell-like energy potential}
\label{sec:Michell}

Already in Remark \ref{rem:Michell} we have first noted the role of the Michell-like energy potential $j^\mathrm{M}$ and the related gauge $\rho^\mathrm{M}$, see \eqref{eq:nu=-1_Michell}. The relaxed potential $j_+^\mathrm{M}$ proved to be equal to the point-wise limit of $j_+^\nu$ for the Poisson ratio  $\nu$ approaching $-1$. Therefore, the solutions that were displayed in Figs \ref{fig:3_point_loads}(e),  \ref{fig:multiple_point_loads}(e), \ref{fig:diagonal_load}(e),  \ref{fig:pressure}(e), and Fig. \ref{fig:fractals} correspond to the pair of problems \ref{eq:PM}, \ref{eq:dPM} for $\rho = \rho^\mathrm{M}$. 

In the present section we will see that the pair \ref{eq:PM}, \ref{eq:dPM} for this particular choice of $\rho = \rho^\mathrm{M}$ is of significant interest and importance. Some extra mathematical implications will be exposed, as well as links and applications to different optimal design problems. It is worth recalling that the whole paper \cite{bolbotowski2022a} is devoted to the pair  \ref{eq:PM}, \ref{eq:dPM} for this special setting precisely.

\subsection{Reformulation as a free material design problem}
\label{ssec:FMD}

Thus far, the justification for solving the pair  \ref{eq:PM}, \ref{eq:dPM} for the Michell gauge $\rho = \rho^\mathrm{M}$ was based on the observation that this case corresponds to optimizing thickness of a membrane made of isotropic material with Poisson ration $\nu=-1$. Nonetheless, acquiring such an auxetic material in practice is difficult; e.g. a material of the chiral honeycomb microstructure  could be employed, see \cite{prall1997}. In what follows we will put forward another optimal design formulation for membranes that leads to the very same pair  \ref{eq:PM}, \ref{eq:dPM}  for $\rho = \rho^\mathrm{M}$. We will build upon a concept of the \textit{free material design}. Apart from controlling the material's density or the structure's thickness, this formulation allows to choose the Hooke operator $\mathscr{H}$  point by point. In the context of classical elasticity (rather than the membrane model) this design problem was originated in \cite{ringertz1993,bendsoe1994}. The theory of the free material design progressed in the series of works \cite{czarnecki2012,czarnecki2015,czarnecki2015,czubacki2015,czarnecki2017} where a connection to a pair of type  \ref{eq:PM}, \ref{eq:dPM} was found. The authors also generalized the formulation by allowing to choose $\mathscr{H}$ from a prescribed class $\mathscr{K}$ of Hooke operators: e.g. from the class of all isotropic operators. The mathematical foundation of the free material design problem, along with further generalizations, has been recently developed in \cite{bolbotowski2022b}.

By the \textit{set of Hooke operators} we will understand $\mathscr{L}_+(\Sdd)$ which is the closed convex cone of linear symmetric  positive semi-definite operators from $\Sdd$ to $\Sdd$. In turn, the \textit{Hooke fields $\lambda$} will be modelled by measures valued in this set, namely $\lambda \in \Mes\big(\Ob;\mathscr{L}_+(\Sdd)\big)$. Every such measure can be decomposed to $\lambda = \mathscr{H} \mu$, where $\mu \in \Mes_+(\Ob)$ and $\mathscr{H} \in  L^1_\mu\big(\Ob;\mathscr{L}_+(\Sdd)\big)$. That being said, we define the compliance of a membrane of  a Hooke field $\lambda$:
\begin{equation*}
	\widetilde\Comp(\lambda) :=- \inf \left\{  \int_\Ob \tilde{j}_+\bigl(\mathscr{H},\tfrac{1}{2} \, \nabla w \otimes \nabla w + e(u) \bigr) \, d\mu  -  \int_\Ob w\, df \ : \  (u,w) \in \D\bigl(\Omega;\R^2 \times \R\bigr) \right\}.
\end{equation*}
We can see that now the potential $ \tilde{j}_+: \mathscr{L}_+(\Sdd) \times \Sdd \to \R_+$ admits two arguments.  Conditions for the well posedeness of the free material design problem that one has to impose on such a potential can be found in \cite{bolbotowski2022b}. Here, we limit ourselves to taking  $ \tilde{j}_+(\mathscr{H},\xi) :=  \min_{\zeta \in \Sddp}  \tilde{j}(\mathscr{H},\xi +\zeta)$
for the initial potential corresponding to the linear Hooke's law:  $ \tilde{j}(\mathscr{H},\xi) = \frac{1}{2} \pairing{\mathscr{H}\xi,\xi}$. One can show that $ \tilde{j}_+$ is positively 1-homogeneous with respect to the first variable $\mathscr{H}$, and, as a result, the value of $\widetilde\Comp(\lambda)$ does not depend on the decomposition  $\lambda = \mathscr{H} \mu$, which justifies the above definition in the first place.

Let us next choose a \textit{class of admissible Hooke operators $\mathscr{K}$} being any closed convex cone contained in $\mathscr{L}_+(\Sdd)$. We are now in a position to formulate the free material design problem for an elastic membrane:
\begin{equation}
	\label{eq:FMD}\tag*{$(\mathrm{FMD}_\mathscr{K})$}
		\widetilde\Comp_\mathrm{min} =  \min \left \{	\widetilde\Comp(\lambda) \ : \ \lambda \in \Mes(\Ob;\mathscr{K}), \ \int_\Ob \tr\,\lambda \leq \Lambda_0 \right \}.
\end{equation}
In \cite{bolbotowski2022b}, in the context of classical elasticity, it was shown how the techniques pioneered in \cite{bouchitte2001} can be adapted to tackle the free material design problem. It suffices to say that the results hitherto developed for membranes -- concerning a design of positive measure $\mu$, which is a counterpart of what we have seen in \cite{bouchitte2001} -- can be generalized towards \ref{eq:FMD} in a manner very similar to \cite{bolbotowski2022b}. Accordingly, we shall state the result below without the proof.

In order to make a link to the Michell gauge $\rho^\mathrm{M}$ we focus on a particular class of admissible Hooke operators being the convex hull of \textit{uni-axial} operators:
\begin{equation*}
	\mathscr{K}_\mathrm{fib} =  \mathrm{co} \Big(\big \{ \mathscr{H} =a \, (\tau \otimes \tau) \otimes (\tau \otimes \tau) \, : \, a \geq 0, \ \tau \in S^1  \big\} \Big);
\end{equation*}
above $S^1$ is the unit sphere in $\Rd$.  Note that  $\tau \otimes \tau \in \Sdd$ is a matrix, whilst $(\tau \otimes \tau) \otimes (\tau \otimes \tau)$ is an operator. One may loosely think of $\mathscr{K}_\mathrm{fib}$ as of a set consisting of those Hooke operators which are achieved via \textit{fibrous microstructures}, i.e. by combining 1D bars aligned in several directions.

From \cite{bolbotowski2022b} we find that optimal choice of the Hooke operator $\mathscr{H}(x)$ can be done point-wisely based on the stress $\sigma(x)$. The optimality conditions are different for each set $\mathscr{K}$. For the set  $	\mathscr{K}_\mathrm{fib}$ such conditions may be found in \cite[Example 5.1]{bolbotowski2022b}. By exploiting those we eventually arrive at the following connection to the Michell setting of the problems  \ref{eq:PM}, \ref{eq:dPM}:
\begin{theorem}
		For a bounded domain $\Omega$  let us take a pair $(\hat\TAU,\hat\vartheta)$ that solves \ref{eq:dPM} for $\rho = \rho^\mathrm{M}$. Then, make the decomposition $\hat\TAU = \hat\sigma \hat\mu$ where $\hat\mu = (\rho^\mathrm{M})^0(\hat\TAU) = \tr\, \hat\TAU$ and, as a result, $\hat\sigma \in L^\infty_{\hat{\mu}}(\Ob;\Sddp)$ with $\tr\,\hat\sigma = 1$ $\ \hat\mu$-a.e.
		In addition, by $\hat{s}_\mathrm{I}, \hat{s}_\mathrm{II}:\Ob \to \R_+$ and $\hat\e_\mathrm{I},\hat\e_\mathrm{II}: \Ob \mapsto S^1$ denote the measurable functions point-wisely constituting the spectral decomposition $\hat\sigma(x) = \sum_{i  \in \{\mathrm{I},\mathrm{II}\}} \hat{s}_i(x) \, \hat\e_i(x) \otimes \hat\e_i(x)$.
		
		Then, the Hooke field
		\begin{equation*}
			\check{\lambda} = \check{\mathscr{H}} \check{\mu}, \quad \text{where} \quad \check{\mu} = \frac{2\Lambda_0}{Z}\, \hat{\mu}, \quad \check{\mathscr{H}}(x) = \sum_{i  \in \{\mathrm{I},\mathrm{II}\}} \hat{s}_i(x) \, \big(\hat\e_i(x) \otimes \hat\e_i(x)\big) \otimes  \big(\hat\e_i(x) \otimes \hat\e_i(x)\big) \ \ \text{for $\mu$-a.e. $x$,}
		\end{equation*}
		solves the Free Material Problem \ref{eq:FMD} for $\mathscr{K} = \mathscr{K}_\mathrm{fib}$.
\end{theorem}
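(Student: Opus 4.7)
The plan is to mimic the structure of Theorem \ref{thm:constructing_lambda_OEM}, replacing the scalar material distribution $\mu$ with a Hooke field $\lambda = \mathscr{H}\mu$ and exploiting a pointwise minimization over $\mathscr{K}_\mathrm{fib}$. First, I would establish a dual formulation for $\widetilde\Comp(\lambda)$ analogous to Proposition \ref{prop:dual_comp_mem}: for each admissible $\lambda = \mathscr{H}\mu$, one has $\widetilde\Comp(\lambda) = \min \{\int_\Ob \tilde{j}^*(\mathscr{H},\sig)\,d\mu + \int_\Ob \jsq\,d\mu : -\DIV(\sig\mu) = 0,\ -\dive(q\mu) = f\}$. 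For the quadratic potential $\tilde{j}(\mathscr{H},\xi) = \tfrac{1}{2}\langle\mathscr{H}\xi,\xi\rangle$ this follows by the same convex duality argument used in Section \ref{sec:formulation}, combined with the observation that the wrinkling-relaxed conjugate $\tilde{j}_+^*(\mathscr{H},\sig)$ equals $\tilde{j}^*(\mathscr{H},\sig) + \chi_{\Sddp}(\sig)$, exactly as in formula \eqref{eq:jplus_star}.

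The crucial identity to be established is that for any $\sig \in \Sddp$ with spectral decomposition $\sig = \sum_i t_i\, \e_i \otimes \e_i$,
\begin{equation*}
\min\Big\{ \tilde{j}^*(\mathscr{H},\sig) : \mathscr{H} \in \mathscr{K}_\mathrm{fib},\ \tr\,\mathscr{H} \leq 1\Big\} = \tfrac{1}{2}\bigl(\tr\,\sig\bigr)^2 = \tfrac{1}{2}\bigl((\rho^\mathrm{M})^0(\sig)\bigr)^2,
\end{equation*}
with the minimum attained at $\mathscr{H}_\sig := \sum_i (t_i/\tr\,\sig)\, (\e_i \otimes \e_i) \otimes (\e_i \otimes \e_i)$. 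Here the computation reduces to a scalar Lagrange-multiplier problem: one writes $\mathscr{H} = \sum_i s_i\, (\tau_i \otimes \tau_i) \otimes (\tau_i \otimes \tau_i)$ and observes that aligning the fibers $\tau_i$ with the eigendirections of $\sig$ and choosing $s_i \propto t_i$ is optimal. Combined with \eqref{eq:recall_rho} applied to $\rho^\mathrm{M}$, this shows that the free material design problem \ref{eq:FMD} with $\mathscr{K} = \mathscr{K}_\mathrm{fib}$, when first optimized pointwise in $\mathscr{H}$, reduces exactly to the dual problem \ref{eq:dPM} for $\rho = \rho^\mathrm{M}$. Moreover, this point-wise recipe identifies $\check{\mathscr{H}}(x)$ with $\mathscr{H}_{\hat\sigma(x)}$, which is precisely the formula in the statement.

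Admissibility of $\check\lambda$ is then straightforward: $\check{\mathscr{H}}(x) \in \mathscr{K}_\mathrm{fib}$ by construction as a convex combination of uni-axial operators, and since $\tr\bigl((\e \otimes \e) \otimes (\e \otimes \e)\bigr) = |\e|^4 = 1$, we have $\tr\,\check{\mathscr{H}} = \hat{s}_\mathrm{I} + \hat{s}_\mathrm{II} = \tr\,\hat\sigma = 1$ $\hat\mu$-a.e. Hence $\int_\Ob \tr\,\check\lambda = \check\mu(\Ob) = \frac{2\Lambda_0}{Z}\hat\mu(\Ob) = \Lambda_0$, using the equi-repartition rule \eqref{eq:equi-repartition} together with $(\rho^\mathrm{M})^0(\hat\sigma) = 1$ $\hat\mu$-a.e. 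Optimality is verified by a chain of inequalities paralleling the proof of Theorem \ref{thm:constructing_lambda_OEM}: with $\check\sig = (Z/(2\Lambda_0))\,\hat\sigma$ and $\check{q} = (Z/(2\Lambda_0))\,\hat{q}$, admissibility of $(\check\sig,\check{q})$ in the dual of $\widetilde\Comp(\check\lambda)$ carries over by linearity (as in \eqref{eq:Div_sigcheck_sighat}, \eqref{eq:div_qcheck_qhat}), while substituting into the dual formula and applying the key identity yields $\widetilde\Comp(\check\lambda) \leq \tfrac{1}{2}\bigl(\Lambda_0/(2 Z^2)\bigr)\cdot 2 Z\cdot(\text{const})$ whose explicit value matches the minimum-compliance formula \eqref{eq:Cmin_Z_m} (evaluated at $p = 2$) and hence the value of $\widetilde\Comp_\mathrm{min}$.

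The main technical obstacle will be the rigorous interpretation of the dual formula when $\check{\mathscr{H}}$ is rank-deficient (e.g.\ $\hat{s}_\mathrm{II}(x) = 0$ on a set of positive $\hat\mu$-measure, as is expected for Michell-type solutions that concentrate on one-dimensional fibers). In that regime $\tilde{j}^*(\check{\mathscr{H}},\sig)$ becomes an extended-real function, finite only when $\sig$ lies in the image of $\check{\mathscr{H}}$, and one must show that the candidate stress $\check\sig$ automatically satisfies this compatibility condition. Verifying this compatibility and deriving the correct measurable selection of $(\hat\e_\mathrm{I},\hat\e_\mathrm{II})$ in the spectral decomposition — together with transferring the min-max interchange from Lemma \ref{lem:problem_P_m} to the Hooke-field setting — will be the main points that require machinery from \cite{bolbotowski2022b} beyond the scalar arguments of Section \ref{sec:pair}.
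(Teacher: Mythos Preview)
The paper does not actually prove this theorem: immediately before stating it, the text says ``we shall state the result below without the proof'' and points to \cite{bolbotowski2022b} for the underlying methodology (in particular, Example 5.1 therein for the pointwise optimality conditions on $\mathscr{K}_\mathrm{fib}$). There is therefore no argument in the paper to compare your proposal against.

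That said, your sketch is essentially the right reconstruction of how the omitted proof should run. The pointwise identity
\[
\min\Big\{ \tilde{j}^*(\mathscr{H},\sig) : \mathscr{H} \in \mathscr{K}_\mathrm{fib},\ \tr\,\mathscr{H} \leq 1\Big\} = \tfrac{1}{2}\bigl(\tr\,\sig\bigr)^2 = \tfrac{1}{2}\bigl((\rho^\mathrm{M})^0(\sig)\bigr)^2 \quad \text{for } \sig\in\Sddp,
\]
with minimizer $\mathscr{H}_\sigma$ as you wrote, is precisely the content of \cite[Example 5.1]{bolbotowski2022b}, and combining it with the min-max argument of Lemma \ref{lem:problem_P_m} reduces \ref{eq:FMD} to the pair \ref{eq:PM}, \ref{eq:dPM} for $\rho=\rho^\mathrm{M}$. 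Your admissibility check ($\tr\,\check{\mathscr{H}}=1$ and $\int\tr\,\check\lambda=\Lambda_0$ via equi-repartition) is correct, and you correctly flag the rank-deficient case and the measurable-selection issue as the places where one must lean on \cite{bolbotowski2022b}.

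One concrete slip: the scaling $\check\sig = (Z/(2\Lambda_0))\,\hat\sigma$ is not right. Since the potential is quadratic ($p=2$), the correct scaling is $\check\sigma = (Z/(2\Lambda_0))^{2/3}\hat\sigma$, exactly as in \eqref{eq:link_OEM_dProb} with $V_0$ replaced by $\Lambda_0$; your $\check{q}=(Z/(2\Lambda_0))\hat{q}$ is fine. With this correction the chain of inequalities closes to $\widetilde\Comp(\check\lambda)\le \tfrac{3}{4}\bigl(Z^4/(2\Lambda_0)\bigr)^{1/3}$, matching \eqref{eq:Cmin_Z_m} at $p=2$.
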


\noindent We can readily see that the pair of problems \ref{eq:PM}, \ref{eq:dPM} with the Michell gauge $\rho = \rho^\mathrm{M}$ arises not only for designing an isotropic membrane with $\nu=-1$, but also corresponds to optimizing the material anisotropy in the class of fibrous microstructures. 

\subsection{The two-point condition and approximation via a system of strings}
\label{ssec:two_point}

In Examples \ref{ex:3_point_loads} and \ref{ex:multiple_point_loads}, where point-loads $f = \sum_{i=1}^N P\, \delta_{x_i}$ were considered, the numerical simulations for $\nu =-1$, that is, for the Michell gauge $\rho = \rho^\mathrm{M}$, have indicated that in this case optimal solutions $\check{\mu}$ are absolutely continuous with respect to the Hausdorff measure $\Ha^1$ restricted to a graph, cf. Fig. \ref{fig:3_point_loads}(e) and Fig. \ref{fig:multiple_point_loads}(e). In mechanical terms, the optimal membrane degenerates to a system of strings. We shall now lay out the extra mathematical structure of the problems \ref{eq:PM}, \ref{eq:dPM} for the particular choice $\rho = \rho^\mathrm{M}$ that explains this phenomenon. All the results to follow in this subsections are directly due to \cite{bolbotowski2022a}, and therein the reader is referred for the details and proofs.

Throughout the rest of this and next subsection we shall assume that the bounded domain $\O$ is convex. We start by analyzing the constraint in the problem \ref{eq:PM} for  $\rho = \rho^\mathrm{M}$. Thanks to the formula for $\rho_+^\mathrm{M}$ in \eqref{eq:est_rhoM}, one can easily show that this constraint is equivalent to enforcing  $\frac{1}{2}  \, \nabla w \otimes \nabla w + e(u)  \preceq \mathrm{I}_2 $ in  $\Ob$, where $\mathrm{I}_2$ is the $2 \times 2$ identity matrix, and the inequality is meant in the sense of the induced quadratic forms. Such constraint can be further rewritten as a two-point inequality (of somewhat similar form to the one exposed for the Michell problem in \cite{bouchitte2008}), so that eventually we have the equivalence below:
\begin{equation}
	\label{eq:two-point_ineq}
	\rho_+^\mathrm{M}\Big(\frac{1}{2}  \nabla w \otimes \nabla w + e(u) \Big)  \leq 1 \ \ \text{in $\Ob$}  \quad \ \Leftrightarrow \quad  \frac{1}{2} \big(w(x)-w(y) \big)^2 + \pairing{u(x)-u(y),x-y} \leq \abs{x-y}^2 \quad \forall\,(x,y) \in \Ob \times \Ob
\end{equation}
Using the two-point inequality in \ref{eq:PM} as the constraint (rather than the differential form) paves the way to a dual to \ref{eq:PM} that is different than \ref{eq:dPM}:
\begin{equation}
	\label{eq:dPMalt}\tag*{$(\mathscr{P}^*)$}
	\inf\left\{ \int_{\Ob \times \Ob} \abs{x-y} \Big( 1 + \tfrac{1}{2} \big(\tfrac{d\pi}{d\Pi}\big)^2 \Big) \Pi(dxdy) \, : \,  (\Pi,\pi) \in \Xi(\O,f)  \right\} 
\end{equation}
Quite similarly to the Monge-Kantorovich optimal transport problem \cite{villani2003}, above we are looking for coupling measures, one positive and one signed. The set  $\Xi(\O,f)$ entails the following constraints:
\begin{equation*}
	 \Xi(\O,f)  = \left\{ (\Pi,\pi) \in \Mes(\Ob \times \Ob;\R_+ \times \R) \, : \, 
	 \def\arraystretch{1.3} 
	  \begin{array}{ll} 
	 	\int_{\Ob \times \Ob} \big\langle \phi(x) - \phi(y) ,\tfrac{x-y}{\abs{x-y}} \big\rangle \Pi(dxdy)  = 0   \quad & \forall\, \phi \in \D(\O;\Rd), \\
	 	\int_{\Ob \times \Ob} \big( \varphi(x) - \varphi(y) \big) \pi(dxdy) = \int_\Ob \varphi \,df \quad & \forall\, \varphi \in \D(\O;\R) 
	 	\end{array}
	  \right\}.
\end{equation*}
Explaining the relation between such pairs $(\Pi,\pi)$ and the pairs $(\TAU,\vartheta)$ sought in \ref{eq:dPM} is in order. For each pair of distinct points $x,y \in \Ob$ we define the 1D matrix- and vector-valued measures, respectively, $(\TAU^{x,y},\vartheta^{x,y}) \in \Mes(\Ob;\Sddp \times \Rd)$:
\begin{equation*}
	 \TAU^{x,y} := \tau^{x,y} \otimes \tau^{x,y} \ \Ha^1 \mres [x,y], \qquad \vartheta^{x,y} := \tau^{x,y} \, \Ha^1 \mres [x,y], \qquad	\tau^{x,y} := \frac{x-y}{\abs{x-y}}.
\end{equation*}
The measure $\TAU^{x,y}$ models a string $[x,y]$ undergoing a unit uni-axial tensile force, whilst  $\vartheta^{x,y}$ corresponds to a unit transverse force in such string. With respect to any coupling measures $(\Pi,\pi) \in \Mes(\Ob \times \Ob;\R_+ \times \R)$ we integrate the measures above, thus defining $(\TAU_\Pi,\vartheta_\pi) \in \Mes(\Ob;\Sddp \times \Rd)$ as follows:
\begin{equation}
	\label{eq:integration_formulas}
	\TAU_\Pi(B) := \int_{\Ob \times \Ob} \TAU^{x,y}(B) \,\Pi(dxdy), \quad \vartheta_\pi(B) := \int_{\Ob \times \Ob} \vartheta^{x,y}(B) \,\pi(dxdy) \quad \text{for every Borel set $B\subset \Rd$}.
\end{equation}
It is straightforward to show that for any pair $(\Pi,\pi) \in \Mes(\Ob \times \Ob;\R_+ \times \R)$  the equivalence below holds true:
\begin{equation*}
	(\Pi,\pi) \in  \Xi(\O,f) \qquad \Leftrightarrow \qquad -\DIV\, \TAU_\Pi = 0, \ \ -\dive \, \vartheta_\pi = f  \quad \text{in $\O$}.
\end{equation*}
We can deduce that a pair $(\Pi,\pi)$ represents a \textit{generalized string system}, in which the axial tensile forces are modelled by $\Pi$, whilst $\pi$ describes the transverse forces. Generalized, since the support of $(\Pi,\pi)$ might be infinite, which then models a membrane of a fibrous-like material.

By using a rather subtle duality argument one can show that the duality gap between \ref{eq:PM} and \ref{eq:dPMalt} vanishes, see \cite[Theorem 3.18]{bolbotowski2022a}: 
\begin{theorem}
	\label{thm:strings}
	For any bounded convex domain these inequalities hold true:
	\begin{equation*}
		Z = \max \overline{\mathcal{P}} = \min \mathcal{P}^* = \inf \mathscr{P}^*.
	\end{equation*}
	As a consequence, any minimizing sequence $(\Pi_h,\pi_h)$ for the problem \ref{eq:dPMalt} furnishes a minimizing sequence of generalized string systems $(\TAU_{\Pi_h},\vartheta_{\pi_h})$ for the problem \ref{eq:dPM}.
\end{theorem}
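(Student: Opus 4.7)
The plan is to exploit the two-point reformulation \eqref{eq:two-point_ineq} of the constraint in $\overline{\mathcal{P}}$. For $\rho = \rho^\mathrm{M}$, the pointwise matrix inequality $\rho_+^\mathrm{M}(\tfrac{1}{2}\nabla w \otimes \nabla w + e(u)) \leq 1$ becomes the requirement
\[
\tfrac{1}{2}(w(x)-w(y))^2 + \langle u(x)-u(y), x-y\rangle \leq |x-y|^2 \qquad \forall\,(x,y) \in \Ob\times\Ob,
\]
which has the structure of a constraint indexed by pairs. This is what allows to introduce a Lagrange multiplier $\Pi \in \Mes_+(\Ob\times\Ob)$ and sets the stage for a formulation over coupling measures.

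The first substantive step would be to prove the weak duality $\sup \overline{\mathcal{P}} \leq \inf \mathscr{P}^*$ by a direct Young-inequality computation: for any admissible $(u,w)$ and any $(\Pi,\pi) \in \Xi(\O,f)$ with $\pi \ll \Pi$ (the complementary case contributes $+\infty$ to the cost), the second constraint in $\Xi(\O,f)$ yields
\[
\int_\Ob w \, df = \int_{\Ob\times\Ob}\!\!(w(x)-w(y))\,\tfrac{d\pi}{d\Pi}\,d\Pi.
\]
Applying the pointwise bound $ab \leq \tfrac{1}{2}|x-y|\,a^2 + \tfrac{1}{2|x-y|}\,b^2$ with $a = d\pi/d\Pi$ and $b = w(x)-w(y)$, then substituting the two-point inequality to estimate $\tfrac{1}{2|x-y|}(w(x)-w(y))^2 \leq |x-y| - \langle u(x)-u(y),\tau^{x,y}\rangle$, and finally invoking the first constraint in $\Xi(\O,f)$ to annihilate the $u$-term, yields exactly the objective of $\mathscr{P}^*$ as an upper bound for $\int_\Ob w\,df$.

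The hard part will be the converse $\inf \mathscr{P}^* \leq \min \mathcal{P}^*$. Since Proposition~\ref{prop:supP=infP*} already gives $\sup \overline{\mathcal{P}} = \min \mathcal{P}^*$, combined with the weak duality above this reduces to closing the chain $\min \mathcal{P}^* = \sup \overline{\mathcal{P}} \leq \inf \mathscr{P}^* \leq \min\mathcal{P}^*$. The genuine content is therefore a Fenchel--Rockafellar-type min-max interchange applied to the Lagrangian
\[
\mathcal{L}(u,w;\Pi,\pi) = \int_\Ob w\,df - \int_{\Ob\times\Ob}\!\!\Big[\tfrac{1}{2}(w(x)-w(y))^2 + \langle u(x)-u(y),x-y\rangle - |x-y|^2\Big]\,d\Pi
\]
augmented by a signed measure $\pi$ coupling $w$, set in an appropriate topology on $C(\Ob\times\Ob)$ versus $\Mes(\Ob\times\Ob)$. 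The main technical obstacle is that the two-point constraint does not control $(u,w)$ in a uniformly coercive way and $\Pi$ is unbounded, so one must either verify a qualification condition (the two-point constraint is strictly feasible at $(u,w)=(0,0)$, which is the natural candidate) to apply a standard duality theorem, or proceed by first working with a truncated/bounded version of $\Pi$ and passing to the limit. The explicit integrand $|x-y|(1 + \tfrac{1}{2}(d\pi/d\Pi)^2)$ emerges as the partial Legendre transform of the quadratic term in $w$ combined with the constant $|x-y|^2$, whereas the constraints $(\Pi,\pi)\in\Xi(\O,f)$ come from the partial Legendre transform with respect to $(u,w)$ at fixed values.

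Finally, for the ``consequence'' clause: given a minimizing sequence $(\Pi_h,\pi_h)$ for $\mathscr{P}^*$, one must verify that $(\TAU_{\Pi_h},\vartheta_{\pi_h})$ defined by \eqref{eq:integration_formulas} lies in the admissible set of $\mathcal{P}^*$, which follows by testing against $\phi\in\D(\O;\Rd)$ and $\varphi\in\D(\O;\R)$: computing $\int_\O \langle e(\phi), \TAU_{\Pi_h}\rangle$ explicitly via Fubini reduces it to $\int_{\Ob\times\Ob}\!\langle\phi(x)-\phi(y),\tau^{x,y}\rangle \Pi_h(dxdy) = 0$, and analogously for $\vartheta_{\pi_h}$. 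One then shows the pointwise bound $\int_\Ob \vro^0(\TAU_\Pi,\vartheta_\pi) \leq \int_{\Ob\times\Ob}|x-y|(1+\tfrac{1}{2}(d\pi/d\Pi)^2)\,d\Pi$, which follows from the positive $1$-homogeneity and subadditivity of $\vro^0$ together with the explicit evaluation $\vro^0(\tau\otimes\tau, \beta\tau) = 1 + \tfrac{1}{2}\beta^2$ for unit vectors $\tau$. Combined with $\inf \mathscr{P}^* = Z = \min \mathcal{P}^*$, this forces $(\TAU_{\Pi_h},\vartheta_{\pi_h})$ to be a minimizing sequence for $\mathcal{P}^*$.
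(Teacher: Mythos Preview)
The paper does not give a self-contained proof here: it states the result and defers entirely to \cite[Theorem~3.18]{bolbotowski2022a}, describing the argument only as ``a rather subtle duality argument''. Your plan is therefore aligned with what the paper indicates: weak duality via the Young inequality plus the two-point bound, strong duality via a Fenchel--Rockafellar interchange, and the consequence via the admissibility check and the subadditivity bound on $\vro^0$. All three ingredients you sketch are the correct ones.

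One point worth tightening: as written, your Lagrangian contains only $\Pi$, and the appearance of $\pi$ is left vague (``augmented by a signed measure $\pi$''). The clean way to make $\pi$ emerge is to linearize the quadratic term \emph{before} dualizing: write $\tfrac12(w(x)-w(y))^2 = \sup_{t\in\R}\{t(w(x)-w(y)) - \tfrac12 t^2\}$, which after integrating against $\Pi$ promotes $t$ to a density $\beta$ and sets $\pi = \beta\,\Pi$. Then the sup over $u$ forces the first constraint in $\Xi(\O,f)$, the sup over $w$ forces the second, and what remains is exactly $\mathscr{P}^*$. Also be careful with the factor $|x-y|$: your constraint term reads $\langle u(x)-u(y),\,x-y\rangle$, whereas $\Xi$ is defined with the \emph{unit} vector $\tau^{x,y}$; this discrepancy is precisely what produces the prefactor $|x-y|$ in the $\mathscr{P}^*$ objective. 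The ``subtle'' part the paper alludes to is indeed the $\sup$--$\inf$ interchange in the non-reflexive pairing $C(\Ob\times\Ob)$ versus $\Mes(\Ob\times\Ob)$ with $\Pi$ unbounded, and your instinct to use strict feasibility at $(u,w)=(0,0)$ as the qualification condition is the right starting point; the full verification is what \cite{bolbotowski2022a} carries out.
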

\begin{figure}[h]
	\centering
	{\includegraphics*[trim={0cm 0cm -0cm -0cm},clip,width=0.22\textwidth]{fig/3points_-10.pdf}}\hspace{0.4cm}
	{\includegraphics*[trim={0cm 0cm -0cm -0cm},clip,width=0.22\textwidth]{fig/multi_-10.pdf}}\hspace{0.4cm}
	{\includegraphics*[trim={0cm 0cm -0cm -0cm},clip,width=0.22\textwidth]{fig/diag_-10.pdf}}\hspace{0.4cm}
	{\includegraphics*[trim={0cm 0cm -0cm -0cm},clip,width=0.22\textwidth]{fig/pressure_-10.pdf}}\\
	\subfloat[]{\includegraphics*[trim={0cm 0cm -0cm -0cm},clip,width=0.22\textwidth]{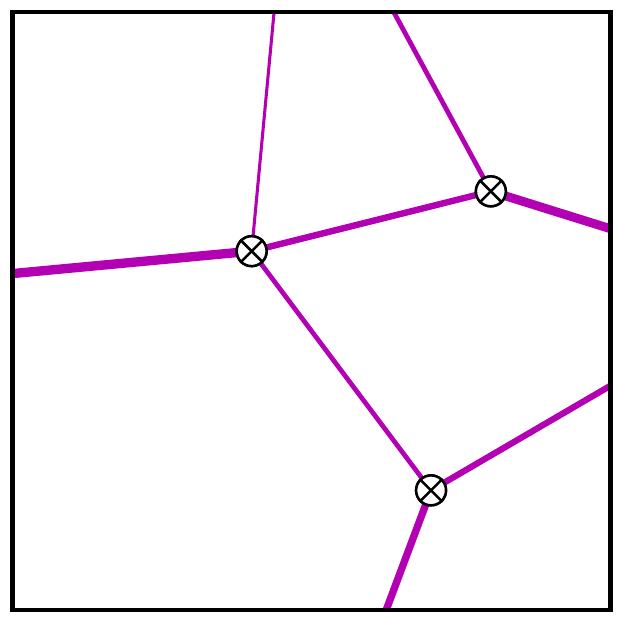}}\hspace{0.4cm}
	\subfloat[]{\includegraphics*[trim={0cm 0cm -0cm -0cm},clip,width=0.22\textwidth]{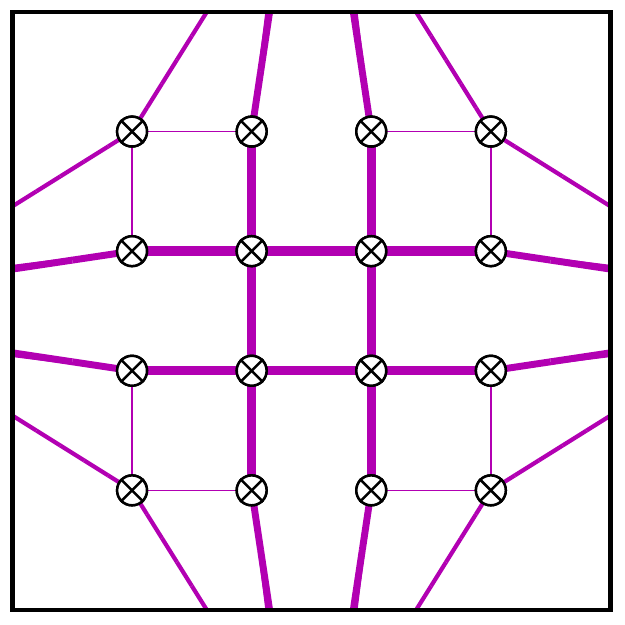}}\hspace{0.4cm}
	\subfloat[]{\includegraphics*[trim={0cm 0cm -0cm -0cm},clip,width=0.22\textwidth]{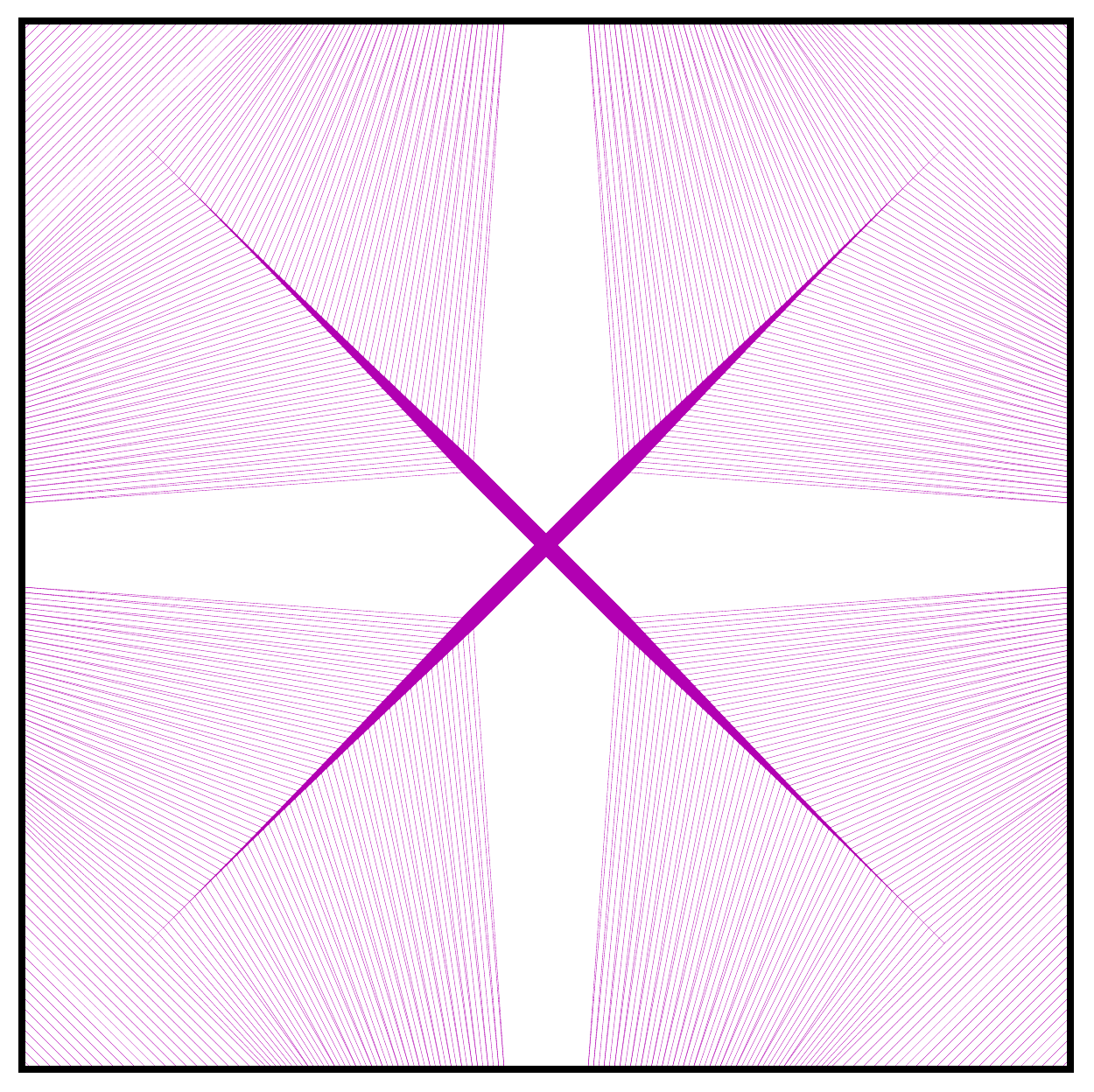}}\hspace{0.4cm}
	\subfloat[]{\includegraphics*[trim={0cm 0cm -0cm -0cm},clip,width=0.22\textwidth]{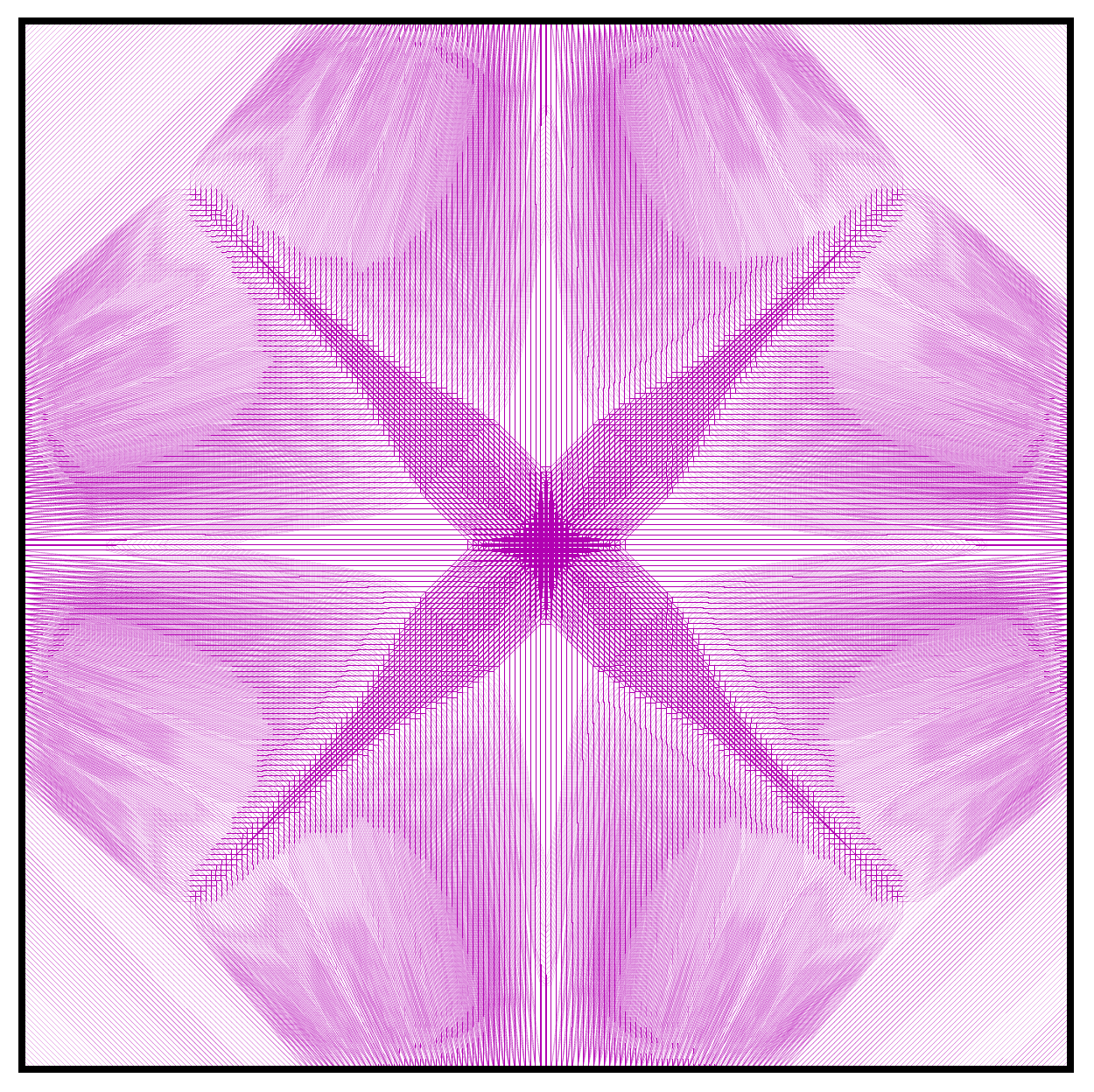}}
	\caption{The optimal membrane problem for $\nu=-1$ and various loads from Examples \ref{ex:3_point_loads}--\ref{ex:pressure} revisited. Top row: the trimmed averaged approximate optimal thickness $\bar{b}_h$ obtained via the finite element method; bottom row: approximate optimal string systems, the positive measures $\check{\mu}_h = \frac{2V_0}{\mathcal{Z}_h} \tr\, \TAU_{\hat\Pi_h}$ .}
	\label{fig:string_systems}       
\end{figure}
Comparing to the Monge-Kantorovich optimal transport, the optimal string problem \ref{eq:dPMalt} suffers from a clear drawback: in general its solutions does not exist, cf. \cite[Remark 4.6]{bolbotowski2022a} for a counter-example. Notwithstanding this, the alternative formulation \ref{eq:dPMalt} unlocks a competitive numerical scheme that is to deliver the sequence $(\Pi_h,\pi_h)$ mentioned in the second part of Theorem \ref{thm:strings}. The premise is simple enough as it also lies at the core of the discrete numerical methods for optimal transport, cf. \cite{merigot2021}. We propose an approximate optimal string system problem $(\mathscr{P}_h^*)$ which differs from \ref{eq:dPMalt} by an additional constraint on the support $\mathrm{sp} (\Pi,\pi) \subset X_h \times X_h$  where $X_h \subset \Ob$ is a finite set of nodes. In case of a quadratic domain $\O$, it is natural to choose a regularly spaced set $X_h$ with distances between adjacent nodes not exceeding $h$.  Then, the approximate problem $(\mathscr{P}_h^*)$  is finite dimensional and can be rewritten as a conic quadratic program. Just this numerical method was employed in \cite{bolbotowski2022a}, but more details on its implementation may be found in \cite{bolbotowski2022c}. In neither paper, however, the convergence of the numerical method has been examined.

In Fig. \ref{fig:string_systems} we revisit the optimal membrane problems examined  for the four different loads from Examples \ref{ex:3_point_loads}--\ref{ex:pressure}. We consider only the case of Poisson ratio $\nu =-1$ for which the design problem is equivalent to the pair \ref{eq:PM}, \ref{eq:dPM} for $\rho = \rho^\mathrm{M}$. The bottom row illustrates solutions of the problem \ref{eq:dPMalt} for a grid $X_h$ counting $201 \times 201$ nodes, which induces $816\,000\,000$ potential string connections. More accurately, the figures represent the measures $\check{\mu}_h = \frac{2V_0}{\mathcal{Z}_h} \tr\, \TAU_{\hat\Pi_h}$ where $\mathcal{Z}_h = \min \mathscr{P}^*_h$ and $\hat\Pi_h$ solves  \ref{eq:dPMalt} together with $\hat\pi_h$. Since the support of $\hat\Pi_h$ is finite, by the formulas \eqref{eq:integration_formulas} it is clear that $\check{\mu}_h$ has a density with respect to $\Ha^1$ measure restricted to the graph spanned by $X_h$. In the bottom row of Fig. \ref{fig:string_systems} the lines' thicknesses are proportional to this density exactly.

Figs \ref{fig:string_systems}(a,b) approve what we have suspected based on the finite element approximations: in the two cases where the load $f$ is finitely supported, there appears to exist an optimal solution being a finite system of strings, that is when $\rho = \rho^\mathrm{M}$. Accordingly, it is fair to deem the numerical scheme based on  $(\mathscr{P}_h^*)$ superior to the finite element method \ref{eq:dPMh} for this type of loads. When a 1D load as in  Fig. \ref{fig:string_systems}(c) both the approximations seem to deliver quality results. For the pressure load in Fig. \ref{fig:string_systems}(d), however, it is quite clear that the finite element method is more suited than the approximation through a system of strings.

The two results in Figs \ref{fig:string_systems}(a,b) beg a natural question: is it true that for loads that are supported on a finite set of points there always exists a finite string system solving \ref{eq:dPMalt}? The following observation provides a clue: in all the simulations performed by the present author the suboptimal string system $(\Pi_h,\pi_h)$ consists of strings that connect two types of points in $X_h$: either the points of loads application, namely in $\mathrm{sp}\,f$, or the points lying on the boundary $\bO$. The following conjecture can thus be made:
\begin{conjecture}
	\label{conj:finite_string_system}
	For a bounded convex domain $\O$ assume that the load  $f \in \Mes(\Ob;\R)$ is finitely supported. Then, solution of the optimal string system problem \ref{eq:dPMalt} exists. Moreover, there is at least one  solution $(\hat\Pi,\hat{\pi})$ satisfying:
	\begin{equation*}
		\mathrm{sp}(\hat\Pi,\hat\pi) \subset (\bO \cup \mathrm{sp} \,f) \times  (\bO \cup \mathrm{sp} \,f).
	\end{equation*}
\end{conjecture}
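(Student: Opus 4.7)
My plan is to attack both existence and the support characterization simultaneously via an axial-equilibrium argument that becomes very rigid precisely when $f$ has finite support.

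\medskip

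\textbf{Step 1 (Existence via tightness).} I would start with a minimizing sequence $(\Pi_n,\pi_n)\in \Xi(\Omega,f)$. Using Proposition~3.7-style coercivity (the lower bound of the cost by $\int|x-y|\,d\Pi_n$) combined with the transverse balance $-\dive\,\vartheta_{\pi_n}=f$ applied to $\varphi\equiv 1$ on connected components tested against $\bO$, one obtains uniform bounds on the total variations $\Pi_n(\Ob\times\Ob)$ and $|\pi_n|(\Ob\times\Ob)$. Weak-$*$ compactness then gives a candidate limit $(\hat\Pi,\hat\pi)$. The objective is lower semicontinuous (the term $|x-y|\rho^2$ is the perspective function of a convex integrand and behaves well under weak-$*$ convergence as in \cite{bouchitte2007}). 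Admissibility passes to the limit by testing with $C^1$ functions. The novelty of the finite-support hypothesis enters here: since $f=\sum_i \alpha_i\delta_{x_i}$, the transverse constraint reduces to finitely many equations in $\hat\pi$, ruling out diffuse escape of mass at the boundary.

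\medskip

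\textbf{Step 2 (Forced collinearity at free interior points).} Set $S:=\bO\cup\mathrm{sp}\,f$ and fix an optimal $(\hat\Pi,\hat\pi)$. Write the transverse density $\hat\rho:=d\hat\pi/d\hat\Pi$, which exists $\hat\Pi$-a.e.\ on the optimum (cf.\ the argument of Proposition~3.10 in \cite{bolbotowski2022a}). The crucial observation is the cost identity computed for two segments sharing a midpoint $z$ that are collinear with opposite unit vectors: if $(a_1,b_1)$ carries a string from $x$ to $z$ and $(a_2,b_2)$ a string from $z$ to $y$ with $a_1\tau^{x,z}+a_2\tau^{z,y}=0$ (axial balance at $z$) and $b_1+b_2=0$ (transverse balance at $z$), then $a_1=a_2=:a$, the points $x,z,y$ are collinear with $z\in[x,y]$, and merging the two strings into a single string from $x$ to $y$ with parameters $(a,b_1)$ leaves both constraints and objective invariant --- the integrals $|x-z|+|z-y|=|x-y|$ match exactly. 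Conversely, if strings meeting at $z$ with axial masses $a_1,a_2$ are \emph{not} collinear, the inequality $|x-z|+|z-y|>|x-y|$ would strictly improve the cost when rerouted through a direct segment (absorbing the discrepancy by reallocating a small amount of axial/transverse mass). This shows that at $\hat\Pi$-a.e.\ interior point $z\notin S$, only collinear ``through-pair'' strings can contribute.

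\medskip

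\textbf{Step 3 (Surgery and iteration).} Using the disintegration theorem, decompose $\hat\Pi$ along its projection to the second marginal and pair up strings passing through each free interior $z$ into collinear matched pairs; perform the merging of Step~2 simultaneously (the merging is mass-preserving and linear, so it defines a measurable map between admissible pairs). The resulting $(\hat\Pi',\hat\pi')$ has the same objective value, same admissibility, and no mass supported on strings that strictly stop at any point of $\Omega\setminus S$. A transfinite or monotone-class induction over a countable dense family of free interior points, together with lower semicontinuity of the cost, then shows that a solution exists whose support lies in $S\times S$.

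\medskip

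\textbf{Main obstacle.} The clean collinearity argument of Step~2 is transparent for atomic $\hat\Pi$ (supported on finitely many segments), where equilibrium at $z$ is a finite linear system. The genuine difficulty is executing Step~3 when $\hat\Pi$ is diffuse: one must globally pair up a potentially uncountable family of strings meeting at a free interior point in a measurable way that preserves both the axial divergence-free constraint $-\DIV\,\TAU_{\hat\Pi}=0$ and the transverse one $-\dive\,\vartheta_{\hat\pi}=f$. I expect this will require a selection theorem applied to a ``matching'' multifunction, or, more naturally, a recasting of the problem in terms of Smirnov-type decomposition of $\TAU_{\hat\Pi}$ into elementary curves \cite{smirnov1993} --- where the finite support of $f$ forces almost every such curve to have endpoints in $S$. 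Making that decomposition compatible with the transverse measure $\hat\pi$ is, in my view, the main technical step separating the conjecture from a theorem.
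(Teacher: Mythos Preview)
The statement you are attempting to prove is explicitly labelled a \emph{Conjecture} in the paper and is left open; the paper supplies no proof. It only remarks, immediately after the statement, that via \cite[Proposition 5.25]{bolbotowski2022a} the conjecture can be recast as a geometric problem of extending a monotone map defined on $\partial\Omega\cup\mathrm{sp}\,f$ to a maximal monotone map on $\mathbb{R}^2$. There is therefore no proof in the paper to compare your proposal against, and what follows is an assessment of whether your outline would close the gap.

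Your Step~2 contains a genuine error. You assert that at any free interior point $z\notin S$ only collinear through-pairs of strings can contribute to an optimal $\hat\Pi$, and that a non-collinear pair can be rerouted via the triangle inequality to strictly lower the cost. But the axial equilibrium $\sum_i a_i\,\tau^{x_i,z}=0$ at $z$ is satisfied by many non-collinear configurations --- for instance three strings meeting at $z$ with directions at mutual angle $120^\circ$ and equal tensions $a_1=a_2=a_3>0$. Such a Y-junction is admissible (each leg contributes a rank-one positive semi-definite stress), and your rerouting move does not apply: replacing two legs by a direct segment $[x_1,x_2]$ destroys equilibrium at $z$ because the third leg remains attached there with nothing to balance it. Ruling out such fans at free interior points is precisely the heart of the conjecture, and it cannot be done with the triangle inequality alone; one needs a genuine optimality argument that couples the axial measure $\hat\Pi$ with the transverse density $\hat\rho=d\hat\pi/d\hat\Pi$ and the specific form of the cost $|x-y|(1+\tfrac{1}{2}\hat\rho^2)$.

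Step~1 is also incomplete. A bound on $\int|x-y|\,d\Pi_n$ does not control $\Pi_n(\overline\Omega\times\overline\Omega)$, since mass may concentrate near the diagonal; the paper stresses (just before Theorem~\ref{thm:strings}) that solutions of $(\mathscr{P}^*)$ fail to exist in general, so the finite-support hypothesis on $f$ must be used more sharply than your phrase ``ruling out diffuse escape of mass'' suggests. A more promising route --- closer in spirit to the paper's monotone-map reformulation --- would be to work on the primal side and show that an optimal pair $(\hat u,\hat w)$ can be chosen so that the two-point inequality \eqref{eq:two-point_ineq} is saturated only along segments with endpoints in $S$, then dualize.
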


\noindent From \cite[Proposition 5.25]{bolbotowski2022a} we find that the above conjecture can be posed in a geometric form  that entails extending a monotone map defined on $\bO \cup \mathrm{sp} \,f$ to a maximal monotone map on whole $\Rd$, see the next subsection for more details.

\subsection{A link to finding maximal Wasserstein distance}
\label{ssec:maximal_MK}

As already mentioned  in the previous subsection, the problem \ref{eq:dPMalt} shows some resemblance to the Monge-Kantorovich problem, but it is not quite it. Nonetheless, a link between the pair \ref{eq:PM}, \ref{eq:dPM} for $\rho = \rho^\mathrm{M}$ and the classical optimal transport has been exposed in \cite{bolbotowski2022a} after all, and it involves maximizing the transport cost.

For simplicity we shall assume that in this subsection the load is a probability $f \in \mathrm{P}(\O)$, namely $f \geq 0$ and  $f(\O) = 1$. 
Let us consider a pair $(u,w) \in \D(\O;\Rd \times \R)$ that is feasible in the problem \ref{eq:PM} assuming that $\rho = \rho^\mathrm{M}$. We define the vector valued map $v: \Rd \to \Rd$ as below
\begin{equation*}
	v = \mathrm{id} -u
\end{equation*}
where $\mathrm{id}$ is the identity mapping on $\Rd$. Clearly we have $v(x) = x$ for any $x \in \Rd \setminus \O$. Since the pair $(u,w)$ is admissible in \ref{eq:PM}, we exploit the two-point inequality in \eqref{eq:two-point_ineq} to deduce that
\begin{equation}
	\label{eq:mono_prop}
	\pairing{v(x)-v(y),x-y} = \abs{x-y}^2 - \pairing{u(x)-u(y),x-y} \geq \tfrac{1}{2}\, \abs{w(x)-w(y)}^2 \geq 0 \qquad \forall\,(x,y) \in \Rd \times \Rd.
\end{equation}
We thus discover that $v$ is a (maximal) monotone map on $\Rd$, see \cite{alberti1999}. We can decide to change variables in \ref{eq:PM} from $(u,w)$ to $(v,w) \in C^\infty(\Rd;\Rd) \times \D(\O;\R)$, and, after imposing that $v$ is monotone, the two-point constraint can be written as $w(x)-w(y) \leq \sqrt{2 \pairing{v(x)-v(y),x-y}}$.  The issue now lies in establishing the form of this inequality constraint in the relaxed problem \ref{eq:relPM} where $v = \mathrm{id}-u \in BV_{\mathrm{loc}}(\Rd;\Rd)$ nay no longer be continuous, cf. Theorem  \ref{prop:char_Kro}. In \cite{bolbotowski2022a} this was resolved by exploiting the paper \cite{alberti1999} on maximal monotone maps, which, a priori, are multifunctions on $\R^d$. We define a convex subset of all such multifunctions:
\begin{equation*}
	\mbf{M}_\O  :=\Big\{ \mbf{v}: \Rd \to 2^{\Rd} \ :\ \mbf{v} \ \text{maximal monotone},\ 
	\mbf{v}(x)= \{x\} \quad \text{ $ \forall\, x \in \Rd\setminus \Ob$}\ \Big\}.
\end{equation*}
In \cite[Lemma 5.3]{bolbotowski2022a} we find that due to the  boundedness of $\O$ the set $\mbf{M}_\O$ is compact when endowed with the topology induced by the Kuratowski's convergence of the multifunctions' graphs. For each pair $(x,y) \in \Rd \times  \Rd$ we define 
\begin{equation}
	\label{def:cost}
	c_{\mbf{v}}(x,y) :=  \inf_{N \geq 2, \ \xi_i \in\Rd} \left\{ \sum_{i=1}^{N-1} \sqrt{2\pairing{\xi'_i - \xi'_{i+1},\xi_i-\xi_{i+1}}} \ :\  \xi_1=x, \ \xi_N=y, \ \  \xi'_i \in \mbf{v}(\xi_i)\right\}.
\end{equation}
One can prove that $c_\mbf{v}$ is a pseudo distance, i.e. it is non-negative, symmetric, and subadditive (it satisfies the triangle inequality). However, it may happen that $c_\mbf{v}(x,y) =0$ for $x \neq y$. Whenever $\mbf{v}(x) = \{v(x)\}$ for $v \in C^1(\Rd;\Rd)$, $c_\mbf{v}$ is the Riemannian distance for the metric tensor  $2\, e(v)$. By \cite[Theorem 5.10]{bolbotowski2022a} we arrive at the \textit{geometric variant} of the relaxed problem \ref{eq:relPM}:
\begin{equation}\tag*{$(\ov{\mathcal{P}}_{\rm geo})$}
	\label{eq:Pgeo}
	Z = \max_{ \mbf{v}\in  \mbf{M}_\O} \Bigg\{ \max_{ w\in   C_0(\Omega)} \ \left\{  \int_\O w\, df \, :\,  w(x) - w(y)  \le  c_{\mbf{v}}(x,y) \quad \forall\, (x,y) \in \Ob \times \Ob\right\} \Bigg\}
\end{equation}
where above $C_0(\O)$ is the linear space of continuous functions vanishing at $\bO$.  Let us stress that above the maximum is achieved, and the value function equals $Z = \max \overline{\mathcal{P} }= \min \mathcal{P}^*$. 

By the subadditivity of $c_\mbf{v}$, to the inner supremum in \ref{eq:Pgeo} we can apply the Rubinstein-Kantorovich duality (see \cite[Theorem 1.14]{villani2003}), and effectively we recover the Monge-Kantorovich optimal transport problem for the (pseudo) distance cost induced by a maximal monotone map $\mbf{v}$: 
\begin{equation}\tag*{$(\mathrm{MK}_\mbf{v})$}
	W_{c_{\mbf{v}}}(f,\bO) := \min \left\{\int_{\Ob\times\Ob} c_{\mbf{v}}(x,y) \,\gamma(dxdy)\ :\ \gamma \in \mathrm{P}(\O \times \bO), \ \ \pi_x \# \gamma = f  \right\}
\end{equation}
where $\#$ stands for the push-forward of the measure, i.e. $\pi_x \# \gamma(B) = \gamma\big(\pi_x^{-1}(B)\big) = \gamma(B \times \R^2)$ for every Borel set $B\subset \Rd$. The measure $\pi_x \# \gamma$ is thus  the first marginal of $\gamma$.
The lack of a condition on the second marginal of $\gamma$ is essentially due to the Dirichlet condition on $w$ in \ref{eq:Pgeo}. As a result, the value $W_{c_{\mbf{v}}}(f,\bO)$ earns the name of the \textit{Wasserstein distance between the source $f$ and the boundary $\bO$}. Let us note that the maximization with respect to $\mbf{v} \in \mbf{M}_\O$ remains. Ultimately, we find the following result:
\begin{theorem}
	Assume a bounded convex domain $\Omega$ and a probability $f \in \mathrm{P}(\O)$. Then,  we  have the equality
	\begin{equation}
		\label{maximalMK}
		Z  = \max\, \Big\{  W_{c_{\mbf{v}}}(f,\bO) \ : \  \mbf{v}\in \mbf{M}_\O \Big\}
	\end{equation}
	where $Z = \max \overline{\mathcal{P} }= \min \mathcal{P}^*$. In particular, the solution, i.e. the map $\hat{\mbf{v}}$ inducing "the worst Wasserstein distance" always exists.
	Moreover, for each such map $\hat{\mbf{v}}$ the function $\hat{w} \in C_0(\O)$ satisfying
	\begin{equation*}
		\hat{w}(x) = \mathrm{dist}_{c_{\hat{\mbf{v}}}}(x,\bO) := \inf_{y \in \bO} c_{\hat{\mbf{v}}}(x,y)
	\end{equation*}
	solves \ref{eq:relPM} together with some $\hat{u} \in BV(\Rd;\Rd)$.
\end{theorem}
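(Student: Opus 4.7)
The plan is to start from the geometric reformulation $(\overline{\mathcal{P}}_{\rm geo})$ quoted from \cite{bolbotowski2022a}, which already identifies $Z = \max \overline{\mathcal{P}} = \min \mathcal{P}^*$ with a two-level maximization where the outer variable is $\mbf{v} \in \mbf{M}_\O$ and the inner one is $w \in C_0(\Omega)$ subject to the Lipschitz-type constraint $w(x) - w(y) \leq c_{\mbf{v}}(x,y)$ for all $(x,y) \in \Ob \times \Ob$. The strategy is then to perform the inner maximization explicitly, in $\mbf{v}$-by-$\mbf{v}$ fashion, by recognizing it as the Kantorovich dual of the transport problem $(\mathrm{MK}_{\mbf{v}})$ whose cost is the pseudo-distance $c_{\mbf{v}}$. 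The key algebraic step is standard: $c_{\mbf{v}}$ is non-negative, symmetric, and subadditive, so by the Rubinstein-Kantorovich theorem \cite[Theorem~1.14]{villani2003}, adapted to cost functions that are pseudo-distances (the non-separation $c_{\mbf{v}}(x,y)=0$ for $x\neq y$ does not affect the duality), one obtains
\begin{equation*}
  \sup\bigl\{\int_\O w\,df : w\in C_0(\O),\  w(x)-w(y)\le c_{\mbf{v}}(x,y)\ \forall (x,y)\in \Ob\times \Ob\bigr\} = W_{c_{\mbf{v}}}(f,\bO),
\end{equation*}
the point being that imposing $w=0$ on $\bO$ is precisely the dual manifestation of leaving the second marginal of $\gamma$ free on $\bO$. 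Combining this with $(\overline{\mathcal{P}}_{\rm geo})$ immediately yields $Z = \sup_{\mbf{v}\in \mbf{M}_\O} W_{c_{\mbf{v}}}(f,\bO)$.

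Next I would establish that the supremum is attained. Rather than chasing upper semicontinuity of $\mbf{v} \mapsto W_{c_{\mbf{v}}}(f,\bO)$ in the Kuratowski topology from scratch, I would exploit the attainment already granted in $(\overline{\mathcal{P}}_{\rm geo})$: a maximizing pair $(\hat{\mbf{v}},\hat{w})$ exists, and for this fixed $\hat{\mbf{v}}$ the chain
\begin{equation*}
  Z = \int_\O \hat{w}\,df \le W_{c_{\hat{\mbf{v}}}}(f,\bO) \le \sup_{\mbf{v}\in \mbf{M}_\O} W_{c_{\mbf{v}}}(f,\bO) = Z
\end{equation*}
collapses to equalities, so $\hat{\mbf{v}}$ realizes the outer max. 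To identify the canonical Kantorovich potential, note that $\hat{w}_0(x) := \mathrm{dist}_{c_{\hat{\mbf{v}}}}(x,\bO)$ is automatically $1$-Lipschitz with respect to $c_{\hat{\mbf{v}}}$ by the triangle inequality, vanishes on $\bO$, and dominates any admissible $w$: for $x \in \Ob$ and any $y \in \bO$ one has $w(x) = w(x) - w(y) \leq c_{\hat{\mbf{v}}}(x,y)$, whence $w \leq \hat{w}_0$ pointwise. Hence $\int_\O \hat{w}_0\,df \geq \int_\O \hat{w}\,df = Z$, and $\hat{w}_0$ itself solves the inner maximization. Continuity of $\hat{w}_0$ on $\Ob$ follows from the triangle-like property of $c_{\hat{\mbf{v}}}$ combined with the Lipschitz-type bound $c_{\hat{\mbf{v}}}(x,y) \leq \sqrt{2\, \mathrm{diam}(\Ob)\, |x-y|^{1/2}}$ inherited from estimate (ii) of Proposition~\ref{prop:estimates_uw} via the very definition \eqref{def:cost}.

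Finally, to obtain the companion $\hat{u}\in BV(\Rd;\Rd)$ with $(\hat{u},\hat{w}_0)$ solving $(\overline{\mathcal{P}})$, I would invoke the representation theorem for maximal monotone maps \cite{alberti1999} which provides, for every $\mbf{v}\in \mbf{M}_\O$, a $BV_{\mathrm{loc}}(\Rd;\Rd)$-selection $v$ of $\mbf{v}$; setting $\hat{u} := \mathrm{id} - v$ (restricted to $\O$) yields $\hat{u} \in BV(\Rd;\Rd)$ vanishing outside $\Ob$, and the monotonicity of $v$ translates into $\eps_s(\hat{u}) \in \Mes(\Rd;\Sddm)$ together with $e(\hat{u}) \preceq \mathrm{I}_2$ a.e., which is exactly the characterization of $\Kro$ for $\rho = \rho^\mathrm{M}$ given in Theorem~\ref{prop:char_Kro}. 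The equality $\int_\O \hat{w}_0\,df = Z$ then certifies optimality in $(\overline{\mathcal{P}})$.

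The main obstacle in fleshing this plan out rigorously is the subtle interplay between the pseudo-distance nature of $c_{\mbf{v}}$ and the Rubinstein-Kantorovich duality: one must verify that the finite-chain definition \eqref{def:cost} produces a function that is admissible as a transportation cost (measurable, lower semi-continuous) and that the Dirichlet boundary condition on $w$ correctly dualizes to the free-marginal problem $W_{c_{\mbf{v}}}(f,\bO)$ even when $c_{\mbf{v}}$ may fail to separate points. A secondary technical point is the passage from a maximal monotone multifunction $\hat{\mbf{v}}$ to an honest $BV$ representative $\hat{u}$ with the correct boundary behaviour; this is where the theory of \cite{alberti1999}, together with the compactness result for $\Kro$ recalled in Proposition~\ref{prop:compactness}, has to be used carefully.
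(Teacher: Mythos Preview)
Your proposal is correct and follows essentially the same route as the paper, which does not give a self-contained proof but sketches precisely this argument in the paragraphs preceding the theorem: start from the geometric reformulation $(\overline{\mathcal{P}}_{\rm geo})$ quoted from \cite[Theorem~5.10]{bolbotowski2022a}, then apply the Rubinstein--Kantorovich duality \cite[Theorem~1.14]{villani2003} to the inner supremum, with attainment inherited from $(\overline{\mathcal{P}}_{\rm geo})$. Your additional details on the distance-potential $\hat{w}_0$ and the $BV$ selection of $\hat{u}$ via \cite{alberti1999} are the natural fill-ins; one cosmetic slip is the H\"older bound, which should read $c_{\hat{\mbf{v}}}(x,y) \le \sqrt{2\,\mathrm{diam}(\Ob)\,|x-y|}$ rather than having $|x-y|^{1/2}$ under the root.
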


The work \cite{bolbotowski2022a} leaves some important open questions regarding the link between the optimal transport problem $(\mathrm{MK}_{\hat{\mbf{v}}})$ for "the worst map" $\hat{\mbf{v}}$ and the optimal membrane problem. The main challenge lies in recasting measures $(\hat{\TAU},\hat{\vartheta})$ solving \ref{eq:dPM} upon an optimal transportation plan $\hat{\gamma}$ for $(\mathrm{MK}_{\hat{\mbf{v}}})$ (an analogous problem is solved for the optimal design of heat conductors, cf. \cite{bouchitte2001}). One of the issues concerns the regularity of geodesics  in the metric space $(\Rd, c_{\hat{\mbf{v}}})$, especially those connecting points $x \in \mathrm{sp}\,f$ and points $y \in \bO$ that minimize $c_{\hat{\mbf{v}}}(x,y)$ (such geodesics always exist, see \cite[Corollary 5.9]{bolbotowski2022a}). One can prove that if Conjecture \ref{conj:finite_string_system} turns out to be true, then for finitely supported source $f$ such geodesics are polygonal chains with their vertices lying in $\mathrm{sp}\, f \cup \bO$. For instance, in examples illustrated in Figs \ref{fig:string_systems}(a,b,c) (bottom row) such geodesics will be contained in the graphs composed of the strings (presumably, since the results are numerical simulations). For more details the reader is referred to \cite{bolbotowski2022a}.

\subsection{Towards optimal 3D vaults}
\label{ssec:vault}

This last subsection oughts to emphasize the practical importance of the pair of problems \ref{eq:PM}, \ref{eq:dPM} for $\rho = \rho^\mathrm{M}$. Although these problems are posed on a two-dimensional (horizontal) domain $\O$, in the paper \cite{bolbotowski2022c} by the present author it is proved that their solutions can be directly exploited to erect an optimal 3D \textit{vaulted structure} above $\O$. Vaulted structures, or \textit{vaults}, are three-dimensional bodies that concentrate on a single surface. Often made of stone or a different brittle material such as brick or glass, they are assumed to withstand compressive stresses only. Classically, their role is to transfer the vertical gravitational load to the supports, e.g. to the planar curve $\bO$. If we are to design a vault, then its surface is to be found. We shall assume that the vertical position of the load will change along with this surface.

As the starting point of theoretical research on optimal design of vaults we can regard the paper \cite{rozvany1979}. Therein, however, the authors worked with a quite stringent constraint. They assumed that the structure is an \textit{arch-grid}, i.e. it decomposes to planar arches going between two points on $\bO$. In the sequel \cite{rozvany1982} the authors clearly indicated a will to let go of this assumption, but no significant progress was made apart from a conjecture for a one point load. Recently, the optimal arch-grid problem was revisited in \cite{czubacki2020,dzierzanowski2021}. Then, in \cite{bolbotowski2022c} it was shown not only how to construct optimal vaults without additional constraints, but also that such vaulted structures are in fact optimal amongst all 3D structures  under the assumption that the load is \textit{vertically transmissible}. However, for those results to hold true we must assume the Michell-like energy potential.  Below we lay out the main idea behind \cite{bolbotowski2022c}.

For a bounded planar domain $\Omega \subset \R^2$, that is to be treated as horizontal, let us define a 3D cylinder with $\Omega$ as its base: $\Oh := \O \times (0,l_0)$ with $l_0>0$ as the height parameter. In the sequel we shall use the underline symbol $\ub{\,\cdot \ }$ to stress that the object is 3D. The set $\Oh$ will constitute the design domain for the optimal vault problem, whilst the planar curve  $\ub{\Upsilon}_0 := \bO \times \{0\}$ will serve as the supporting curve. Exactly as in the case of membranes, the loading data is 2D, namely $f \in \Mes(\Ob;\R_-)$. The non-positivity $f \leq 0$ is to enforce that the load is gravitational, hence pointing downwards. The target load will be 3D, and it is to be transmitted vertically from $f$. More precisely, we shall choose the vector-valued measure $\ub{\mathcal{F}} \in \Mes(\Obh;\R^3)$ in the class:
\begin{equation*}
	\mathscr{T}(\Oh,f) := \Big\{ \ub{\mathcal{F}} = \e_3\,\ub{f} \, : \,  \ub{f}\in\Mes\bigl(\Obh;\R_- \bigr),  \ \  \pi_{\Rd} \, \#\, \ub{f} = f \Big\}.
\end{equation*}
Above $\e_3$ stands for the unit vector orthogonal to the base plane $\R^2$ that contains $\O$ (the vector points upwards); the map $\pi_{\Rd}:\R^3 \to \Rd$ is the orthogonal projection onto $\Rd$.  One can easily check that $\mathscr{T}(\Oh,f)$ is  a weakly-* compact subset of $\Mes(\Obh;\R^3)$.

\begin{figure}[h]
	\centering
	\subfloat[]{\includegraphics*[trim={0cm 0.8cm 1cm 3cm},clip,width=0.32\textwidth]{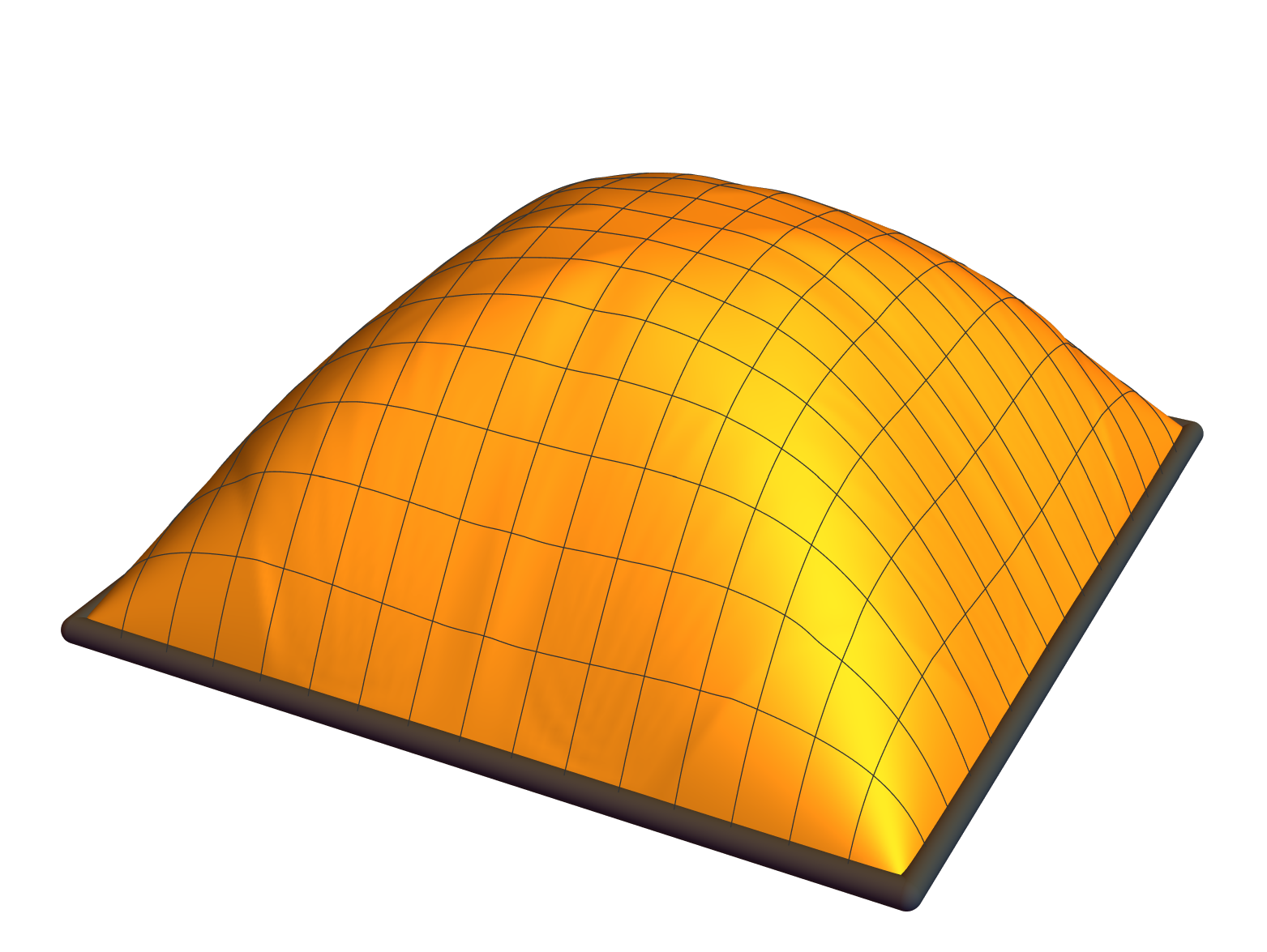}}\hspace{0.cm}
	\subfloat[]{\includegraphics*[trim={0cm 0.8cm 1cm 3cm},clip,width=0.32\textwidth]{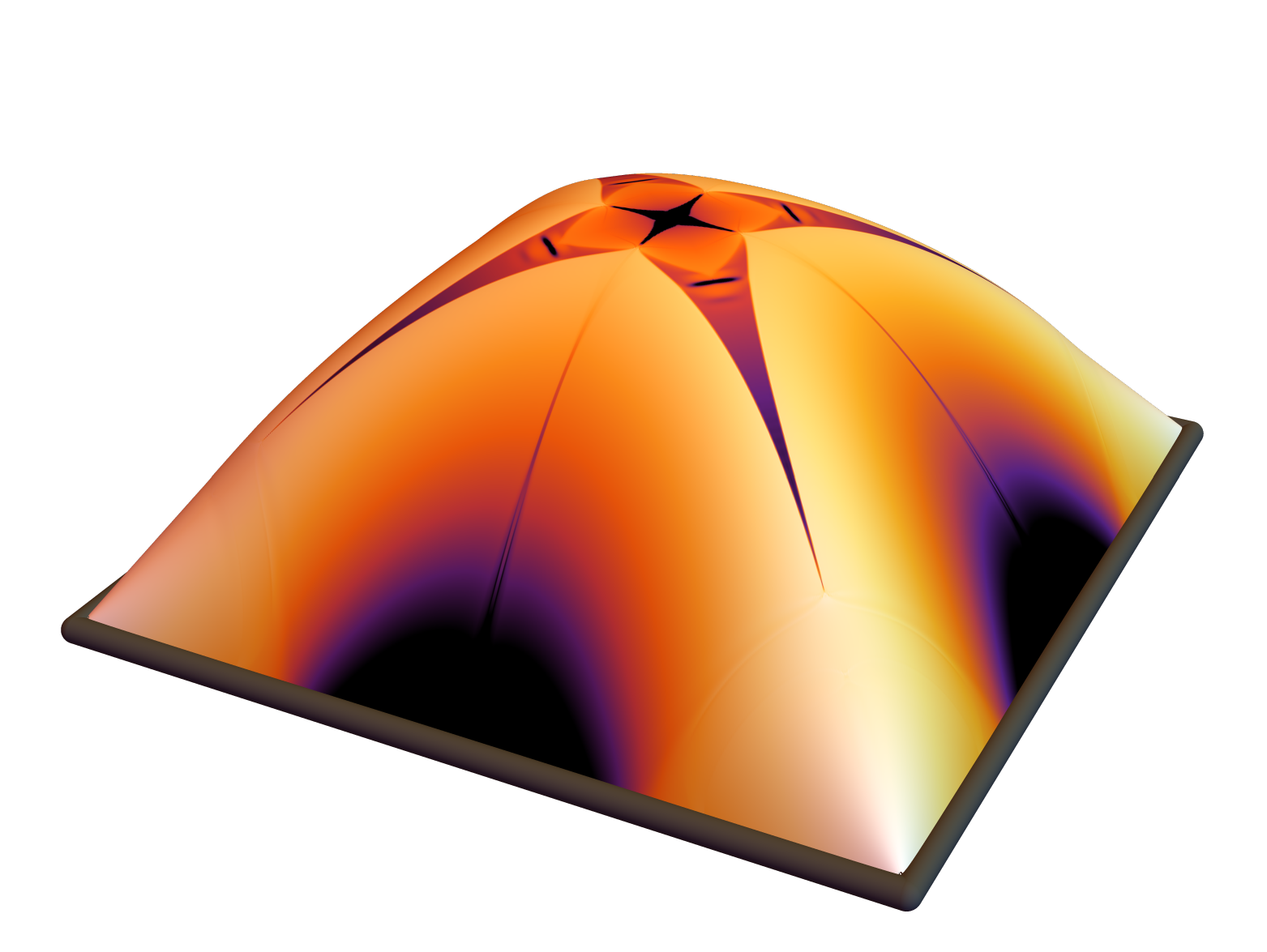}}\hspace{0.cm}
	\subfloat[]{\includegraphics*[trim={0cm 0.8cm 1cm 3cm},clip,width=0.32\textwidth]{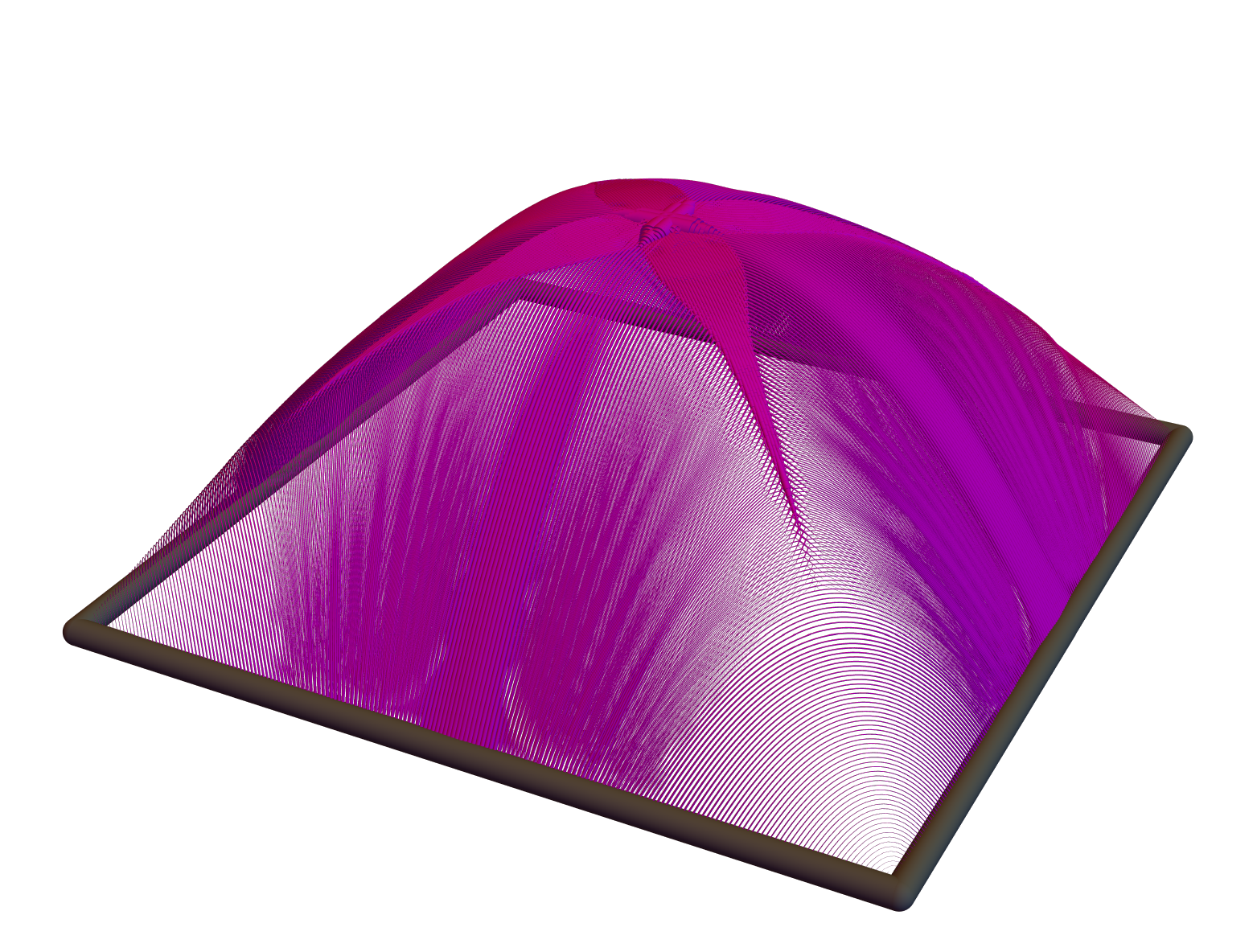}}
	\caption{Numerical approximations of an optimal vault. (a) An approximate elevation surface -- the graph of $z_h = -w_h/\sqrt{2}$; (b) an approximation by a shell of varying thickness $\check{\ub{\beta}}\,\!_h$; (c) an approximation by a 3D truss -- a grid-shell.}
	\label{fig:vaults}       
\end{figure}

The 3D structure occupying the domain $\Oh$ will be represented by a measure $\ub{\mu} \in \Mes_+(\Obh)$ modelling the material's distribution. As the compliance of such a structure $\ub{\mu}$, subject to a load $\ub{\mathcal{F}} \in \Mes(\Obh;\R^3)$ and with $\ub{\Upsilon}_0$ as the supporting curve, we will understand
\begin{equation}
	\label{eq:vault_comp}
	\mathscr{C}(\ub{\mathcal{F}},\ub{\mu}) := - \inf \bigg\{ \int_\Obh\, \ub{j}^\mathrm{M}_-\bigl( \ub{e}(\ub{u}) \bigr) \, d\ub{\mu}  - \int_\Obh \pairing{\ub{u},\ub{\mathcal{F}}}  \ : \  \ub{u} \in \D\bigl(\R^3 \setminus \ub{\Upsilon}_0;\R^3\bigr) \bigg\}.
\end{equation}
Above the 3D elastic energy potential $\ub{j}^\mathrm{M}_-: \mathcal{S}^{3 \times 3} \to \R_+$ comes from a 3D Michell-like potential $\ub{j}^\mathrm{M}(\ub{\xi}) = \frac{1}{2}  (\max_{i \in \{\mathrm{I},\mathrm{II},\mathrm{III}\}} \{ |\lambda_i(\ub{\xi})| \} )^2$. The modification $\ub{j}^\mathrm{M}_-$ is to rule out tensile stresses rather than the compressive ones (as it was the case for $j^\mathrm{M}_+$). As a result, the formula reads $\ub{j}^\mathrm{M}_-(\ub{\xi}) :=  \min_{\ub{\zeta} \in \mathcal{S}_-^{3 \times 3}} \ub{j}^\mathrm{M}(\ub{\xi} +\ub{\zeta})$, where $\mathcal{S}_-^{3 \times 3}$ is the cone of negative semi-definite matrices. We now propose a formulation of the \textit{relaxed Optimal Vault problem}: 
\begin{equation*}\tag*{$(\mathrm{OV})$}
	\label{eq:OV}
	\min_{\substack{\ub{\mathcal{F}} \in \Mes(\Obh;\R^3) \\ \ub{\mu} \in \Mes_+(\Obh)}} \Big\{	\mathscr{C}(\ub{\mathcal{F}},\ub{\mu})  \, : \, \ub{\mathcal{F}} \in \mathscr{T}(\Oh,f), \ \ \ub{\mu}(\Obh) \leq V_0  \Big\}
\end{equation*}
Let us stress that the definition of compliance in \eqref{eq:vault_comp} is more classical than the one defined for membranes in \ref{eq:nlcomp} in the following sense: here the strain field $\ub{\xi}: \Obh \to  \mathcal{S}^{3 \times 3} $ entering the energy potential is linear with respect to the displacement $\ub{u}$, that is $\ub{\xi} = \ub{e}(\ub{u}) = \frac{1}{2}(\ub\nabla \ub{u} + \ub\nabla^\top\! \ub{u})$. We say that the model of the optimized body is \textit{geometrically linear} (but not fully linear as the constitutive law induced by the potential $\ub{j}^\mathrm{M}_-$ is nonlinear).

With the comment above in mind, we can now observe that the problem \ref{eq:OV} is similar to the classical 3D Michell problem \cite{bouchitte2008,lewinski2019a} up to two aspects: (i) only compression is allowed, which is reflected by the potential $\ub{j}^\mathrm{M}_-$; (ii) the load $\ub{\mathcal{F}}$ is a design variable. A priori, there seems to be no reason why the solution should represent a vault, i.e. that $\ub{\mu}$ and $\ub{\mathcal{F}}$ are lower dimensional measures concentrating on a graph of some \textit{elevation function} $z\in C_0(\O;[0,l_0])$. This explains the term "\textit{relaxed} Optimal Vault problem": we allow the design to be any 3D structure, but we hope that vault-like shape will emerge by optimality. This would not be the case if it was not for the possibility to vertically transmit the load $\ub{\mathcal{F}}$. Moreover, numerical experiments conducted by the present author disapprove of this conjecture for potentials $\ub{j}$ other than $\ub{j}^\mathrm{M}$. Incapability of withstanding tensile stresses is also key here. To sum up, the work \cite{bolbotowski2022c} exploits the very particular  form of \ref{eq:OV} to show that vaults are indeed optimal.

The idea behind the construction to follow can be traced back to the work of Antoni Gaud\'{i} \cite{bergos1999}. We first take a planar membrane which we subject to a downward load $f \leq 0$. The membrane then deflects by $w: \Omega \to \R_-$. Next, we \textit{flip} the surface being the graph of $w$ and rescale it to our needs by a factor  $\alpha >0$. We arrive at an elevation function $z = - \alpha \,w$. The structure concentrating on its graph is capable of withstanding the load $f$ (transmitted vertically to this graph) via pure compressive stresses. This was Gaud\'{i}'s ingenious method  for shaping stone vaults. Roughly speaking, the following result shows that if the membrane is optimally designed and $\alpha$ is strategically chosen, then the obtained vault will be optimal as well:
\begin{theorem}
	\label{thm:vault}
	For a bounded domain $\Omega$ that is star-shaped with respect to a ball let us take solutions $(u,w)$ and $(\TAU,\vartheta)$ that solve problems \ref{eq:relPM} and, respectively, \ref{eq:dPM} for $\rho = \rho^\mathrm{M}$. Assume, in addition, that $u$, $w$ are Lipschitz continuous and $(u,w) = (0,0)$ on $\bO$. Let us define the elevation function $z \in \mathrm{Lip}(\Ob;\R)$ as follows:
	\begin{equation*}
		z := - \frac{1}{\sqrt{2}} \, w.
	\end{equation*}
	Then, the following 3D load and 3D material distribution below
	\begin{equation}
		\label{eq:unprojection}
		\check{\ub{\mathcal{F}}} = \ub{z} \, \#\, (\e_3\,f) ,    \quad  \check{\ub{\mu}} :=  \ub{z}\, \#  \Big( \tfrac{V_0}{Z} \vro^0(\TAU,\vartheta) \Big), \qquad \text{where} \quad \ub{z}(x) = \big(x,z(x)\big) \in \R^3 \quad \forall\, x \in \Rd,
	\end{equation}
	solves the optimal vault problem \ref{eq:OV} provided that $\Oh = \O \times (0,l_0)$ for $l_0 \geq \frac{1}{2}\,\mathrm{diam}(\O)$.
\end{theorem}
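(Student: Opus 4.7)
The approach mirrors the strategy used to pass from $\ref{eq:OEM}$ to $\ref{eq:dPM}$ via Theorem \ref{thm:constructing_lambda_OEM}, but now in three dimensions with compression replacing tension. First I would establish a dual formulation of $\ref{eq:OV}$ analogous to Proposition \ref{prop:dual_comp_mem}: by a similar Rockafellar-type duality argument, $\mathscr{C}(\ub{\mathcal{F}},\ub{\mu})$ equals the minimum of $\int_\Obh (\ub{j}^\mathrm{M})^*(\ub{\sigma})\,d\ub{\mu}$ over $\ub\mu$-measurable $\ub{\sigma}$ valued in $\mathcal{S}^{3\times 3}_-$ and satisfying the 3D equilibrium $-\ub\DIV(\ub\sigma\ub\mu)=\ub{\mathcal{F}}$ in $\R^3\setminus\ub\Upsilon_0$. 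Then, by the Ky-Fan argument underpinning Lemma \ref{lem:problem_P_m}, the minimum in $\ref{eq:OV}$ reduces to a formula of the type $\frac{3}{4}(\ub{Z}^2/V_0)^{1/3}$, where $\ub{Z}$ is the value of the stress-based problem over all compressive, equilibrated, vertically transmissible stress measures $\ub\TAU\in\Mes(\Obh;\mathcal{S}^{3\times 3}_-)$ subject to the $\tr\,\ub\TAU$-type Michell norm. The theorem will follow if we show that $\ub{Z}=Z$, and that the measures arising from \eqref{eq:unprojection} realize this value.

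Next I would construct an explicit optimal candidate $\check{\ub\TAU}$ concentrated on the graph $\Sigma:=\ub{z}(\Ob)$. Writing the tangent vectors $\ub{t}_i:=\e_i+z_{,i}\e_3$ to $\Sigma$ (defined a.e.\ since $z$ is Lipschitz), and using the Lebesgue decomposition $\TAU=\sigma\mu$, $\vartheta=q\mu$ with $\mu=\vro^0(\TAU,\vartheta)$, I would set $\check{\ub\TAU}:= -\tfrac{V_0}{Z}\,\ub{z}\#\ub{S}\mu$ where, on a point-wise basis, $\ub{S}(x)$ is the $3\times 3$ symmetric block tensor $\ub{S}=\bigl[\sigma\ \ \tilde q;\ \tilde q^\top\ \ \langle\sigma^{-1}\tilde q,\tilde q\rangle\bigr]$ with $\tilde q$ derived from $q$ by the Schur-complement lifting of Lemma \ref{lem:quadruple_instead_of_pair}. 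A direct computation on affine test fields $\ub\phi\in\D(\R^3\setminus\ub\Upsilon_0;\R^3)$, using $z=-w/\sqrt{2}$ and the 2D equilibria $-\DIV\,\TAU=0$, $-\dive\,\vartheta=f$, shows that $-\ub\DIV\,\check{\ub\TAU}=\e_3\,(\ub{z}\#f)=\check{\ub{\mathcal{F}}}$. The negative semi-definiteness of $\check{\ub{S}}$ is precisely the Schur-complement content of the equivalence of (i) and (iii) in Lemma \ref{lem:quadruple_instead_of_pair} (with the sign flipped to compression), and the identity $(\ub{\rho}^\mathrm{M})^0(\check{\ub{S}})=\vro^0(\sigma,q)$ holds pointwise $\mu$-a.e.\ thanks to the equi-repartition structure exposed in \eqref{eq:equi-repartition}. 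Integrating against $\check{\ub\mu}$ recovers exactly $\ub{Z}\le Z$, giving the upper bound on $\min\ref{eq:OV}$.

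For the matching lower bound, I would test the primal definition of $\mathscr{C}(\ub{\mathcal{F}},\ub{\mu})$ with a canonical 3D displacement field lifted from the 2D solution. Specifically, one takes $\ub{U}(X):=\bigl(u(x),\,-w(x)/\sqrt{2}\bigr)$ for $X=(x,x_3)\in\R^3$ (vertically constant, so $\partial_3\ub{U}=0$). A direct computation of $\ub{e}(\ub{U})$ combined with the two-point inequality \eqref{eq:two-point_ineq} satisfied by $(u,w)$ shows that $\ub{e}(\ub{U})$ belongs (in the sense of an associated gauge) to the sub-level set corresponding to $\ub{j}_-^\mathrm{M}\le 1/2$. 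Because any admissible $\ub{\mathcal{F}}\in\mathscr{T}(\Oh,f)$ is vertical and projects onto $f$, we get $\int\pairing{\ub{U},\ub{\mathcal{F}}}=-\tfrac{1}{\sqrt{2}}\int w\,df =-Z/\sqrt{2}$ (up to sign conventions from the compressive energy), which after the Lemma \ref{lem:problem_P_m}-style homogeneity scaling yields $\mathscr{C}(\ub{\mathcal{F}},\ub{\mu})\ge\mathscr{C}(\check{\ub{\mathcal{F}}},\check{\ub\mu})$.

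The main obstacle will be two-fold. First, rigorously interpreting the pushforward $\ub{z}\#$ when $z$ is only Lipschitz (so that $\check{\ub\TAU}$ concentrates on a non-smooth graph) and verifying the 3D distributional divergence equation requires a careful change-of-variables argument; one needs the Lipschitz-area formula for $\ub{z}$ and the cancellation between the $z_{,i}$ terms in $\ub{t}_i\otimes\ub{t}_j$ and the vertical component of $\vartheta$. Second, on the primal side, ensuring $\ub{U}$ serves as a legitimate test field (it is not compactly supported in $\R^3\setminus\ub\Upsilon_0$ as required in \eqref{eq:vault_comp}) demands an approximation by cutoffs and a relaxation argument in the spirit of Proposition \ref{prop:compactness} and Theorem \ref{prop:char_Kro}, lifted to 3D. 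Both steps crucially exploit the special structure of the Michell potential $j^\mathrm{M}$, which is the very reason such a clean construction works only in the case $\rho=\rho^\mathrm{M}$.
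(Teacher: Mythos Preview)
The paper does not actually contain a proof of this theorem. Theorem~\ref{thm:vault} is stated in Section~\ref{ssec:vault} as a result quoted from the companion paper~\cite{bolbotowski2022c}; the surrounding text makes this explicit (``in the paper \cite{bolbotowski2022c} by the present author it is proved that\ldots'') and no argument is supplied here. So there is nothing in this paper to compare your proposal against.

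On the substance of your outline: the architecture is sensible---a dual reformulation of \ref{eq:OV}, an explicit compressive stress measure concentrated on the graph of $z$ built from $(\TAU,\vartheta)$ via a Schur-complement lift, and a matching primal test field. Two details, however, need repair. First, the homogeneity is different in the vault problem: the strain $\ub{e}(\ub{u})$ is \emph{linear} in $\ub{u}$, unlike the F\"oppl strain, so the Lemma~\ref{lem:problem_P_m} scaling does not reproduce the exponent pattern $\tfrac{3}{4}(\ub{Z}^2/V_0)^{1/3}$ you wrote; for $p=2$ with linear strain one gets $\ub{Z}^2/(2V_0)$ instead. Second, the vertically constant field $\ub{U}(x,x_3)=(u(x),-w(x)/\sqrt{2})$ does not satisfy $\ub{\rho}_-^{\mathrm{M}}(\ub{e}(\ub{U}))\le 1$ as a consequence of the 2D constraint: the Schur-complement condition for $\ub{e}(\ub{U})+\mathrm{I}_3\succeq 0$ reads $e(u)+\mathrm{I}_2-\tfrac{1}{8}\nabla w\otimes\nabla w\succeq 0$, which is a \emph{lower} bound on $e(u)$ and is not what $\rho_+^{\mathrm{M}}(\tfrac{1}{2}\nabla w\otimes\nabla w+e(u))\le 1$ provides. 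The correct field (cf.\ the block identity in Lemma~\ref{lem:quadruple_instead_of_pair}) carries a different scaling in the vertical component---and getting the signs and the factor right is exactly where the specific choice $z=-w/\sqrt{2}$ and the Michell structure become essential. For the full argument you should consult~\cite{bolbotowski2022c}.
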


Apart from the theoretical gains, Theorem \ref{thm:vault} delivers efficient numerical schemes for finding optimal vaults. The numerical methods known from the literature, e.g. \cite{jiang2018}, require discretizing  the 3D design space $\Oh$, being, of course, computationally costly. Instead, we can now focus on numerical approximation of solutions of the 2D pair \ref{eq:relPM}, \ref{eq:dPM} -- either by the finite element method developed here or via system of strings like in \cite{bolbotowski2022a} -- whilst the passage to a 3D structure is analytical.

Let us discuss this algorithm on an example of uniform load: $f = - p \, \mathcal{L}^2 \mres \O$ for a square domain $\O = (0,a) \times (0,a)$. In Fig. \ref{fig:string_systems}(d) we have seen the simulations for such data acquired by both methods. In the sequel let us focus on the finite element approximation, for the other is explained in \cite{bolbotowski2022c}. One must remember, that approximation $(u_h,w_h)$ is generated automatically when running the MATLAB code. Accordingly, in Fig. \ref{fig:vaults}(a) we can see the graph $G_{z_h}$ of the function $z_h = - \frac{1}{\sqrt{2}}\, w_h$ (note that $z_h \geq 0$ since $w_h \leq 0$ due to the negative load $f$). According to Theorem \ref{thm:vault}, this graph approximates the optimal elevation surface on which the optimal vault is to concentrate. For the next step we must compute the approximation of the positive 2D measure: $ \tfrac{V_0}{Z_h} \vro^0(\TAU_h,\vartheta_h)$. Naturally, it will be an absolutely continuous measure with density $\check{\beta}_h = \tfrac{V_0}{Z_h} \, \vro^0(\sigma_h,q_h)$ which is different than the approximate optimal thickness $\check{b}_h = \tfrac{2\,V_0}{Z_h} \, (\rho^M)^0(\sigma_h) = \tfrac{2\,V_0}{Z_h} \, \tr\,\sigma_h$ whose heat-map could be observed at the top of Fig. \ref{fig:string_systems}(d). In fact, one can show that $\check{\beta}_h = \tfrac{V_0}{Z_h} \big(\tr\,\sigma_h +  \pairing{\nabla z_h \otimes \nabla z_h,\sigma_h} \big)$. For absolutely continuous measures the push forwards in \eqref{eq:unprojection} can be computed explicitly by the area formula, see \cite{ambrosio2000}. For instance,
\begin{equation*}
	\check{\ub{\mu}}\,\!_h = \check{\ub{\beta}}\,\!_h  \, \Ha^2 \mres G_{z_h},  \qquad  \text{where} \quad \check{\ub{\beta}}\,\!_h (\ub{x}) = \frac{\check{\beta}_h\big( \ub{z}_h^{-1}(\ub{x}) \big) }{J_{\nabla \ub{z}_h}\big( \ub{z}_h^{-1}(\ub{x}) \big)} \quad \text{for  \ \ $\Ha^2$-a.e. $\ub{x} \in G_{z_h}$}.
\end{equation*}
Above $\ub{z}_h= (\mathrm{id},z_h): \Ob \to \R^3$, the set  $G_{z_h} = \ub{z}_h(\O)$ is the Lipschitz graph of $z_h$ illustrated in Fig. \ref{fig:vaults}(a), and $J_{\nabla \ub{z}_h}$ is the Jacobian of $\ub{z}_h$. One can compute that $J_{\nabla \ub{z}_h}(x) = \sqrt{1+ \vert\nabla z_h(x)\vert^2}$ for a.e. $x \in \O$.

Ultimately, in Fig. \ref{fig:vaults}(b) we can see the heat-map of $\check{\ub{\beta}}\,\!_h $  plotted on the surface $G_{z_h}$ exactly. Thus, by using the finite element method for the pair \ref{eq:relPM}, \ref{eq:dPM} we arrived at an approximation of an optimal vault being a \textit{shell} of varying thickness $\check{\ub{\beta}}\,\!_h $. In turn, when departing from a numerical solution by a system of string visible at the bottom of Fig. \ref{fig:string_systems}(d), we end up with a 3D truss approximation of an optimal vault illustrated in Fig. \ref{fig:vaults}(c), see \cite{bolbotowski2022c} for more details on this construction. In the literature, such a "dense" truss which lies on a single surface is often referred to as a \textit{grid-shell}. 

\bigskip
\noindent\textbf{Acknowledgements}

This paper was partially prepared within the author's Ph.D. thesis entitled \textit{Elastic bodies and structures of the optimum form, material distribution, and anisotropy}. To his supervisors, Professor Tomasz Lewi\'{n}ski and Professor Piotr Rybka, the author gives many  thanks for their invaluable guidance.

The author also wishes to express his gratitude to Professor Guy Bouchitt\'{e} for fruitful discussions on the topic of this work that took place during a  three-week visit at Laboratoire IMATH, Universit\'{e} de Toulon in February of 2020.

Finally, the author would like to thank the National Science Centre (Poland) for the financial support and acknowledge the Research Grant no 2019/33/B/ST8/00325 entitled \textit{Merging the optimum design problems of structural topology and of the optimal choice of material characteristics. The theoretical foundations and numerical methods}.

\appendix
\renewcommand{\thesection}{\Alph{section}}
\setcounter{section}{0}

\section*{Appendix}

\section{Convex duality}
\label{ap:convex}

For a locally convex linear topological space $X$ and its dual $X^*$ by $\pairing{\argu,\argu}$ denote the duality pairing. For any extended-real function $g:X \to \Rb = [-\infty,\infty] $ we define its convex conjugate (or the Legandre-Fenchel transform) $g^*:X^* \to \Rb$ and the convex biconjugate $g^{**}:X \to \Rb$ as below:
\begin{equation*}
g^*(x^*) := \sup_{x \in X} \Big\{ \pairing{x,x^*} - g(x) \Big\}, \qquad g^{**}(x) := \sup_{x^* \in X^*} \Big\{ \pairing{x,x^*} - g^*(x^*) \Big\}.
\end{equation*}

For any $x \in X$  such that $g(x)$ is finite, by the subdifferential  $\partial g (x)$  we understand the set of those $x^* \in X^*$ that satisfy $g(z)  \geq g(x) + \pairing{z-x, x^*}$ for all $z \in X$. It is straightforward to show the following equivalence:
\begin{equation}
	\label{eq:subdiff}
	x^* \in \partial g(x)\qquad \Leftrightarrow \qquad  \pairing{x,x^*} = g(x)+g^*(x^*).
\end{equation}

Introducing another locally convex linear topological space $Y$ together with its dual $Y^*$,  we consider a linear continuous operator $\Lambda:X \to Y$, while the \textit{adjoint} $\Lambda^*:Y^* \to X^*$ is the uniquely defined linear continuous operator that for each pair $x \in X$, $y^* \in Y^*$ satisfies $\pairing{x,\Lambda^* y^*} = \pairing{\Lambda x, y^*}$. 
The classical tool for examining duality between two variational problems can be readily stated:
\begin{theorem}[\textbf{Classical duality}]
	\label{thm:duality_classical}
	For $X,Y$ and $\Lambda$ as above let $\Phi: X \to \Rb$, $\Psi:Y \to \Rb$ be convex lower semi-continuous functionals, while by $\Phi^*:X^* \to \Rb$, $\Psi^*:Y^* \to \Rb$ denote their convex conjugates.
	
	If there exists $x_0 \in X$ such that $\Phi(x_0) < + \infty$ and $\Psi$ is continuous at $\Lambda x_0$,  then
	\begin{equation}
	\label{eq:sup=inf}
	\mathscr{Z} := \sup _{x \in X} \Big \{ - \Phi (x) - \Psi (\Lambda x)   \Big \} = \inf
	_{y^* \in Y ^*} \Big \{ \Phi ^* (- \Lambda ^* y^*)+\Psi ^* (y^*) \Big \},
	\end{equation}
	and the minimum on the right hand side is achieved provided that $\mathscr{Z} < \infty$.
\end{theorem}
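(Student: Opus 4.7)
My plan is to follow the standard route through a perturbation (value) function and conclude by a Hahn-Banach type argument. Define
\begin{equation*}
h : Y \to \Rb, \qquad h(y) := \inf_{x \in X} \bigl\{ \Phi(x) + \Psi(\Lambda x + y) \bigr\}.
\end{equation*}
A direct computation of the convex conjugate gives
\begin{equation*}
h^*(y^*) = \sup_{y \in Y} \sup_{x \in X} \bigl\{ \pairing{y,y^*} - \Phi(x) - \Psi(\Lambda x + y) \bigr\} = \Phi^*(-\Lambda^* y^*) + \Psi^*(y^*),
\end{equation*}
where the last equality uses the substitution $z = \Lambda x + y$ and the definition of the adjoint. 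Hence the right-hand side of \eqref{eq:sup=inf} equals $\inf_{y^*} h^*(y^*) = -h^{**}(0)$, while the left-hand side equals $-h(0)$. The whole theorem therefore reduces to proving the single identity $h(0) = h^{**}(0)$, together with the claim that $\partial h(0) \neq \varnothing$ when $h(0)$ is finite.

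Weak duality (i.e.\ $-h(0) \leq -h^{**}(0)$) is an immediate consequence of the Fenchel--Young inequalities $\Phi(x) + \Phi^*(-\Lambda^* y^*) \geq -\pairing{\Lambda x, y^*}$ and $\Psi(\Lambda x) + \Psi^*(y^*) \geq \pairing{\Lambda x, y^*}$, so one just has to establish the converse. For this the key is to verify that $h$ is convex and bounded above on some neighbourhood of $0 \in Y$: convexity follows from joint convexity of $(x,y) \mapsto \Phi(x) + \Psi(\Lambda x + y)$, while the upper bound uses the qualification hypothesis. Indeed, if $V$ is a neighbourhood of $\Lambda x_0$ on which $\Psi$ is bounded above by some $M$, then plugging $x = x_0$ into the infimum defining $h(y)$ shows $h(y) \leq \Phi(x_0) + M < \infty$ for every $y \in V - \Lambda x_0$.

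A convex functional on a locally convex space that is bounded above in a neighbourhood of an interior point of its effective domain is continuous there; in particular $h$ is lower semi-continuous at $0$ and $h(0) > -\infty$ (if the primal value were $+\infty$ the theorem is trivial, so one may assume the primal value is finite, i.e.\ $h(0) \in \R$). The Fenchel--Moreau theorem then yields $h(0) = h^{**}(0)$, which is the desired zero-gap equality. For the attainment statement, continuity of $h$ at $0$ combined with $h(0) \in \R$ ensures, via the Hahn--Banach separation theorem applied to the epigraph of $h$ and the open half-space below $(0, h(0))$, the existence of a continuous affine minorant of $h$ that agrees with $h$ at $0$; its slope is precisely an element $\hat{y}^* \in \partial h(0)$, and the extremality relation \eqref{eq:subdiff} gives $h(0) + h^*(\hat y^*) = 0$, so $\hat y^*$ realizes the infimum in the dual.

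The main obstacle is the upgrade from ``bounded above in a neighbourhood'' to continuity and non-emptiness of the subdifferential in the infinite-dimensional, possibly non-Banach setting of a general locally convex space $X$; this is where one must invoke the appropriate geometric form of Hahn--Banach (the epigraph of $h$ has non-empty interior at height slightly above $h(0)$ thanks to the upper bound on a neighbourhood, allowing separation from the point $(0, h(0) - \varepsilon)$ for $\varepsilon > 0$, and then letting $\varepsilon \to 0$). Everything else is bookkeeping with conjugates and the elementary inequality above.
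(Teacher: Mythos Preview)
Your argument is correct and is precisely the perturbation-function route; the paper does not give its own proof but simply cites Theorem~4.1 (with equation~(4.18) and condition~(4.21)) in Chapter~III of Ekeland--Temam, which is exactly the argument you have reproduced.

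One small remark on presentation: invoking Fenchel--Moreau for $h(0)=h^{**}(0)$ is slightly off, since that theorem requires global lower semicontinuity of $h$, whereas you have only established continuity at the single point $0$. However, your subsequent Hahn--Banach/subgradient step is the correct device and already delivers both the zero-gap equality and the attainment in one stroke: the existence of $\hat y^*\in\partial h(0)$ gives $h^*(\hat y^*)=-h(0)$, hence $\inf_{y^*} h^*(y^*)=-h(0)$ with the infimum achieved at $\hat y^*$. The Fenchel--Moreau sentence can therefore be dropped. (Also, in your final paragraph the ``general locally convex space'' in question is $Y$, not $X$, since $h$ lives on $Y$.)
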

\begin{proof}
	See Theorem 4.1, equation (4.18) and condition (4.21) in Chapter III of \cite{ekeland1999}.
\end{proof}

\section{Algebraic formulation for the finite element approximation of the pair $\Prob$, $\dProb$}
\label{app:algebraic}

Assuming that $\sigma \in \Sdd$  is (strictly) positive definite, the result to be stated below is a particular case of what in the literature is known as the \textit{Schur complement lemma}, see \cite[Lemma 4.2.1]{ben2001}. Below we give its independent proof under a weaker assumption that $\sigma \in \Sddp$, namely that $\sigma$ is positive semi-definite.

\begin{proposition}[\textbf{Schur complement}]
	\label{prop:pos_semdef_block_mat}
	For a matrix $\sig \in \Sdd$, a vector $q \in \Rd $, and a number $c \in \R$ the following conditions are equivalent:
	\begin{enumerate}[label={(\roman*)}]
		\item the $3 \times 3$ matrix below is positive semi-definite:
		\begin{equation}
			\label{eq:3x3_mat}
			 \begin{bmatrix}
			\,\sig  &  \tfrac{1}{\sqrt{2}}\,q\,\\
			\tfrac{1}{\sqrt{2}}\,q^\top\, & c\,
			\end{bmatrix};
		\end{equation}
		\item there hold:
		\begin{equation*}
			\begin{cases}
				\sig \in \Sddp,\\
				q \in \IM \, \sig,\\
				c \geq \jsq,
			\end{cases}
		\end{equation*}
		where the definition of $\jsq$ for an arbitrary semi-definite matrix $\sigma \in \Sddp$ is given in \eqref{eq:jsq}.
	\end{enumerate}
\end{proposition}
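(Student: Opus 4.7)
The plan is to attack the equivalence by ``completing the square'' in the block matrix, thereby avoiding any true appeal to invertibility of $\sigma$. Throughout, let $M$ denote the $3\times 3$ matrix in \eqref{eq:3x3_mat}, and for any $(v,s)\in\R^2\times\R$ compute
\begin{equation*}
\bigl\langle M(v,s)^\top,(v,s)^\top\bigr\rangle \;=\; \langle \sigma v,v\rangle + \sqrt{2}\,s\,\langle q,v\rangle + c\,s^2.
\end{equation*}
This identity will be the main engine of both implications.

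For (i)$\Rightarrow$(ii): testing with $(v,0)$ immediately yields $\sigma\in\Sddp$. To show $q\in\mathrm{Im}\,\sigma$, suppose instead that $q$ has a nontrivial component $q_\perp$ in $(\mathrm{Im}\,\sigma)^\perp=\ker\sigma$; testing with $(t\,q_\perp,s)$ gives $\sqrt{2}\,s\,t\,|q_\perp|^2+c\,s^2\ge 0$ for all $t,s\in\R$, which is impossible (fix $s\neq 0$ and let $t\to-\infty\cdot\mathrm{sign}(s)$). Hence $q\in\mathrm{Im}\,\sigma$, so we may write $q=\sigma\hat\theta$ for some $\hat\theta\in\R^2$, and by definition $\jsq=\tfrac{1}{2}\langle\sigma\hat\theta,\hat\theta\rangle$ regardless of the (non-uniqueness of) choice of $\hat\theta$. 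Testing with $(v,s)=(-\hat\theta/\sqrt{2},1)$ produces
\begin{equation*}
0\;\le\;\tfrac{1}{2}\langle\sigma\hat\theta,\hat\theta\rangle - \langle q,\hat\theta\rangle + c \;=\; c - \tfrac{1}{2}\langle\sigma\hat\theta,\hat\theta\rangle,
\end{equation*}
which is precisely $c\ge\jsq$.

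For (ii)$\Rightarrow$(i): write again $q=\sigma\hat\theta$ and complete the square in the above quadratic form to obtain
\begin{equation*}
\bigl\langle M(v,s)^\top,(v,s)^\top\bigr\rangle \;=\; \bigl\langle \sigma\,(v+\tfrac{s}{\sqrt{2}}\hat\theta),\,v+\tfrac{s}{\sqrt{2}}\hat\theta\bigr\rangle + s^2\bigl(c-\tfrac{1}{2}\langle\sigma\hat\theta,\hat\theta\rangle\bigr).
\end{equation*}
Both summands are non-negative under (ii), so $M\succeq 0$.

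The only delicate step is verifying $q\in\mathrm{Im}\,\sigma$ from (i); all the rest is algebraic manipulation. Once that is in hand, the square-completion identity handles the nontrivial direction cleanly and, importantly, makes it transparent that $\jsq$ being defined through any pre-image $\hat\theta$ of $q$ under $\sigma$ is consistent, since the computation above depends on $\hat\theta$ only through $\sigma\hat\theta=q$ and $\langle\sigma\hat\theta,\hat\theta\rangle=\langle q,\hat\theta\rangle$, both of which are invariant under the addition of an element of $\ker\sigma$ to $\hat\theta$.
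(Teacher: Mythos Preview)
Your proof is correct and follows essentially the same approach as the paper: both work directly with the quadratic form $\langle M(v,s)^\top,(v,s)^\top\rangle$. The paper packages the equivalence as the single condition $0\le\inf_{\theta,t}\{\tfrac{1}{2}\langle\sigma\theta,\theta\rangle-\langle tq,\theta\rangle+ct^2\}$ and evaluates this infimum case-by-case, whereas you split the two implications, using specific test vectors for (i)$\Rightarrow$(ii) and an explicit completion of the square for (ii)$\Rightarrow$(i); these are the same computation organized differently.
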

\begin{proof}
	By definition, the statement (i) is true if and only if the quadratic form generated by the matrix \eqref{eq:3x3_mat} is non-negative for any vector $\eta \in \R^3$. Let us consider vectors  $\eta = [\,\frac{1}{\sqrt{2}}\theta^\top \ -\!t\,]^\top$ where $\theta \in \R^2$, $t \in \R$. Then (i) can be rewritten as:
	\begin{align}
		0 \leq \inf_{\theta\in \R^2, \ t\in \R} \Big\{ \tfrac{1}{2} \pairing{\sig \theta,\theta} - \pairing{t q,\theta} + c\, t^2 \Big\} &=  \inf_{ t\in \R}
		\begin{cases} - \tfrac{1}{2} \pairing{\sig^{-1}(tq),(tq)} + c\, t^2  & \text{if $\sig\in \Sddp$ \ and \  $\q \in  \IM(\sig)$,   }\\
		\nonumber
		-\infty & \text{otherwise}
		\end{cases}\\
		&=
		\begin{cases} 0  & \text{if $\sig\in \Sddp$, \ and \  $\q \in  \IM(\sig)$, \ and \ $c\geq \jsq$,   }\\
		\nonumber
		-\infty & \text{otherwise.}
		\end{cases}
	\end{align}
	Clearly, the inequality above holds true if and only if (ii) is satisfied.
\end{proof}

\begin{proof}[Proof of Theorem \ref{thm:duality_for_FEM}]
	In the proof we will draw upon \cite[Section 2.5]{ben2001} where a summary on conic duality may be found. If in the mutually dual conic problems $(\mathrm{Pr})$ and $(\mathrm{Dl})$ therein we make the following substitutions
	\begin{equation*}
	\mbf{x} = \!\begin{bmatrix}
	\mbf{r}^0 \\ \bm{\tau}^{11}\\ \bm{\tau}^{22} \\ \bm{\tau}^{33} \\ \bm{\tau}^{12}\\ \bm{\tau}^{13} \\ \bm{\tau}^{23}  
	\end{bmatrix}\!,
	\quad
	\mbf{c} = \! \begin{bmatrix}
	\mbf{1} \\ \mbf{0} \\ \mbf{0} \\ \mbf{1} \\  \mbf{0} \\ \mbf{0} \\  \mbf{0}
	\end{bmatrix}\!,
	\quad
	\mbf{P} = \begin{bmatrix}
	\mbf{0} & \mbf{0} & \mbf{0} \\
	\mbf{B}^{11} & \mbf{0} & \mbf{0} \\
	\mbf{0} & \mbf{B}^{22} & \mbf{0} \\
	\mbf{0} & \mbf{0} & \mbf{0} \\
	\mbf{B}_1^{12} & \mbf{B}_2^{12} & \mbf{0}\\
	\mbf{0} & \mbf{0} &  \mbf{D}^1 \\
	\mbf{0} & \mbf{0} &  \mbf{D}^2
	\end{bmatrix}^\top \! \!\!\!\!,
	\ \
	\mbf{p} = \begin{bmatrix}
	\mbf{0} \\ \mbf{0} \\ \mbf{f}
	\end{bmatrix}\!,
	\quad
	\mbf{v} = \!\begin{bmatrix}
	\mbf{u}_1\\ \mbf{u}_2  \\ \mbf{w}
	\end{bmatrix}\!,
	\quad
	\bm{\eta}^1_i = \!\begin{bmatrix}
	\mbf{r}(i) \\ \tilde{\bm{\epsilon}}^{11}(i)\\ \tilde{\bm{\epsilon}}^{22}(i) \\ \tilde{\bm{\epsilon}}^{12}(i)
	\end{bmatrix}\!,
	\quad
	\bm{\eta}^2_i = \!\begin{bmatrix}
	\bm{\zeta}^{11}(i)\\ \bm{\zeta}^{22}(i) \\ \bm{\zeta}^{12}(i) \\ \bm{\zeta}^{33}(i)\\ \bm{\zeta}^{13}(i) \\ \bm{\zeta}^{23}(i) 
	\end{bmatrix}\!;
	\end{equation*}
	then we can recognize that $(\mbf{P}_h) = (\mathrm{Dl})$ and $(\mbf{P}^*_h) = (\mathrm{Pr})$. To complete the definition of $(\mathrm{Pr})$, $(\mathrm{Dl})$ along the lines of  \cite[Section 2.5]{ben2001} we put $\mbf{b}=\mbf{0}$ and, in addition, for each $i$ we define the matrices $\mbf{A}^1_i \in \R^{4 \times 7n}$ and $\mbf{A}^2_i \in \R^{6 \times 7n}$ that consist of zeros or ones and allocate the variables into two groups of conic constraints with the cones $\mathrm{K}_{1} = \mathrm{K}^4_\dro$ and $\mathrm{K}_{2} = \mathrm{K}^6_+$, respectively. To recast the problem $(\mbf{P}_h)$ from $(\mathrm{Dl})$ we exploit the duality between the cones $\mathrm{K}^4_\rho$ and $\mathrm{K}^4_\dro$ (see Proposition \ref{prop:cone_rho_dual}), and, in addition, we must change variables: $\bm{\epsilon}^{11} = -\tilde{\bm{\epsilon}}^{11}$, $\bm{\epsilon}^{22} = -\tilde{\bm{\epsilon}}^{22}$, $\bm{\epsilon}^{12} = -\tilde{\bm{\epsilon}}^{12}$.
	
	According to \cite[Section 2.5]{ben2001}, in order to prove that the duality gap vanishes and moreover that problem $(\mbf{P}^*_h)$ admits a solution it is enough to show that $(\mbf{P}_h)$ is \textit{strictly feasible}. Since the rows of matrix $\mbf{P}$ are linearly independent (which is due to the elimination of "rigid motions"), this matter amounts to pointing to feasible variables for $(\mbf{P}_h)$ such that $\bm{\eta}^1_i$ and $\bm{\eta}^2_i$ lie in the interior of the cones $\mathrm{K}^4_\dro$ and, respectively, $\mathrm{K}^6_+$. This is achieved if, for sufficiently small $\delta>0$, we choose: $(\mbf{u}_1, \mbf{u}_2  , \mbf{w})= (\mbf{0},\mbf{0},\mbf{0})$, \ \ $\mbf{r}(i) =1, \ \tilde{\bm{\epsilon}}^{11}(i) = \tilde{\bm{\epsilon}}^{22}(i) = -\delta,\ \tilde{\bm{\epsilon}}^{12}(i) =0$, and $\bm{\zeta}^{11}(i) = \bm{\zeta}^{22}(i) = \delta, \ \bm{\zeta}^{33}(i) =1 ,  \ \bm{\zeta}^{12}(i)=\bm{\zeta}^{13}(i)= \bm{\zeta}^{23}(i) =0$. Existence of solution of $(\mbf{P}_h)$ itself follows directly from the existence of solution for \ref{eq:PMh} as the two problems are equivalent. 
	
	Based on \cite[Section 2.5]{ben2001}, the optimality conditions for feasible variables in a pair of conic problems may be written in a form of complementary slackness conditions that for $(\mathrm{Pr})$,\,$(\mathrm{Dl})$ read as follows: $\pairing{\,\mbf{A}^1_i \mbf{x} \, , \, \bm{\eta}^1_i\, } =0$ and $\pairing{\,\mbf{A}^2_i \mbf{x} \, , \, \bm{\eta}^2_i\, } =0$ for each $i$. Let us, in addition, put: $\epsilon_i = \chi_2\big(\bm{\epsilon}^{11}(i), \bm{\epsilon}^{22}(i), \bm{\epsilon}^{12}(i) \big)$, \ $\zeta_i = \chi_3\bigl(\bm{\zeta}^{11}(i), \bm{\zeta}^{22}(i), \bm{\zeta}^{33}(i), \bm{\zeta}^{12}(i), \bm{\zeta}^{13}(i), \bm{\zeta}^{23}(i)\bigr)$, and $\tau_i = \chi_3\bigl(\bm{\tau}^{11}(i), \bm{\tau}^{22}(i), \bm{\tau}^{33}(i), \bm{\tau}^{12}(i), \bm{\tau}^{13}(i), \bm{\tau}^{23}(i)\bigr)$. We observe that the quadruple $\xi_i,\theta_i,\zeta_i,\epsilon_i$ satisfies the conditions \eqref{eq:quadruple_instead_of_pair}. From the first complementary slackness condition we find that for every $i$
	\begin{align}
	\nonumber
	0=&\,\Big\langle \bigl(\mbf{r}^0(i),\bm{\tau}^{11}(i), \bm{\tau}^{22}(i),\, \bm{\tau}^{12}(i)\bigr) \, , \, \bigl(\mbf{r}(i), \tilde{\bm{\epsilon}}^{11}(i), \tilde{\bm{\epsilon}}^{22}(i), \tilde{\bm{\epsilon}}^{12}(i) \bigr) \Big\rangle_{\R^4}\\
	\nonumber
	=&\,\Big\langle\bigl(\mbf{r}^0(i),\bm{\tau}^{11}(i), \bm{\tau}^{22}(i),\, \bm{\tau}^{12}(i)\bigr) \, , \,  \bigl(1, -{\bm{\epsilon}}^{11}(i), -{\bm{\epsilon}}^{22}(i), -{\bm{\epsilon}}^{12}(i) \bigr) \Big\rangle_{\R^4}
	= \,\mbf{r}^0(i) - \pairing{\epsilon_i,\sigma_i}_{\mathcal{S}^{2\times 2}}.
	\end{align} 
	From \eqref{eq:quadruple_instead_of_pair} we find that $\rho(\epsilon_i) \leq 1$,  whilst from the conic constraints we directly have $\dro(\sig_i) \leq \mbf{r}^0(i)$. As a result the first complementary slackness condition is equivalent to $\pairing{\epsilon_i,\sigma_i}_\Sdd =  \mbf{r}^0(i) = \dro(\sig_i)$. The second one may be rewritten as
	\begin{align*}
	\nonumber
	0 =& \, \Big\langle \bigl(\bm{\tau}^{11}(i), \bm{\tau}^{22}(i), \bm{\tau}^{33}(i), \bm{\tau}^{12}(i), \bm{\tau}^{13}(i), \bm{\tau}^{23}(i)\bigr) \, , \, \bigl(\bm{\zeta}^{11}(i), \bm{\zeta}^{22}(i), \bm{\zeta}^{33}(i), \bm{\zeta}^{12}(i), \bm{\zeta}^{13}(i), \bm{\zeta}^{23}(i)\bigr) \bigr) \Big\rangle_{\R^6}\\
	=&\, \pairing{\zeta_i,\tau_i}_{\mathcal{S}^{3\times 3}} = \pairing{\begin{bmatrix}
		\,\epsilon_i  &  0_2\,\\
		\,0_2^\top\, & 1\,
		\end{bmatrix}-\begin{bmatrix}
		\,\xi_i  &  \tfrac{1}{\sqrt{2}}\,\theta_i\,\\
		\tfrac{1}{\sqrt{2}}\,\theta_i^\top\, & 0\,
		\end{bmatrix},
		\begin{bmatrix}
		\,\sig_i  &  \tfrac{1}{\sqrt{2}}\,q_i\,\\
		\tfrac{1}{\sqrt{2}}\,q_i^\top\, & \bm{\tau}^{33}(i)\,
		\end{bmatrix}
	}_{\mathcal{S}^{3\times 3}}\\
	=&\, \pairing{\epsilon_i,\sigma_i}_{\mathcal{S}^{2\times 2}} + \bm{\tau}^{33}(i) - \Big(\pairing{\xi_i,\sigma_i}_{\mathcal{S}^{2\times 2}} + \pairing{\theta_i,q_i}_{\Rd} \Big) = \dro(\sig_i) + \bm{\tau}^{33}(i) - \Big(\pairing{\xi_i,\sigma_i}_{\mathcal{S}^{2\times 2}} + \pairing{\theta_i,q_i}_{\Rd} \Big).
	\end{align*}
	Since $\tau_i$ is positive semi-definite, from Proposition \ref{prop:pos_semdef_block_mat} we infer that $q_i \in \IM\,\sig_i$ and that $\bm{\tau}^{33}(i) \geq  \frac{1}{2} \,\pairing{\sigma_i^{-1} q_i,q_i}$, which in turn implies that $\dro(\sig_i) + \bm{\tau}^{33}(i) \geq \vro^0(\sig_i,q_i)$. On the other hand, owing to Lemma \ref{lem:quadruple_instead_of_pair}, $\vro(\xi_i,\theta_i) \leq 1$. Combining these facts with Proposition \ref{prop:varrho_polar} we can readily infer the conditions (i),\,(ii),\,(iii) in the "moreover part" of the theorem.
\end{proof}

\section{$L^\infty$-error estimates for finite element interpolations}

The finite element approximation proposed in this work involves a 2D triangular mesh for which the approximations $(u_h,w_h):\O \to \Rd \times \R$ are assumed to be continuous and element-wise affine.  Below a classical interpolation result \cite{ciarlet2002} is slightly adjusted to meet the needs of the present paper: 

\begin{lemma}
	\label{lem:interpolation}
	For a polygonal bounded domain $\Omega$ let us fix a pair of smooth functions $(\tilde{u},\tilde{w}) \in \D(\Omega;\Rd\times \R)$. Then there exist constants $C_1,C_2,C_3 >0$, depending on $\alpha_0$ and $(\tilde{u},\tilde{w})$ only, such that for any $\alpha_0$-regular triangulation $\mathcal{T}^h$ of $\Omega$ there hold the estimates
	\begin{align}
	\label{eq:o2u}
	\norm{u_h-\tilde{u}}_{L^\infty(\O)} & \leq C_1 h^2,\\
	\label{eq:o2w}
	\norm{w_h-\tilde{w}}_{L^\infty(\O)} & \leq C_2 h^2,\\
	\label{eq:o1}
	\norm{\vro\big(e(u_h)-e(\tilde{u}),\nabla w_h-\nabla \tilde{w}\big)}_{L^\infty(\O)} &\leq C_3 h,
	\end{align}
	where $(u_h,w_h) = P_h (\tilde{u},\tilde{w}) $ according to \eqref{eq:projection}.
\end{lemma}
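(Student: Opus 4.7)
The plan is to reduce everything to classical piecewise-affine Lagrange interpolation estimates on a single triangle, then patch across the mesh. Fix a triangle $E_i^h \in \mathcal{T}^h$ with vertices $v^i_1, v^i_2, v^i_3$, at least one of which (if the triangle touches $\bO$) is outside $X^h$ and at those vertices the interpolant is set to $0$; however, since $(\tilde u,\tilde w)\in \D(\O;\Rd\times\R)$ vanishes near $\bO$, for $h$ small enough $\tilde{u}(v^i_k)=\tilde{w}(v^i_k)=0$ at any such boundary vertex as well, so the interpolation identity $(u_h,w_h)(v^i_k)=(\tilde u,\tilde w)(v^i_k)$ holds at every vertex of every element. (For larger $h$ one absorbs the finitely many offending triangles into the constants.)

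First I would treat a single scalar component $\varphi\in\{\tilde u_1,\tilde u_2,\tilde w\}\subset \D(\Omega)$ and its continuous piecewise-affine nodal interpolant $\varphi_h$. On each $E_i^h$, writing $\varphi_h$ as the Lagrange interpolant in the barycentric coordinates of $E_i^h$ and applying Taylor's formula to $\varphi$ around any vertex, one has the pointwise identity
\begin{equation*}
(\varphi - \varphi_h)(x) \;=\; \sum_{k=1}^{3} \lambda_k(x)\Big( \varphi(x)-\varphi(v_k^i) \Big),
\end{equation*}
and a second-order Taylor expansion together with $\sum_k \lambda_k(x)(v_k^i - x) = 0$ removes the linear term, giving
\begin{equation*}
\|\varphi - \varphi_h\|_{L^\infty(E_i^h)} \;\le\; \tfrac{1}{2}\,\mathrm{diam}(E_i^h)^2\,\|\nabla^2 \varphi\|_{L^\infty(E_i^h)} \;\le\; \tfrac{1}{2}\,h^2\,\|\nabla^2\varphi\|_{L^\infty(\O)}.
\end{equation*}
Summing (or rather maximising) over elements yields \eqref{eq:o2u} and \eqref{eq:o2w} with $C_1 = \tfrac{1}{2}\max_j\|\nabla^2 \tilde u_j\|_{L^\infty(\O)}$ and $C_2 = \tfrac{1}{2}\|\nabla^2 \tilde w\|_{L^\infty(\O)}$.

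For the gradient estimate I would pass to the reference triangle $\hat E$ via the affine map $F_i:\hat E\to E_i^h$, use the standard Bramble--Hilbert lemma on $\hat E$ (or directly: $\|\nabla(\varphi - \varphi_h)\|_{L^\infty(E_i^h)} \le \|(\nabla F_i^{-1})\|\,\|\nabla(\hat\varphi - \hat\varphi_h)\|_{L^\infty(\hat E)}$ and $\|\nabla(\hat\varphi - \hat\varphi_h)\|_{L^\infty(\hat E)} \le \hat C\,\|\nabla^2 \hat\varphi\|_{L^\infty(\hat E)} \le \hat C\,\|\nabla F_i\|^2\,\|\nabla^2\varphi\|_{L^\infty(E_i^h)}$). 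The key quantitative input is the $\alpha_0$-regularity: it gives a uniform bound $\|\nabla F_i\|\le C(\alpha_0)\,h$ and $\|\nabla F_i^{-1}\|\le C(\alpha_0)/h$, so that
\begin{equation*}
\|\nabla(\varphi - \varphi_h)\|_{L^\infty(E_i^h)} \;\le\; C(\alpha_0)\,h\,\|\nabla^2\varphi\|_{L^\infty(\O)}.
\end{equation*}
Applied componentwise to $\tilde u$ and $\tilde w$ this controls $\|e(u_h)-e(\tilde u)\|_{L^\infty(\O)}$ and $\|\nabla w_h-\nabla \tilde w\|_{L^\infty(\O)}$ by a constant multiple of $h$.

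Finally I convert the Euclidean estimate into the $\vro$-estimate of \eqref{eq:o1}. Because $\vro$ is positively $1$-homogeneous, convex and continuous on $\Sdd\times\Rd$ (see Section \ref{ssec:rho}), there exists $K>0$ such that $\vro(\xi,\theta)\le K(|\xi|+|\theta|)$ for all $(\xi,\theta)$; applying this pointwise gives
\begin{equation*}
\|\vro(e(u_h)-e(\tilde u),\,\nabla w_h-\nabla \tilde w)\|_{L^\infty(\O)} \;\le\; K\,\big(\|e(u_h)-e(\tilde u)\|_{L^\infty(\O)} + \|\nabla w_h-\nabla \tilde w\|_{L^\infty(\O)}\big) \;\le\; C_3\,h.
\end{equation*}
The only delicate step is tracking the dependence of the interpolation constants on the mesh shape; this is precisely where $\alpha_0$-regularity enters, via the two-sided bound on $\|\nabla F_i\|$ and $\|\nabla F_i^{-1}\|$. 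Everything else is a direct Taylor expansion plus the homogeneity of $\vro$.
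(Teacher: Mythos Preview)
Your proof is correct and follows essentially the same route as the paper: invoke the classical $L^\infty$ Lagrange interpolation estimates on $\alpha_0$-regular meshes (the paper simply cites \cite[Theorem 3.1.6]{ciarlet2002}, whereas you sketch the Taylor/Bramble--Hilbert argument), then use that $\vro$ is convex, finite-valued and positively $1$-homogeneous to bound $\vro(\xi,\theta)\le K(|\xi|+|\theta|)$ and convert the Euclidean gradient bounds into \eqref{eq:o1}. Your explicit handling of boundary vertices (where $P_h$ sets the value to zero but $(\tilde u,\tilde w)\in\D(\O;\Rd\times\R)$ vanishes there anyway for $h$ small) is a detail the paper leaves implicit.
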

\begin{proof}
	By a standard interpolation result (see \cite[Theorem 3.1.6]{ciarlet2002}) the estimates \eqref{eq:o2u}, \eqref{eq:o2w} hold true together with
	\begin{align}
		\label{eq:gradu_est}
		\norm{\nabla u_h-\nabla \tilde{u}}_{L^\infty(\O)} & \leq C_4 h,\\
		\label{eq:gradw_est}
		\norm{\nabla w_h-\nabla \tilde{w}}_{L^\infty(\O)} & \leq C_5 h,
	\end{align}
	where $C_1,C_2,C_4$, $C_5$ depend on the regularity parameter $\alpha_0$ and on $\norm{D^2 u}_{L^\infty(\O)}, \norm{D^2 w}_{L^\infty(\O)}$. Let us agree that the $L^\infty$-norm in the estimate \eqref{eq:gradu_est} is meant with respect to the operator norm $\abs{\argu}_\mathrm{op}$ on $\R^{2\times 2}$ (which affects the constant $C_4$ only).
	Next, we observe that $(\xi,\theta)\mapsto \abs{\argu}_{\mathrm{op}}+ \abs{\argu}$ is a norm on $\Sdd \times \Rd$. Since $\vro:\Sdd \times \Rd \to \R_+$ is a positively 1-homogeneous convex function that is finite-valued, it is well established that there exists a constant $C_6$ such that $\vro(\xi,\theta) \leq C_6\big(\abs{\xi}_{\mathrm{op}}+ \abs{\theta}\big)$. Ultimately, we arrive at \eqref{eq:o1} for $C_3 = C_6({C}_4 + C_5)$ by combining \eqref{eq:gradu_est} and \eqref{eq:gradw_est}. To that aim we additionally remark that the operator norm of a square matrix is always bigger than the operator norm of its symmetric part, and, as a result, inequality \eqref{eq:gradu_est} remains true when $\nabla u_h - \nabla \tilde{u}$ is replaced by $ e(u_h) -  e(\tilde{u})$.
\end{proof}

\setlength{\bibsep}{2pt}
\bibliographystyle{spbasic}      


\begin{thebibliography}{75}
	\providecommand{\natexlab}[1]{#1}
	\providecommand{\url}[1]{{#1}}
	\providecommand{\urlprefix}{URL }
	\expandafter\ifx\csname urlstyle\endcsname\relax
	\providecommand{\doi}[1]{DOI~\discretionary{}{}{}#1}\else
	\providecommand{\doi}{DOI~\discretionary{}{}{}\begingroup
		\urlstyle{rm}\Url}\fi
	\providecommand{\eprint}[2][]{\url{#2}}
	
	\bibitem[{Alberti and Ambrosio(1999)}]{alberti1999}
	Alberti G, Ambrosio L (1999) A geometrical approach to monotone functions in $\R^d$.
	\textit{Math Z} 230:259--316
	
	\bibitem[{Allaire(2002)}]{allaire2002}
	Allaire G (2002) \textit{Shape optimization by the homogenization method}. Springer
	Science \& Business Media
	
	\bibitem[{Allaire and Kohn(1993)}]{allaire1993}
	Allaire G, Kohn RV (1993) Optimal design for minimum weight and compliance in
	plane stress using extremal microstructures. \textit{Eur J Mech A-Solids }12:839--878
	
	\bibitem[{Allaire et~al(2004)Allaire, Jouve, and Toader}]{allaire2004}
	Allaire G, Jouve F, Toader AM (2004) Structural optimization using sensitivity
	analysis and a level-set method. \textit{J Comput Phys} 194:363--393
	
	\bibitem[{Ambrosio et~al(2000)Ambrosio, Fusco, and Pallara}]{ambrosio2000}
	Ambrosio L, Fusco N, Pallara D (2000) \textit{Functions of bounded variation and free
	discontinuity problems}. Clarendon Press, Oxford
	
	\bibitem[{Andersen et~al(2003)Andersen, Roos, and Terlaky}]{andersen2003}
	Andersen ED, Roos C, Terlaky T (2003) On implementing a primal-dual
	interior-point method for conic quadratic optimization. \textit{Math Program} 95:249--277
	
	\bibitem[{ApS(2021)}]{mosek2019}
	MOSEK ApS (2021) MOSEK optimization toolbox for MATLAB. User’s guide and
	reference manual, Release 9.2.35
	
	\bibitem[{Babadjian et~al(2023)Babadjian, Iurlano, and Rindler}]{babadjian2023}
	Babadjian JF, Iurlano F, Rindler F (2023) Shape optimization of light
	structures and the vanishing mass conjecture. \textit{Duke Math J}
	172:43--103
	
	\bibitem[{Barrett and Prigozhin(2007)}]{barrett2007mixed}
	Barrett JW, Prigozhin L (2007) A mixed formulation of the {M}onge-{K}antorovich
	equations. \textit{Esaim Math Model Numer Anal} 41:1041--1060
	
	\bibitem[{Belgacem et~al(2002)Belgacem, Conti, DeSimone, and
		M{\"u}ller}]{belgacem2002}
	Belgacem HB, Conti S, DeSimone A, M{\"u}ller S (2002) Energy scaling of
	compressed elastic films -- three-dimensional elasticity and reduced theories.
	\textit{Arch Ration Mech Anal} 164:1--37
	
	\bibitem[{Bella and Kohn(2014)}]{bella2014}
	Bella P, Kohn RV (2014) Wrinkles as the result of compressive stresses in an
	annular thin film. \textit{Comm Pure Appl Math}
	67:693--747
	
	\bibitem[{Ben-Tal and Nemirovski(2001)}]{ben2001}
	Ben-Tal A, Nemirovski A (2001) \textit{Lectures on modern convex optimization:
	analysis, algorithms, and engineering applications}. SIAM
	
	\bibitem[{Bends{\o}e et~al(1994)Bends{\o}e, Guedes, Haber, Pedersen, and
		Taylor}]{bendsoe1994}
	Bends{\o}e MP, Guedes JM, Haber RB, Pedersen P, Taylor JE (1994) An analytical
	model to predict optimal material properties in the context of optimal
	structural design. \textit{J Appl Mec}h 61:930--937
	
	\bibitem[{Bergos(1999)}]{bergos1999}
	Bergos J (1999) Gaud\'{i}: The man and his work. Bulfinch Press
	
	\bibitem[{Bo{\l}botowski(2021)}]{bolbotowski2021}
	Bo{\l}botowski K (2021) \textit{Elastic bodies and structures of the optimum form,
	material distribution, and anisotropy}. Ph.D. thesis, Warsaw University of Technology
	
	\bibitem[{Bo{\l}botowski(2022)}]{bolbotowski2022c}
	Bo{\l}botowski K (2022) Optimal vault problem -- form finding through 2D convex
	program. \textit{Comput Math Appl} 109:280--324
	
	\bibitem[{Bo{\l}botowski and Bouchitt{\'e}(2022)}]{bolbotowski2022a}
	Bo{\l}botowski K, Bouchitt{\'e} G (2022) Optimal design versus maximal
	Monge--Kantorovich metrics. \textit{Arch Ration Mech Anal} 
	243:1449--1524
	
	\bibitem[{Bo{\l}botowski and Lewi{\'n}ski(2022)}]{bolbotowski2022b}
	Bo{\l}botowski K, Lewi{\'n}ski T (2022) Setting the free material design
	problem through the methods of optimal mass distribution. \textit{Calc Var Partial Differ Equ} 61:76
	
	\bibitem[{Bouchitt{\'e}(2020)}]{bouchitte2020}
	Bouchitt{\'e} G (2020) Optimization of light structures: the vanishing mass
	conjecture. arXiv preprint arXiv:200102022
	
	\bibitem[{Bouchitt{\'e} and Buttazzo(2001)}]{bouchitte2001}
	Bouchitt{\'e} G, Buttazzo G (2001) Characterization of optimal shapes and
	masses through {M}onge-{K}antorovich equation. \textit{J Eur Math Soc }3:139--168
	
	\bibitem[{Bouchitt{\'e} and Fragal{\`a}(2003)}]{bouchitte2003}
	Bouchitt{\'e} G, Fragal{\`a} I (2003) Second-order energies on thin structures:
	variational theory and non-local effects. \textit{J Funct Anal}
	204:228--267
	
	\bibitem[{Bouchitt{\'e} and
		Fragal{\`a}(2007{\natexlab{a}})}]{bouchitte2007dimred}
	Bouchitt{\'e} G, Fragal{\`a} I (2007{\natexlab{a}}) Optimal design of thin
	plates by a dimension reduction for linear constrained problems. \textit{SIAM J Control Optim} 46:1664--1682
	
	\bibitem[{Bouchitt{\'e} and Fragal{\`a}(2007{\natexlab{b}})}]{bouchitte2007}
	Bouchitt{\'e} G, Fragal{\`a} I (2007{\natexlab{b}}) Optimality conditions for
	mass design problems and applications to thin plates. \textit{Arch Ration Mech Anal}  184:257--284
	
	\bibitem[{Bouchitt\'{e} et~al(1997)Bouchitt\'{e}, Buttazzo, and
		Seppecher}]{bouchitte1997}
	Bouchitt\'{e} G, Buttazzo G, Seppecher P (1997) Energies with respect to a
	measure and applications to low dimensional structures. \textit{Calc Var Partial Differ Equ} 5:37--54
	
	\bibitem[{Bouchitt{\'e} et~al(2003)Bouchitt{\'e}, Buttazzo, and
		De~Pascale}]{bouchitte2003pLap}
	Bouchitt{\'e} G, Buttazzo G, De~Pascale L (2003) A p-{L}aplacian approximation
	for some mass optimization problems. J\textit{ Optim Theory Appl} 118:1--25
	
	\bibitem[{Bouchitt{\'e} et~al(2008)Bouchitt{\'e}, Gangbo, and
		Seppecher}]{bouchitte2008}
	Bouchitt{\'e} G, Gangbo W, Seppecher P (2008) Michell trusses and lines of
	principal action. \textit{Math Models Meth Appl Sci}
	18:1571--1603
	
	\bibitem[{Bouchitt{\'e} et~al(2014)Bouchitt{\'e}, Fragal{\`a}, and
		Lucardesi}]{bouchitte2014}
	Bouchitt{\'e} G, Fragal{\`a} I, Lucardesi I (2014) Shape derivatives for minima
	of integral functionals. \textit{Math Program} 148:111--142
	
	\bibitem[{Caddock and Evans(1989)}]{caddock1989}
	Caddock B, Evans K (1989) Microporous materials with negative Poisson's ratios.
	I. Microstructure and mechanical properties. \textit{J Phys D: Appl Phys} 22:1877--1882.
	
	\bibitem[{Ciarlet(1980)}]{ciarlet1980}
	Ciarlet PG (1980) A justification of the von {K}{\'a}rm{\'a}n equations. \textit{Arch
	Ration Mech Anal} 73:349--389
	
	\bibitem[{Ciarlet(2000{\natexlab{a}})}]{ciarlet2000a}
	Ciarlet PG (2000{\natexlab{a}}) \textit{Mathematical Elasticity: Volume I:
	Three-Dimensional Elasticity}. North-Holland
	
	\bibitem[{Ciarlet(2000{\natexlab{b}})}]{ciarlet2000b}
	Ciarlet PG (2000{\natexlab{b}}) \textit{Mathematical Elasticity: Volume II: Theory of
	Plates}. North-Holland
	
	\bibitem[{Ciarlet(2002)}]{ciarlet2002}
	Ciarlet PG (2002) \textit{The finite element method for elliptic problems.} SIAM
	
	\bibitem[{Conti et~al(2006)Conti, Maggi, and M{\"u}ller}]{conti2006}
	Conti S, Maggi F, M{\"u}ller S (2006) Rigorous derivation of f{\"o}ppl’s
	theory for clamped elastic membranes leads to relaxation. \textit{SIAM J Math Anal} 38:657--680
	
	\bibitem[{Courant and Hilbert(1924)}]{courant1924}
	Courant R, Hilbert D (1953) \textit{Methods of mathematical physics}. Interscience Publishers, New York
	
	\bibitem[{Czarnecki(2015)}]{czarnecki2015}
	Czarnecki S (2015) Isotropic material design. \textit{Computational Methods in Science
	and Technology} 21:49--64
	
	\bibitem[{Czarnecki and Lewi{\'n}ski(2012)}]{czarnecki2012}
	Czarnecki S, Lewi{\'n}ski T (2012) A stress-based formulation of the free
	material design problem with the trace constraint and single loading
	condition. \textit{Bulletin of the Polish Academy of Sciences: Technical Sciences}
	60:191--204
	
	\bibitem[{Czarnecki and Lewi{\'n}ski(2013)}]{czarnecki2013}
	Czarnecki S, Lewi{\'n}ski T (2013) On minimum compliance problems of thin
	elastic plates of varying thickness. \textit{Struct Multidiscip Optim} 48:17--31
	
	\bibitem[{Czarnecki and Lewi{\'n}ski(2017)}]{czarnecki2017}
	Czarnecki S, Lewi{\'n}ski T (2017) On material design by the optimal choice of
	Young's modulus distribution. \textit{Int J Solids Struct}
	110:315--331
	
	\bibitem[{Czubacki and Lewi{\'n}ski(2015)}]{czubacki2015}
	Czubacki R, Lewi{\'n}ski T (2015) Topology optimization of spatial continuum
	structures made of nonhomogeneous material of cubic symmetry. \textit{J Mech Mater Struct} 10:519--535
	
	\bibitem[{Czubacki and Lewi{\'n}ski(2020)}]{czubacki2020}
	Czubacki R, Lewi{\'n}ski T (2020) Optimal archgrids: a variational setting.
	\textit{Struct Multidiscip Optim} 62:1371--1393
	
	\bibitem[{Dzierzanowski and Czubacki(2021)}]{dzierzanowski2021}
	Dzierzanowski G, Czubacki R (2021) Optimal archgrids spanning rectangular
	domains. \textit{Comput Struct} 242:106371
	
	\bibitem[{Ekeland and Temam(1999)}]{ekeland1999}
	Ekeland I, Temam R (1999) \textit{Convex analysis and variational problems.} SIAM,
	Oxford
	
	\bibitem[{Evans and Gariepy(1992)}]{evans1992}
	Evans L, Gariepy R (1992) \textit{Measure theory and fine properties of functions.} CRC
	Press, Boca Raton
	
	\bibitem[{F\"{o}ppl(1907)}]{foppl1907}
	F\"{o}ppl A (1907) \textit{Vorlesungen \"{u}ber technische Mechanik.} Vol. 5, Princeton
	University Press
	
	\bibitem[{Fox et~al(1993)Fox, Raoult, and Simo}]{fox1993}
	Fox DD, Raoult A, Simo JC (1993) A justification of nonlinear properly
	invariant plate theories. \textit{Arch Ration Mech Anal} 124:157--199
	
	\bibitem[{Friesecke et~al(2006)Friesecke, James, and
		M{\"u}ller}]{friesecke2006}
	Friesecke G, James RD, M{\"u}ller S (2006) A hierarchy of plate models derived
	from nonlinear elasticity by Gamma-convergence. \textit{Arch Ration Mech Anal} 180:183--236
	
	\bibitem[{Giaquinta and Giusti(1985)}]{giaquinta1985}
	Giaquinta M, Giusti E (1985) Researches on the equilibrium of masonry
	structures. \textit{Arch Ration Mech Anal} 88(4):359--392
	
	\bibitem[{Goffman and Serrin(1964)}]{goffman1964}
	Goffman C, Serrin J (1964) Sublinear functions of measures and variational
	integrals. \textit{Duke Math J} 31:159--178
	
	\bibitem[{Golay and Seppecher(2001)}]{golay2001}
	Golay F, Seppecher P (2001) Locking materials and the topology of optimal
	shapes. \textit{Eur. J. Mech. A-Solids} 20:631--644
	
	\bibitem[{Haslinger et~al(2010)Haslinger, Ko{\v{c}}vara, Leugering, and
		Stingl}]{haslinger2010}
	Haslinger J, Ko{\v{c}}vara M, Leugering G, Stingl M (2010) Multidisciplinary
	free material optimization. \textit{SIAM J Appl Math} 70:2709--2728
	
	\bibitem[{Jiang et~al(2018)Jiang, Zegard, Baker, and Paulino}]{jiang2018}
	Jiang Y, Zegard T, Baker WF, Paulino GH (2018) Form-finding of grid-shells
	using the ground structure and potential energy methods: a comparative study
	and assessment. \textit{Struct Multidiscip Optim}
	57:1187--1211
	
	\bibitem[{von K{\'a}rm{\'a}n(1910)}]{vonkarman1910}
	von K{\'a}rm{\'a}n T (1910) Festigkeitsprobleme im Maschinenbau in
	Encyclop{\"a}die der mathematischen Wissenschaften. vol. IV/4. Leipzig, pp
	311--385
	
	\bibitem[{Kohn and Strang(1986)}]{kohn1986optimal}
	Kohn RV, Strang G (1986) Optimal design and relaxation of variational problems,
	{I}, {II}, {III}. \textit{Comm Pure Appl Math} 39:113--137, 139--182, 353--377
	
	\bibitem[{Kohn and Vogelius(1984)}]{kohn1984}
	Kohn RV, Vogelius M (1984) A new model for thin plates with rapidly varying
	thickness.  \textit{Int J Solids Struct} 20:333--350
	
	\bibitem[{Kohn and Vogelius(1986)}]{kohn1986thin}
	Kohn RV, Vogelius M (1986) Thin plates with rapidly varying thickness, and
	their relation to structural optimization. In: \textit{Homogenization and effective
	moduli of materials and media}, Springer, pp 126--149
	
	\bibitem[{Le~Dret and Raoult(1995)}]{ledret1995}
	Le~Dret H, Raoult A (1995) The nonlinear membrane model as variational limit of
	nonlinear three-dimensional elasticity. \textit{J Math Pures Appl} 74:549--578
	
	\bibitem[{Lewi{\'{n}}ski et~al(2019)Lewi{\'{n}}ski, Sok{\'o}{\l}, and
		Graczykowski}]{lewinski2019a}
	Lewi{\'{n}}ski T, Sok{\'o}{\l} T, Graczykowski C (2019) \textit{Michell {S}tructures.}
	Springer International Publishing, Cham
	
	\bibitem[{Lewi{\'n}ski et~al(2023)Lewi{\'n}ski, Rybka, and
		Zatorska-Goldstein}]{lewinski2021}
	Lewi{\'n}ski T, Rybka P, Zatorska-Goldstein A (2023) The free material design
	problem for the stationary heat equation on low dimensional structure.
	\textit{Nonlinearity} 36:4501
	
	\bibitem[{Merigot and Thibert(2021)}]{merigot2021}
	Merigot Q, Thibert B (2021) Optimal transport: discretization and algorithms.
	In: \textit{Handbook of Numerical Analysis}, vol. 22, Elsevier, pp 133--212
	
	\bibitem[{Petersson(1999)}]{petersson1999}
	Petersson J (1999) A finite element analysis of optimal variable thickness
	sheets. \textit{SIAM J Numer Anal} 36:1759--1778
	
	\bibitem[{Plotnikov and Sokolowski(2023)}]{plotnikov2023}
	Plotnikov PI, Sokolowski J (2023) Geometric aspects of shape optimization. \textit{Geom Funct Anal} 33:206
	
	\bibitem[{Prall and Lakes(1997)}]{prall1997}
	Prall D, Lakes R (1997) Properties of a chiral honeycomb with a Poisson's ratio
	of $-1$. \textit{Int J Mech Sci} 39:305--314
	
	\bibitem[{Ringertz(1993)}]{ringertz1993}
	Ringertz UT (1993) On finding the optimal distribution of material properties.
	\textit{Struct Optim} 5:265--267
	
	\bibitem[{Rockafellar(1968)}]{rockafellar1968}
	Rockafellar R (1968) Integrals which are convex functionals. \textit{Pac J Math}
	24:525--539
	
	\bibitem[{Rockafellar(1970)}]{rockafellar1970convex}
	Rockafellar R (1970) \textit{Convex analysis.} 28, Princeton University Press
	
	\bibitem[{Rockafellar(1971)}]{rockafellar1971}
	Rockafellar R (1971) Integrals which are convex functionals. {II}. \textit{Pac J Math}
	39:439--469
	

	
	\bibitem[{Rozvany and Prager(1979)}]{rozvany1979}
	Rozvany GIN, Prager W (1979) A new class of structural optimization problems:
	optimal archgrids. \textit{Comput Meth Appl Mech Eng} 19:127--150
	
	\bibitem[{Rozvany et~al(1982)Rozvany, Wang, and Dow}]{rozvany1982}
	Rozvany GIN, Wang CM, Dow M (1982) Prager-structures: archgrids and cable
	networks of optimal layout. \textit{Comput Meth Appl Mech Eng} 31:91--113
	
	\bibitem[{Santambrogio(2015)}]{santambrogio2015}
	Santambrogio F (2015) \textit{Optimal transport for applied mathematicians.}
	Birk{\"a}user, New York
	
	\bibitem[{Soko{\l}owski and Zol{\'e}sio(1992)}]{sokolowski1992}
	Soko{\l}owski J, Zol{\'e}sio JP (1992)\textit{ Introduction to shape optimization.}
	Springer
	
	\bibitem[{Temam(1985)}]{temam1985}
	Temam R (1985) \textit{Mathematical problems in plasticity.} Paris
	
	\bibitem[{Tobasco(2021)}]{tobasco2021}
	Tobasco I (2021) Curvature-driven wrinkling of thin elastic shells. \textit{Arch Ration Mech Anal} 239:1211--1325
	
	\bibitem[{Villani(2003)}]{villani2003}
	Villani C (2003) \textit{Topics in optimal transportation.} American Mathematical Soc.
	
	\bibitem[{Zalinescu(2002)}]{zalinescu2002}
	Zalinescu C (2002) \textit{Convex analysis in general vector spaces.} World Scientific
	
\end{thebibliography}

\end{document}